\def\E{{\mathbb{E}}}
\def\N{{\mathbb{N}}}
\def\P{{\mathbb{P}}}
\def\Q{{\mathbb{Q}}}
\def\R{{\mathbb{R}}}
\def\Z{{\mathbb{Z}}}
\def\Dcal{{\mathcal{D}}}
\def\Ecal{{\mathcal{E}}}
\def\Fcal{{\mathcal{F}}}
\def\Lcal{{\mathcal{L}}}
\def\Ncal{{\mathcal{N}}}
\def\PX{{\mathbf{P}}}
\def\EX{{\mathbf{E}}}
\def\PQ{{\mathbf{Q}}}
\def\PZ{{\mathbf{Z}}}
\newcommand{\indicator}[1]{\mathbbm{1}_{\{#1\}}}
\newcommand{\norm}[1]{\lVert #1 \rVert}
\newtheorem{theorem}{Theorem}[section]
\newtheorem{prop}[theorem]{Proposition}
\newtheorem{lemma}[theorem]{Lemma}
\newtheorem{corollary}[theorem]{Corollary}
\newtheorem*{conv-assumption}{Convergence of L\'evy Processes}
\newtheorem*{tightness-assumption}{Tightness Assumption}
\theoremstyle{remark}
\title[Convergence of local time processes]{Asymptotic behavior of local times of compound Poisson processes with drift in the infinite variance case}
\author[A.\ Lambert]{Amaury Lambert$^1$}
\email{amaury.lambert@upmc.fr}
\address{$^1$Laboratoire de Probabilit\'es et Mod\`eles Al\'eatoires\\
UMR 7599 CNRS and UPMC Univ Paris 06\\
Case courrier 188\\
4 Place Jussieu\\
F-75252 Paris Cedex 05, France}
\author[F.\ Simatos]{Florian Simatos$^2$}
\email{f.simatos@tue.nl}
\address{$^2$Department of Mathematics \& Computer Science \\
Eindhoven University of Technology \\ P.O. Box 513 \\
5600 MB Eindhoven, The Netherlands}
\thanks{The research of Amaury Lambert is funded by project `MANEGE' 09-BLAN-0215 from ANR (French national research agency). While most of this research was carried out, Florian Simatos was affiliated with CWI and sponsored by an NWO-VIDI grant.}
\date{\today}
\newcounter{step}
\newcommand{\newstep}{\noindent \textit{Step~$\thestep$. \addtocounter{step}{1}}}
\begin{document}

\maketitle

\begin{abstract}
	Consider compound Poisson processes with negative drift and no negative jumps, which converge to some spectrally positive L\'evy process with non-zero L\'evy measure. In this paper we study the asymptotic behavior of the local time process, in the spatial variable, of these processes killed at two different random times: either at the time of the first visit of the L\'evy process to $0$, in which case we prove results at the excursion level under suitable conditionings; or at the time when the local time at~$0$ exceeds some fixed level. We prove that finite-dimensional distributions converge under general assumptions, even if the limiting process is not c\`adl\`ag. Making an assumption on the distribution of the jumps of the compound Poisson processes, we strengthen this to get weak convergence. Our assumption allows for the limiting process to be a stable L\'evy process with drift.
	
	These results have implications on branching processes and in queueing theory, namely, on the scaling limit of binary, homogeneous Crump--Mode--Jagers processes and on the scaling limit of the Processor-Sharing queue length process.
\end{abstract}

\bigskip\bigskip\bigskip

\hrule
\vspace{-2mm}
\tableofcontents
\vspace{-8mm}
\hrule

\newpage

\section{Introduction}

Let $X_n$ be a sequence of spectrally positive compound Poisson processes with drift which converges weakly to $X$, then necessarily a spectrally positive L\'evy process. The limiting process has continuous paths as soon as $\E(X_n(1)^2) \to \beta$ for some finite $\beta$, a case henceforth referred to as the finite variance case. In this paper, we focus on the infinite variance 
case, when the limiting L\'evy process has non zero L\'evy measure. Let $L_n$ and $L$ be the local time processes associated to $X_n$ and $X$, respectively, defined by the occupation density formula
\[ \int_0^t f(X_n(s)) ds = \int_{-\infty}^{+\infty} f(x) L_n(x,t) dx \ \text{ and } \ \int_0^t f(X(s)) ds = \int_{-\infty}^{+\infty} f(x) L(x,t) dx. \]

Since $X_n$ converges to $X$ it is natural to also expect $L_n$ to converge to $L$. Note however that the map that to a function associates its local time process is not continuous, and so such a result does not automatically follow from the continuous mapping theorem. In the finite variance case, i.e., when $X$ is Brownian motion, this question has been looked at in Lambert et al.~\cite{Lambert11:0} under the assumption that both $X_n$ and $X$ drift to~$-\infty$. Previously, Khoshnevisan~\cite{Khoshnevisan93:0} investigated this question under different assumptions on $X_n$ and $X$ but with a different goal, namely to derive convergence rates. The goal of the present paper is to investigate the asymptotic behavior of $L_n$ in the infinite variance case, i.e., when the L\'evy measure of $X$ is non-zero.

Except for the two aforementioned papers, it seems that this question has not received much attention. In sharp contrast, there is a rich literature in the closely related case where $X$, still a L\'evy process, is approximated by a sequence $X_n$ of random walks. There are results looking at, e.g., strong and weak invariance principles, convergence rates and laws of the iterated logarithm. Nonetheless, the compound Poisson case in which we will be interested is of practical interest since it finds applications in the theory of branching processes and in queueing theory (see discussion below and Section~\ref{sec:implications}); besides, the setup that we consider offers specific technical difficulties that do not seem to have been addressed in the random walk case. An overview of existing results in the random walk case can give insight into the specific technical difficulties that arise in our framework.

\subsection*{The random walk case}

The most studied case in the random walk case is when $X_n$ is of the form $X_n(t) = S(nt) / n^{1/2}$ with $S$ a lattice random walk with finite variance, say with step size distribution $\xi$, so that $X$ is of the form $\sigma^2 B$ with $B$ a standard Brownian motion. One of the earliest work is in this area was done by Knight~\cite{Knight63:0}, see also~\cite{Borodin81:0, Csaki83:0, Kesten65:0, Perkins82:0, Revesz81:0, Revesz81:1, Stone63:0} for weak convergence results, laws of the iterated logarithm, strong invariance principles and explicit convergence rates. The introduction of Cs\"{o}rg\H{o} and R\'ev\'esz~\cite{Csorgo85:0} presents a good overview of the literature.

When one drops the finite variance assumption on $S$ (but keeps the lattice assumption), $X_n$ is of the form $X_n(t) = S(nt) / s_n$ for some normalizing sequence $(s_n)$ and $X$ is a stable L\'evy process. In this case, significantly fewer results seem available: Borodin~\cite{Borodin84:0} has established weak convergence results, Jain and Pruitt~\cite{Jain84:1} a functional law of the iterated logarithm and Kang and Wee~\cite{Kang97:0} $L_2$-convergence results.

Focusing specifically on weak convergence results, the best results have been obtained by Borodin~\cite{Borodin81:0, Borodin84:0}, who proved that $L_n$ converges weakly to $L$ if $\E(\xi^2) < +\infty$ (finite variance case) or if $\xi$ is in the domain of attraction of a stable law with index $1 < \alpha < 2$ (infinite variance case).
\\

On the other hand, the picture is far to be as complete in the non-lattice case, even when one only focuses on weak convergence results. First of all, in this case the very definition of the local time process is subject to discussion, since in contrast with the lattice case, it cannot be defined by keeping track of the number of visits to different points in space. In Cs\"{o}rg\H{o} and R\'ev\'esz~\cite{Csorgo85:0} for instance, five different definitions are discussed. In the finite variance case, Perkins~\cite{Perkins82:0} has proved that $L_n$ converges to $L$, in the sense of finite-dimensional distributions if $\E(\xi^2) < +\infty$, and weakly if $\E(\xi^4) < +\infty$ and $\limsup_{|t| \to \infty} |\E(e^{it\xi})| < 1$; see also~\cite{Borodin86:0, Csorgo85:0}. In view of the sharp results obtained by Borodin~\cite{Borodin81:0} in the lattice case, it is not clear that the conditions derived by Perkins~\cite{Perkins82:0} to get weak convergence are optimal. Also, note that this discrepancy, in terms of existing results, between the lattice and non-lattice case, reflects the fact that tightness is significantly more difficult in the non-lattice case. In the non-lattice case, the most involved part of the proof concerns the control of small oscillations of the local time, a difficulty that does not appear in the lattice case, as soon as the amplitude of oscillations is smaller than the lattice mesh (see the discussion after Proposition~\ref{prop:case>}).

We finally stress that to our knowledge, the present work is the first study of the asymptotic behavior of $L_n$ in the non-lattice and infinite variance case.

\subsection*{Main results}

In the present paper, we will be interested in $X_n$ of the form $X_n(t) = Y_n(nt) / s_n$ with $(s_n)$ some normalizing sequence, $Y_n(t) = P_n(t) - t$ and $P_n$ a compound Poisson process whose jump distribution $\xi_n \geq 0$ has infinite second moment. We assume that $X_n$ does not drift to $+\infty$ and that it converges weakly to a spectrally positive L\'evy process $X$. We will focus on the variations in space of the local time processes and consider the asymptotic behavior of the processes $L_n(\, \cdot \,, \tau_n)$ for some specific choices of $\tau_n$. Since $L_n(a,t)$ is increasing in $t$ this contains the most challenging part of the analysis of local time processes; moreover, this allows for results at the excursion level (see Theorem~\ref{thm:main-1}). This setup presents two main differences with previous works on random walks.

First, the sequence $X_n$ stems from a \emph{sequence} of compound Poisson processes, when all the aforementioned works in the random walk case consider one random walk $S$ that is scaled. Besides being of practical interest for branching processes and queueing theory, since this allows $X$ to have a drift and, more generally, not to be stable, this variation triggers one important technical difference. Indeed, most of the works on random walks heavily exploit embedding techniques, typically embedding $S$ into $X$. It is therefore not clear whether such techniques could be adapted to a triangular scheme such as the one considered here.

Second, the image set $\{ X_n(t), t \geq 0 \}$ is not lattice and $X_n$ has infinite variance; thus, the corresponding random walk counterpart would be the case of non-lattice random walk with infinite variance which, as mentioned previously, has not been treated. Similarly as Perkins~\cite{Perkins82:0} in the case of non-lattice random walk with finite variance, we will show that finite-dimensional distributions converge under minimal assumptions and that tightness holds under more stringent ones. However, in contrast with Perkins~\cite{Perkins82:0} our assumptions to get tightness will not be in terms of finiteness of some moments but in terms of the specific distribution of $\xi_n$. In particular, under our assumptions the limiting process $X$ can be any process of the form $X(t) = Y(t) - dt$ with $Y$ a spectrally positive stable L\'evy process with index $1 < \alpha < 2$ and $d \geq 0$.

\subsection*{Implications}

As alluded to above, our results have implications for branching processes and in queueing theory, see Section~\ref{sec:implications} for more details. In short, the process $(L_n(a, \tau_n), a \geq 0)$ for the random times $\tau_n$ that will be considered has been shown in Lambert~\cite{Lambert10:0} to be equal in distribution to a (rescaled) binary, homogeneous Crump-Mode-Jagers (CMJ) branching process. Although the scaling limits of Galton-Watson processes and of Markovian CMJ have been exhaustively studied, see~\cite{Grimvall74:0, Helland78:0, Lamperti67:0}, except for Lambert et al.~\cite{Lambert11:0} and Sagitov~\cite{Sagitov94:0, Sagitov95:0} little seems to be known for more general CMJ processes. In particular, we study here for the first time a sequence of CMJ processes that converges towards a non-Markovian limit process.

Also, CMJ processes are in one-to-one relation with busy cycles of the Processor-Sharing queue via a random time change sometimes called Lamperti transformation in the branching literature. Thus our results also show that busy cycles of the Processor-Sharing queue converge weakly to excursions that can be explicitly characterized. Leveraging on general results by Lambert and Simatos~\cite{Lambert12:0}, this implies uniqueness (but not existence) of possible accumulation points of the sequence of queue length processes. This constitutes therefore a major step towards determining the scaling limit (called heavy traffic limit in the queueing literature) of the Processor-Sharing queue in the infinite variance, which has been a long-standing open question.

\subsection*{Organization of the paper}

Section~\ref{sec:notation} sets up the framework of the paper, introduces notation, states assumptions enforced throughout the paper and the two main results. Section~\ref{sec:preliminary} is devoted to some preliminary results on L\'evy processes. In Section~\ref{sec:fd-conv} we prove that under general assumptions, finite-dimensional distributions converge while tightness is taken care of under specific technical assumptions in Section~\ref{sec:tightness}. The (long and tedious) appendix is the most technical part of the paper: it proves the tightness of an auxiliary sequence of processes, which is exploited in Section~\ref{sec:tightness} to prove tightness of the processes of interest.

\subsection*{Acknowledgements} F.\ Simatos would like to thank Bert Zwart for initiating this project and pointing out the reference~\cite{Kella05:0}.

\section{Notation and main results} \label{sec:notation}

\subsection{Space $\Dcal$}

Let $\Dcal$ be the set of functions $f: [0,\infty) \to \R$ which are right-continuous and have a left limit denoted by $f(t-)$ for any $t > 0$. If $f$ is increasing we write $f(\infty) = \lim_{x \to +\infty} f(x) \in [0,\infty]$. For $f \in \Dcal$ we define $\underline f \in \Dcal$ the function $f$ reflected above its past infimum through the following formula:
\[ \underline f(t) = f(t) - \min\left(0, \inf_{0 \leq s \leq t} f(s) \right). \]

For $f_n, f \in \Dcal$ we note $f_n \to f$ for convergence in the Skorohod $J_1$ topology (see for instance Billingsley~\cite{Billingsley99:0} or Chapter~VI in Jacod and Shiryaev~\cite{Jacod03:0}). For any function $f \in \Dcal$, we introduce the family of mappings $(T_f(A, k), A \subset \R, k \geq 0)$ defined recursively for any subset $A \subset \R$ by $T_f(A,0) = 0$ and for $k \geq 1$,
\[ T_f(A, k) = \inf\left\{ t > T_f(A, k-1): f(t) \in A \text{ or } f(t-) \in A \right\}. \]
We will write for simplicity $T_f(A) = T_f(A,1)$ and when $A = \{a\}$ is a singleton, we will write $T_f(a, k)$ and $T_f(a)$ in place of $T_f(A, k)$ and $T_f(A)$, respectively. A function $f \in \Dcal$ is called an excursion if $T_f(0) = +\infty$, or if $T_f(0) \in (0,+\infty)$ and $f(t) = 0$ for all $t \geq T_f(0)$. By $\Ecal \subset \Dcal$ we will denote the set of excursions.

We use the canonical notation for c\`adl\`ag stochastic processes. Let $\Omega = \Dcal$ and $X = (X(t), t \geq 0)$ be the coordinate process, defined by $X(t) = X_\omega(t) = \omega(t)$. We will systematically omit the argument of functional operators when they are applied at $X$; $T(A,k)$ for instance stands for the random variable $T_X(A,k)$. Finally, let $X^0 = X(\, \cdot \, \wedge T(0))$ be the process $X$ stopped upon its first visit to $0$, with $X^0 = X$ when $T(0) = +\infty$.

\subsection{Sequence of L\'evy processes}

For $n \geq 1$, fix $\kappa_n \in (0, +\infty)$ and $\Lambda_n$ some positive random variable. For $x \in \R$, let $\P_n^x$ be the law of a L\'evy process started at $x$ with Laplace exponent $\psi_n$ given by
\[ \psi_n(\lambda) = \lambda - \kappa_n \E \left( 1 - e^{-\lambda \Lambda_n} \right), \ \lambda \geq 0. \]
Noting $\pi_n(da) = \kappa_n \P(\Lambda_n \in da)$, one sees that $X$ under $\P_n^0$ is of the form $P_n(t) - t$ with $P_n$ a compound Poisson process with L\'evy measure $\pi_n$. If we denote by $\eta_n$ the largest root of the convex mapping $\psi_n$, then $\psi_n$ is increasing on $[\eta_n,+\infty)$ and its inverse is denoted by $\phi_n$. In particular, $\phi_n(0) = \eta_n$, which equals zero as soon as $\psi_n'(0+)\ge 0$. 

Let $\Lambda_n^*$ be the forward recurrence time of $\Lambda_n$, also called size-biased distribution, which is the random variable with density $\P(\Lambda_n \geq x) / \E(\Lambda_n)$ with respect to Lebesgue measure. Let $\P_n$ and $\P_n^*$ be the measures defined by $\P_n(\,\cdot\,) = \int \P_n^x(\,\cdot\,) \P(\Lambda_n \in dx)$ and $\P_n^*(\,\cdot\,) = \int \P_n^x(\,\cdot\,) \P(\Lambda_n^* \in dx)$. We will use repeatedly the following result, which characterizes the law of the overshoot of~$X$ under $\P_n^0$ when $X$ under $\P_n^0$ does not drift to $+\infty$ (an assumption that will be enforced throughout the paper). 

\begin{lemma}[Theorem~VII.$17$ in Bertoin~\cite{Bertoin96:0}] \label{lemma:overshoot}
	If $X$ under $\P_n^0$ does not drift to $+\infty$, then $X\big( T((0,\infty)) \big)$ under $\P_n^0(\, \cdot \, | \, T((0,\infty)) < +\infty)$ is equal in distribution to $\Lambda_n^*$.
\end{lemma}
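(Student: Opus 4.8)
The statement is precisely Theorem~VII.$17$ in Bertoin~\cite{Bertoin96:0}; the plan is to recover it from fluctuation theory for the spectrally positive process $X$ under $\P_n^0$. \emph{First}, I would reduce the overshoot to a ladder height. Under $\P_n^0$ the process $X=P_n(\cdot)-(\cdot)$ starts at $0$, has only positive jumps and unit negative drift, so $X(t)<0$ for small $t>0$ and $X$ can enter $(0,\infty)$ only through a jump; hence $T((0,\infty))=\inf\{t>0:X(t)>0\}$ is the first time the running supremum $\sup_{s\le t}X(s)$ leaves its initial value $0$, and it does so by a jump of size $X(T((0,\infty)))$. Equivalently, $X(T((0,\infty)))$ is the size of the \emph{first jump of the ascending ladder-height subordinator $H$ of $X$}, and $\{T((0,\infty))<\infty\}$ is the event that $H$ performs at least one jump before being killed.

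\emph{Next}, I would pin down $H$ via the Wiener--Hopf factorization. Since $X$ has no negative jumps, $\inf_{s\le t}X(s)$ decreases continuously, and since $X$ does not drift to $+\infty$ one has $\inf_{t\ge 0}X(t)=-\infty$; choosing $-\inf_{s\le\cdot}X(s)$ as local time at the infimum makes the descending ladder-height process the pure unit drift $t\mapsto t$, with inverse local time $T(-t)$ satisfying $\E_n^0(e^{-qT(-t)})=e^{-t\phi_n(q)}$ (first passage below a level is continuous and governed by $\phi_n=\psi_n^{-1}$). With these matched normalizations the Wiener--Hopf factorization yields that the Laplace exponent $\Phi$ of $H$ equals $\psi_n(\lambda)/\lambda$. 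Writing $(1-e^{-\lambda\Lambda_n})/\lambda=\int_0^{\Lambda_n}e^{-\lambda u}\,du$ and using $\psi_n'(0+)=1-\kappa_n\E(\Lambda_n)\ge 0$ (as $X$ does not drift to $+\infty$), this rewrites as
\[ \Phi(\lambda)=\psi_n'(0+)+\int_0^\infty (1-e^{-\lambda u})\,\kappa_n\P(\Lambda_n\ge u)\,du . \]
Because $\kappa_n\P(\Lambda_n\ge\cdot)$ has finite total mass $\kappa_n\E(\Lambda_n)<\infty$, this identifies $H$ as a compound Poisson subordinator killed at rate $\psi_n'(0+)$ with Lévy measure $\kappa_n\P(\Lambda_n\ge u)\,du$.

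\emph{Finally}, conditionally on $\{T((0,\infty))<\infty\}$, that is, on $H$ performing a jump before being killed, the first jump of $H$ has the normalized Lévy law
\[ \frac{\kappa_n\P(\Lambda_n\ge u)\,du}{\kappa_n\E(\Lambda_n)}=\frac{\P(\Lambda_n\ge u)}{\E(\Lambda_n)}\,du , \]
which is exactly the law of $\Lambda_n^*$; by the first step this jump is $X(T((0,\infty)))$, giving the claim (and, as a by-product, $\P_n^0(T((0,\infty))<\infty)=\kappa_n\E(\Lambda_n)$).

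The only genuinely non-routine point is the ladder-exponent identity $\Phi(\lambda)=\psi_n(\lambda)/\lambda$, i.e.\ the correct bookkeeping of local-time normalizations in the Wiener--Hopf factorization for a spectrally positive process that creeps downward; everything else is an elementary computation. An alternative that avoids fluctuation theory is to argue directly on the embedded jump chain of $X$ by a renewal-reward (inspection-paradox) argument — the jump that $X$ straddles at time $T((0,\infty))$ is size-biased and the undershoot below $0$ is uniform on that jump, so the overshoot has density $\P(\Lambda_n\ge u)/\E(\Lambda_n)$ — but making this rigorous under the conditioning is more cumbersome than quoting the fluctuation-theoretic result.
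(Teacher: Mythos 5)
The paper offers no proof of this lemma; it is stated as a direct quotation of Theorem~VII.17 in Bertoin~\cite{Bertoin96:0} (applied, after a sign flip, to the spectrally negative process $-X$). Your argument correctly reconstructs the fluctuation-theoretic content behind that citation: the reduction of the overshoot $X(T((0,\infty)))$ to the first jump of the ascending ladder-height subordinator $H$, the identification of $H$'s Laplace exponent as $\psi_n(\lambda)/\lambda$ via the Wiener--Hopf factorization (using that the descending ladder height is pure unit drift because $X$ creeps downward, so the normalizations match), and the resulting first-jump law as the normalized L\'evy measure $\kappa_n\P(\Lambda_n\ge u)\,du\big/\big(\kappa_n\E(\Lambda_n)\big)$, which is precisely the density of $\Lambda_n^*$. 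The by-product $\P_n^0(T((0,\infty))<\infty)=\kappa_n\E(\Lambda_n)$ is also correct and consistent with the remark in Section~\ref{sec:notation} that $\kappa_n\E(\Lambda_n)\le 1$ characterizes not drifting to $+\infty$.
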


Let $w_n$ be the scale function of $X$ under $\P_n^0$, which is the only absolutely continuous increasing function with Laplace transform
\[ \int_0^\infty e^{-\lambda x} w_n(x) dx = \frac{1}{\psi_n(\lambda)}, \ \lambda > \phi_n(0). \]

It is well-known, and can be easily computed, that $w_n(0) = \lim_{\lambda \to +\infty} (\lambda / \psi_n(\lambda)) = 1$. Scale functions play a central role with regards to exit problems, see forthcoming formula~\eqref{eq:scale}. We now define the sequence of rescaled processes that will be the main focus of the paper.
\\

Fix from now on some sequence $(s_n)$ of strictly positive real numbers, which increases to infinity, and for $n \geq 1$ define $r_n = n / s_n$. Let $\PX_n^{x}$, $\PX_n$ and $\PX_n^*$ be the laws of $X(nt) / s_n$ under $\P_n^{x s_n}$, $\P_n$ and $\P_n^*$, respectively, and let $\underline \PX_n^0$ be the law of $\underline X$ under $\PX_n^0$. Then $\PX_n^a$ is the law of a L\'evy process started at $a$, with L\'evy exponent $\Psi_n(\lambda) = n \psi_n(\lambda / s_n)$, L\'evy measure $\Pi_n(da) = n \kappa_n \P(\Lambda_n / s_n \in da)$ and scale function $W_n(a) = w_n(a s_n) / r_n$. Set also $\Phi_n(\lambda) = s_n \phi_n(\lambda/n)$ so that $\Phi_n(0)$ is the largest root of $\Psi_n$ and $\Phi_n$ is the inverse of $\Psi_n$ on $[\Phi_n(0),+\infty)$. Throughout the paper, we use $\Rightarrow$ to denote weak convergence.

\begin{conv-assumption}
	In the rest of the paper, we consider $\PX^0$ the law of a spectrally positive L\'evy process with infinite variation and non-zero L\'evy measure started at $0$. It is assumed throughout that $(1)$ for each $n \geq 1$, $X$ under $\P_n^0$ does not drift to $+\infty$ and $(2)$ $\PX_n^0 \Rightarrow \PX^0$.
\end{conv-assumption}

We also define $\Psi$ the L\'evy exponent and $W$ the scale function associated to $\PX^0$, as well as $\underline \PX^0$ the law of $\underline X$ under $\PX^0$. The previous assumptions have two immediate consequences: $(1)$ $X$ under $\PX^0$ does not drift to $+\infty$; in particular, $\Psi$ is increasing and letting $\Phi$ be its inverse, it is not hard to show that $\Phi_n \to \Phi$ and $(2)$ $\kappa_n \E(\Lambda_n) \leq 1$ and $\kappa_n \E(\Lambda_n) \to 1$; in particular $\P_n^0$ is close to the law of a critical L\'evy process.
\\

As alluded to above, scale functions play a central role with regards to exit problems. This comes from the following relation, that holds for any $0 \leq a < b \leq \infty$, see for instance Theorem~VII.$8$ in Bertoin~\cite{Bertoin96:0}: 
\begin{equation} \label{eq:scale}
	\PX_n^0 \left( T(-(b-a)) < T(a) \right) = \frac{W_n(a)}{W_n(b)} \ \text{ and } \ \PX^0 \left( T(-(b-a)) < T(a) \right) = \frac{W(a)}{W(b)}.
\end{equation}

\subsection{Local times and excursion measures}

For a given function $f \in \Dcal$, let $\mu_{t,f}$ for any $t \geq 0$ be its occupation measure defined via
\[ \int_\R \varphi(a) \mu_{t,f}(da) = \int_0^t \varphi(f(s)) ds \]
for every measurable function $\varphi \geq 0$. When $\mu_{t,f}$ is absolutely continuous with respect to Lebesgue measure, we denote by $L_f(\, \cdot \, , t)$ its Radon-Nikodym derivative restricted to $[0,\infty)$, satisfying the so-called occupation density formula
\[
	\int_0^t \varphi(f(s)) ds = \int_0^\infty \varphi(a) L_f(a, t) da
\]
for any $t \geq 0$ and any measurable function $\varphi \geq 0$ with $\varphi(a) = 0$ for $a < 0$. The functional $L_f(\, \cdot \, , \,\cdot\,)$ is known as the local time process of $f$ and is uniquely determined up to sets of zero Lebesgue measure (in the space variable). Let $T_f^L(\zeta)$ for $\zeta \geq 0$ be the first time the amount of local time accumulated at level $0$ exceeds $\zeta$:
\[ T_f^L(\zeta) = \inf \left \{ t \geq 0: L_f(0, t) \geq \zeta \right \}. \]

In the sequel we will be interested in the local time process $L = L_X$ under various measures. Under $\PX_n^0$, $L$ is simply given by
\[ L(a,t) = \frac{1}{r_n} \sum_{0\le s\le t} \indicator{X(s) =a} , \ t, a \geq 0, \ \PX_n^0\text{-almost surely}. \]

Further, it is known that $L$ under $\PX^0$ is almost surely well-defined, see for instance Bertoin~\cite{Bertoin96:0}. We will consider $\Ncal$ the excursion measure of $X$ under $\PX^0$ associated to the local time $(L(0, t), t \geq 0)$, and $\underline \Ncal$ the excursion measure of $\underline X$ under $\PX^0$ normalized by considering the local time at $0$ of $\underline X$ equal to $(\min(0,\inf_{[0,t]} X), t \geq 0)$. Under $\Ncal$ and $\underline \Ncal$ we will consider the process $L^0$ defined as follows:
\[ L^0 = \big(L(a, T(0)), a \geq 0 \big). \]

This process indeed is well-defined on $(0, \infty)$ (its value at~$0$ is zero) under $\Ncal$ and $\underline \Ncal$, since $\Ncal$ and $\underline{\Ncal}$ both have the same semigroup in $(0,\infty)$ as the L\'evy process under $(\PX^a, a > 0)$ killed upon reaching zero. Similarly, the process $L_{\underline X}$ is well-defined under $\PX^0$, because on $[\varepsilon,\infty)$ it can be expressed as a finite sum of the local time processes of independent excursions distributed as $\underline{\Ncal}(\, \cdot \mid \sup X > \varepsilon)$. Recall from the beginning of the paragraph that the normalization at~$0$ of $L_{\underline{X}}$ is slightly different, so that the occupation formula for this local time only holds on $(0,\infty)$.

\subsection{Additional assumption for tightness}

It will be shown that the mere assumption $\PX_n^0 \Rightarrow \PX^0$ implies that the finite-dimensional distributions of $L$ under $\PX_n^0$ converge towards those of $L$ under $\PX^0$ (see forthcoming Theorems~\ref{thm:main-1} and~\ref{thm:main-2} for precise statements). However, it is known that the local time process of a L\'evy process is either jointly continuous, or has a very wild behavior, see Barlow~\cite{Barlow88:0} for a general criterion. In the latter case, for every $t, \varepsilon > 0$ the set $\{ L(a,t), a \in \Q \cap (-\varepsilon, \varepsilon) \}$ is almost surely dense in $[0,\infty)$. When focusing on spectrally positive L\'evy processes with infinite variation, Barlow's criterion, which is in general difficult to determine, takes the following simple form.

\begin{lemma} \label{lemma:cond}
	The local time process of $X$ under $\PX^0$ is jointly continuous if and only if
	\begin{equation} \label{eq:cond}
		\int^\infty \frac{d\lambda}{\Psi(\lambda) \sqrt{\log \lambda}} < +\infty.
	\end{equation}
\end{lemma}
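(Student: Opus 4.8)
The plan is to invoke Barlow's general criterion for joint continuity of local times of L\'evy processes (Barlow~\cite{Barlow88:0}) and show that, for a spectrally positive L\'evy process $X$ under $\PX^0$ with infinite variation and non-zero L\'evy measure, the criterion reduces to~\eqref{eq:cond}. Barlow's criterion is phrased in terms of a metric on the state space built from the resolvent (or equivalently from the characteristic exponent) of $X$, together with a metric-entropy condition on balls in that metric; joint continuity holds if and only if a certain entropy integral converges. So the first step is to recall the precise form of Barlow's condition and identify the relevant ingredients for our $X$.

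Next I would exploit the special structure of spectrally positive L\'evy processes. For such processes the characteristic exponent is, up to the usual analytic continuation, governed by the Laplace exponent $\Psi$, which is convex, increasing on $[\Phi(0),\infty)$, and satisfies $\Psi(\lambda)/\lambda \to \infty$ because $X$ has infinite variation. A key reduction is that the local time at a point $a$ and the local time at $0$ differ only through the excursion structure of $X$ away from $a$; since (as noted in the excerpt) the excursion measures $\Ncal$ and $\underline\Ncal$ have the same semigroup in $(0,\infty)$ as the L\'evy process killed on reaching $0$, the continuity of the two-parameter field $L(a,t)$ is controlled by the one-parameter object $L(0,\cdot)$ together with spatial regularity, which in turn is dictated by how fast $\PX^0(T(-\varepsilon) < T(a))$ decays --- and by~\eqref{eq:scale} this is $W(a)/W(\varepsilon)$, i.e., everything is encoded in the scale function $W$, hence in $\Psi$. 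Concretely, one shows that the Barlow metric $d(a,b)$ is comparable to a function of $|a-b|$ expressible via $\int e^{-\lambda}\,d\lambda/\Psi(\lambda)$-type quantities, and that the entropy of a ball of radius $\delta$ is of order $(\text{something})$; plugging this into Barlow's integral and changing variables turns the convergence of the entropy integral into precisely the convergence of $\int^\infty d\lambda/(\Psi(\lambda)\sqrt{\log\lambda})$. The $\sqrt{\log\lambda}$ factor is the standard signature of a Gaussian-type metric entropy bound (Dudley's integral), reflecting that the local time field, while not Gaussian, has increments with Gaussian-like tails at the relevant scale.

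I would then verify the dichotomy: when~\eqref{eq:cond} fails, Barlow's theorem yields the stated wild behavior (density of $\{L(a,t): a \in \Q\cap(-\varepsilon,\varepsilon)\}$ in $[0,\infty)$), and when it holds, joint continuity follows. It is worth checking the edge cases --- that infinite variation guarantees $\int^\infty d\lambda/\Psi(\lambda) < \infty$ so that $L$ is at least well-defined and the only issue is the finer $\sqrt{\log}$ correction, and that the non-zero L\'evy measure is what prevents $\Psi$ from being purely quadratic (in which case $X$ is Brownian motion and~\eqref{eq:cond} holds trivially, recovering Trotter's theorem).

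The main obstacle I expect is the faithful translation of Barlow's abstract metric-entropy criterion into the concrete analytic condition~\eqref{eq:cond}: Barlow's formulation involves a somewhat delicate intrinsic metric and a two-sided entropy estimate, and carrying out the comparison $d(a,b) \asymp (\text{explicit function of }|a-b|\text{ and }\Psi)$ with matching upper and lower bounds --- so that one gets an \emph{if and only if} rather than just one implication --- is the technical heart of the argument. Fortunately, for spectrally one-sided processes the scale-function representation~\eqref{eq:scale} makes the relevant hitting probabilities completely explicit, which is what allows the metric to be pinned down precisely; the remaining work is a change of variables in the entropy integral, which is routine once the metric is identified.
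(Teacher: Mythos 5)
Your high-level strategy matches the paper's: start from Barlow's criterion, use the scale function to make the abstract metric-entropy condition concrete for a spectrally one-sided process, and then change variables to arrive at a condition on $\Psi$. However, you identify the translation of Barlow's criterion into an explicit analytic condition as ``the technical heart'' and then leave it unresolved, whereas this is precisely where the paper does its actual work; a plan that postpones the core step is not yet a proof.

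The key idea you are missing is that Bertoin's book has already packaged the specialization of Barlow's theorem to spectrally one-sided L\'evy processes. The paper's proof proceeds in three short, concrete steps. First, it shows (in Lemma~\ref{lemma:conv-hitting-times}) that $\EX^0\!\left(L(0,T(-a))\right)=W(a)$; this identity is what makes Lemma~V.$11$ and Theorem~V.$15$ of Bertoin~\cite{Bertoin96:0} applicable and yields the intermediate criterion
\[
\int_0 \sqrt{\log\!\left(1/W^{-1}(x)\right)}\,dx<+\infty,
\]
stated directly in terms of the inverse scale function --- no Dudley-type entropy computation is needed on your side, because Bertoin has done it. Second, the change of variables $y=W^{-1}(x)$ followed by integration by parts converts this into $\int_0 W(u)\,du/\!\left(u\sqrt{\log(1/u)}\right)<+\infty$. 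Third, the two-sided bound $c/(u\Psi(1/u))\le W(u)\le C/(u\Psi(1/u))$ (Proposition~III.$1$ in~\cite{Bertoin96:0}), which is what preserves the \emph{if and only if}, plus the substitution $\lambda=1/u$, delivers~\eqref{eq:cond}. Your proposal gestures at all three ingredients (scale functions make hitting probabilities explicit; a change of variables; a two-sided estimate), but without naming Bertoin's $W^{-1}$-criterion or the $W\asymp 1/(\,\cdot\,\Psi(1/\cdot))$ bound, you would be forced to rederive Barlow's metric-entropy estimates from scratch, which is far from ``routine.'' A secondary point: the last sentence of your proposal about $\Psi$ being ``purely quadratic'' when the L\'evy measure vanishes is irrelevant here, since the standing assumption of the paper is that the L\'evy measure is non-zero, so that case does not arise.
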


\begin{proof}
	It will be proved in Lemma~\ref{lemma:conv-hitting-times} that $\EX^0 (L(0, T(-a)) = W(a)$ for any $a > 0$. Then Lemma~V.$11$ and Theorem~V.$15$ in Bertoin~\cite{Bertoin96:0} ensure that $L$ under $\PX^0$ is jointly continuous if and only if
	\[ \int_0 \sqrt{\log(1/W^{-1}(x))} dx < +\infty. \]
	
	Using the change of variables $y = W^{-1}(x)$ and integrating by parts, it can be shown that
	\[ \int_0^{W(1)} \sqrt{\log (1/W^{-1}(x))} dx = \frac{1}{2} \int_0^1 \frac{W(u)}{u \sqrt{\log(1/u)}} du. \]
	
	Since there exist two finite constants $0 < c \leq C$ such that $c / (u \Psi(1/u)) \leq W(u) \leq C / (u \Psi(1/u))$ for every $u \geq 0$ (see Proposition~III.$1$ or the proof of Proposition~VII.$10$ in Bertoin~\cite{Bertoin96:0}), we obtain
	\[ \int_0 \sqrt{\log (1/W^{-1}(x))} dx < +\infty \Longleftrightarrow \int_0 \frac{du}{u^2 \Psi(1/u) \sqrt{\log(1/u)}} < +\infty \]
	and the change of variables $\lambda = 1/u$ in the last integral therefore gives the result.
\end{proof}

In particular, when~\eqref{eq:cond} fails, $L$ under $\PX^0$ is not even c\`adl\`ag (in the spatial variable) and so cannot be the weak limit of any sequence, when using Skorohod's topology. It is tempting to think that~\eqref{eq:cond} is enough for $L$ under $\PX_n^0$ to converge weakly towards $L$ under $\PX^0$, and we leave this question open for future research. In the present paper we will prove tightness (and hence weak convergence) under the following assumption.

\begin{tightness-assumption}
	In the rest of the paper we fix some $1 < \alpha < 2$ and denote by $\Lambda$ the random variable with tail distribution function $\P(\Lambda \geq s) = (1+s)^{-\alpha}$. We will say that the tightness assumption holds if for $n \geq 1$ we have $s_n = n^{1/\alpha}$ and $\Lambda_n = \Lambda$.
\end{tightness-assumption}

Note that under this assumption, $\PX^0$ is the law of a L\'evy process of the form $Y(t) - dt$ with $Y$ a stable L\'evy process with index $\alpha$ and $d \geq 0$. It is then not difficult to check that~\eqref{eq:cond} is satisfied and so our limiting processes will be continuous. However, we will show weak convergence without the \emph{a priori} knowledge given to us by Lemma~\ref{lemma:cond} that the limiting process is continuous. But since our pre-limit processes make deterministic jumps of size $1/r_n \to 0$, it follows from our approach that $(L(a, T), a \geq 0)$ is continuous for some specific random times $T$, thus proving directly (without resorting to Barlow's more general and complete result) that the local time process of $X$ is continuous.

\subsection{Main results}

The following two theorems are the main results of the paper.

\begin{theorem} \label{thm:main-1}
	For any $a_0 > 0$, the two sequences of processes $L^0$ under $\PX_n^*(\, \cdot \, | \, T(a_0) < T(0))$ and $\PX_n(\, \cdot \, | \, T(a_0) < T(0))$ converge in the sense of finite-dimensional distributions to $L^0$ under $\Ncal(\, \cdot \, | \, T(a_0) < T(0))$ and $\underline \Ncal(\, \cdot \, | \, T(a_0) < T(0))$, respectively. If in addition the tightness assumption holds, then both convergences hold in the sense of weak convergence.
\end{theorem}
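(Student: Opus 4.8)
The plan is to prove the finite-dimensional convergence first and then upgrade to weak convergence under the tightness assumption. For the finite-dimensional statement, the key observation is that $L^0 = (L(a,T(0)), a\ge 0)$ is, under both $\PX_n^\bullet$ and $\Ncal$/$\underline\Ncal$, a Markov process in the spatial variable $a$: by the excursion theory description quoted after the definition of $L^0$, on $(0,\infty)$ the law of $(L(a,\cdot),a>0)$ coincides with the semigroup of the Lévy process $X$ under $(\PX^a,a>0)$ killed at $0$. More concretely, the Ray--Knight type description says that, started from a value $\ell$ at level $a_0$, the process $(L(a_0+a,T(0)),a\ge 0)$ is a continuous-state branching-like process; for the compound Poisson approximations it is a rescaled Galton--Watson-type process, and for the limit it is the CSBP associated with $\Psi$. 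So I would: (i) compute the one-dimensional marginals of $L^0$ at a single level $a$, using the exit formula~\eqref{eq:scale} and the identity $\EX^0(L(0,T(-a)))=W(a)$ (proved later as Lemma~\ref{lemma:conv-hitting-times}), together with Lemma~\ref{lemma:overshoot} to handle the starting measures $\PX_n^*$ and $\PX_n$ (the size-biasing accounts for the overshoot at the first entrance into $(0,\infty)$); (ii) identify the spatial transition semigroup of $L^0$ in terms of the Laplace exponents $\Psi_n,\Psi$ and scale functions $W_n,W$; (iii) conclude finite-dimensional convergence from $\PX_n^0\Rightarrow\PX^0$ via the convergences $W_n\to W$, $\Psi_n\to\Psi$, $\Phi_n\to\Phi$ (the last two being noted in the excerpt), using the Markov property in space to reduce joint laws at $0<a_1<\cdots<a_k$ to products of transition kernels. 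The conditioning on $\{T(a_0)<T(0)\}$ is handled by the scale-function formula~\eqref{eq:scale}, whose right-hand side $W_n(a_0)/W_n(b)$ and $W(a_0)/W(b)$ converge, so the conditioned laws converge as well.

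For the weak convergence under the tightness assumption, I would argue that once the finite-dimensional distributions converge, it remains to prove tightness of $(L^0(a),a\in[0,M])$ in the $J_1$ topology on $\Dcal$ for each fixed $M$, for the conditioned laws. Since the prelimit process $a\mapsto L(a,T(0))$ is (a rescaling of) a nonnegative branching-type Markov chain with deterministic downward/upward steps of size $1/r_n\to 0$, tightness should follow from a modulus-of-continuity estimate; the paper signals that this is exactly the content of the appendix, where tightness of an auxiliary sequence of processes is established. So the bulk of the work here is to invoke that appendix result: phrase $(L^0(a),a\ge 0)$ under $\PX_n^*(\,\cdot\mid T(a_0)<T(0))$ as a functional of the auxiliary sequence, transfer its tightness, and combine with the already-proven finite-dimensional convergence and with the identification (from Lemma~\ref{lemma:cond}, since~\eqref{eq:cond} holds for the stable-plus-drift limit) that the limit is continuous, so that finite-dimensional plus tightness yields weak convergence and there is no loss at the (countable, hence negligible) set of discontinuity times.

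The main obstacle, as the excerpt itself emphasizes, is tightness: controlling the small spatial oscillations of $L_n(\cdot,\tau_n)$ in the non-lattice, infinite-variance regime, where no lattice-mesh cutoff is available and no embedding of the prelimit into the limit is possible because we work with a genuine triangular array $(X_n)$ rather than one scaled process. Everything there is deferred to the appendix; for the statement at hand the technical heart is ensuring the auxiliary tightness result applies to the \emph{conditioned} measures $\PX_n^*(\,\cdot\mid T(a_0)<T(0))$ and $\PX_n(\,\cdot\mid T(a_0)<T(0))$ and transfers to the excursion measures $\Ncal(\,\cdot\mid T(a_0)<T(0))$ and $\underline\Ncal(\,\cdot\mid T(a_0)<T(0))$ in the limit. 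A secondary, more routine difficulty is the bookkeeping around the starting measure: verifying that $\PX_n^*$ (respectively $\PX_n$) conditioned on reaching $a_0$ before $0$ has the right limit, for which Lemma~\ref{lemma:overshoot} and the convergence of scale functions are the essential ingredients, and checking that the slightly different normalization of $L_{\underline X}$ at $0$ does not affect the process $L^0$, which only involves levels $a\in(0,\infty)$.
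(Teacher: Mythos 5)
There is a genuine gap, and it is the central step. You claim that $L^0=(L(a,T(0)),a\ge 0)$ is a Markov process in the spatial variable $a$, and you plan to prove finite-dimensional convergence by identifying its ``spatial transition semigroup'' (Ray--Knight/CSBP style) and passing to the limit in the kernels. This fails: for a spectrally positive L\'evy process with jumps, the local time process in the spatial variable is \emph{not} Markovian. The paper says so explicitly in the remark following Theorem~\ref{thm:main-2}, citing Eisenbaum and Kaspi~\cite{Eisenbaum93:0}: since $X$ under $\PX^0$ has discontinuous paths, the limiting local time processes in Theorems~\ref{thm:main-1} and~\ref{thm:main-2} are not Markov. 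The sentence in the paper you are leaning on (``$\Ncal$ and $\underline\Ncal$ both have the same semigroup in $(0,\infty)$ as the L\'evy process under $(\PX^a,a>0)$ killed upon reaching zero'') is a statement about the Markov property of the excursion $X$ in the \emph{time} variable, needed only to make sense of $L^0$ under $\Ncal$; it does not transfer to a Markov property of $a\mapsto L^0(a)$. Intuitively, an upward jump of $X$ from below $a$ to above $a'$ couples the local times at all intermediate levels in a non-local way, destroying the spatial Markov property. Since your steps (ii) and (iii) rest entirely on writing joint laws at $0<a_1<\dots<a_k$ as products of spatial transition kernels, the finite-dimensional argument as proposed does not go through. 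What the paper does instead (Proposition~\ref{prop:conv-fd-shifted-process}) is to fix a finite set $A\subset[a_0,\infty)$, introduce the Markov chain $M$ of successive \emph{distinct levels in $A\cup\{0\}$} visited by $X$ before $T(0)$, and express $(L^0(a),a\in A)$ as a sum, over visits of $M$, of independent geometric (prelimit) or exponential (limit) contributions; convergence then follows from convergence of hitting probabilities (Lemmas~\ref{lemma:continuity-hitting-times-Levy}, \ref{lemma:conv-hitting-times}), not from any spatial Markov property of $L^0$ itself.

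A second, smaller, issue is that the tightness step is not just ``invoke the appendix result''. The appendix (Proposition~\ref{prop:tightness-interm}) gives C-tightness of the \emph{shifted} process $L^0(a_0+\,\cdot\,)$ under $\PX_n^*(\,\cdot\mid T(a_0)<T(0))$, whereas Theorem~\ref{thm:main-1} requires tightness of the unshifted process $L^0$ on $[0,\infty)$, which includes the levels between $0$ and $a_0$. Getting from one to the other is the content of Proposition~\ref{prop:tightness-one}: it requires passing through Proposition~\ref{prop:tightness-1}, working under the measure $\PX_n^0(\,\cdot\mid T^L(\zeta_n)<+\infty,\,K_n\ge 1)$ with $\zeta_n\to 1$, and decomposing $L(\,\cdot\,,T^L(\zeta_n))$ into the excursion hitting $a_0$ plus the remaining excursions, each piece being shown C-tight and subtracted. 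This non-trivial path decomposition, together with the absolute-continuity argument relating $\PX_n$ to $\PX_n^*$ via $-f'/f$, is what your proposal elides. Your observation that the jumps of the prelimit local time have deterministic size $1/r_n\to 0$ (so any subsequential limit is continuous) is, on the other hand, correct and is exactly the remark the paper makes at the start of Appendix~\ref{sec:proof}.
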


When reading the following theorem it is useful to have in mind that $r_n \to +\infty$, since $r_n = 1/W_n(0)$ and it will be proved in Lemma~\ref{lemma:conv-W} that $W_n(0) \to W(0) = 0$.

\begin{theorem} \label{thm:main-2}
	Let $\zeta > 0$ and $(z_n)$ be any integer sequence such that $\zeta_n = z_n / r_n \to \zeta$. Then the sequence of processes $L(\, \cdot \,, T^L(\zeta_n))$ under $\PX_n^0(\, \cdot \, | \, T^L(\zeta_n) < +\infty)$ converges in the sense of finite-dimensional distributions to $L(\, \cdot \,, T^L(\zeta))$ under $\PX^0(\, \cdot \, | \, T^L(\zeta) < +\infty)$. If in addition the tightness assumption holds, then the convergence holds in the sense of weak convergence.
\end{theorem}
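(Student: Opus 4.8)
The plan is to prove Theorem~\ref{thm:main-2} by reducing it, via the strong Markov property applied at the successive return times of $X$ to $0$, to a statement about excursions, and then invoking Theorem~\ref{thm:main-1}. Under $\PX_n^0$, the process $X$ makes deterministic jumps of size $1/r_n$ at $0$, so the local time at~$0$ increases by $1/r_n$ each time $X$ returns to $0$; hence $T^L(\zeta_n)$ is exactly the $z_n$-th return time of $X$ to~$0$, and $\{T^L(\zeta_n) < +\infty\}$ is the event that $X$ returns to~$0$ at least $z_n$ times. Between consecutive returns to~$0$, $X$ performs i.i.d.\ excursions (away from~$0$, but starting with the fixed overshoot), and on each such excursion~$\e_i$ the contribution to $L(a, T^L(\zeta_n))$ is $L_{\e_i}(a, T_{\e_i}(0))$, i.e.\ a copy of the process $L^0$. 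Thus $L(\cdot, T^L(\zeta_n))$ under $\PX_n^0(\cdot \mid T^L(\zeta_n) < +\infty)$ is a sum of $z_n$ i.i.d.\ copies of $L^0$ under $\PX_n^{\Lambda_n^*}(\cdot\mid \text{the excursion is "counted"})$ — here I will need to be careful about the overshoot/starting-point law, which by Lemma~\ref{lemma:overshoot} is $\Lambda_n^*$, matching the $\PX_n^*$ appearing in Theorem~\ref{thm:main-1}. On the limiting side, by excursion theory $L(\cdot, T^L(\zeta))$ under $\PX^0$ is a Poisson superposition: $L(a, T^L(\zeta)) = \sum_i L_{\e_i}^0(a)$ where the $\e_i$ range over the atoms of a Poisson process on the excursion space with intensity $\zeta\, \Ncal$, and the conditioning on $T^L(\zeta) < +\infty$ selects the event that no excursion has infinite length (equivalently $T(-\infty)$ is not hit, using that $X$ does not drift to $+\infty$).

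**Key steps, in order.**
First I would make the decomposition above precise: for the pre-limit, write, on the event $T^L(\zeta_n) < +\infty$,
\[
  L(a, T^L(\zeta_n)) \;=\; \sum_{i=1}^{z_n} L_{\e_i^{(n)}}^0(a),
\]
with $\e_i^{(n)}$ i.i.d.\ under $\PX_n^*(\cdot \mid T(0) < +\infty)$ (returning to~$0$), and record that by Lemma~\ref{lemma:overshoot} the first value is $\Lambda_n^*$; for the limit, write $L(a, T^L(\zeta)) = \sum_i L_{\e_i}^0(a)$ for a Poisson point process of excursions with intensity $\zeta \Ncal$ restricted to finite-length excursions. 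Second, establish convergence of a single building block: I would show that $L^0$ under $\PX_n^*(\cdot \mid T(0) < +\infty)$ converges f.d.d.\ (and, under the tightness assumption, weakly) to $L^0$ under $\Ncal(\cdot \mid T(0) < +\infty)$ — this should follow from Theorem~\ref{thm:main-1} by integrating over the starting level $a_0$, or more directly by a minor variant of its proof, since Theorem~\ref{thm:main-1} already handles the conditioned excursion measures. Third, I would handle the Poissonization: the number $z_n \sim \zeta r_n \to \infty$ of i.i.d.\ excursions, each of which "survives" (hits level $\varepsilon$, say) with probability of order $1/r_n$, so the excursions reaching height $\varepsilon$ form, in the limit, a Poisson process with the correct intensity; on a fixed spatial interval $[\varepsilon, \infty)$ only finitely many excursions contribute and a standard "law of rare events" / convergence-of-Poisson-approximations argument gives the f.d.d.\ limit. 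Fourth, I would let $\varepsilon \downarrow 0$: the total contribution near~$0$ is negligible, which on the limit side is the statement that $L(a, T^L(\zeta)) \to 0$ as $a \downarrow 0$ and on the pre-limit side follows from a uniform-in-$n$ estimate on $\sup_{a \le \varepsilon} L(a, T^L(\zeta_n))$ (controlled via the scale function $W_n$ and formula~\eqref{eq:scale}, using $W_n \to W$ and $W(0)=0$). Finally, under the tightness assumption, tightness of $L(\cdot, T^L(\zeta_n))$ follows from tightness of each building block (Theorem~\ref{thm:main-1}) together with a tightness criterion for finite-but-random sums; again the only delicate point is uniform control of the oscillations near~$0$, for which the appendix's tightness results for the auxiliary sequence should apply.

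**Main obstacle.**
The hardest part will be the passage $\varepsilon \downarrow 0$ combined with the Poissonization, i.e.\ showing that the sum over the (many, each tiny) pre-limit excursions that do \emph{not} reach height $\varepsilon$ contributes negligibly to $L(a, T^L(\zeta_n))$ for $a$ bounded away from~$0$, \emph{uniformly in $n$}, and simultaneously that the number and shapes of the surviving excursions converge to the right Poisson law. The subtlety is that conditioning on $T^L(\zeta_n) < +\infty$ (survival of all $z_n$ excursions, i.e.\ none drifting down to $-\infty$) interacts with this truncation, so one must check that this conditioning is asymptotically equivalent to the conditioning $T^L(\zeta) < +\infty$ on the limit side; this should come out of the near-criticality consequence $\kappa_n \E(\Lambda_n) \to 1$ noted after the convergence assumption, but making it rigorous requires care. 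A secondary technical point, as flagged in the paper, is that we do not know a priori that the limit is càdlàg in space, so the weak-convergence half must be obtained through the tightness machinery of Section~\ref{sec:tightness} and the appendix rather than from any soft continuity argument, and one must verify those hypotheses apply to the random-time-changed process $L(\cdot, T^L(\zeta_n))$.
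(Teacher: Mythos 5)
Your decomposition into i.i.d.\ excursion contributions starting from an overshoot $\Lambda_n^*$, and the plan to Poissonize and then invoke the single-excursion convergence, match the paper's strategy. But the ``main obstacle'' you flag reveals a misconception that would derail the proof. When you fix a finite set $A\subset(0,\infty)$ and set $a_0=\min A>0$, an excursion that does not reach level $a_0$ contributes \emph{exactly zero} --- not ``negligibly'' --- to the vector $(L(a,T^L(\zeta_n)))_{a\in A}$. There is therefore no $\varepsilon\downarrow 0$ passage to perform and no uniform-in-$n$ control of small excursions is needed for the finite-dimensional part. Moreover, your plan to first prove convergence of the \emph{unconditioned} building block $L^0$ under $\PX_n^*(\,\cdot\mid T(0)<+\infty)$ and then Poissonize over all $z_n\to\infty$ excursions cannot close: that unconditioned law converges to the identically zero process on $[a_0,\infty)$ (a typical excursion has height $O(1/s_n)$), so the limit of a single summand is degenerate and a naive superposition of $z_n$ of them gives nothing. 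The correct scaling only appears once you condition on reaching $a_0$.

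The paper's argument (Corollary~\ref{cor:fd-conv-large-initial-condition}) is exactly the sharpened version of your plan: for $A$ with $a_0=\min A>0$, under $\PX_n^0(\,\cdot\mid T^L(\zeta_n)<+\infty)$ one has
\[
(L(a,T^L(\zeta_n)))_{a\in A}\;\stackrel{d}{=}\;\Bigl(\sum_{k=1}^{K_n} L^{a_0}_{n,k}(a)\Bigr)_{a\in A},
\]
with $(L^{a_0}_{n,k})_k$ i.i.d.\ copies of $L^0$ under $\PX_n^*(\,\cdot\mid T(a_0)<T(0))$ (using Lemma~\ref{lemma:overshoot}) and $K_n$ an independent binomial$\bigl(z_n,\PX_n^0(T(a_0)<T(0))\bigr)$. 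By Lemma~\ref{lemma:conv-hitting-times}, $r_n\PX_n^0(T(a_0)<T(0))\to\Ncal(T(a_0)<T(0))$, so $K_n\Rightarrow\mathrm{Poisson}\bigl(\zeta\Ncal(T(a_0)<T(0))\bigr)$, and each summand converges by Proposition~\ref{prop:conv-fd-shifted-process}; the limit agrees with the Poissonian description of $L(\,\cdot\,,T^L(\zeta))$ under $\PX^0(\,\cdot\mid T^L(\zeta)<+\infty)$ restricted to $a\geq a_0$.

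For tightness (Proposition~\ref{prop:tightness-1}), the paper again does not estimate $\sup_{a\le\varepsilon}L(a,T^L(\zeta_n))$. It fixes $a_0>0$ and exploits translation invariance: $L(\,\cdot\,,T^L(\zeta_n))$ under the conditional law is equal in distribution to the local-time profile at levels $a_0+\cdot$ accumulated between the first and the $z_n$-th visit to $a_0$; this is exhibited as a difference of two C-tight sequences built from Proposition~\ref{prop:tightness-interm} (C-tightness of $L^0(a_0+\cdot)$ under $\PX_n^*(\,\cdot\mid T(a_0)<T(0))$) via Lemma~\ref{lemma:tightness-sum}, so the singular behaviour of $L$ near level~$0$ never needs to be confronted directly. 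Your plan to verify the appendix's hypotheses for the time-changed process directly, with explicit control of oscillations near~$0$, would be much harder and is not what the paper does.
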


Note that since $X$ under $\PX^0$ is assumed to have discontinuous paths, Theorem~$1.1$ in Eisenbaum and Kaspi~\cite{Eisenbaum93:0} guarantees that the limiting process in Theorem~\ref{thm:main-2} is not Markovian. Decomposing the path of $X$ into its excursions away from $0$, it could be also be shown that $L^0$ under $\Ncal$ and $\underline \Ncal$ does not satisfy the Markov property.

As a last remark, we stress that it is possible to extend the proof of Theorem~\ref{thm:main-2} to get convergence of the processes $(L(a,T^L(\zeta_n)), a \in \R)$ instead of $(L(a,T^L(\zeta_n)), a \geq 0)$. Both the proofs of convergence of the finite-dimensional distributions and of tightness can be adapted to this case with no major changes, though at the expense of more computations.

\section{Preliminary results} \label{sec:preliminary}

We prove in this section preliminary results that will be used several times in the paper. We first need results concerning the continuity of hitting times, cf.\ Jacod and Shiryaev~\cite[Proposition VI.2.11]{Jacod03:0} for closely related results.

\begin{lemma} \label{lemma:continuity-hitting-times}
	Let $f_n, f \in \Dcal$ such that $f_n \to f$ and $A$ be any finite subset of $\R$. Assume that $f_n$ for each $n \geq 1$ has no negative jumps and that:
	\begin{enumerate}
		\item \label{i} $f(0) \notin A$;
		\item $T_f(A)$ is finite;
		\item $f$ has no negative jumps;
		\item \label{iv} for any $a \in A$ and $\varepsilon > 0$, $\sup_{[T_f(a), T_f(a) + \varepsilon]} f > a$ and $\inf_{[T_f(a), T_f(a) + \varepsilon]} f < a$;
	\end{enumerate}
	Then $T_{f_n}(A) \to T_{f}(A)$. In particular, if in addition to~\ref{i}--\ref{iv} above $f$ also satisfies the following condition:
	\begin{enumerate}\setcounter{enumi}{4}
		\item \label{v} $f$ is continuous at $T_f(a)$ for each $a \in A$;
	\end{enumerate}
	then $f_n(T_{f_n}(A)) \to f(T_{f}(A))$, with $f_n(T_{f_n}(A))$ being well-defined for $n$ large enough.
\end{lemma}

\begin{proof}
	In the rest of the proof note $A = \{a_k, 1 \leq k \leq K\}$, $T = T_f(A)$ and $T_n = T_{f_n}(A)$. Assume that the result holds for $K = 1$: then for each $a \in A$ one has $T_{f_n}(a) \to T_{f}(a)$. Since $T = \min_{a \in A} T_f(a)$ and $T_n = \min_{a \in A} T_{f_n}(a)$, one deduces that $T_n \to T$. Thus the result only needs to be proved for $K = 1$, which we assume from now on. We then note for simplicity $a = a_1$.

	We first prove that $\liminf_n T_n \geq T$. Let $\varepsilon > 0$ and $t \in (T - \varepsilon, T)$ be such that $f$ is continuous at $t$: then
	\[ \lim_{n \to +\infty} \inf_{[0,t]} |f_n-a| = \inf_{[0,t]} |f-a|. \]

	Since $f(0) \not = a$ and $t < T$, we get $\inf_{[0,t]} |f-a| > 0$ and so $\inf_{[0,t]} |f_n-a| > 0$ for $n$ large enough. For those $n$ we therefore have $T_n \geq t$ and so $\liminf_n T_n \geq t$. Since $t \geq T - \varepsilon$ and $\varepsilon$ is arbitrary, letting $\varepsilon \to 0$ gives $\liminf_n T_n \geq T$.

	We now prove that $\limsup_n T_n \leq T$. Fix $\varepsilon > 0$ and let $t_2 \in (T, T+\varepsilon)$ be a continuity point of $f$ such that $f(t_2) < a$ and $t_1 \in (T, t_2)$ another continuity point of $f$ such that $f(t_1) > a$. Since $t_1$ and $t_2$ are continuity points for $f$, we have $f_n(t_i) \to f(t_i)$ for $i = 1, 2$. Since $f(t_1) > a > f(t_2)$, there exists $n_0 \geq 0$ such that $f_n(t_1) > a > f_n(t_2)$ for all $n \geq n_0$. Since $f_n$ has no negative jumps, for those $n$ there necessarily exists $t_3 \in (t_1, t_2)$ such that $f_n(t_3) = a$ which implies $T_n \leq t_3$. Since $t_3 \leq t_2 \leq T+\varepsilon$ we obtain $T_n \leq T+\varepsilon$ for all $n \geq n_0$ and in particular $\limsup_n T_n \leq T+\varepsilon$. Letting $\varepsilon \to 0$ achieves the proof.
\end{proof}

\begin{lemma}\label{lemma:continuity-hitting-times-Levy}
	For any finite subset $A \subset \R \setminus \{0\}$, $(T(A), X(T(A)))$ under $\PX_n^0(\, \cdot \mid T(A) < +\infty)$ converges weakly to $(T(A), X(T(A)))$ under $\PX^0(\, \cdot \mid T(A) < +\infty)$. If in addition $\min A > 0$, then $(T(A), X(T(A)))$ under $\underline \PX_n^0$ converges weakly to $(T(A), X(T(A)))$ under $\underline \PX^0$. 
\end{lemma}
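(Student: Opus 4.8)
The plan is to deduce this lemma from Lemma~\ref{lemma:continuity-hitting-times} together with the Skorohod representation theorem, treating the two assertions (under $\PX_n^0(\,\cdot\mid T(A)<+\infty)$ and under $\underline\PX_n^0$) in parallel. First I would observe that since $\PX_n^0\Rightarrow\PX^0$ and the reflection map $f\mapsto\underline f$ is continuous for the $J_1$ topology at continuous limit points, we also have $\underline\PX_n^0\Rightarrow\underline\PX^0$; so in both cases we have a sequence of laws on $\Dcal$ converging weakly to a limit law. Using Skorohod's representation theorem, realize these on a common probability space so that $f_n\to f$ almost surely in $J_1$, where $f$ is distributed as $X$ under $\PX^0$ (resp.\ $\underline X$ under $\PX^0$). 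The goal is then to check that the hypotheses~\ref{i}--\ref{v} of Lemma~\ref{lemma:continuity-hitting-times} hold almost surely on the event $\{T_f(A)<+\infty\}$, and that the conditioning events themselves converge nicely.

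The key points to verify almost surely for the limit process $f$ are: (a) $f(0)=0\notin A$, which holds by hypothesis since $0\notin A$; (b) $f$ has no negative jumps, which holds since $\PX^0$ is spectrally positive, and each $f_n$ has no negative jumps since it is (a rescaling of) a compound Poisson process minus a drift — for the reflected process $\underline f$ one uses that reflection above the past infimum does not create negative jumps; (c) conditions~\ref{iv} and~\ref{v}, namely that at each hitting time $T_f(a)$ the process immediately goes both strictly above and strictly below level $a$, and is continuous there. Condition~\ref{v} is the delicate one: a spectrally positive L\'evy process of infinite variation creeps downward but not upward, so it hits any level $a>0$ (or any level at all, from below) continuously almost surely — this is where I would invoke the regularity/creeping properties of spectrally positive L\'evy processes (e.g.\ Bertoin~\cite{Bertoin96:0}, using that $0$ is regular for $(-\infty,0)$ and that the process does not drift to $+\infty$). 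Condition~\ref{iv} follows from $0$ being regular for both half-lines for the process started afresh at level $a$ by the strong Markov property, again using infinite variation. For the reflected process $\underline X$ one must separately handle the level $0$, but since $\min A>0$ in the second assertion, $0\notin A$ and no extra care is needed there; the hitting times of levels $a>0$ by $\underline X$ inherit the same creeping/regularity properties from the unreflected excursions.

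With~\ref{i}--\ref{v} in force on $\{T_f(A)<+\infty\}$, Lemma~\ref{lemma:continuity-hitting-times} gives $T_{f_n}(A)\to T_f(A)$ and $f_n(T_{f_n}(A))\to f(T_f(A))$ almost surely on that event. The remaining issue is the conditioning: I need $\PX_n^0(T(A)<+\infty)\to\PX^0(T(A)<+\infty)$ and that the event $\{T_f(A)<+\infty\}$ is a continuity point in the appropriate sense. For levels $a>0$ this probability equals $W_n(0)/W_n(a)\to W(0)/W(a)$ by~\eqref{eq:scale} (after taking the finite subset apart, or using $T(A)=\min_a T_f(a)$ and inclusion--exclusion); for $A$ containing negative levels one uses that such levels are hit with probability one (the process does not drift to $+\infty$) so the conditioning is vacuous there, and in general $\{T_f(A)<+\infty\}=\bigcup_{a\in A}\{T_f(a)<+\infty\}$ reduces to the case of a single negative level or a single positive level. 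Finally, to pass from almost-sure convergence on the unconditioned space to weak convergence under the conditioned laws, I would write, for bounded continuous $g$,
\[
 \EX_n^0\big(g(T(A),X(T(A)))\mid T(A)<+\infty\big)=\frac{\E\big(g(T_{f_n}(A),f_n(T_{f_n}(A)))\indicator{T_{f_n}(A)<+\infty}\big)}{\P(T_{f_n}(A)<+\infty)},
\]
and argue that the numerator converges by dominated convergence — the only subtlety being on the boundary event $\{T_f(A)=+\infty,\ T_{f_n}(A)<+\infty \text{ i.o.}\}$, which I would show has probability zero using $\liminf_n T_{f_n}(A)\ge T_f(A)$ from the first half of the proof of Lemma~\ref{lemma:continuity-hitting-times} (valid without the extra regularity hypotheses), so that $T_{f_n}(A)\to+\infty$ on $\{T_f(A)=+\infty\}$.

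The main obstacle I anticipate is verifying condition~\ref{v} — the almost-sure upward-continuity (non-creeping-upward in a way that matters, i.e.\ that the process reaches level $a$ by a jump with probability zero) — cleanly for the limit process and, where needed, for the reflected process at positive levels; this rests on the infinite-variation, spectrally-positive structure and on $0$ being regular for $(-\infty,0)$, and is precisely the place where the ``infinite variation'' hypothesis in the Convergence of L\'evy Processes assumption is used.
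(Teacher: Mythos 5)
Your overall strategy — invoke Lemma~\ref{lemma:continuity-hitting-times}, realize the weak convergence $\PX_n^0\Rightarrow\PX^0$ (resp.\ $\underline\PX_n^0\Rightarrow\underline\PX^0$) a.s.\ via Skorohod, and verify that conditions~\ref{i}--\ref{v} hold a.s.\ for the limit path — is exactly the paper's route (the paper phrases the last step as $\PX^0(H_A)=1$ plus the continuous mapping theorem, which amounts to the same thing), and your discussion of why~\ref{iv}--\ref{v} hold for a spectrally positive infinite-variation process is sound. Where your proposal genuinely breaks down is in the treatment of the conditioning when $\min A>0$, and there are two concrete problems.

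First, the formula is wrong: by~\eqref{eq:scale} with $b=\infty$ one gets $\PX_n^0(\sup X<a)=W_n(a)/W_n(\infty)$, hence $\PX_n^0(T(a)<+\infty)=1-W_n(a)/W_n(\infty)$, not $W_n(0)/W_n(a)$ (the latter is $\PX_n^0(T(-a)<T(0))$). This is not just a slip of sign: since $W(0)=0$, your claimed limit $W(0)/W(a)$ would be~$0$, which would make the conditional expectation you write down degenerate and the argument collapse. Second, your resolution of the ``boundary event'' is not valid. You argue that $\{T_f(A)=+\infty,\ T_{f_n}(A)<+\infty\text{ i.o.}\}$ is null because $\liminf_n T_{f_n}(A)\ge T_f(A)$, but that liminf bound only gives $T_{f_n}(A)\to+\infty$; it is perfectly compatible with $T_{f_n}(A)$ being finite for every $n$. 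In that case $\indicator{T_{f_n}(A)<+\infty}$ is identically~$1$ while $\indicator{T_f(A)<+\infty}=0$, so the pointwise convergence you need for dominated convergence fails. The deeper reason the conditioning is delicate is that the functional $f\mapsto\sup_{[0,\infty)}f$ is \emph{not} continuous for $J_1$ on $[0,\infty)$ (mass can escape to large times), so one cannot get convergence of $\PX_n^0(T(a)<+\infty)$ or of the indicator from path convergence alone. The paper repairs precisely this point by a separate argument: it shows that the laws of $(X,\sup X)$ under $\PX_n^0$ are tight, takes an accumulation point $(Y,M)$, uses Skorohod again to get $M\ge\sup Y$ a.s., and concludes $M=\sup Y$ a.s.\ because they have the same distribution — this gives joint convergence of $(X,\sup X)$ and then the zero-probability boundary $\{\sup X=a\}$ finishes the conditioning. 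You would need to incorporate an argument of this type to make your dominated-convergence step correct.
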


\begin{proof}
	In the rest of the proof let $H_A \subset \Dcal$ be the set of functions $f \in \Dcal$ satisfying the five conditions~\ref{i}--\ref{v} of Lemma~\ref{lemma:continuity-hitting-times}. Let us first assume that $\min A > 0$ and show the convergence under $\underline \PX_n^0$. By assumption we have $\PX_n^0 \Rightarrow \PX^0$ and so the continuous mapping theorem implies that $\underline \PX_n^0 \Rightarrow \underline \PX^0$. Moreover, since $X$ under $\PX^0$ has by assumption infinite variation, one can easily check that $\underline \PX^0(H_A) = 1$ and so the continuous mapping theorem together with Lemma~\ref{lemma:continuity-hitting-times} give the result for $\underline \PX^0$.

	Let us now show the result under $\PX_n^0(\, \cdot \mid T(A) < +\infty)$, so we do not assume anymore $\min A > 0$. Since $\PX^0(H_A) = 1$, using the same arguments as under $\underline \PX_n^0$, one sees that it is enough to prove that $\PX^0_n(\, \cdot \mid T(A) < +\infty) \Rightarrow \PX^0(\, \cdot \mid T(A) < +\infty)$. If $\min A \leq 0$ or if all the L\'evy processes are critical, then $\PX_n^0(T(A) < +\infty) = \PX^0(T(A) < +\infty) = 1$ and so this last convergence is the same as $\PX_n^0 \Rightarrow \PX^0$. Otherwise, since $\{T(A)<+\infty\} = \{\sup X \ge \min A\}$ it is sufficient to check that $\PX^0 (\sup X= a) =0$ for all $a$ and to prove that $(X, \sup X)$ under $\PX_n^0$ converges to $(X, \sup X)$ under $\PX^0$, which we do now. 

Taking $b=\infty$ in \eqref{eq:scale} shows that
\[	\PX_n^0 \left( \sup X <a  \right) = \frac{W_n(a)}{W_n(\infty)} \ \text{ and } \ \PX^0 \left( \sup X <a  \right) = \frac{W(a)}{W(\infty)},\]
so that indeed $\PX^0 (\sup X= a) =0$ ($W$ is continuous), and the laws of $\sup X$ converge (by Lemma~\ref{lemma:conv-W}).	

As a consequence, the laws of $(X,\sup X)$ are tight. Let $(Y,M)$ be any accumulation point of this sequence. Then $Y$ must be equal in distribution to $X$ under $\PX^0$ and $M$ must be equal in distribution to $\sup X$ under $\PX^0$. As a consequence, $M$ and $\sup Y$ have the same distribution, but $(Y,M)$ does not necessarily have the same law as $(X,\sup X)$. To prove this, it is sufficient to show that $M=\sup Y$. By Skorokhod embedding theorem, we can find a sequence $(Y_n, M_n)$ defined on the same probability space as $(Y,M)$ and converging almost surely to $(Y,M)$, such that for each $n$, $(Y_n,M_n)$ has the law of $(X, \sup X)$ under $\PX^0_n$. Then for any continuity point $t$ of $Y$, $\sup_{[0,t]}Y_n$ converges to $\sup_{[0,t]}Y$. This shows that $M\ge \sup_{[0,t]}Y$, and since continuity points are dense, $M\ge \sup Y$. Now since $M$ and $\sup Y$ have the same distribution, the almost sure inequality $M\ge \sup Y$ actually is an almost sure equality, hence the result.
\end{proof}

\begin{lemma} \label{lemma:conv-hitting-times-2}
	For any $a > 0$,
	\[ \lim_{n \to +\infty}  \kappa_n s_n\PX_n \left( T(a) < T(0) \right) = \underline \Ncal(T(a) < T(0)). \]
\end{lemma}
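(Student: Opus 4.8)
\emph{Overview.} The plan is to compute $\kappa_n s_n \PX_n(T(a) < T(0))$ essentially in closed form in terms of the scale function $W_n$, to recognise that the limit equals $W'(a)/W(a)$, and to identify the latter with $\underline\Ncal(T(a)<T(0))$. For the first part I would condition on the starting point: since $\PX_n(\,\cdot\,)=\int\PX_n^y(\,\cdot\,)\,\P(\Lambda_n/s_n\in dy)$, the exit identity~\eqref{eq:scale} translated by $y$ gives $\PX_n^y(T(a)<T(0))=1-W_n(a-y)/W_n(a)$ for $0\le y<a$, while $\PX_n^y(T(a)<T(0))=1$ for $y\ge a$ (a process with no negative jumps not drifting to $+\infty$, started above $a$, must creep down through $a$ before reaching $0$). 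Combined with $\kappa_n s_n\,\P(\Lambda_n/s_n\in\,\cdot\,)=r_n^{-1}\Pi_n(\,\cdot\,)=W_n(0)\Pi_n(\,\cdot\,)$ this yields
\[ \kappa_n s_n\PX_n(T(a)<T(0))=W_n(0)\left[\Pi_n([a,\infty))+\frac1{W_n(a)}\int_{[0,a)}\bigl(W_n(a)-W_n(a-y)\bigr)\Pi_n(dy)\right]. \]
Writing $W_n(a)-W_n(a-y)=\int_{(a-y,a]}dW_n$, applying Fubini, and invoking the renewal identity obtained from $\Psi_n(\lambda)\int_0^\infty e^{-\lambda x}W_n(x)\,dx=1$ — equivalently, differentiating in $a$ the identity $r_n W_n(a)-1=\int_{[0,a]}\bigl(\int_0^{a-u}\Pi_n((w,\infty))\,dw\bigr)dW_n(u)$, which gives $r_n W_n'(a)=\int_{[0,a]}\Pi_n((a-u,\infty))\,dW_n(u)$ — a short computation shows that the two $\Pi_n([a,\infty))$-terms cancel and that
\[ \kappa_n s_n\PX_n(T(a)<T(0))=\frac{W_n'(a)}{W_n(a)}+\frac{\Pi_n(\{a\})}{r_n^2\,W_n(a)}. \]
Since $\Pi_n(\{a\})\le\Pi_n([a,\infty))$ stays bounded (convergence of L\'evy measures) while $r_n\to\infty$ and $W_n(a)\to W(a)>0$, the second term is $O(r_n^{-2})\to 0$.

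\emph{Identifying the limit.} By excursion theory for the process $X-\inf X$ reflected at its infimum, with the normalisation chosen for $\underline\Ncal$, the amount of local time $-\inf X$ accumulated by $X$ under $\PX^0$ before $X-\inf X$ first exceeds $a$ — that is, $-\inf_{[0,\sigma]}X$ with $\sigma=\inf\{t:X(t)-\inf_{[0,t]}X\ge a\}$ — is exponentially distributed with parameter $\underline\Ncal(\sup>a)=\underline\Ncal(T(a)<T(0))$; and the first-passage identity for processes reflected at their infimum (dual to the one for processes reflected at their supremum; cf.\ the use of $W'$ in the proof of Lemma~\ref{lemma:cond}) shows this parameter equals $W'(a)/W(a)$. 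So the claim reduces to $W_n'(a)/W_n(a)\to W'(a)/W(a)$. I expect this last convergence to be the main obstacle: $\PX_n^0\Rightarrow\PX^0$ gives $\Psi_n\to\Psi$ and hence $W_n\to W$ locally uniformly (Lemma~\ref{lemma:conv-W}), but it does not by itself control the densities $W_n'$.

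\emph{Resolving the obstacle.} I would bypass $W_n'$ by running the excursion argument on the pre-limit as well. For the compound Poisson process $X$ under $\PX_n^0$ the origin is a holding point of $X-\inf X$ with holding rate $\kappa_n s_n$ in the local-time clock (during a holding period the next jump of $X$ occurs after an $\mathrm{Exp}(n\kappa_n)$ amount of real time, during which $-\inf X$ increases by $r_n$ times that), and each excursion begins with a jump of law $\Lambda_n/s_n$; hence the excursion measure of $X-\inf X$ under $\PX_n^0$ equals $\kappa_n s_n$ times the law of $X$ under $\PX_n$ killed at $T(0)$, so that $-\inf_{[0,\sigma_n]}X$ under $\PX_n^0$ is exponential with parameter exactly $\kappa_n s_n\PX_n(T(a)<T(0))$, where $\sigma_n=\inf\{t:X(t)-\inf_{[0,t]}X\ge a\}$. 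The map $\omega\mapsto-\inf_{[0,\sigma(\omega)]}\omega$, with $\sigma(\omega)=\inf\{t:\omega(t)-\inf_{[0,t]}\omega\ge a\}$, is continuous at $\PX^0$-almost every $\omega$ — the argument paralleling Lemma~\ref{lemma:continuity-hitting-times}, using that $\PX^0$ has infinite variation (so that $X-\inf X$ crosses the level $a$ cleanly and $t\mapsto\inf_{[0,t]}X$ has continuous paths) — so by $\PX_n^0\Rightarrow\PX^0$ and the continuous mapping theorem these exponential laws converge, which forces $\kappa_n s_n\PX_n(T(a)<T(0))\to\underline\Ncal(T(a)<T(0))$ (and, a posteriori, $W_n'(a)/W_n(a)\to W'(a)/W(a)$). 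Either presentation works; the cleanest write-up is probably to derive the exact formula of the first paragraph, then invoke this excursion comparison to pass to the limit.
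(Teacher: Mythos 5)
Your \emph{Resolving the obstacle} paragraph is essentially the paper's proof. The paper sets $S=\sup_{[0,T(-1)]}\underline X$, applies the exponential formula for the Poisson point process of excursions of $\underline X$ to get $\PX_n^0(S<a)=\exp\bigl(-\kappa_n s_n\PX_n(T(a)<T(0))\bigr)$ and analogously under $\PX^0$, and then concludes from $\PX_n^0\Rightarrow\PX^0$ plus the continuity properties of $T(-1)$ and $\sup$ given by Lemma~\ref{lemma:continuity-hitting-times}. Your random variable $-\inf_{[0,\sigma_n]}X$ is the dual description of the same excursion quantity (indeed $\{S<a\}=\{-\inf_{[0,\sigma]}X>1\}$), and your identification of the excursion measure of $\underline X$ under $\PX_n^0$ with $\kappa_n s_n\PX_n(\,\cdot\,;T(0)<\cdot\,)$ is exactly what underlies the paper's exponential formula. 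So the core argument is the same.

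The first two paragraphs are a correct but ultimately unnecessary detour, and you diagnose precisely why: the direct route would require $W_n'(a)/W_n(a)\to W'(a)/W(a)$, which is not available \emph{a priori}. In fact the paper proves the present lemma first and \emph{then} deduces $W_n'(a)\to W'(a)$ from it in Lemma~\ref{lemma:conv-W}, where it also records the clean identity $\kappa_n s_n\PX_n(T(a)<T(0))=W_n'(a)/W_n(a)$. (Your extra term $\Pi_n(\{a\})/(r_n^2 W_n(a))$ appears to be a bookkeeping slip: a short Laplace-transform computation from $\Psi_n\widehat W_n=1$ gives $\E[W_n(a-\Lambda_n/s_n)]=W_n(a)-W_n'(a)/(\kappa_n s_n)$ directly, with no correction; an atom of $\Pi_n$ at $a$ would instead show up as non-differentiability of $W_n$ at $a$, which the paper's standing smoothness claim for $w_n$ rules out.) You might streamline the write-up by dropping the first two paragraphs and presenting the excursion-plus-continuous-mapping argument directly, exactly as the paper does.
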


\begin{proof}
	Let $a > 0$ and $S = \sup_{[0, T(-1)]} \underline X$: the exponential formula for the Poisson point process of excursions gives
	\[  \PX_n^0 \left( S < a \right) = \exp \big(- \kappa_ns_n \PX_n \left( T(a) < T(0) \right) \big) \ \text{ and } \ \PX^0 \left( S < a \right) = \exp \left(- \underline{\Ncal} \left( T(a) < T(0) \right) \right). \]

	On the other hand, using Lemma~\ref{lemma:continuity-hitting-times} and continuity properties of the $\sup$ operator, it is not hard to see that $S$ under $\PX_n^0$ converges weakly to $S$ under $\PX^0$. Since the distribution of $S$ under $\PX^0$ has not atom, we get $\PX_n^0 \left(S < a \right) \to \PX^0 \left(S < a \right)$ which, in view of the last display, concludes the proof.
\end{proof}

It is well-known that scale functions are everywhere differentiable, which justifies the following statement.

\begin{lemma} \label{lemma:conv-W}
	For every $a \geq 0$, it holds that $W_n(a) \to W(a)$ and $W_n'(a) \to W'(a)$. Moreover, $W_n(\infty) \to W(\infty)$.
\end{lemma}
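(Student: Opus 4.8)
The plan is to reduce everything to the convergence of Laplace transforms. The weak convergence $\PX_n^0\Rightarrow\PX^0$ of spectrally positive Lévy processes is equivalent to the pointwise convergence $\Psi_n\to\Psi$ on $[0,\infty)$, which is automatically locally uniform since the $\Psi_n$ are convex; as moreover $\Phi_n(0)=\Phi(0)=0$ (none of the processes drifts to $+\infty$), this gives for every $\lambda>0$
\[ \int_0^\infty e^{-\lambda x}W_n(x)\,dx=\frac1{\Psi_n(\lambda)}\longrightarrow\frac1{\Psi(\lambda)}=\int_0^\infty e^{-\lambda x}W(x)\,dx, \]
and, after an integration by parts, $\int_0^\infty e^{-\lambda x}W_n'(x)\,dx=\lambda/\Psi_n(\lambda)-W_n(0)\to\lambda/\Psi(\lambda)=\int_0^\infty e^{-\lambda x}W'(x)\,dx$ once we know $W_n(0)=1/r_n\to0=W(0)$.

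To prove $W_n(a)\to W(a)$ for $a>0$, note that by the continuity theorem for Laplace transforms (applied if necessary to the finite measures $e^{-x}W_n(x)\,dx$) the first display forces $H_n(x):=\int_0^xW_n\to H(x):=\int_0^xW$ for every $x\ge0$. Each $H_n$ is convex (as $W_n$ is nondecreasing) and is $C^1$ with $H_n'=W_n$ (scale functions being absolutely continuous), and so is $H$ with $H'=W$; since a pointwise limit of convex functions is convex and its derivative, at any point where the limit is differentiable, equals the limit of the derivatives, we get $W_n(a)=H_n'(a)\to H'(a)=W(a)$ for every $a>0$. Using monotonicity of $W_n$ and continuity of $W$ at $0$, $\limsup_nW_n(0)\le\limsup_nW_n(a)=W(a)\downarrow W(0)=0$ as $a\downarrow0$, hence $W_n(0)\to0$; in particular $r_n=1/W_n(0)\to\infty$, which legitimizes the integration by parts above.

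For $W_n(\infty)\to W(\infty)$: convexity of $\Psi_n$ and $\Psi_n(0)=0$ give $\Psi_n(\lambda)\le\lambda\Psi_n'(\lambda)$, so $\lambda\mapsto\lambda/\Psi_n(\lambda)$ is nonincreasing on $(0,\infty)$ and, by monotone convergence, $\lambda/\Psi_n(\lambda)=\int_0^\infty e^{-u}W_n(u/\lambda)\,du\uparrow W_n(\infty)$ as $\lambda\downarrow0$; thus $W_n(\infty)=\sup_{\lambda>0}\lambda/\Psi_n(\lambda)=1/\Psi_n'(0+)$ and likewise $W(\infty)=1/\Psi'(0+)$ (convention $1/0=\infty$), so it suffices to show $\Psi_n'(0+)\to\Psi'(0+)$. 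The bound $\Psi_n'(0+)\le(\Psi_n(\varepsilon)-\Psi_n(0))/\varepsilon$, letting $n\to\infty$ then $\varepsilon\downarrow0$, gives $\limsup_n\Psi_n'(0+)\le\Psi'(0+)$; for the reverse inequality write $\Psi_n'(0+)=-\EX_n^0(X(1))=\EX_n^0(X(1)^-)-\EX_n^0(X(1)^+)$, note that the upper tails $\PX_n^0(X(1)>x)$ are dominated uniformly in $n$ by a quantity controlled by $\Pi_n((x/2,\infty))\to\nu((x/2,\infty))$, whence $\{X(1)^+\}$ is uniformly integrable and $\EX_n^0(X(1)^+)\to\EX^0(X(1)^+)$, while Fatou's lemma gives $\liminf_n\EX_n^0(X(1)^-)\ge\EX^0(X(1)^-)$; combining, $\liminf_n\Psi_n'(0+)\ge-\EX^0(X(1))=\Psi'(0+)$. (When $W(\infty)=\infty$, i.e. $\Psi'(0+)=0$, the convergence is anyway immediate from $W_n(\infty)\ge W_n(a)\to W(a)$ for all $a$.)

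The convergence $W_n'(a)\to W'(a)$ is the delicate point and, I expect, the main obstacle, since $W_n'$ need not be monotone, so that the Laplace-transform limit above together with $\int_0^xW_n'=W_n(x)-1/r_n\to W(x)$ does not by itself yield pointwise convergence. The approach I would take is to recognize $W_n'(x)\,dx+\tfrac1{r_n}\delta_0$ as the potential measure of the subordinator $\sigma_n$ with Bernstein exponent $\Psi_n(\lambda)/\lambda=\Psi_n'(0+)+\int_0^\infty(1-e^{-\lambda v})\,\Pi_n((v,\infty))\,dv$; from $\Psi_n/\lambda\to\Psi/\lambda$ one gets $\sigma_n\Rightarrow\sigma$, the subordinator with exponent $\Psi(\lambda)/\lambda$, whose potential measure is $W'(x)\,dx$ (with no atom at $0$, consistently with $W(0)=0$, as $\int_{(0,1)}v\,\nu(dv)=\infty$ by infinite variation). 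It then remains to deduce, from this vague convergence of potential measures and the continuity of the limiting density $W'$, the pointwise convergence of the densities. Under the tightness assumption this is easy: $\Pi_n((v,\infty))=n\kappa_n(1+vs_n)^{-\alpha}$ is log-convex in $v$, so $\sigma_n$ is a special subordinator and $W_n'$ is nonincreasing, and the argument used above for $W_n$ — now applied to the concave functions $x\mapsto\int_0^xW_n'$ — gives $W_n'(a)\to W'(a)$; in general one must add a uniform (renewal/local-limit type) control on the oscillation of $W_n'$ over compact subsets of $(0,\infty)$.
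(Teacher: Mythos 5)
Your argument for $W_n(a)\to W(a)$ is correct and is, modulo level of detail, the one in the paper: the paper invokes Feller's continuity theorem for Laplace transforms together with the continuity of $W$, and your convexity-of-$\int_0^\cdot W_n$ observation is an explicit way of passing from vague convergence of $W_n(x)\,dx$ to pointwise convergence. Your treatment of $W_n(\infty)$ is also in the spirit of the paper's terse remark $W_n(\infty)=1/\Psi_n'(0+)$, although the uniform-integrability step you invoke to get $\liminf_n\Psi_n'(0+)\ge\Psi'(0+)$ is not fully substantiated: pointwise convergence $\Pi_n((x,\infty))\to\nu((x,\infty))$ does not by itself control the tails $\int_M^\infty x\,\Pi_n(dx)$ uniformly in $n$, so this part would need more care.

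The genuine gap is the one you yourself flag: your route to $W_n'(a)\to W'(a)$ only works under the tightness assumption, where $W_n$ is concave and hence $W_n'$ is monotone, and in general you merely gesture at a uniform renewal/local-limit control on the oscillation of $W_n'$ without supplying it. Vague convergence of the potential measures $\tfrac1{r_n}\delta_0+W_n'(x)\,dx$ does not yield pointwise convergence of the densities without some such control, so the proposal does not prove the lemma as stated. The paper sidesteps the issue entirely with a probabilistic identity: it writes $\PX_n(T(a)<T(0))=W_n'(a)/(\kappa_ns_nW_n(a))$, shows via the exponential formula for the excursion point process that $\underline{\Ncal}(T(a)<T(0))=W'(a)/W(a)$, and appeals to Lemma~\ref{lemma:conv-hitting-times-2} (itself proved by applying Lemma~\ref{lemma:continuity-hitting-times} to $\sup_{[0,T(-1)]}\underline X$) to get $\kappa_ns_n\PX_n(T(a)<T(0))\to\underline{\Ncal}(T(a)<T(0))$; combined with the already established $W_n(a)\to W(a)$ this delivers $W_n'(a)\to W'(a)$ for all $a>0$ with no smoothness or monotonicity hypotheses on $W_n'$. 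That excursion-theoretic step is precisely the missing ingredient in your proposal.
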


\begin{proof}
First observe that the pointwise convergence of $1/\Psi_n$ to $1/\Psi$, which are the respective Laplace transforms of $W_n$ and $W$, along with Theorem XIII.1.2 in Feller~\cite{Feller71:0} and the continuity of $W$ ensure that $W_n(a) \to W(a)$ for any $a\ge 0$.

On the other hand, it is elementary to show that
	\[ \PX_n \left( T(a) < T(0) \right)=  \frac{W'_n(a)}{\kappa_ns_nW_n(a)} , \]
	and similarly, one can obtain $\underline {\Ncal} (T(a) < T(0)) = W'(a) / W(a)$ as follows (which will prove $W'_n(a) \to W'(a)$ in view of Lemma~\ref{lemma:conv-hitting-times-2}). Setting $h(x):=\underline{\Ncal}(T(x)<T(0))$, by the exponential formula for the Poisson point process of excursions, we get, by \eqref{eq:scale}, for any $0<a<b$,
	\begin{multline*}
	\frac{W(a)}{W(b)}=\PX^0 \left( T(-(b-a)) < T(a) \right)\\
	=\exp\left(-\int_0^{b-a}\underline{\Ncal}(T(a+x)<T(0)) \, dx\right) = \exp\left(-\int_a^{b}h(u) \, du\right).
	\end{multline*}
This entails $h(a)=W'(a)/W(a)$, for example by differentiating the last equality. The last convergence comes from the relations $W_n(\infty) =1/\Psi_n'(0+)$ and $W(\infty) =1/\Psi'(0+)$.
\end{proof}

\begin{lemma} \label{lemma:conv-hitting-times-3}
	For any $\varepsilon > 0$,
	\[ \lim_{n \to +\infty}  \kappa_n s_n\PX_n \left( T(0)>\varepsilon\right) = \underline \Ncal(T(0) > \varepsilon). \]
\end{lemma}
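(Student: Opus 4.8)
The plan is to adapt the proof of Lemma~\ref{lemma:conv-hitting-times-2}, replacing the maximal \emph{height} of the excursions of $\underline X$ away from $0$ that are straddled before $T(-1)$ by their maximal \emph{length}. Recall that, restricted to paths stopped at $T(0)$, the measure $\kappa_n s_n \PX_n$ is the excursion measure of $\underline X$ under $\PX_n^0$ for the normalization of the local time at $0$ fixed in Section~\ref{sec:notation}, so that $\kappa_n s_n \PX_n(T(0) > \varepsilon)$ is the mass it gives to excursions of length larger than $\varepsilon$. Since $X$ has no negative jumps, the first-passage process $\tau = (T(-u), u \geq 0)$ is, under $\PX_n^0$ (resp.\ under $\PX^0$), a subordinator with Laplace exponent $\Phi_n$ (resp.\ $\Phi$), and its largest jump over $[0,1]$,
\[ \ell^\star := \sup_{0 \leq u \leq 1} \big( \tau(u) - \tau(u-) \big), \]
is exactly the length of the longest excursion of $\underline X$ away from $0$ begun before time $T(-1)$. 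The exponential formula for the Poisson point process of excursions then gives
\[ \PX_n^0 \left( \ell^\star \leq \varepsilon \right) = \exp \big( -\kappa_n s_n \PX_n(T(0) > \varepsilon) \big) \ \text{ and } \ \PX^0 \left( \ell^\star \leq \varepsilon \right) = \exp \big( -\underline{\Ncal}(T(0) > \varepsilon) \big), \]
and $\underline{\Ncal}(T(0) > \varepsilon) < +\infty$ since it is the mass that the L\'evy measure of the subordinator $\tau$ under $\PX^0$ puts on $(\varepsilon, +\infty)$. Taking logarithms, it is therefore enough to prove that $\PX_n^0(\ell^\star \leq \varepsilon) \to \PX^0(\ell^\star \leq \varepsilon)$.

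First I would check that $\ell^\star$ under $\PX_n^0$ converges weakly to $\ell^\star$ under $\PX^0$. Since $\Phi_n \to \Phi$, the subordinators $\tau$ under $\PX_n^0$ converge weakly in $\Dcal$ to $\tau$ under $\PX^0$: convergence of the Laplace exponents yields convergence of the one-dimensional, hence (by independence and stationarity of the increments) of the finite-dimensional, distributions, and the limit has no fixed time of discontinuity. On the other hand, the functional $f \mapsto \sup_{u \in [0,1]} (f(u) - f(u-))$ is continuous in the $J_1$ topology at every $f \in \Dcal$ having no jump at time $1$, because macroscopic jumps neither merge nor split under $J_1$ convergence; and $\tau$ under $\PX^0$ has almost surely no jump at time $1$. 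The continuous mapping theorem then gives the announced weak convergence of $\ell^\star$.

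It remains to show that the law of $\ell^\star$ under $\PX^0$ has no atom at $\varepsilon$, which by the two displays above amounts to $\underline{\Ncal}(T(0) = \varepsilon) = 0$. Fix $u \in (0, \varepsilon)$; the set of excursions with $T(0) > u$ has finite $\underline{\Ncal}$-measure, and by the Markov property of $\underline{\Ncal}$ at time $u$ together with the description of its semigroup recalled in Section~\ref{sec:notation}, conditionally on $\{T(0) > u\}$ and on $X(u) = x$ for some $x > 0$, the remaining length $T(0) - u$ is distributed as $T(-x)$ under $\PX^0$. This last law is diffuse for every $x > 0$, being the law at a positive time of the subordinator $\tau$, whose L\'evy measure is infinite because $X$ has infinite variation, so that $0$ is regular for $\underline X$ (alternatively one invokes the Hartman--Wintner criterion for atoms of infinitely divisible laws). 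Consequently $\underline{\Ncal}(T(0) = \varepsilon,\, T(0) > u) = 0$ for every $u \in (0,\varepsilon)$, and letting $u \downarrow 0$ gives $\underline{\Ncal}(T(0) = \varepsilon) = 0$. Thus $\varepsilon$ is a continuity point of the distribution function of $\ell^\star$ under $\PX^0$, so $\PX_n^0(\ell^\star \leq \varepsilon) \to \PX^0(\ell^\star \leq \varepsilon)$, which concludes.

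The two steps deserving the most care are the weak convergence of $\ell^\star$ — where one must argue cleanly that the largest-jump functional is well behaved under Skorohod convergence and that $\Phi_n \to \Phi$ does upgrade to $\Dcal$-convergence of the associated subordinators — and the diffuseness of the first-passage time $T(-x)$ under $\PX^0$; the remaining manipulations (the exponential formula, the reduction via logarithms, the monotone limit $u\downarrow 0$) are routine.
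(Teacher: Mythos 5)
Your proof is correct, but it follows a genuinely different route from the paper's. The paper argues through Laplace transforms: it combines the identity $\Phi(\lambda)=\underline{\Ncal}(1-e^{-\lambda T})$ (no drift term, since $X$ has infinite variation) with the explicit computation $\kappa_n s_n\,\EX_n(1-e^{-\lambda T})=\Phi_n(\lambda)-\lambda/r_n$, lets $n\to\infty$ using $\Phi_n\to\Phi$, and then invokes Feller's continuity theorem (Theorem~XIII.1.2 of~\cite{Feller71:0}) for Laplace transforms of measures. You instead adapt the excursion-theoretic argument of Lemma~\ref{lemma:conv-hitting-times-2}, trading maximal \emph{height} for maximal \emph{length}: you express $\kappa_n s_n\PX_n(T(0)>\varepsilon)$ via the exponential formula as $-\log\PX_n^0(\ell^\star\le\varepsilon)$, where $\ell^\star$ is the largest jump of the first-passage subordinator on $[0,1]$, deduce $\Dcal$-convergence of the subordinators from $\Phi_n\to\Phi$, and apply the continuous mapping theorem to the largest-jump functional. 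Both routes reduce, in the end, to the same delicate point: $\underline{\Ncal}(T(0)=\varepsilon)=0$ for all $\varepsilon>0$. The paper asserts this in passing (the ``that is, for any $x>0$'' at the end of its proof), whereas you actually prove it by conditioning on the excursion at an intermediate time $u\in(0,\varepsilon)$ and using the diffuseness of $\tau(x)=T(-x)$ under $\PX^0$, which in turn follows since $\tau$ is a driftless subordinator with infinite L\'evy measure. A small remark on that step: the phrase ``L\'evy measure is infinite because $X$ has infinite variation, so that $0$ is regular for $\underline X$'' presents the regularity of $0$ as a consequence of the infinitude of the L\'evy measure, when they are really the same fact stated two ways; a cleaner formulation is that $d=0$ and $\Phi(\lambda)\to\infty$ together force the L\'evy measure of $\tau$ to be infinite, and then Hartman--Wintner gives diffuseness. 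Overall, your argument is sound and structurally more uniform with the rest of Section~\ref{sec:preliminary}, at the cost of being longer and of needing the continuity, in the $J_1$ topology, of the largest-jump functional at paths with no jump at the endpoint, for which a citation (e.g.\ to Jacod--Shiryaev, Chapter~VI) would be appropriate.
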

\begin{proof} We abbreviate $T(0)$ into $T$.
According to Chapter~VII in Bertoin~\cite{Bertoin96:0}, under $\PX^0$ (resp.\ $\PX_n^0$), the first passage time process of $X$ in the negative half line is a subordinator with Laplace exponent $\Phi$ (resp.\ $\Phi_n$). This has two consequences.

The first one, obtained by considering $X$ under $\PX^0$, is that $\Phi(\lambda) = d\lambda+\underline{\Ncal}(1-e^{-\lambda T})$, where $d\ge 0$ is a drift coefficient. Actually, $d=0$, since the L\'evy process $X$ under $\PX^0$ has infinite variation, so that $\lim_{\lambda\to\infty} \lambda/\Psi(\lambda) = 0 =  \lim_{\lambda\to\infty} \Phi(\lambda)/\lambda = d$. We thus have  $\Phi(\lambda) = \underline{\Ncal}(1-e^{-\lambda T})$. The second one, obtained by considering $X$ under $\PX_n^0$, is that $\EX_n\left(e^{-\lambda T}\right) = \E \left(e^{-\Phi_n(\lambda) \Lambda_n/s_n}\right)$, so that
\[
\EX_n\left(1-e^{-\lambda T}\right) = \frac{(n/s_n)\Phi_n(\lambda)- \Psi_n(\Phi_n(\lambda))}{n\kappa_n} =\frac{\Phi_n(\lambda)}{\kappa_ns_n}-\frac{\lambda}{n\kappa_n}.
\]

Multiplying each side with $\kappa_n s_n$, letting $n \to +\infty$ and using that $\Phi_n(\lambda) \to \Phi(\lambda)$, this shows that $\kappa_ns_n \EX_n(1-e^{-\lambda T})$ converges to $\underline{\Ncal}(1-e^{-\lambda T})$, i.e.,
\[
\lim_{n\to\infty} \int_0^\infty dx\, e^{-\lambda x}\kappa_ns_n\PX_n (T>x)  =\int_0^\infty dx\, e^{-\lambda x}\underline{\Ncal} (T>x).
\]

Then Theorem XIII.1.2 in Feller~\cite{Feller71:0} implies that $\kappa_ns_n\PX_n (T>x)\to \underline{\Ncal} (T>x)$ for any $x>0$ such that $\underline{\Ncal}( T=x)=0$, that is, for any $x>0$. 
\end{proof}

\begin{lemma} \label{lemma:conv-hitting-times}
	Recall that $r_n = n / s_n$ If $A$ is a finite subset of $\R$ such that $0 \notin A$, then
	\[ \lim_{n \to +\infty} r_n \PX_n^0 \left( T(A) < T(0) \right) = \Ncal(T(A) < T(0)). \]
\end{lemma}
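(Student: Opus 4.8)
The plan is to reduce the statement concerning $\PX_n^0$ (the law of the L\'evy process *not* reflected at its infimum) to the already-established statement concerning $\underline{\PX}_n^0$ (Lemma~\ref{lemma:conv-hitting-times-2}) via the excursion decomposition at the infimum. First I would recall the standard excursion-theoretic identity relating $\Ncal$ and $\underline{\Ncal}$: the excursions of $X$ away from $0$ can be split according to whether they immediately dip below $0$ or not. Concretely, up to the first visit to $0$, the path of $X$ under $\PX_n^0$ is obtained by concatenating (a.s.\ finitely many) negative excursions of $X$ below its running infimum — these are exactly the excursions of $\underline X$ — interspersed between the successive records of the infimum. An excursion of $X$ away from $0$ that stays nonnegative is an excursion of $X$ reflected *at $0$* that never goes below $0$; the key point is that on the event $\{T(A)<T(0)\}$ with $0\notin A$ and, say, $\min A>0$ relevant, the first excursion reaching $A$ before returning to $0$ is a nonnegative excursion. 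The quantitative link is $\Ncal(\,\cdot\, ,\ \text{excursion stays} \geq 0) $ and $\underline{\Ncal}$ differing by the law of the infimum record process, and in fact $\Ncal(T(A)<T(0)) = \underline{\Ncal}(T(A)<T(0)) \cdot (\text{normalizing factor})$, or more precisely the two are related through the scale function via $W$, $W'$.

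The cleanest route, which I expect to work, is to express both sides through scale functions and then invoke Lemma~\ref{lemma:conv-W}. On the pre-limit side, a computation entirely analogous to the one in the proof of Lemma~\ref{lemma:conv-W} — using \eqref{eq:scale} and the Markov property at $T(a)$, or directly the two-sided exit identity — should give, for $A=\{a\}$ with $a>0$,
\[
r_n \PX_n^0\big(T(a)<T(0)\big) = \frac{1}{W_n(a)} \qquad\text{or a comparable explicit expression in } W_n, W_n',
\]
and correspondingly $\Ncal(T(a)<T(0)) = 1/W(a)$ on the limit side; then Lemma~\ref{lemma:conv-W} finishes the singleton case immediately. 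For a general finite set $A$ with $0\notin A$, I would split $A$ into its positive part $A^+ = A\cap(0,\infty)$ and negative part $A^- = A\cap(-\infty,0)$. Hitting $A^-$ before $0$ happens with probability tending to $1$ only if... no — rather, since $X$ is spectrally positive it creeps downward, so $T(A^-)<T(0)$ forces passing through the largest element of $A^-$, and one reduces to hitting a single negative level before $0$; combined with $T(A^+)<T(0)$ via inclusion–exclusion over the finitely many points and the Markov property at the first hitting time, everything is expressible through the functions $W_n(a)$, $W_n'(a)$ evaluated at the (finitely many) points of $A$ and their successive differences. Passing to the limit is then a finite linear/rational combination of the convergences $W_n(a)\to W(a)$, $W_n'(a)\to W'(a)$ from Lemma~\ref{lemma:conv-W}, provided no denominator degenerates — and $W$ is strictly positive on $(0,\infty)$, which handles that.

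The main obstacle I anticipate is handling the points of $A$ that lie *below* $0$ and the bookkeeping of multiple levels simultaneously: for a spectrally positive process, reaching a negative level before $0$ requires creeping, and $T(\{a_1,\dots,a_k\})<T(0)$ for several levels must be decomposed carefully (order the positive levels increasingly, the negative levels decreasingly, and peel off the first level reached), invoking the strong Markov property at each stage. Each piece is then a two-sided-exit quantity controlled by \eqref{eq:scale} and hence by scale functions, but assembling them into a single expression that is manifestly continuous in $(W_n(a_i), W_n'(a_i))_i$ requires some care. A secondary subtlety is that the excursion measure $\Ncal$ is infinite near $0$, so the identity $\Ncal(T(A)<T(0))<\infty$ needs $0\notin A$ (used crucially) and one should check the relevant events are bounded away from the degenerate part of $\Ncal$; this is automatic since $A$ is bounded away from $0$. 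With these reductions the proof is short: establish the explicit scale-function formula for both $r_n\PX_n^0(T(A)<T(0))$ and $\Ncal(T(A)<T(0))$, then apply Lemma~\ref{lemma:conv-W}.
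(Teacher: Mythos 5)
Your high-level strategy---reduce to explicit scale-function formulas for both sides and then apply Lemma~\ref{lemma:conv-W}---is indeed what the paper does. But there is a concrete error in the key formula, and the harder parts of the argument are glossed over in a way that would not survive scrutiny.

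The formula you write for the positive-singleton case is wrong: you claim $r_n \PX_n^0(T(a)<T(0))=1/W_n(a)$ and $\Ncal(T(a)<T(0))=1/W(a)$ for $a>0$, but the correct identities are $r_n\PX_n^0(T(a)<T(0))=1/W_n(a)-1/W_n(\infty)$ and $\Ncal(T(a)<T(0))=1/W(a)-1/W(\infty)$. The missing correction $-1/W_n(\infty)$ is exactly $\PX_n^0(T(0)=+\infty)$, i.e.\ the probability that the first excursion drifts to $-\infty$ and never returns to $0$, which is \emph{positive} whenever the process is subcritical (and the paper's standing assumptions allow this). Writing $1/W_n(a)$ alone mixes up $\{T(a)<T(0)\}$ with the larger event $\{\sup X^0 > a\}\cup\{T(0)=+\infty\}$, whose probability is what $1/(r_nW_n(a))$ actually computes after letting the lower barrier go to $-\infty$ in the two-sided exit identity.

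Beyond that, two further pieces of the argument are missing and are genuinely the delicate part. First, for $A=\{a\}$ with $a<0$, the pre-limit side is easy ($1/(r_nW_n(-a))$ by spatial invariance and \eqref{eq:scale}), but showing that $\Ncal(T(a)<T(0))=1/W(-a)$ on the limit side requires an argument: the paper identifies $\EX^0(L(\cdot, T(a)))$ with the continuous potential density of the process killed at $a$ and reads off the value at $0$. You don't mention how you would get this. Second, for the mixed case $A=\{a,b\}$ with $a<0<b$, it is not enough to ``inclusion--exclude over the finitely many points and use the Markov property at the first hitting time''; the paper derives both $\PX_n^0(T(a)\wedge T(b)<T(0))$ and $\Ncal(T(a)\wedge T(b)<T(0))$ by decomposing the path at successive visits to $0$ (a geometric/exponential number of excursions that avoid $a$), which is what produces the matching rational expressions in $W_n$ and $W$. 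Finally, your opening idea of routing the proof through $\underline{\PX}_n^0$ and Lemma~\ref{lemma:conv-hitting-times-2} does not pan out: Lemma~\ref{lemma:conv-hitting-times-2} concerns $\kappa_n s_n \PX_n$ and $\underline\Ncal$, whereas here the normalization is $r_n$ and the measure is $\Ncal$; reconciling those requires exactly the scale-function identities you are trying to avoid, so nothing is saved.
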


\begin{proof}
	Since $X$ under $\PX_n^0$ is spectrally positive, the following simplifications occur:
	\begin{itemize}
		\item if $\min A > 0$ then $\PX_n^0 \left( T(A) < T(0) \right) = \PX_n^0 \left( T(\min A) < T(0) \right)$;
		\item if $\max A < 0$ then $\PX_n^0 \left( T(A) < T(0) \right) = \PX_n^0 \left( T(\max A) < T(0) \right)$;
		\item if $\min A < 0 < \max A$ then $\PX_n^0 \left( T(A) < T(0) \right) = \PX_n^0 \left( T(\min A) \wedge T(\max A) < T(0) \right)$.
	\end{itemize}

	Thus to prove the result, there are only three cases to consider: $A = \{a\}$ with $a > 0$, $A = \{a\}$ with $a < 0$ or $A = \{ a, b\}$ with $a < 0 < b$.
	\\

	\noindent \textit{First case: $A = \{a\}$ with $a < 0$}. For any $x>0$, $L(x,T(a))$ under $\PX^0$ is~$0$ if $T(a)< T(x)$ and otherwise it is exponentially distributed with parameter $\Ncal(T(a-x) < T(0))$, so we get $v_a(x):=\EX^0 (L(x, T(a))= \PX^0(T(x)<T(a))/\Ncal (T(a-x)<T(0))$. Now since $\PX^0$ only charges L\'evy processes with infinite variation, the map $x \mapsto T(x)$ is $\PX^0$-a.s.\ and $\Ncal$-a.e.\ continuous, so by monotone convergence, $v_a$ is right-continuous and $v_a(0) = 1/\Ncal (T(a)<T(0))$. Now we refer the reader to, e.g., the second display on page~$207$ of Bertoin~\cite{Bertoin97:1}, to check that the L\'evy process with law $\PX^0$ and killed upon hitting $a$ has a continuous potential density, say $u_a$, whose value at~$0$ is equal to $W(-a)$. In addition, by the occupation density formula and Fubini-Tonelli theorem, for any non-negative function $\varphi$ vanishing on the negative half line, $\EX^0 \int_0^{T(a)} \varphi(X(s)) ds =\int_0^\infty \varphi(x) v_a(x) dx$, which, by definition of $u_a$, also equals $\int_0^\infty \varphi(x) u_a(x) dx$. Since $u_a$ is continuous and $v_a$ is right-continuous, $u_a$ and $v_a$ are equal, and in particular $v_a(0) = 1/\Ncal (T(a)<T(0)) = u_a(0) = W(-a)$.

	On the other hand, using the invariance in space of a L\'evy process,~\eqref{eq:scale} and recalling that $r_n = 1/W_n(0)$, we get
	\[ \PX_n^0 \left( T(a) < T(0) \right) = \PX_n^{-a} \left( T(0) < T(-a) \right) = \frac{1}{r_n W_n(-a)}. \]

	The result therefore follows from Lemma~\ref{lemma:conv-W}.
	\\

	\noindent \textit{Second case: $A = \{a, b\}$ with $a < 0 < b$}. Under $\PX_n^0$, the event $\{ T(a) < T(b) \}$ is equal to the event that all excursions away from~$0$ before the first one that hits $a$ (which exists since $X$ does not drift to $+\infty$) do not hit $b$. Hence
	\begin{align*}
		\PX_n^0 \left( T(a) < T(b) \right) & = \sum_{k \geq 0} \PX_n^0 \left( T(a) < T(0) \right) \left\{ \PX_n^0 \left( T(0) < T(a) \wedge T(b) \right) \right\}^k\\
		& = \frac{\PX_n^0 \left( T(a) < T(0) \right)} {\PX_n^0 \left( T(a) \wedge T(b) < T(0) \right)}
	\end{align*}
	so that
	\begin{equation} \label{eq:interm-1}
		\PX_n^0 \left( T(a) \wedge T(b) < T(0) \right) = \frac{\PX_n^0 \left( T(a) < T(0) \right)} {\PX_n^0 \left( T(a) < T(b) \right)} = \frac{W_n(0) / W_n(-a)}{W_n(b) / W_n(b-a)}
	\end{equation}
	using~\eqref{eq:scale} for the last equality. Using the same reasoning, we derive a similar formula for $\Ncal(T(a) \wedge T(b) < T(0))$ as follows. Let $\eta$ be the time of the first atom of the Poisson point process of excursions $(e_t, t \geq 0)$ of $X$ away from~$0$ such that $\inf e_t < a$. Then $\eta$ is distributed like an exponential random variable with parameter $\Ncal(\inf X < a)$ and the Poisson point process $(e_t, t < \eta)$ is independent of $\eta$ and has intensity measure $\Ncal(\, \cdot \, ; \inf X > a)$. Thus by a similar path decomposition as previously,
	\begin{align*}
		\PX^0 \left( T(a) < T(b) \right) & = \E \left\{ \exp \left(-\eta \Ncal(\sup X > b, \inf X > a) \right) \right\}\\
		& = \frac{\Ncal(\inf X < a)} {\Ncal(\inf X < a) + \Ncal(\sup X > b, \inf X > a)}\\
		& = \frac{\Ncal(\inf X < a)} {\Ncal(T(a) \wedge T(b) < T(0))}
	\end{align*}
	from which it follows that
	\begin{equation} \label{eq:interm-2}
		\Ncal(T(a) \wedge T(b) < T(0)) = \frac{\Ncal \left( \inf X < a \right)} {\PX^0 \left( T(a) < T(b) \right)} = \frac{1/W(-a)} {W(b) / W(b-a)}
	\end{equation}
	using $\Ncal(\inf X < a) = 1/W(-a)$ which was proved in the first case. In view of~\eqref{eq:interm-1} and~\eqref{eq:interm-2}, we get
	\[ \lim_{n \to +\infty} r_n \PX_n^0 \left( T(a) \wedge T(b) < T(0) \right) = \Ncal \left( T(a) \wedge T(b) < T(0) \right). \]
	This gives the result.
	\\

	\noindent \textit{Third case: $A = \{a\}$ with $a > 0$}. Remember that $X^0 = X(\, \cdot \, \wedge T(0))$, with $X^0 = X$ in the event $\{T(0) = +\infty\}$.  Consider now $c < 0$: on the one hand, we have by definition
	\[ \PX_n^0 \left( T(c) \wedge T(a) < T(0) \right) = \PX_n^0 \left( \inf X^0 < c \text{ or } \sup X^0 > a \right). \]
	On the other hand,~\eqref{eq:interm-1} for the first equality and~\eqref{eq:scale} for the second one give
	\[
		\PX_n^0 \left( T(c) \wedge T(a) < T(0) \right) = \frac{W_n(0) / W_n(-c)}{W_n(a) / W_n(a-c)} = \frac{\PX_n^a(T(0) < T(a-c))}{r_n W_n(a)}.
	\]

	Because $X$ under $\PX_n^a$ does not drift to $+\infty$, $\PX_n^a(T(0) < T(a-c)) \to 1$ as $c \to -\infty$. Thus letting $c \to -\infty$, we obtain
	\[ \PX_n^0 \left( \inf X^0 = -\infty \text{ or } \sup X^0 > a \right) = \frac{1}{r_n W_n(a)}. \]
	Since under $\PX_n^0$, $\sup X^0 > 0$ implies $\inf X^0 > -\infty$, we have
	\[ \PX_n^0 \left( \inf X^0 = -\infty \text{ or } \sup X^0 > a \right) = \PX_n^0 \left( T(0) = +\infty \right) + \PX_n^0 \left( T(a) < T(0) \right). \]
	Finally, one obtains $\PX_n^0 \left( T(0) = +\infty \right) = 1/(r_n W_n(\infty))$ by taking $a = 0$ and letting $b= +\infty$ in~\eqref{eq:scale}, and in particular
	\[ \PX_n^0 \left( T(a) < T(0) \right) = \frac{1}{r_n W_n(a)} - \frac{1}{r_n W_n(\infty)}. \]

	Similar arguments also imply
	\[ \Ncal \left( T(a) < T(0) \right) = \frac{1}{W(a)} - \frac{1}{W(\infty)} \]
	and so the result follows from Lemma~\ref{lemma:conv-W}.
\end{proof}

\section{Convergence of the finite-dimensional distributions} \label{sec:fd-conv}

Proposition~\ref{prop:conv-fd-shifted-process} establishes the convergence of the finite-dimensional distributions of the local time processes shifted at a positive level, from which we deduce the finite-dimensional convergence of the processes appearing in Theorems~\ref{thm:main-1} and~\ref{thm:main-2} in Corollaries~\ref{cor:conv-fd-P^*} and~\ref{cor:fd-conv-large-initial-condition}.

\begin{prop} \label{prop:conv-fd-shifted-process}
	Let $a_0 > 0$. Then the two sequences of shifted processes $L^0(\, \cdot \, + a_0)$ under $\PX_n^* \left( \, \cdot \mid T(a_0) < T(0) \right)$ and $\PX_n \left( \, \cdot \mid T(a_0) < T(0) \right)$ converge in the sense of finite-dimensional distributions to $L^0(\, \cdot \, + a_0)$ under $\Ncal(\, \cdot \mid T(a_0) < T(0))$ and $\underline \Ncal(\, \cdot \mid T(a_0) < T(0))$, respectively.
\end{prop}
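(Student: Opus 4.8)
The plan is to exploit the branching structure, in the spatial variable, of the local time process. Fix $a_0 > 0$; the goal is to show that for any $0 \le x_1 < \cdots < x_k$ the vector $\big(L(a_0+x_1, T(0)), \ldots, L(a_0+x_k,T(0))\big)$ converges in distribution under each of the two conditioned measures. The starting point is that, under $\PX_n^0$, the process $\big(L(a_0+x,T(0)), x \ge 0\big)$ is a Markov chain in the spatial variable $x$: conditionally on $L(a_0+x,T(0)) = m/r_n$, the $m$ downcrossings of level $a_0+x$ before $T(0)$ split the remainder of the path into $m$ independent excursions above $a_0+x$, each of which — by spatial homogeneity of $X$ and Lemma~\ref{lemma:overshoot}, which identifies the overshoot with $\Lambda_n^*$ — contributes to $\big(L(a_0+x+y,T(0)), y \ge 0\big)$ an independent copy of the (rescaled) downcrossing-count process of $X$ under $\PX_n^*$, observed at level $y$ before its first return to $0$. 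In the limit, the analogous Markov property holds under $\Ncal$ and $\underline\Ncal$ with a \emph{common} transition semigroup: this is exactly the content of the fact recalled in Section~\ref{sec:notation} that $\Ncal$ and $\underline\Ncal$ share, in $(0,\infty)$, the semigroup of $X$ under $(\PX^a, a>0)$ killed at $0$, so that the conditioning on $T(a_0)<T(0)$ enters only through the law of the value $L(a_0,T(0))$ at $x=0$. It therefore suffices to prove two things: (i) the law of $L(a_0,T(0))$ under $\PX_n^*(\,\cdot\mid T(a_0)<T(0))$, resp.\ $\PX_n(\,\cdot\mid T(a_0)<T(0))$, converges weakly to its law under $\Ncal(\,\cdot\mid T(a_0)<T(0))$, resp.\ $\underline\Ncal(\,\cdot\mid T(a_0)<T(0))$; and (ii) the one-step spatial transition kernels converge.

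For step (ii), excursion theory reduces the transition mechanism to a single quantity: the (rescaled) number of downcrossings of a level $y$ before $T(0)$ by $X$ started just above $0$, i.e.\ under $\PX_n^a$ for $a>0$ or under $\PX_n^*$. This quantity is geometric-type, and its generating function is an explicit ratio of values of the scale function $W_n$, computed exactly as in the proof of Lemma~\ref{lemma:conv-hitting-times} via~\eqref{eq:scale}. Using Lemma~\ref{lemma:conv-W} (convergence of $W_n$, $W_n'$, $W_n(\infty)$) together with the criticality $\kappa_n\E(\Lambda_n)\to 1$ enforced by the convergence assumption, these generating functions converge to the corresponding Ray--Knight branching mechanism of the limit. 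The transition kernel itself is the law of a sum of roughly $\ell r_n$ independent such contributions, divided by $r_n$; since the log-Laplace transform of the sum equals $\lfloor \ell r_n\rfloor$ times that of a single term and $r_n \to \infty$, the limit is governed by the derivative at $0$ and the full Laplace exponent of the contribution, which is where the infinite-variance nature of $\xi_n$ (no finite second moment, hence an $\alpha$-stable-type branching mechanism in the limit) appears. I expect this triangular-array passage to the limit to be the main obstacle, precisely because the sequence $(X_n)$ comes from a sequence of compound Poisson processes rather than one rescaled process, so no embedding of $X_n$ into $X$ is available and the convergence of the branching mechanisms must be controlled directly and uniformly on compact sets of starting values.

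For step (i), I would decompose the path of $X$ under $\PX_n^0$ conditioned on $\{T(a_0)<T(0)\}$ into its excursions away from $0$: $L(a_0,T(0))$ is a geometric number of ``failed'' excursions followed by the local time accumulated at $0$ along excursions reaching $a_0$, and its Laplace transform is again an explicit expression in $W_n$, via~\eqref{eq:scale} and the exponential formula for the Poisson point process of excursions used in Lemmas~\ref{lemma:conv-hitting-times-2} and~\ref{lemma:conv-hitting-times}. The only difference between the $\PX_n^*$ and $\PX_n$ cases lies in the law of the first overshoot above $0$ ($\Lambda_n^*$ versus $\Lambda_n$) and in the normalizing probability $\PX_n^{(*)}(T(a_0)<T(0))$, and both sources converge: the first via Lemma~\ref{lemma:overshoot} and the convergence $\kappa_n s_n \PX_n(T(a_0)<T(0)) \to \underline\Ncal(T(a_0)<T(0))$ of Lemma~\ref{lemma:conv-hitting-times-2} together with $r_n\PX_n^0(T(a_0)<T(0)) \to \Ncal(T(a_0)<T(0))$ of Lemma~\ref{lemma:conv-hitting-times}, the second by Lemma~\ref{lemma:conv-W}.

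Finally, I would assemble (i) and (ii) through the spatial Markov property by writing the multivariate Laplace transform $\E\exp\big(-\sum_j \theta_j L(a_0+x_j,T(0))\big)$ as an iterated composition, from the highest level $a_0+x_k$ down to $a_0$, of the one-step branching mechanisms, closed off by the initial law at $a_0$. Since each building block converges and composition of these (uniformly Lipschitz, monotone) maps is continuous, the multivariate Laplace transforms converge, which yields convergence of the finite-dimensional distributions. Working with Laplace transforms rather than weak convergence of the underlying Markov chains is deliberate: it avoids having to establish any Feller property of the limiting semigroup, which matters because — as Lemma~\ref{lemma:cond} shows — the limiting local time process need not even be c\`adl\`ag in the spatial variable in the generality of Proposition~\ref{prop:conv-fd-shifted-process}.
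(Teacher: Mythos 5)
Your route is genuinely different from the paper's. You invoke the Ray--Knight structure: view $\bigl(L^0(a_0+x),\,x\geq 0\bigr)$ as a spatial Markov (branching) chain, reduce the question to (i) convergence of the marginal at $a_0$ and (ii) convergence of the spatial transition (branching) kernels, and close with multivariate Laplace transforms precisely so as not to invoke any Feller property that the (possibly non-c\`adl\`ag) limit could lack. The paper argues differently: it fixes the finite level set $A\subset[a_0,\infty)$ with $\min A=a_0$, introduces the finite-state Markov chain $M$ recording the successive distinct elements of $A_0=A\cup\{0\}$ visited by $X$ before $T(0)$, and writes each $L^0(a)$, $a\in A$, as a sum of $S(a)$ i.i.d.\ geometric contributions (with success probability $q_n(a)=\PX_n^a(T(A_0\setminus\{a\})<T(a))$), resp.\ exponential contributions under $\Ncal$ or $\underline\Ncal$; the convergence then reduces to (a) convergence of $M$ as a finite-state chain (initial law and transition matrix, both via Lemma~\ref{lemma:continuity-hitting-times-Levy}) and (b) $r_n^{-1}(1+G_k^n(a))\Rightarrow E_k(a)$, which follows from $r_n q_n(a)\to q(a)$, i.e.\ from Lemma~\ref{lemma:conv-hitting-times}. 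What the paper's decomposition buys is that it never needs a sharp triangular-array asymptotic $r_n\log\phi_n(\theta/r_n)\to-u(\theta)$ of a single-downcrossing Laplace transform — the object your step (ii) requires and that you rightly identify as the main obstacle; only the first-order rate of Lemma~\ref{lemma:conv-hitting-times} is used. Two points in your sketch call for care and are side-stepped by the paper's device of transitioning between \emph{exact} levels in $A_0$: first, the segment $[0,T(a_0+x))$ contributes to $L^0(a_0+x+y)$ for $y>0$, and this initial above-$(a_0+x)$ piece starts from $X(0)$ or from the first overshoot above $a_0+x$, not from a fresh $\Lambda_n^*$-overshoot, so the assertion that the conditioning enters only through the value $L^0(a_0)$ is not literally correct and needs the initial contribution to be tracked separately (in the paper this is built into the convergence of $M_1=X(T(A))$); second, the internal excursions above $a_0+x$ you invoke are conditioned to return to $a_0+x$ \emph{exactly} (a positive-probability event for compound Poisson with drift), so one must check that Lemma~\ref{lemma:overshoot} survives that conditioning — the paper handles the analogous point explicitly at the level of the geometric decomposition rather than appealing to the unconditional overshoot law. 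With those repairs your approach should go through, but it is heavier machinery than the paper deploys.
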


\begin{proof}
	Let $A$ a finite subset of $[a_0,\infty)$ with $a_0 = \min A$, $A_0 = A \cup \{0\}$ and let $(\Q_n, \Q)$ be a pair of probability distributions either equal to
	\[ \left( \PX_n^*(\, \cdot \mid T(a_0) < T(0)), \ \Ncal(\, \cdot \mid T(a_0) < T(0)) \right) \]
	or equal to
	\[ \left( \PX_n(\, \cdot \mid T(a_0) < T(0)), \ \underline \Ncal(\, \cdot \mid T(a_0) < T(0)) \right). \]

	We show that $(L^0(a), a \in A)$ under $\Q_n$ converges weakly to $(L^0(a), a \in A)$ under $\Q$, which will prove the result.
	\\

	\newstep We begin by expressing the laws of $(L^0(a), a \in A)$ under $\Q_n$ and of $(L^0(a), a \in A)$ under $\Q$ in a convenient form, cf.~\eqref{eq:decomposition-L_n} and~\eqref{eq:decomposition-L} below. Let $M = (M_k, k \geq 1)$ be the sequence with values in $A_0$ which keeps track of the successively distinct elements of $A$ visited by $X$ before the first visit to $0$. More specifically, let $\sigma_1 = T(A)$ and for $k \geq 1$ define recursively
	\[ \sigma_{k+1} = \begin{cases}
		\inf \big\{ t > \sigma_k: X(t) \in A_0 \backslash \{X(\sigma_k)\} \big\} & \text{ if } X(\sigma_k) \in A,\\
		\sigma_k & \text{else}.
	\end{cases}  \]

	Note that for every $k \geq 1$, $\sigma_k$ under both $\Q_n$ and $\Q$ is almost surely finite so the above definition makes sense (in this proof we will only work under $\Q_n$ or $\Q$). Let $M_k = X(\sigma_k)$ and for $a \in A$ define
	\[ S(a) = \sum_{k=1}^\infty \indicator{M_k = a}. \]

	For each $a \in A$ this sum is finite, since $M$ under $\Q_n$ and $\Q$ only makes a finite number of visits to $A$ before visiting $0$. When $M_k \in A$, $X$ accumulates some local time at $M_k$ between times $\sigma_k$ and $\sigma_{k+1}$ before visiting $M_{k+1} 
\in A_0 \setminus \{M_k\}$. The amount of local time accumulated depends on whether we are working under $\Q_n$ or $\Q$.

	Under $\Q_n$ and conditionally on $M_k \in A$, $X$ reaches $M_k \in A$ at time $\sigma_k$ and then returns to this point a geometric number of times before visiting $M_{k+1} \in A_0 \setminus \{M_k\}$. Identifying the parameter of the geometric random variables involved, one sees that $(L^0(a), a \in A)$ under $\Q_n$ can be written as follows (with the convention $\sum_1^0 = 0$):
	\begin{equation} \label{eq:decomposition-L_n}
		 (L^0(a), a \in A) = \left( \frac{1}{r_n} \sum_{k=1}^{S(a)} \left( 1 + G_k^n(a) \right), a \in A \right)
	\end{equation}
	where $G_k^n(a)$ is a geometric random variable with success probability $q_n(a)$ given by
	\[ q_n(a) = \PX_n^{a} \left( T \left( A_0 \backslash \{a\} \right) < T(a) \right) \]
	and the random variables $(G_k^n(a), k \geq 1, a \in A)$ are independent and independent of the vector $(S(a), a \in A)$.

	Similarly, under $\Q$ and conditionally on $M_k = a \in A$, it is well-known by excursion theory that $X$ accumulates an amount of local time at level $a$ between times $\sigma_k$ and $\sigma_{k+1}$ which is exponentially distributed with parameter $q(a)$ given by
	\[ q(a) = \Ncal^a(T(A_0 \backslash \{a\}) < T(a)), \]
	where $\Ncal^a$ is the excursion measure of $X$ away from $a$. Iterating this decomposition, one sees that $\left( L^0(a), a \in A \right)$ under $\Q$ can be written as follows:
	\begin{equation} \label{eq:decomposition-L}
		(L^0(a), a \in A) = \left( \sum_{k=1}^{S(a)} E_k(a), a \in A \right)
	\end{equation}
	where $E_k(a)$ is an exponential random variable with parameter $q(a)$ and the random variables $(E_k(a), k \geq 1, a \in A)$ are independent and independent of $(S(a), a \in A)$.

	In view of the decompositions~\eqref{eq:decomposition-L_n} and~\eqref{eq:decomposition-L} and the independence of the random variables appearing in these sums, the result will be proved if we can show that each summand $r_n^{-1} (1+G_k^n(a))$ converges in distribution to $E_k(a)$, and if we can show that the numbers of terms $(S(a),a \in A)$ under $\Q_n$ also converges to $(S(a), a \in A)$ under $\Q$.
	\\

	\newstep We prove that for each $a \in A$, $r_n^{-1} (1+G_1^n(a))$ converges in distribution to $E_1(a)$. This is actually a direct consequence of Lemma~\ref{lemma:conv-hitting-times} which implies that $r_n q_n(a) \to q(a)$ (using the invariance in space of L\'evy processes). The following last step is devoted to proving the convergence of $(S(a), a \in A)$ under $\Q_n$ towards $(S(a), a \in A)$ under $\Q$.
	\\

	\newstep To show that $(S(a), a \in A)$ under $\Q_n$ converges towards $(S(a), a \in A)$ under $\Q$, it is sufficient to show that $M$ under $\Q_n$ converges towards to $M$ under $\Q$. To prove this, we note that both under $\Q_n$ and $\Q$, $M$ is a Markov chain living in the finite state space $A_0$ and absorbed at $0$. Thus to show that $M$ under $\Q_n$ converges to $M$ under $\Q$, it is enough to show that the initial distributions and also the transition probabilities converge.
	\\

	Let us prove the convergence of the initial distributions. We have $\Q_n(M_1 = 0) = \Q(M_1 = 0) = 0$ and for $a \in A$,
	\[ \Q_n(M_1 = a) = \Q_n \left( X(T(A_0)) = a \right) = \Q_n \left( X(T(A)) = a \right). \]

	Similarly,
	\[ \Q(M_1 = a) = \Q \left( X(T(A_0)) = a \right) = \Q \left( X(T(A)) = a \right). \]

	When $\Q_n = \PX_n^*(\, \cdot \mid T(a_0) < T(0))$ and $\Q = \Ncal(\, \cdot \mid T(a_0) < T(0))$, we have
	\[
		\Q_n \left( X(T(A)) = a \right) = \PX_n^* \left( X(T(A)) = a \mid T(a_0) < T(0) \right) = \PX_n^0 \left( X(T(A)) = a \mid T(A) < +\infty \right)
	\]
	and so Lemma~\ref{lemma:continuity-hitting-times-Levy} gives
	\[
		\lim_{n \to +\infty} \Q_n \left( X(T(A)) = a \right) = \PX^0 \left( X(T(A)) = a \mid T(A) < +\infty \right).
	\]
	Since
	\[ \PX^0 \left( X(T(A)) = a \mid T(A) < +\infty \right) = \Ncal \left( X(T(A)) = a \mid T(a_0) < T(0) \right) = \Q\left( X(T(A)) = a \right) \]
	this proves the convergence of the initial distributions in this case. The second case $\Q_n = \PX_n(\, \cdot \mid T(a_0) < T(0))$ and $\Q = \underline \Ncal(\, \cdot \mid T(a_0) < T(0))$ follows similarly by considering the reflected process: we have then
	\[
		\Q_n \left( X(T(A)) = a \right) = \PX_n \left( X(T(A)) = a \mid T(a_0) < T(0) \right) = \underline \PX_n^0 \left( X(T(A)) = a \right)
	\]
	and so Lemma~\ref{lemma:continuity-hitting-times-Levy} gives
	\[
		\lim_{n \to +\infty} \Q_n \left( X(T(A)) = a \right) = \underline \PX^0 \left( X(T(A)) = a \right).
	\]
	Since
	\[ \underline \PX^0 \left( X(T(A)) = a \right) = \underline \Ncal \left( X(T(A)) = a \mid T(a_0) < T(0) \right) = \Q\left( X(T(A)) = a \right) \]
	this proves the convergence of the initial distributions in this case as well.
	\\

	It remains to show that transition probabilities also converge. Note that by definition, in contrast with the initial distributions, transition probabilities of $M$ under $\Q_n$ and $\Q$ do not depend on the case considered. Since $0$ is an absorbing state for $M$ under $\Q_n$ and $\Q$, we only have to show that for any $a \in A$ and $b \in A_0$ with $a \neq b$, we have $\Q_n \left( M_{k+1} = b \mid M_k = a \right) \to \Q \left( M_{k+1} = b \mid M_k = a \right)$. On the one hand,
	\[
		\Q_n \left( M_{k+1} = b \mid M_k = a \right) = \PX_n^{a} \left( X(T(A_0 \backslash \{a\})) = b \right)
	\]
	while on the other hand,
	\[
		\Q \left( M_{k+1} = b \mid M_k = a \right) = \PX^{a} \left( X(T(A_0 \backslash \{a\})) = b \right).
	\]
	Note that $T(A_0 \backslash \{a\})$ is almost surely finite for $a \in A$ both under $\PX_n^a$ and $\PX^a$, so the result follows from Lemma~\ref{lemma:continuity-hitting-times-Levy}.
\end{proof}

\begin{corollary} \label{cor:conv-fd-P^*}
	For any $a_0 > 0$, the two sequences of processes $L^0$ under $\PX_n^* \left( \, \cdot \mid T(a_0) < T(0) \right)$ and $\PX_n \left( \, \cdot \mid T(a_0) < T(0) \right)$ converge in the sense of finite-dimensional distributions to $L^0$ under $\Ncal(\, \cdot \mid T(a_0) < T(0))$ and $\underline \Ncal(\, \cdot \mid T(a_0) < T(0))$, respectively.
\end{corollary}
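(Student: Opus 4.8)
The plan is to bootstrap from Proposition~\ref{prop:conv-fd-shifted-process}, which already yields the convergence of the coordinates of $L^0$ at levels $\ge a_0$; what is missing is the joint incorporation of the coordinates at levels in $(0,a_0)$. Fix a finite set $T\subset[0,\infty)$ at which we test the finite-dimensional distributions. The coordinate at level $0$ is harmless: under $\PX_n^*(\,\cdot\mid T(a_0)<T(0))$ and $\PX_n(\,\cdot\mid T(a_0)<T(0))$ the process reaches $0$ by creeping continuously downwards, so $L^0(0)=L(0,T(0))=1/r_n$ deterministically, while $1/r_n=W_n(0)\to W(0)=0=L^0(0)$ under $\Ncal$ and $\underline\Ncal$ by Lemma~\ref{lemma:conv-W}; appending a coordinate that converges to a constant preserves weak convergence. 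Enlarging $T$ (and projecting back at the very end via the continuous mapping theorem), we may therefore assume $T\subset(0,\infty)$ and $a_0\in T$, and we set $\varepsilon=\min T\le a_0$.

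First I would apply Proposition~\ref{prop:conv-fd-shifted-process} with $\varepsilon$ in the role of $a_0$: since $\varepsilon=\min T$ and $T\subset[\varepsilon,\infty)$, the vector $(L^0(a),a\in T)$ under $\PX_n^*(\,\cdot\mid T(\varepsilon)<T(0))$, resp.\ under $\PX_n(\,\cdot\mid T(\varepsilon)<T(0))$, converges weakly to $(L^0(a),a\in T)$ under $\Ncal(\,\cdot\mid T(\varepsilon)<T(0))$, resp.\ $\underline\Ncal(\,\cdot\mid T(\varepsilon)<T(0))$. It remains to replace the conditioning event by $\{T(a_0)<T(0)\}$. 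Because all the processes at play are spectrally positive and creep continuously downwards, reaching level $a_0>\varepsilon$ strictly before $0$ forces reaching $\varepsilon$ strictly before $0$, so $\{T(a_0)<T(0)\}\subset\{T(\varepsilon)<T(0)\}$ under each of the four measures; consequently $\PX_n^*(\,\cdot\mid T(a_0)<T(0))$ is nothing but $\PX_n^*(\,\cdot\mid T(\varepsilon)<T(0))$ conditioned further on the event $B:=\{T(a_0)<T(0)\}$, and likewise for the three other measures. Moreover, up to a null set, $B=\{L^0(a_0)>0\}$ for each of the four measures: a point visited before time $T(0)$ carries positive local time ($0$ is regular for every point for the infinite-variation limit, while for the pre-limit processes the only exception, namely a jump landing exactly at a downcrossing of $a_0$, has probability $0$), and the reverse inclusion is trivial. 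As $a_0\in T$, the event $B$ corresponds to the open subset $\{x\in\R^T:x_{a_0}>0\}$ of the state space of the tested vector.

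The last ingredient is a soft conditioning lemma: if $\mu_n\Rightarrow\mu$, if $B$ is open with $\mu(B)>0$, and if $\mu_n(B)\to\mu(B)$, then $\mu_n(\,\cdot\mid B)\Rightarrow\mu(\,\cdot\mid B)$; this follows by applying the portmanteau lower bound for lower semicontinuous functions to $x\mapsto f(x)\indicator{x\in B}$ and to $x\mapsto(1-f(x))\indicator{x\in B}$ (both lower semicontinuous for $B$ open and $0\le f\le1$ continuous), and then letting $f$ range over $C_b$. I expect the delicate point to be precisely that one cannot simply invoke a continuous-mapping argument here, because the boundary $\{x_{a_0}=0\}$ carries positive mass under the limit (the reference excursion may well reach level $\varepsilon$ without reaching $a_0$); one genuinely needs the extra hypothesis $\mu_n(B)\to\mu(B)$. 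To obtain it I would observe that, in the notation of the proof of Proposition~\ref{prop:conv-fd-shifted-process} run with $\varepsilon$ in place of $a_0$, one has $B=\{S(a_0)\ge1\}$, where $S(a_0)=\sum_{k\ge1}\indicator{M_k=a_0}$ counts the visits to $a_0$ of the finite, absorbed Markov chain $M$; that proof shows $M$, hence the integer-valued $S(a_0)$, converges in distribution under the relevant measures, so that $\mu_n(B)=\PX_n^*(S(a_0)\ge1\mid T(\varepsilon)<T(0))\to\underline{\phantom{0}}\,\Ncal(S(a_0)\ge1\mid T(\varepsilon)<T(0))=\mu(B)$, and similarly for the reflected pair.

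Putting these together: applying the conditioning lemma with $\mu_n$ the law of $(L^0(a),a\in T)$ under $\PX_n^*(\,\cdot\mid T(\varepsilon)<T(0))$ and $\mu$ its weak limit, and then with the reflected pair $\PX_n(\,\cdot\mid T(\varepsilon)<T(0))$ and $\underline\Ncal(\,\cdot\mid T(\varepsilon)<T(0))$, yields the weak convergence of $(L^0(a),a\in T)$ under the measures conditioned on $\{T(a_0)<T(0)\}$; projecting onto the original test set and restoring the (deterministic) coordinate at level $0$ then concludes. I expect the main obstacle to be exactly the conditioning step just described — establishing $\mu_n(B)\to\mu(B)$ and circumventing the positive-mass boundary — everything else being quoted from, or reduced to, the proof of Proposition~\ref{prop:conv-fd-shifted-process}.
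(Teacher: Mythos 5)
Your proof is correct and reaches the same conclusion as the paper, but via a genuinely different mechanism at the conditioning step. The paper tests finite-dimensional convergence through survival-function events $E = \{L^0(a_i) \ge u_i,\, i = 0,\dots,I\}$ with all $u_i > 0$; since $u_0, u_1 > 0$, the event $E$ is automatically contained both in $\{T(a_0) < T(0)\}$ and in $\{T(a_1) < T(0)\}$ where $a_1 = \min_{i\ge 1} a_i$, so Bayes' formula rewrites $\PX_n^*(E \mid T(a_0) < T(0))$ directly as the scalar ratio $\PX_n^*(T(a_1)<T(0))/\PX_n^*(T(a_0)<T(0))$ times $\PX_n^*(E \mid T(a_1)<T(0))$; the latter factor is Proposition~\ref{prop:conv-fd-shifted-process} and the ratio converges by Lemmas~\ref{lemma:overshoot} and~\ref{lemma:conv-hitting-times} (Lemma~\ref{lemma:conv-hitting-times-2} for the $\PX_n$ pair). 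You instead invoke a general conditioning lemma for weak convergence (via lower-semicontinuous functions) and establish the required $\mu_n(B)\to\mu(B)$ by re-opening the proof of Proposition~\ref{prop:conv-fd-shifted-process} to extract convergence of the Markov chain $M$, hence of $S(a_0)$. This works, and your handling of the null-set identification $\{T(a_0)<T(0)\}=\{L^0(a_0)>0\}$ and of the atomic boundary $\{x_{a_0}=0\}$ is careful; but it is heavier machinery for the same content, and indeed $\mu_n(B)$ is precisely the reciprocal of the paper's ratio $\PX_n^*(T(a_1)<T(0))/\PX_n^*(T(a_0)<T(0))$, so your derivation of its convergence via the chain $M$ is an alternative route to what Lemmas~\ref{lemma:overshoot} and~\ref{lemma:conv-hitting-times} deliver directly. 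The paper's choice of test events sidesteps the conditioning lemma entirely because a change of conditioning set reduces to multiplication by a convergent scalar; your argument, by contrast, would extend more readily to conditioning events that are not sub-events of the test events, at the cost of the extra lower-semicontinuity apparatus.
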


\begin{proof}
	Let $I \geq 1$, $0 < a_1 < \cdots < a_I$ and $u_i > 0$ for $i = 0, \ldots, I$: we prove the result for the convergence under $\PX_n^*$, the result for $\PX_n$ follows along the same lines, replacing $\PX_n^*$ by $\PX_n$ and $\Ncal$ by $\underline \Ncal$. We show that
	\begin{multline*}
		\lim_{n \to +\infty} \PX_n^* \left( L^0(a_i) \geq u_i, i = 0, \ldots, I \mid T(a_0) < T(0) \right)\\
		= \Ncal \left( L^0(a_i) \geq u_i, i = 0, \ldots, I \mid T(a_0) < T(0) \right).
	\end{multline*}

	If $a_0 \leq a_1$ then this follows directly from Proposition~\ref{prop:conv-fd-shifted-process}. If $a_0 > a_1$ we use Bayes formula:
	\begin{multline*}
		\PX_n^* \left( L^0(a_i) \geq u_i, i = 0, \ldots, I \mid T(a_0) < T(0) \right)\\
		= \frac{\PX_n^* \left( T(a_1) < T(0) \right)}{\PX_n^* \left( T(a_0) < T(0) \right)} \PX_n^* \left( L^0(a_i) \geq u_i, i = 0, \ldots, I \mid T(a_1) < T(0) \right).
	\end{multline*}

	Lemma~\ref{lemma:overshoot} and Lemma~\ref{lemma:conv-hitting-times} (use Lemma~\ref{lemma:conv-hitting-times-2} for $\PX_n$) give
	\[ \lim_{n \to +\infty} \frac{\PX_n^* \left( T(a_1) < T(0) \right)}{\PX_n^* \left( T(a_0) < T(0) \right)} = \frac{\Ncal \left( T(a_1) < T(0) \right)}{\Ncal \left( T(a_0) < T(0) \right)} \]
	and so Proposition~\ref{prop:conv-fd-shifted-process} gives
	\begin{multline*}
		\lim_{n \to +\infty} \PX_n^* \left( L^0(a_i) \geq u_i, i = 0, \ldots, I \mid T(a_0) < T(0) \right)\\
		= \frac{\Ncal(T(a_1) < T(0))}{\Ncal(T(a_0) < T(0))} \Ncal \left( L^0(a_i) \geq u_i, i = 0, \ldots, I \mid T(a_1) < T(0) \right)
	\end{multline*}
	which is equal to $\Ncal (L^0(a_i) \geq u_i, i = 0, \ldots, I \mid T(a_0) < T(0))$. The result is proved.
\end{proof}

\begin{corollary} \label{cor:fd-conv-large-initial-condition}
	Let $\zeta > 0$ and $(z_n)$ be any integer sequence such that $\zeta_n = z_n / r_n \to \zeta$, and recall that $T^L(\zeta) = \inf \left \{ t \geq 0: L(0, t) \geq \zeta \right \}$. Then the sequence of processes $L(\, \cdot \,, T^L(\zeta_n))$ under $\PX_n^0(\, \cdot \mid T^L(\zeta_n) < +\infty)$ converges in the sense of finite-dimensional distributions to $L(\, \cdot \,, T^L(\zeta))$ under $\PX^0(\, \cdot \mid T^L(\zeta) < +\infty)$.
\end{corollary}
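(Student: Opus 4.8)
The plan is to decompose the path of $X$ run up to $T^L(\zeta_n)$ (respectively $T^L(\zeta)$) into its successive excursions away from $0$, to recognise the limiting law as a compound Poisson vector, and to match the two term by term. Fix a finite set $A\subset[0,\infty)$ and put $A'=A\cap(0,\infty)$, $a_0=\min A'$; if $A'=\emptyset$ there is nothing to prove, and the coordinate $a=0$ is deterministic since $L(0,T^L(\zeta_n))=z_n/r_n=\zeta_n\to\zeta=L(0,T^L(\zeta))$ (the last equality because $L(0,\,\cdot\,)$ is continuous under $\PX^0$), so it suffices to treat $A'$. Under $\PX_n^0$ the increasing functional $L(0,\,\cdot\,)$ increases by jumps of size $1/r_n$ precisely at the visits of $X$ to $0$, so $T^L(\zeta_n)$ is the time of the $z_n$-th such visit, and conditioning on $T^L(\zeta_n)<+\infty$ amounts to requiring that the first $z_n-1$ excursions of $X$ away from $0$ all return to $0$. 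Since no local time on $A'$ is collected while $X$ sits at $0$, this yields
\[
\big(L(a,T^L(\zeta_n)),\ a\in A'\big)\ \stackrel{d}{=}\ \Big(\sum_{k=1}^{z_n-1}L^0_{(k)}(a),\ a\in A'\Big),
\]
where $L^0_{(1)},L^0_{(2)},\dots$ are i.i.d.\ copies of $L^0$ under $\PX_n^0(\,\cdot\mid T(0)<+\infty)$.

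The next step exploits the absence of negative jumps. Under $\PX_n^0$ an excursion away from $0$ returns to $0$ if and only if it ever enters $(0,\infty)$, and an excursion reaching a level $a_0>0$ is automatically finite and collects all of its local time on $A'\subset(0,\infty)$ after its first entrance into $(0,\infty)$. By Lemma~\ref{lemma:overshoot} this first entrance point has (after rescaling) the law of $\Lambda_n^*$, so by the strong Markov property the excursion continues from there as $X$ under $\PX_n^*$. Hence, conditionally on reaching $a_0$, an excursion contributes a vector $(L^0(a),a\in A')$ with the law of $L^0$ under $\PX_n^*(\,\cdot\mid T(a_0)<T(0))$, while among the first $z_n-1$ (finite) excursions the number reaching $a_0$ is binomial with parameters $z_n-1$ and
\[
p_n:=\PX_n^*\big(T(a_0)<T(0)\big)=\frac{\PX_n^0\big(T(a_0)<T(0)\big)}{\PX_n^0\big(T(0)<+\infty\big)},
\]
independently of the contributions. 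Consequently the characteristic function of $(L(a,T^L(\zeta_n)),a\in A')$ under $\PX_n^0(\,\cdot\mid T^L(\zeta_n)<+\infty)$ equals $\big(1-p_n(1-\varphi_n(\xi))\big)^{z_n-1}$, where $\varphi_n$ is the characteristic function of $(L^0(a),a\in A')$ under $\PX_n^*(\,\cdot\mid T(a_0)<T(0))$.

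Now I pass to the limit. Lemma~\ref{lemma:conv-hitting-times} gives $r_n\PX_n^0(T(a_0)<T(0))\to\Ncal(T(a_0)<T(0))$, and $\PX_n^0(T(0)=+\infty)=1/(r_nW_n(\infty))\to0$ by Lemma~\ref{lemma:conv-W} together with $r_n\to+\infty$; hence $p_n\to0$ and $(z_n-1)p_n=\zeta_n\,r_np_n\to\zeta\,\Ncal(T(a_0)<T(0))$. By Corollary~\ref{cor:conv-fd-P^*}, $(L^0(a),a\in A')$ under $\PX_n^*(\,\cdot\mid T(a_0)<T(0))$ converges in distribution, so $\varphi_n(\xi)\to\varphi(\xi)$, the characteristic function of $(L^0(a),a\in A')$ under $\Ncal(\,\cdot\mid T(a_0)<T(0))$. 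Therefore $(1-p_n(1-\varphi_n(\xi)))^{z_n-1}\to\exp\!\big(-\zeta\,\Ncal(T(a_0)<T(0))(1-\varphi(\xi))\big)$, which is the characteristic function of a sum $\sum_{j=1}^N\eta_j$ with $N$ Poisson of parameter $\zeta\,\Ncal(T(a_0)<T(0))$ and $(\eta_j)$ i.i.d.\ of law $\Ncal\big((L^0(a),a\in A')\in\,\cdot\mid T(a_0)<T(0)\big)$, independent of $N$. Finally I identify this compound Poisson vector with $(L(a,T^L(\zeta)),a\in A')$ under $\PX^0(\,\cdot\mid T^L(\zeta)<+\infty)$: by excursion theory the path of $X$ up to $T^L(\zeta)$ carries a Poisson point process of excursions with intensity $\zeta\Ncal$, conditioning on $T^L(\zeta)<+\infty$ removes the infinite ones, the local time on a level of $A'$ is the sum of the excursion local times, and only excursions reaching $a_0$ contribute — these are all finite, form a Poisson number of mean $\zeta\,\Ncal(T(a_0)<T(0))$, and have i.i.d.\ contributions of law $\Ncal(\,\cdot\mid T(a_0)<T(0))$ — which is precisely the vector above. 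Restoring the deterministic coordinate $a=0$ and letting $A$ range over all finite subsets of $[0,\infty)$ yields the claim.

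The step I expect to be most delicate is the bookkeeping of the second paragraph, namely checking that the three conditionings at play — on $T^L(\zeta_n)<+\infty$, on an excursion entering $(0,\infty)$, and on an excursion reaching $a_0$ — are compatible in exactly the way used, and that the resulting sum is an independent compound binomial sum. This rests squarely on the spectrally positive structure, through the equivalence ``excursion finite'' $\Leftrightarrow$ ``excursion enters $(0,\infty)$'' and the implication ``excursion reaches $a_0>0$'' $\Rightarrow$ ``excursion finite'', together with Lemma~\ref{lemma:overshoot} to pin down the entrance law. Once the representation $(1-p_n(1-\varphi_n(\xi)))^{z_n-1}$ is established, passing to the limit is routine.
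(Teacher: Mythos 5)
Your proof is correct and follows essentially the same route as the paper: decompose $L(\cdot,T^L(\zeta_n))$ on $A'\subset(0,\infty)$ into a binomial number of i.i.d.\ excursion local times (only excursions reaching $a_0=\min A'$ contribute, and by Lemma~\ref{lemma:overshoot} each of those has the law of $L^0$ under $\PX_n^*(\cdot\mid T(a_0)<T(0))$), pass to a Poisson number in the limit using Lemma~\ref{lemma:conv-hitting-times} and Corollary~\ref{cor:conv-fd-P^*}, and identify the resulting compound-Poisson vector with the excursion decomposition of $L(\cdot,T^L(\zeta))$ under $\PX^0(\cdot\mid T^L(\zeta)<+\infty)$. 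The only cosmetic differences are that you make the Poisson limit explicit via characteristic functions and are a touch more careful with the binomial parameters ($z_n-1$ trials with success probability $\PX_n^*(T(a_0)<T(0))=\PX_n^0(T(a_0)<T(0))/\PX_n^0(T(0)<+\infty)$, versus the paper's $z_n$ and $\PX_n^0(T(a_0)<T(0))$); both give the same limit since $\PX_n^0(T(0)<+\infty)\to1$.
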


\begin{proof}
	Since the marginals at $0$ are deterministic and converge to $\zeta$, we restrict our attention to finite sets $A \subset (0,\infty)$ and we are interested in the weak convergence of the sequence $(L(a, T^L(\zeta_n)), a \in A)$ under $\PX_n^0 ( \, \cdot \mid T^L(\zeta_n) < +\infty )$. Let $a_0 = \min A > 0$: then only those excursions reaching level $a_0$ contribute and so for each $n \geq 1$, $(L(a, T^L(\zeta_n)), a \in A)$ under $\PX_n^0 ( \, \cdot \mid T^L(\zeta_n) < +\infty )$ is equal in distribution to
	\[ \left( \sum_{k=1}^{K_n} L^{a_0}_{n,k}(a), a \in A\right) \]
	where $(L^{a_0}_{n,k}, k \geq 1)$ are i.i.d.\ processes with distribution $L^0$ under $\PX_n^*(\, \cdot \mid T(a_0) < T(0))$, and $K_n$ is an independent random variable distributed as a binomial random variable with parameters $z_n$ and $\PX_n^0 \left( T(a_0) < T(0) \right)$. Since
	\[ \lim_{n \to +\infty} z_n / r_n = \zeta \ \text{ and } \ \lim_{n \to +\infty} r_n \PX_n^0 (T(a_0) < T(0)) = \Ncal(T(a_0) < T(0)), \]
	the second convergence being given by Lemma~\ref{lemma:conv-hitting-times}, the sequence $(K_n)$ converges weakly to a Poisson random variable $K$ with parameter $\zeta \Ncal(T(a_0) < T(0))$. On the other hand, $L^{a_0}_{n,1}$ converges in distribution to $L^0$ under $\Ncal(\, \cdot \mid T(a_0) < T(0))$ by Proposition~\ref{prop:conv-fd-shifted-process}, so the sequence $(L(a, T^L(\zeta_n)), a \in A)$ under $\PX_n^0( \, \cdot \mid T^L(\zeta_n) < +\infty)$ converges weakly to
	\[ \left( \sum_{k=1}^{K} L^{a_0}_{k}(a), a \in A \right) \]
	where $(L^{a_0}_{k}, k \geq 1)$ are independent from $K$ and are i.i.d.\ with common distribution $L^0$ under $\Ncal(\, \cdot \mid T(a_0) < T(0))$. This proves the result.
	Indeed, under $\PX^0(\,\cdot \,| \, T^L(\zeta)<+\infty)$ also, only excursions reaching level $a_0$ contribute to $L(\, \cdot \,, T^L(\zeta))$ on $A$, and there is a Poisson number, say $K'$, with parameter $\zeta \Ncal(T(a_0) < T(0))$, of such excursions. These excursions are i.i.d.\ with common law $\Ncal(\, \cdot \mid T(a_0) < T(0))$, and so their local time processes are i.i.d.\ with common distribution $L^0$ under $\Ncal (\, \cdot \mid T(a_0) < T(0))$. The local time being an additive functional, it is the sum of local times of these $K'$ i.i.d.\ excursions.
\end{proof}

\section{Tightness results} \label{sec:tightness}

Tightness is a delicate issue. In the finite variance case and assuming that the limiting Brownian motion (with drift) drifts to $-\infty$, it follows quickly from a simple queueing argument, see Lambert et al.~\cite{Lambert11:0}. In the infinite variance case of the present paper we prove tightness under the tightness condition stated in Section~\ref{sec:notation}. So in the rest of this section we assume that the tightness condition holds, i.e., for each $n \geq 1$ we have $s_n = n^{1/\alpha}$ and $\Lambda_n = \Lambda$ where $\Lambda$ has tail distribution $\P(\Lambda \geq s) = (1+s)^{-\alpha}$. The main technical result concerning tightness is contained in the following proposition, Appendix~\ref{sec:proof} is devoted to its proof.

\begin{prop} \label{prop:tightness-interm}
	For any $a_0 > 0$, the sequence $L^0(a_0 + \, \cdot \,)$ under $\PX_n^*(\, \cdot \, | \, T(a_0) < T(0))$ converges weakly to the continuous process $L^0(a_0 + \, \cdot \,)$ under $\Ncal(\, \cdot \, | \, T(a_0) < T(0))$.
\end{prop}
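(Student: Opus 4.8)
The plan is to establish tightness of the sequence $L^0(a_0 + \,\cdot\,)$ under $\PX_n^*(\,\cdot\, \mid T(a_0) < T(0))$ in the Skorohod space $\Dcal$, since convergence of the finite-dimensional distributions is already granted by Proposition~\ref{prop:conv-fd-shifted-process} and the limit process is continuous (as noted after the tightness assumption, \eqref{eq:cond} holds so the limiting local time is jointly continuous; in particular the finite-dimensional limit identifies a bona fide process on $\Dcal$). Given continuity of the limit, it suffices to verify a modulus-of-continuity tightness criterion: for every $\varepsilon, \eta > 0$ there is $\delta > 0$ with
\[
  \limsup_{n \to +\infty} \PX_n^* \Big( \sup_{|a-b| \leq \delta,\ a,b \geq a_0} \big| L^0(a) - L^0(b) \big| > \varepsilon \ \Big|\ T(a_0) < T(0) \Big) < \eta,
\]
together with control of the process on a neighborhood of infinity (which is easy, since past some random level no excursion from $a_0$ reaches that high and the contribution vanishes). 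The decomposition \eqref{eq:decomposition-L_n} should be the workhorse: on $[a_0,\infty)$ the prelimit local time is, up to the spatial Markov chain $M$ tracking which levels are visited, a sum of blocks $r_n^{-1}\sum_{k=1}^{S(a)}(1+G_k^n(a))$ of geometric random variables whose parameters $q_n(a)$ satisfy $r_n q_n(a) \to q(a) = \Ncal^a(T(A_0\setminus\{a\}) < T(a))$. The oscillation of $L^0$ over a small spatial window is thus controlled by the oscillation of these geometric sums as the level varies, which under the tightness assumption ($\Lambda_n = \Lambda$ fixed, $s_n = n^{1/\alpha}$) one can hope to bound uniformly.

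The key steps, in order, would be: \emph{(i)} reduce the statement, via the already-proven convergence of finite-dimensional distributions and continuity of the limit, to proving tightness in $\Dcal$ on $[a_0, M]$ for each fixed $M$, plus a tail estimate handling $[M,\infty)$; \emph{(ii)} express the prelimit process through \eqref{eq:decomposition-L_n} and, by the branching/excursion structure, dominate its spatial increments by those of an \emph{auxiliary} family of processes built from the geometric blocks, stripped of the conditioning and of the state-dependence of $M$ --- this is exactly the ``auxiliary sequence of processes'' alluded to in the organization of the paper whose tightness is relegated to the appendix; \emph{(iii)} prove a fourth-moment (or Kolmogorov--Chentsov-type) increment bound of the form $\E|L^0(a) - L^0(b)|^4 \leq C|a-b|^{1+\gamma}$ for the auxiliary process, uniformly in $n$, exploiting the explicit tail $\P(\Lambda \geq s) = (1+s)^{-\alpha}$ with $1 < \alpha < 2$ to control the scale function increments $W_n'$ (via Lemma~\ref{lemma:conv-W} and the asymptotics $W_n(u) \asymp 1/(u\Psi_n(1/u))$) that govern the hitting probabilities $q_n$; \emph{(iv)} transfer the bound back through steps (ii)--(i) to conclude tightness of $L^0(a_0+\,\cdot\,)$ itself, and combine with the finite-dimensional convergence to obtain weak convergence to $L^0(a_0+\,\cdot\,)$ under $\Ncal(\,\cdot\, \mid T(a_0) < T(0))$, which is continuous.

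The main obstacle, and the reason the appendix is ``long and tedious,'' will be step (iii): controlling the \emph{small} spatial oscillations of the local time in this non-lattice, infinite-variance regime. Unlike the lattice random-walk case, there is no mesh below which oscillations are automatically absent, so one genuinely needs quantitative control of how $L^0(a)$ varies as $a$ moves continuously --- this means precise estimates on the modulus of continuity of $a \mapsto W_n(a), W_n'(a)$ and on the fluctuations of the geometric sums around their means, uniformly in $n$, and it is here that the exact form $\Lambda_n = \Lambda$ (rather than mere moment assumptions à la Perkins) is needed to get clean enough bounds. A secondary technical point is handling the conditioning $\{T(a_0) < T(0)\}$ and the dependence structure induced by the Markov chain $M$; the cleanest route is to bound everything above by an unconditioned, independent-increment auxiliary object for which moment computations are tractable, and only at the very end reinstate the conditioning using that $\PX_n^*(T(a_0) < T(0))$ is bounded away from $0$ after normalization (Lemma~\ref{lemma:conv-hitting-times}, Lemma~\ref{lemma:overshoot}).
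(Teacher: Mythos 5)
Your outline is broadly sound in its early stages — reduce to tightness on compacts via the already-established finite-dimensional convergence and continuity of the limit, and exploit the geometric-block decomposition \eqref{eq:decomposition-L_n} together with the explicit form of $\Lambda_n=\Lambda$ to control $W_n$, $W_n'$ — but the central estimate you propose in step (iii) fails, and the paper explicitly remarks on this. You propose a Kolmogorov--Chentsov bound on a \emph{single} spatial increment, of the form $\E\left|L^0(b)-L^0(a)\right|^4 \leq C|b-a|^{1+\gamma}$. In the non-lattice, small-oscillation regime $|b-a|\lesssim 1/s_n$, such a bound cannot hold with exponent strictly greater than $1$: the discussion preceding the proof of Proposition~\ref{prop:tightness-interm} in the paper states that ``considering only one of these two terms gives a bound which can be shown to decay only linearly in $c-a$, which is not sufficient to establish tightness.'' Intuitively, over a window of width $\lesssim 1/s_n$ the increment $L^0(b)-L^0(a)$ already picks up an order-one fluctuation from the first (conditionally geometric) block, and the moments saturate at a linear rate in $b-a$; no fourth-moment single-increment bound can fix this. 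The paper sidesteps the obstruction by invoking a different tightness criterion (Theorem~13.5 in Billingsley) which only requires bounding the probability that \emph{both} adjacent increments $|L^0(b)-L^0(a)|$ and $|L^0(c)-L^0(b)|$ are large simultaneously, i.e.\ a bound on $\left|L^0(b)-L^0(a)\right| \wedge \left|L^0(c)-L^0(b)\right|$. The superlinear decay $(c-a)^{3/2}$ is then obtained in Proposition~\ref{prop:case<} by a conditional argument (conditioning on the $\sigma$-field $\Fcal$ of the first-block variables, then applying Cauchy--Schwarz) that exploits the near-independence of the two adjacent increments. A second, smaller discrepancy: you suggest stripping the conditioning and passing to an unconditioned independent-increment auxiliary object, whereas the paper keeps the conditioning $\{T(a_0)<T(0)\}$ essentially throughout (reducing it to $\{T(a)<T(0)\}$ by a Bayes/monotonicity argument at the top of the appendix) and works directly with the explicit two-increment law given in Lemma~\ref{lemma:fd} and formula~\eqref{eq:fd-integrated}, rather than dominating by a simpler process. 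Without replacing your step (iii) by the minimum-of-two-increments criterion, the argument as written does not go through.
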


We now prove the tightness of the sequences considered in Theorems~\ref{thm:main-1} and~\ref{thm:main-2}. In the sequel, we say that a sequence $(Z_n)$ under $\Q_n$ is C-tight if it is tight and any accumulation point is continuous. It is known, see for instance Chapter~VI in Jacod and Shiryaev~\cite{Jacod03:0}, that this holds if and only if for any $m, \delta > 0$,
\begin{equation} \label{eq:conditions-C-tightness}
	\lim_{a \to +\infty} \limsup_{n \to +\infty} \Q_n \left( \sup_{0 \leq t \leq m} |Z_n(t)| \geq a \right) = 0 \quad \text{ and } \quad \lim_{\varepsilon \to 0} \limsup_{n \to +\infty} \Q_n \left( \varpi_m(Z_n, \varepsilon) \geq \delta \right) = 0
\end{equation}
where $\varpi$ is the following modulus of continuity:
\[ \varpi_m(f, \varepsilon) = \sup \left\{ |f(t) - f(s)| : 0 \leq t, s \leq m, |t-s| \leq \varepsilon \right\}. \]

\begin{lemma}\label{lemma:tightness-sum}
	For each $n \geq 1$, consider on the same probability space an integer valued random variable $K_n$ and a sequence of processes $(Z_{n,k}, k \geq 1)$. Assume that for each $k \geq 1$ the sequence $(Z_{n,k}, n \geq 1)$ is C-tight and that the sequence $(K_n, n \geq 1)$ is tight. Then the sequence $(Z_{n,1} + \cdots + Z_{n, K_n}, n \geq 1)$ is C-tight.
\end{lemma}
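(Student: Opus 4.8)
The plan is to verify directly the two conditions in~\eqref{eq:conditions-C-tightness} for the sequence $Z_n := Z_{n,1} + \cdots + Z_{n,K_n}$, exploiting the subadditivity of the two functionals $f \mapsto \sup_{0 \leq t \leq m}|f(t)|$ and $f \mapsto \varpi_m(f,\varepsilon)$ together with the fact that tightness of $(K_n)$ allows the random number of summands to be truncated at a deterministic level, uniformly in $n$. The point is that both these functionals are nonnegative and subadditive, so on the event $\{K_n \le N\}$ one has the \emph{pathwise} bounds $\sup_{0\le t\le m}|Z_n(t)| \le \sum_{k=1}^N \sup_{0\le t\le m}|Z_{n,k}(t)|$ and $\varpi_m(Z_n,\varepsilon) \le \sum_{k=1}^N \varpi_m(Z_{n,k},\varepsilon)$; these are valid precisely because $K_n$ and the $(Z_{n,k})$ are given on a common probability space.

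First I would fix $m, \delta, \eta > 0$ and use tightness of $(K_n)$ to choose an integer $N$ with $\sup_n \Q_n(K_n > N) < \eta$. Splitting according to $\{K_n > N\}$ or $\{K_n \le N\}$ and applying a union bound, the pathwise bounds above give, for every $a > 0$ and $\varepsilon > 0$,
\[ \Q_n\Big( \sup_{0\le t\le m}|Z_n(t)| \ge a \Big) \le \eta + \sum_{k=1}^N \Q_n\Big( \sup_{0\le t\le m}|Z_{n,k}(t)| \ge a/N \Big) \]
and
\[ \Q_n\big( \varpi_m(Z_n,\varepsilon) \ge \delta \big) \le \eta + \sum_{k=1}^N \Q_n\big( \varpi_m(Z_{n,k},\varepsilon) \ge \delta/N \big). \]
Then I would let $n \to \infty$ and invoke the C-tightness of each $(Z_{n,k})_{n\ge 1}$: the first condition in~\eqref{eq:conditions-C-tightness} yields $\lim_{a\to\infty}\limsup_n \sum_{k=1}^N \Q_n(\sup_{0\le t\le m}|Z_{n,k}(t)| \ge a/N) = 0$, being a finite sum of terms each tending to $0$, and the second yields $\lim_{\varepsilon\to 0}\limsup_n \sum_{k=1}^N \Q_n(\varpi_m(Z_{n,k},\varepsilon)\ge\delta/N) = 0$. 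Consequently $\limsup_{a\to\infty}\limsup_n \Q_n(\sup_{0\le t\le m}|Z_n(t)|\ge a) \le \eta$ and $\limsup_{\varepsilon\to 0}\limsup_n \Q_n(\varpi_m(Z_n,\varepsilon)\ge\delta) \le \eta$, and since $\eta>0$ is arbitrary both conditions of~\eqref{eq:conditions-C-tightness} hold, so $(Z_n)$ is C-tight.

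I do not expect a serious obstacle: the argument is a truncation plus two union bounds, and the only points requiring (routine) care are that $N$ can be chosen uniformly in $n$ thanks to tightness of $(K_n)$ and that the subadditive bounds hold on the full event $\{K_n \le N\}$. One could alternatively argue that a finite vector $(Z_{n,1},\dots,Z_{n,N})$ is C-tight in $\Dcal^N$ — tightness of the marginals implies tightness of the vector — that addition is continuous at $N$-tuples of continuous functions, and then dispose of the random index by conditioning on $\{K_n = j\}$ for $j \le N$; but the direct verification above is shorter and avoids continuity-of-addition subtleties in the $J_1$ topology.
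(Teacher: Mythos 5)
Your proof is correct and follows essentially the same route as the paper's: both truncate the random number of summands at a deterministic level using tightness of $(K_n)$, apply pathwise subadditivity of $\varpi_m(\cdot,\varepsilon)$ and of the running supremum, and conclude with a union bound and the C-tightness of each $(Z_{n,k})_{n\ge 1}$. The only cosmetic difference is bookkeeping: the paper keeps the truncation level $\beta$ free, takes $\varepsilon\to 0$ first and then $\beta\to\infty$, while you fix the level $N$ uniformly in $n$ up front via an $\eta$-argument; these are equivalent.
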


\begin{proof}
	Let $S_n = Z_{n,1} + \cdots + Z_{n, K_n}$: we must show that it satisfies~\eqref{eq:conditions-C-tightness}. We show how to control the modulus of continuity, the supremum can be dealt with similar arguments. For any $0 \leq s, t \leq m$ with $|t-s| \leq \varepsilon$, we have
	\[ \left| S_n(t) - S_n(s) \right| \leq \sum_{k=1}^{K_n} \left| Z_{n,k}(t) - Z_{n,k}(s) \right| \leq \sum_{k=1}^{K_n} \varpi_m(Z_{n,k}, \varepsilon) \]
	and so we obtain the following bound, valid for any $\beta > 0$:
	\begin{align*}
		\Q_n \left( \varpi_m(S_n, \varepsilon) \geq \delta \right) & \leq \Q_n \left( \sum_{k=1}^{\beta} \varpi_m(Z_{n,k}, \varepsilon) \geq \delta \right) + \Q_n(K_n \geq \beta)\\
		& \leq \sum_{k=1}^{\beta} \Q_n \left( \varpi_m(Z_{n,k}, \varepsilon) \geq \delta / \beta \right) + \Q_n(K_n \geq \beta).
	\end{align*}
	Since each sequence $(Z_{n,k}, n \geq 1)$ is C-tight this gives
	\[
		\lim_{\varepsilon \to 0} \limsup_{n \to +\infty} \Q_n \left( \varpi_m(S_n, \varepsilon) \geq \delta \right) \leq \limsup_{n \to +\infty} \Q_n(K_n \geq \beta)
	\]
	which goes to $0$ as $\beta$ goes to infinity since $(K_n)$ is tight. The result is proved.
\end{proof}

For $n \geq 1$ and $a > 0$ we denote by $G_n(a)$ a geometric random variable with parameter $1-W_n(0) / W_n(a)$ (remember that $W_n(0) = 1/r_n$), so that according to Lemma~\ref{lemma:conv-W} the sequence $(W_n(0) G_n(a))$ converges in distribution to an exponential random variable with parameter $1/W(a)$. The following proposition, combined with Corollary~\ref{cor:fd-conv-large-initial-condition}, proves Theorem~\ref{thm:main-2}.

\begin{prop} \label{prop:tightness-1}
	Let $\zeta > 0$ and $(z_n)$ be any integer sequence such that $\zeta_n = z_n / r_n \to \zeta$. Then the sequence of processes $L(\, \cdot \,, T^L(\zeta_n))$ under $\PX_n^0(\, \cdot \, | \, T^L(\zeta_n) < +\infty)$ is C-tight.
\end{prop}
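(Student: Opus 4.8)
The plan is to reuse the excursion decomposition from the proof of Corollary~\ref{cor:fd-conv-large-initial-condition} in order to transfer C-tightness away from $0$ from Proposition~\ref{prop:tightness-interm} via Lemma~\ref{lemma:tightness-sum}, and then to handle the behaviour in a neighbourhood of $0$ by a separate estimate. Fix $a_0 > 0$. The identity established in that proof holds not only for finite marginals but at the level of processes: under $\PX_n^0(\,\cdot \mid T^L(\zeta_n) < +\infty)$ the process $L(a_0 + \,\cdot\,, T^L(\zeta_n))$ is equal in distribution to $\sum_{k=1}^{K_n} L^{a_0}_{n,k}(a_0 + \,\cdot\,)$, where $(L^{a_0}_{n,k}, k \geq 1)$ are i.i.d.\ with common law $L^0$ under $\PX_n^*(\,\cdot \mid T(a_0) < T(0))$ and $K_n$ is an independent binomial random variable with parameters $z_n$ and $\PX_n^0(T(a_0) < T(0))$. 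Since $z_n / r_n \to \zeta$ and $r_n \PX_n^0(T(a_0) < T(0)) \to \Ncal(T(a_0) < T(0))$ by Lemma~\ref{lemma:conv-hitting-times}, the sequence $(K_n)$ converges weakly to a Poisson random variable and is in particular tight; and each sequence $(L^{a_0}_{n,k}(a_0 + \,\cdot\,), n \geq 1)$ is C-tight by Proposition~\ref{prop:tightness-interm}. Lemma~\ref{lemma:tightness-sum} then yields that $(L(a_0 + \,\cdot\,, T^L(\zeta_n)), n \geq 1)$ is C-tight for every $a_0 > 0$.

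To obtain C-tightness of $Z_n := L(\,\cdot\,, T^L(\zeta_n))$ on all of $[0,\infty)$ one verifies~\eqref{eq:conditions-C-tightness} on $[0,m]$ for every $m$; recall $Z_n(0) = \zeta_n$ is deterministic with $\zeta_n \to \zeta$. Splitting the modulus of continuity at a level $a_0 \in (\varepsilon, m)$ gives $\varpi_m(Z_n, \varepsilon) \leq \varpi_m(Z_n(a_0 + \,\cdot\,), \varepsilon) + 2 \sup_{0 \leq a \leq a_0} |Z_n(a) - \zeta_n|$, and likewise $\sup_{0 \leq a \leq m} Z_n(a) \leq \zeta_n + \sup_{0 \leq a \leq a_0} |Z_n(a) - \zeta_n| + \sup_{a_0 \leq a \leq m} |Z_n(a)|$. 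The terms involving the shifted process are controlled by the first step, so the whole statement reduces to proving that for every $\delta > 0$,
\[ \lim_{a_0 \to 0^+} \ \limsup_{n \to +\infty} \ \PX_n^0\Big( \sup_{0 \leq a \leq a_0} \big| L(a, T^L(\zeta_n)) - \zeta_n \big| \geq \delta \ \Big|\ T^L(\zeta_n) < +\infty \Big) = 0. \]
This is genuinely a double limit: for fixed $a_0 > 0$ the quantity inside does \emph{not} vanish as $n \to +\infty$, as it converges in distribution to the corresponding oscillation of the limiting profile (which we do not yet know to be continuous at $0$), so no argument based on a vanishing variance will do.

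I expect this estimate to be the main obstacle: it is exactly the control of the small oscillations of the local time near a fixed level, flagged in the introduction as the delicate point, and the decomposition of the first step is useless here because at $a_0 = 0$ all $z_n - 1 \to +\infty$ excursions contribute and $K_n$ is no longer tight. To prove it I would write $L(a, T^L(\zeta_n)) = \sum_{i=1}^{z_n - 1} \ell_i(a)$ for $a > 0$, with $(\ell_i)$ i.i.d.\ equal in law to $L^0$ under $\PX_n^*(\,\cdot \mid T(0) < +\infty)$, and split each excursion according to whether it exceeds $a_0$. The finitely many excursions exceeding $a_0$ contribute a sum of i.i.d.\ processes that are continuous in the limit, whose supremum over $[0,a_0]$ is small when $a_0$ is small: by~\eqref{eq:scale} and Lemma~\ref{lemma:conv-W} the number of such excursions grows like $\zeta / W(a_0)$ as $a_0 \downarrow 0$ while each one carries only $O(W(a_0))$ units of local time, so a uniform-integrability argument controls their aggregate oscillation near $0$. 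The excursions staying below $a_0$ are the delicate part: their aggregate contribution has mean $\approx \zeta_n$ and $O(1)$ fluctuations, so one truly needs a modulus-of-continuity estimate for $a \mapsto \sum_i \ell_i(a)$ on $[0,a_0]$, not merely a pointwise bound.

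For that last point I would reduce, via a union bound over dyadic sub-intervals and the i.i.d.\ structure, to a tail bound on $\sup\{ |L^0(a) - L^0(b)| : 0 < a,b \leq a_0,\ |a-b| \leq \eta \}$ under $\PX_n^*$ for a single excursion, and then invoke — or re-run, with $a_0$ and $0$ in the roles of the two levels — precisely the oscillation estimates proved in the appendix in the course of establishing Proposition~\ref{prop:tightness-interm}; this is where the specific choice $\Lambda_n = \Lambda$ with $\P(\Lambda \geq s) = (1+s)^{-\alpha}$ enters, through sharp control of the tails of the local time increments of the rescaled compound Poisson excursion. Combining the small- and large-excursion bounds gives the displayed limit, which together with the first two steps completes the proof of C-tightness.
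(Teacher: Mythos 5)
Your first step is correct and matches the paper's opening move: the excursion decomposition combined with Proposition~\ref{prop:tightness-interm} and Lemma~\ref{lemma:tightness-sum} gives C-tightness of the shifted process $L(a_0 + \,\cdot\,, T^L(\zeta_n))$ for each fixed $a_0 > 0$. Up to here the two arguments coincide.

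The second part is where the proposal diverges, and where it has a real gap. You correctly identify that what remains is to control the oscillations of $a \mapsto L(a, T^L(\zeta_n))$ near $a = 0$, and you correctly observe this is the genuinely hard point. But your proposed route — a double limit in $a_0$ and $n$, with a re-run of the appendix estimates for levels close to $0$ — is not carried out, and there is a concrete obstruction to making it work as stated: the moment/oscillation bounds in Propositions~\ref{prop:case<} and~\ref{prop:case>} are proved for $a_0 \leq a < b < c \leq A$ with a \emph{fixed} $a_0 > 0$, and the constants there repeatedly use lower bounds of the form $x_0 \geq a_0/4$ (see Lemma~\ref{lemma:j} and Lemma~\ref{lemma:p-xi+theta}), so they degenerate as $a_0 \to 0$. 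You would need uniform-in-$a_0$ versions of those estimates, which is a non-trivial extension not provided by the paper's appendix.

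The paper sidesteps the near-$0$ estimate entirely with a spatial-shift trick that you did not find. Let $\tau_n$ be the first return to $0$ after $T(a_0, z_n)$, and decompose the local time at levels $\geq a_0$ accumulated up to $\tau_n$ as $L(a_0 + \,\cdot\,, \tau_n) = L^{1,a_0} + L^{2,a_0}$, where $L^{1,a_0}$ is the contribution of the time interval $[T(a_0), T(a_0, z_n)]$ and $L^{2,a_0}$ is the remainder. By spatial invariance of the L\'evy process, $L^{1,a_0}$ under $\PX_n^0(\,\cdot \mid T(a_0, z_n) < +\infty)$ is equal in distribution to $L(\,\cdot\,, T^L(\zeta_n))$ under $\PX_n^0(\,\cdot \mid T^L(\zeta_n) < +\infty)$ — exactly the process one wants to prove C-tight. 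On the other hand $L(a_0 + \,\cdot\,, \tau_n)$ is C-tight by the same excursion decomposition as in your first step, and $L^{2,a_0}$ is, after a harmless correction of size $1/r_n$, equal in law to $L^0(a_0 + \,\cdot\,)$ under $\PX_n^*(\,\cdot \mid T(a_0) < T(0))$, hence C-tight by Proposition~\ref{prop:tightness-interm}. Writing $L^{1,a_0} = L(a_0 + \,\cdot\,, \tau_n) - L^{2,a_0}$ and using the subadditivity of the modulus of continuity gives C-tightness of $L^{1,a_0}$ and hence of the target sequence. The point is that the process that looks like it lives near level $0$ is realized, via the shift, as living at levels $\geq a_0$, so the appendix estimates apply at a fixed positive level and no near-$0$ modulus bound is ever needed. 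You should look for this identification rather than trying to push the tail estimates down to level~$0$.
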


\begin{proof}
	Fix any $a_0 > 0$ and let $\tau_n$ be the time of the first visit to~$0$ after time $T(a_0, z_n)$, i.e., $\tau_n = \inf\{ t \geq T(a_0, z_n): X(t) = 0 \}$, and $K_n$ be the number of excursions of $X$ away from~$0$ that reach level $a_0$ before time $\tau_n$. Unless otherwise stated, we work implicitly under $\PX_n^0(\, \cdot \, | \, T(a_0, z_n) < +\infty)$ so that $\tau_n$ and $K_n$ are well-defined. Then the following decomposition holds (using Lemma~\ref{lemma:overshoot}):
	\begin{equation} \label{eq:decomposition}
		L(a_0 + a, \tau_n) = \sum_{k=1}^{K_n} L_{n,k}^{a_0}(a), \ a \geq 0,
	\end{equation}
	where $(L_{n,k}^{a_0}, k \geq 1)$ are independent processes, all equal in distribution to $L^0(a_0 + \, \cdot \, )$ under $\PX_n^*(\, \cdot \, | \, T(a_0) < T(0))$. Note that by Proposition~\ref{prop:tightness-interm}, $(L_{n,1}^{a_0}, n \geq 1)$ is C-tight. In an excursion that reaches level $a_0$, the law of the number of visits of $X$ to $a_0$ is $1 + G_n(a_0)$, thus $K_n$ is equal in distribution to
	\[  \min \left\{ k \geq 1: \sum_{i=1}^k \left( 1 + G_{n,i}(a_0) \right) \geq \zeta_n \right\} \]
	with $(G_{n,i}(a_0), i \geq 1)$ i.i.d.\ random variables with common distribution $G_n(a_0)$. Since $(G_n(a_0) / r_n)$ converges in distribution to an exponential random variable with parameter $1/W(a_0)$ and $z_n / r_n \to \zeta$, the sequence $(K_n)$ converges in distribution to a Poisson random variable with parameter $\zeta / W(a_0)$. In particular, the sequence $(K_n)$ is tight, and combining~\eqref{eq:decomposition}, Lemma~\ref{lemma:tightness-sum}, Proposition~\ref{prop:tightness-interm} and the C-tightness of $(L_{n,1}^{a_0}, n \geq 1)$, one sees that the sequence $L(a_0 + \cdot \,, \tau_n)$ is C-tight. We now prove the desired C-tightness of the sequence $L(\, \cdot \,, T^L(\zeta_n))$ under $\PX_n^0(\, \cdot \, | \, T^L(\zeta_n) < +\infty)$.

	For $a \geq 0$ define $L^{1,a_0}(a)$ and $L^{2,a_0}(a)$ the local time accumulated at level $a_0 + a$ in the time interval $[T(a_0) , T(a_0, z_n)]$ and $[0, T(a_0)) \cup (T(a_0, z_n), \tau_n]$, respectively. Then $L(a_0 + a, \tau_n) = L^{1,a_0}(a) + L^{2,a_0}(a)$ which we write as
	\[ L^{1,a_0}(a) = L(a_0 + a, \tau_n) - L^{2,a_0}(a), \ a \geq 0. \]
	We have just proved that the sequence $L(a_0 + \cdot\,, \tau_n)$ was C-tight, so if we can prove that the sequence $L^{2,a_0}$ is also C-tight, Lemma~\ref{lemma:tightness-sum} will imply the C-tightness of $L^{1,a_0}$. But by invariance in space, $L^{1,a_0}$ under $\PX_n^0(\, \cdot \, | \, T(a_0, z_n) < +\infty)$ is equal in distribution to $L(\,\cdot\,, T^L(\zeta_n))$ under $\PX_n^0(\, \cdot \, | \, T^L(\zeta_n) < +\infty)$ so this will prove the desired result. Hence it remains to prove the C-tightness of $L^{2,a_0}$.

	Let $g(a_0)$ be the left endpoint of the first excursion of $X$ away from $0$ that reaches $a_0$, and define the excursion $Y$ as follows:
	\[ Y(t) = \begin{cases}
		X(t + g(a_0)) & \text{if } 0 \leq t \leq T(a_0) - g(a_0),\\
		X\left((t + T(a_0, z_n) + g(a_0) - T(a_0)) \wedge \tau_n \right) & \text{if } t \geq T(a_0) - g(a_0).
	\end{cases} \]
	Then $Y$ is distributed like $X(\cdot \wedge T(0))$ under $\PX_n^*(\, \cdot \, | \, T(a_0) < T(0))$ and
	\[ L^{3,a_0}(a) \stackrel{\text{def.}}{=} L^{2,a_0}(a) + \frac{1}{r_n} \indicator{a = a_0}, \]
	so that $L^{3,a_0}$ is the local time process above level $a_0$ of the process $Y$. The term $\indicator{a = a_0}$ is here to compensate the fact that $L^{2,a_0}(a)$ is the local time accumulated during the time interval $[0, T(a_0)) \cup (T(a_0, z_n), \tau_n)$ instead of $[0, T(a_0)) \cup [T(a_0, z_n), \tau_n)$, so $L^{2,a_0}$ is missing one visit of $Y$ to $a_0$. Thus $L^{3,a_0}$ is equal in distribution to the process $L^0(a_0 + \, \cdot \,)$ under $\PX_n^*(\, \cdot \, | T(a_0) < T(0))$ and the sequence $L^{3,a_0}$ is therefore C-tight by Proposition~\ref{prop:tightness-interm}. Neglecting the factor $r_n^{-1}$ which vanishes in the limit, this implies that $L^{2,a_0}$ is C-tight and concludes the proof.
\end{proof}

The following proposition, combined with Corollary~\ref{cor:fd-conv-large-initial-condition}, proves Theorem~\ref{thm:main-1}.

\begin{prop} \label{prop:tightness-one}
	For any $a_0 > 0$, the two sequences $L^0$ under $\PX_n^*(\, \cdot \, | \, T(a_0) < T(0))$ and $\PX_n(\, \cdot \, | \, T(a_0) < T(0))$ are C-tight.
\end{prop}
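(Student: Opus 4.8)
The plan is to reduce both C-tightness statements to Proposition~\ref{prop:tightness-interm}, which already gives the C-tightness of the \emph{shifted} process $L^0(a_0 + \, \cdot \,)$, by handling separately the behavior of the local time process on the bounded interval $[0, a_0]$ and on $[a_0, \infty)$. On $[a_0,\infty)$ there is essentially nothing to do: the two processes $L^0$ under $\PX_n^*(\, \cdot \mid T(a_0) < T(0))$ and under $\PX_n(\, \cdot \mid T(a_0) < T(0))$ agree in law with the shifted process $L^0(a_0 + \, \cdot \,)$ considered in Proposition~\ref{prop:tightness-interm} (for the $\PX_n^*$ case this is immediate; for the $\PX_n$ case one first observes, as in the proof of Proposition~\ref{prop:conv-fd-shifted-process}, that only excursions reaching $a_0$ matter, and decomposes the reflected path into such excursions). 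Hence the modulus of continuity $\varpi_m(L^0, \varepsilon)$ restricted to arguments $\geq a_0$, and the supremum $\sup_{a_0 \le t \le a_0 + m}|L^0(t)|$, are controlled by Proposition~\ref{prop:tightness-interm}. So the only real content is to control $L^0$ on $[0, a_0]$, and then to splice the two estimates together at the point $a_0$.

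First I would treat the interval $[0, a_0]$. Fix $\delta, m > 0$ (where now we may assume $m \le a_0$, the general case following by combining with the previous paragraph). Using the excursion-theoretic decomposition already exploited in the proof of Proposition~\ref{prop:conv-fd-shifted-process}: under both $\Q_n = \PX_n^*(\, \cdot \mid T(a_0) < T(0))$ and $\Q_n = \PX_n(\, \cdot \mid T(a_0) < T(0))$, one can write, for $a \in [0, a_0]$, the process $L^0$ as a sum over the successive levels in a fixed finite grid, where between consecutive grid points $X$ accumulates a geometric (times $1/r_n$) amount of local time; each such summand converges after scaling to an exponential random variable, and the \emph{number} of grid points visited before hitting $0$ converges to the corresponding quantity under $\Ncal$ (resp.\ $\underline\Ncal$), which is a.s.\ finite. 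This is exactly the mechanism that, in the finite variance case (Lambert et al.~\cite{Lambert11:0}), reduces tightness to a queueing argument; here, because $X$ under $\PX^0$ is conditioned (via $T(a_0) < T(0)$) so that the relevant excursion interval is a single, well-controlled excursion, the local time on $[0,a_0]$ is a finite sum of independent geometric contributions whose total is tight, and whose oscillations over small spatial windows can be made small with large probability. Concretely, I would bound $\Q_n(\varpi_{[0,a_0]}(L^0, \varepsilon) \geq \delta)$ by first conditioning on the (tight) number of distinct levels of a fine grid that are visited, then using that each scaled geometric increment has a limit law with no atom, so that the increments over a window of width $\varepsilon$ are, uniformly in $n$, of size $O(\varepsilon)$ in probability; letting $\varepsilon \to 0$ after $n \to \infty$ kills this term. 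The supremum condition is handled the same way, bounding $\sup_{[0,a_0]} L^0$ by the sum of these finitely many scaled geometric variables, whose total converges in distribution and is therefore tight.

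Finally I would glue the two pieces. Given the C-tightness of $L^0$ restricted to $[0,a_0]$ and the C-tightness of $L^0(a_0 + \, \cdot \,)$ on $[0,m]$ from Proposition~\ref{prop:tightness-interm}, C-tightness on the whole half-line follows because the modulus of continuity over an interval straddling $a_0$ is bounded by the sum of the two one-sided moduli plus the jump of $L^0$ at $a_0$, and the latter is $\le 1/r_n \to 0$ (recall the pre-limit process makes deterministic jumps of size $1/r_n$, since $L(a,t) = r_n^{-1}\sum_{s \le t}\indicator{X(s)=a}$); likewise the global supremum is dominated by the sum of the two. This yields the two conditions in~\eqref{eq:conditions-C-tightness} and hence C-tightness of $L^0$ under both $\PX_n^*(\, \cdot \mid T(a_0) < T(0))$ and $\PX_n(\, \cdot \mid T(a_0) < T(0))$, completing the proof together with the finite-dimensional convergence in Corollary~\ref{cor:conv-fd-P^*}. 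The main obstacle is the oscillation estimate on $[0,a_0]$: one must rule out that the finitely many geometric local-time contributions conspire to create large oscillations on arbitrarily small spatial scales, uniformly in $n$ — this is where the specific structure (only one excursion, tight number of summands, atomless limit increments) is essential, and is morally why Proposition~\ref{prop:tightness-interm} (proved in the appendix) was needed in the first place.
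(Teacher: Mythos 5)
Your decomposition into a region above $a_0$ (handled by Proposition~\ref{prop:tightness-interm}) and a region $[0,a_0]$ is natural, but the argument you give for $[0,a_0]$ does not actually work, and this is precisely where the entire difficulty of the proposition lives. A finite grid in $[0,a_0]$ plus tightness of the per-grid-point geometric contributions only gives you control of the finite-dimensional marginals of $L^0$ on the grid; it says nothing about $|L^0(a)-L^0(a')|$ for arbitrary nearby $a,a'$ between grid points. Converting ``each scaled geometric increment has an atomless limit'' into a uniform-in-$n$ oscillation bound is exactly the Kolmogorov-type estimate of the appendix, and Propositions~\ref{prop:case<} and~\ref{prop:case>} are proved only for levels $a_0\le a<b<c\le A$ under the conditioning $\{T(a)<T(0)\}$; they do not directly cover the strip $[0,a_0]$, where the presence of the boundary at $0$ destroys the structure those moment estimates rely on. You acknowledge this as ``the main obstacle'' but do not resolve it, so there is a genuine gap.

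The paper sidesteps the problem entirely with a spatial-invariance trick, carried out in the proof of Proposition~\ref{prop:tightness-1}: instead of controlling $L(\,\cdot\,,T^L(\zeta_n))$ on $[0,a_0]$ directly, one observes that the local time at levels $\geq a_0$ accumulated between the first and the $z_n$th visit of $X$ to $a_0$ has, by translation invariance of the L\'evy process, the same law as $L(\,\cdot\,,T^L(\zeta_n))$ on $[0,\infty)$ under $\PX_n^0$. The former is then written as a difference of two C-tight processes living entirely above $a_0$, so Proposition~\ref{prop:tightness-interm} suffices and nothing on $[0,a_0]$ ever needs to be bounded head-on. The proof of Proposition~\ref{prop:tightness-one} then uses this: with $z_n=\lceil r_n\rceil$, the process $L^0$ under $\PX_n^*(\,\cdot\mid T(a_0)<T(0))$ is realized as the local time of the first excursion reaching $a_0$, expressed as $L(\,\cdot\,,T^L(\zeta_n))$ minus the contribution of the other excursions, and Lemma~\ref{lemma:tightness-sum} plus Proposition~\ref{prop:tightness-1} close the argument. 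Finally, your treatment of the $\PX_n$ case is also too loose: $\PX_n$ is not an excursion measure, and the paper handles it by an explicit Radon--Nikodym change of measure $\P(\Lambda\in ds)=(-f'/f)(s)\,\P(\Lambda^*\in ds)$ with $-f'/f=\alpha/(1+\cdot)$ bounded by $\alpha$, transferring C-tightness from the $\PX_n^*$ case. Without this (or a substitute) you do not actually cover the second sequence in the statement.
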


\begin{proof}
	In the rest of the proof fix some $a_0 > 0$. We first show the C-tightness of $L^0$ under $\PX_n^*(\, \cdot \, | \, T(a_0) < T(0))$, from which the C-tightness under $\PX_n(\, \cdot \, | \, T(a_0) < T(0))$ is then derived. Let $z_n = \lceil r_n \rceil$ be the smallest integer larger than $r_n$ and $\zeta_n = z_n / r_n$, so that $\zeta_n \to 1$. Since $T^L(\zeta_n) = T(0,z_n)$, when $T^L(\zeta_n)$ is finite $X$ has at least $z_n$ excursions away from $0$, and so we can define $K_n$ the number of excursions among these $z_n$ first excursions that visit $a_0$. Let $L_n = L(\, \cdot \, , T^L(\zeta_n))$ and $\Q_n = \PX_n^0(\, \cdot \, | \, T^L(\zeta_n) < +\infty, K_n \geq 1)$: using Proposition~\ref{prop:tightness-1} it is easy to show that $L_n$ under $\Q_n$ is C-tight. Indeed, decomposing the path $(X(t), 0 \leq t \leq T(0,z_n))$ into its $z_n$ excursions away from~$0$, one gets
	\[
		\PX_n^0 \left( K_n = 0 \, | \, T^L(\zeta_n) < +\infty \right) = \left\{ \PX_n^0 \left( T(0) < T(a_0) \, | \, T(0) < +\infty \right) \right\}^{z_n}.
	\]

	By duality and~\eqref{eq:scale},
	\[
		\PX_n^0 \left( T(0) < T(a_0) \, | \, T(0) < +\infty \right) = \PX_n^0 \left( T(0) < T(-a_0) \, | \, T(0) < +\infty \right) = \frac{1 - \frac{1}{r_n W_n(a_0)}}{1 - \frac{1}{r_n W_n(\infty)}}
	\]
	which gives $\PX_n^0 \left( K_n = 0 \, | \, T^L(\zeta_n) < +\infty \right) \to e^{-1/W(a_0)}e^{1/W(\infty)} < 1$. It follows in particular that $C = 1 / (\inf_n \PX_n^0 \left( K_n \geq 1 \, | \, T^L(\zeta_n) < +\infty \right))$ is finite, and so for any $m, \varepsilon, \delta > 0$,
	\begin{multline*}
		\Q_n \left( \varpi_m(L_n, \varepsilon) \geq \delta \right) = \frac{\PX_n^0 \left( \varpi_m(L_n, \varepsilon) \geq \delta, K_n \geq 1 \, | \, T^L(\zeta_n) < +\infty \right)}{\PX_n^0 \left( K_n \geq 1 \, | \, T^L(\zeta_n) < +\infty \right)}\\
		\leq C \PX_n^0 \left( \varpi_m(L_n, \varepsilon) \geq \delta \, | \, T^L(\zeta_n) < +\infty \right).
	\end{multline*}

	Since $L_n$ under $\PX_n^0 \left( \, \cdot \, | \, T^L(\zeta_n) < +\infty \right)$ is C-tight by Proposition~\ref{prop:tightness-1}, this implies that the second condition in~\eqref{eq:conditions-C-tightness} holds. One can similarly control the supremum and prove that the first condition in~\eqref{eq:conditions-C-tightness} also holds, which finally proves the C-tightness of $L_n$ under $\Q_n$.

	We now prove the C-tightness of $L^0$ under $\PX_n^*(\, \cdot \, | \, T(a_0) < T(0))$. Let $g(a_0) < d(a_0)$ be the endpoints of the first excursion of $X$ away from $0$ which reaches level $a_0$. In the event $\{T^L(\zeta_n) < +\infty, K_n \geq 1\}$, define $L^1$ as the local time process in the interval $[0, g(a_0)) \cup (d(a_0), T(0,z_n)]$ and $L^2$ the local time process in the interval $[g(a_0), d(a_0)]$. Then under $\Q_n$, the three following properties hold:
	\begin{enumerate}
		\item the process $(L^1(a) + r_n^{-1} \indicator{a = 0}, a \geq 0)$ is equal in distribution to $L(\, \cdot \, , T^L(\zeta_n'))$ under $\PX_n^0(\, \cdot \, | \, T^L(\zeta_n') < +\infty)$, where $\zeta_n' = (z_n-1)/r_n$;
		\item $L^2 = L_n - L^1$;
		\item $L^2$ is equal in distribution to $L^0$ under $\PX_n^*(\, \cdot \mid T(a_0) < T(0))$.
	\end{enumerate}

	Proposition~\ref{prop:tightness-1} and the first property entail that $L^1$ under $\Q_n$ is C-tight. Since $L_n$ under $\Q_n$ has been proved to be C-tight, the second property together with Lemma~\ref{lemma:tightness-sum} imply that $L^2$ under $\Q_n$ is C-tight. The last property finally proves the C-tightness of $L^0$ under $\PX_n^*(\, \cdot \, | \, T(a_0) < T(0))$.

	We now prove that $L^0$ under $\PX_n(\, \cdot \, | \, T(a_0) < T(0))$ is C-tight using the Radon-Nikodym derivative. Let $f(s) = \P(\Lambda \geq s)$: then $\P(\Lambda^* \in ds) / ds = f(s)$ and $\P(\Lambda \in ds) / ds = -f'(s)$ so that $\Lambda$ is absolutely continuous with respect to $\Lambda^*$ with Radon-Nikodym derivative $-f'/f$. In particular, for any measurable function $g$, it holds that
	\[ \E \left( g(\Lambda) \right) = \E \left( -\frac{f'(\Lambda^*)}{f(\Lambda^*)} g(\Lambda^*) \right). \]

	Consequently, since $-f'(s) / f(s) = \alpha / (1+s)$, we get for any $m, \varepsilon$ and $\delta > 0$
	\[ \PX_n \left( \varpi_m(L^0, \varepsilon) \geq \delta \, | \, T(a) < T(0) \right) = \alpha \E \left( \frac{g(\Lambda^*)}{1+\Lambda^*} \right) \leq \alpha \PX_n^* \left( \varpi_m(L^0, \varepsilon) \geq \delta \, | \, T(a) < T(0) \right) \]
	where $g(x) = \PX_n^x \left( \varpi_m(L^0, \varepsilon) \geq \delta \, | \, T(a) < T(0) \right)$. Since $L^0$ under $\PX_n^*(\, \cdot \, | \, T(a_0) < T(0))$ is C-tight by the first part of the proof, we deduce that the second condition in~\eqref{eq:conditions-C-tightness} is satisfied. The supremum can be handled similarly, and therefore the proof is complete.
\end{proof}

\section{Implications for branching processes and queueing theory} \label{sec:implications}

\subsection{Implications for branching processes}

A Crump--Mode--Jagers (CMJ) process, or general branching process, is a stochastic process with non-negative integer values counting the size of a population where individuals give birth to independent copies of themselves; see for instance Haccou et al.~\cite{Haccou07:0} for a definition.

For $n \geq 1$ and $z \in \N$, let $\Z_n^{z}$ (resp.\ $\Z_n^{z*}$) the the law of a binary, homogeneous CMJ branching process with life length distribution $\Lambda_n$ and offspring intensity $\kappa_n$ started with $z$ individuals with i.i.d.\ life lengths with common distribution $\Lambda_n$ (resp.\ $\Lambda_n^*$). Let $\PZ_n^{z}$ (resp.\ $\PZ_n^{z^*}$) be the law of $X(s_n t) / r_n$ under $\Z_n^z$ (resp.~$\Z_n^{z*}$). It follows directly from results in Lambert~\cite{Lambert11:0} that $\PZ_n^1$ is the law of $L^0$ under $\PX_n$ and that $\PZ_n^{z*}$ is the law of $L(\, \cdot \, , T^L(z/r_n))$ under $\PX_n^0(\, \cdot \mid T^L(z/r_n) < +\infty)$. In the sequel, for $\zeta > 0$ introduce $\PZ^{\zeta*}$ the law of $L(\, \cdot \, , T^L(\zeta))$ under $\PX^0(\, \cdot \, | \, T^L(\zeta) < +\infty)$ and $\PZ^*$ (resp.\ $\PZ$) the push-forward of $\Ncal$ (resp.\ $\underline \Ncal$) by $L^0$. In other words, $\PZ^*$ and $\PZ$ are defined by
\[ \PZ^*(A) = \Ncal(L^0 \in A) \text{ and } \PZ(A) = \underline \Ncal(L^0 \in A) \]
for any Borel set $A \subset \Ecal$. The two theorems below are therefore plain reformulations of Theorem~\ref{thm:main-1} and~\ref{thm:main-2}.

\begin{theorem} \label{thm:CMJ-1}
	For any $\varepsilon > 0$, the two sequences $\PZ_n^{1*}(\, \cdot \, | \, T(0) > \varepsilon)$ and $\PZ_n^{1}(\, \cdot \, | \, T(0) > \varepsilon)$ converge in the sense of finite-dimensional distributions to $\PZ^*(\, \cdot \, | \, T(0) > \varepsilon)$ and $\PZ(\, \cdot \, | \, T(0) > \varepsilon)$, respectively. If in addition the tightness assumption holds, then both convergences hold in the sense of weak convergence.
\end{theorem}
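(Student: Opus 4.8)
The plan is to show that Theorem~\ref{thm:CMJ-1} is essentially a translation of Theorem~\ref{thm:main-1} through the dictionary established just above the statement, namely that $\PZ_n^{1}$ is the law of $L^0$ under $\PX_n$, that $\PZ_n^{1*}$ is the law of $L^0$ under $\PX_n^*$, and that $\PZ$ and $\PZ^*$ are the push-forwards of $\underline\Ncal$ and $\Ncal$ by the map $L^0$. Under this dictionary, the conditioning event $\{T(0) > \varepsilon\}$ on the branching side pulls back to the event $\{T(0) > \varepsilon\}$ on the L\'evy side, so what must be proved is: the finite-dimensional distributions (resp.\ the laws, under the tightness assumption) of $L^0$ under $\PX_n^*(\,\cdot\mid T(0)>\varepsilon)$ and $\PX_n(\,\cdot\mid T(0)>\varepsilon)$ converge to those of $L^0$ under $\Ncal(\,\cdot\mid T(0)>\varepsilon)$ and $\underline\Ncal(\,\cdot\mid T(0)>\varepsilon)$, respectively.

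The first step is to reduce the conditioning $\{T(0)>\varepsilon\}$ to the conditioning $\{T(a_0)<T(0)\}$ already handled in Theorem~\ref{thm:main-1}. The point is that under $\PX_n^*$ (and under $\PX_n$, and under $\Ncal$, $\underline\Ncal$) the events $\{T(0)>\varepsilon\}$ and $\{T(a_0)<T(0)\}$ become "nested" in the right way: more precisely, I would write, for any finite $A\subset(0,\infty)$ with $a_0=\min A$ and any thresholds $u_i$, a Bayes decomposition
\[
	\PX_n^*\!\left(L^0(a_i)\ge u_i\ \forall i,\ T(0)>\varepsilon\right)
	= \PX_n^*\!\left(L^0(a_i)\ge u_i\ \forall i\mid T(a_0)<T(0)\right)\,\PX_n^*\!\left(T(a_0)<T(0),\ T(0)>\varepsilon\ \big|\ \text{the excursion reaching }a_0\right),
\]
the idea being that once an excursion reaches $a_0>0$ one automatically has $T(0)>0$, and the additional constraint $T(0)>\varepsilon$ only affects the excursion structure, not the local time accumulated at levels $\ge a_0$ after the first up-crossing of $a_0$. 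Concretely I would argue that, conditionally on $\{T(a_0)<T(0)\}$, the process $L^0(a_0+\,\cdot\,)$ is independent of the indicator $\indicator{T(0)>\varepsilon}$ only up to the contribution of the very excursion reaching $a_0$; a cleaner route is to first prove the result conditionally on $\{T(a_0)<T(0)\}$ using Theorem~\ref{thm:main-1} directly, then remove the conditioning by showing $\PX_n^*(T(a_0)<T(0)\mid T(0)>\varepsilon)\to\Ncal(T(a_0)<T(0)\mid T(0)>\varepsilon)$ via Lemma~\ref{lemma:conv-hitting-times}, Lemma~\ref{lemma:conv-hitting-times-3} and Lemma~\ref{lemma:overshoot}, exactly as in the proof of Corollary~\ref{cor:conv-fd-P^*} (where the analogous passage from $\{T(a_0)<T(0)\}$ to $\{T(a_1)<T(0)\}$ is carried out), and finally let $a_0\downarrow 0$ to recover all finite-dimensional marginals including those touching levels arbitrarily close to $0$. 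For the $\PX_n$ versus $\underline\Ncal$ case one uses Lemma~\ref{lemma:conv-hitting-times-2} and Lemma~\ref{lemma:conv-hitting-times-3} in place of Lemma~\ref{lemma:conv-hitting-times}, mirroring the parenthetical remark in Corollary~\ref{cor:conv-fd-P^*}.

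For the weak-convergence statement under the tightness assumption, tightness is inherited from Proposition~\ref{prop:tightness-one}: conditioning on $\{T(0)>\varepsilon\}$ can only inflate probabilities by the bounded factor $1/\PX_n^*(T(0)>\varepsilon\mid T(a_0)<T(0))$, which stays bounded away from $0$ uniformly in $n$ by Lemma~\ref{lemma:conv-hitting-times-3} and Lemma~\ref{lemma:conv-hitting-times-2}, so the C-tightness criterion~\eqref{eq:conditions-C-tightness} transfers verbatim, just as in the last paragraph of the proof of Proposition~\ref{prop:tightness-one} where a bounded Radon--Nikodym-type factor is absorbed. Combining tightness with the finite-dimensional convergence already established then yields weak convergence. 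The main obstacle I anticipate is the first step: making rigorous the claim that conditioning on $\{T(0)>\varepsilon\}$ does not disturb the law of the local time process above $a_0$ beyond what happens in the single excursion that first reaches $a_0$ — that is, correctly bookkeeping the excursion decomposition so that the Bayes factor genuinely factorizes and converges. Everything else is a recombination of Theorem~\ref{thm:main-1}, Corollary~\ref{cor:conv-fd-P^*}, Proposition~\ref{prop:tightness-one} and the hitting-time lemmas of Section~\ref{sec:preliminary}.
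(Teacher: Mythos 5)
You have misidentified the conditioning event, and this error undermines the entire proposal. When you write that ``the conditioning event $\{T(0) > \varepsilon\}$ on the branching side pulls back to the event $\{T(0) > \varepsilon\}$ on the L\'evy side,'' you are implicitly conflating the first zero of the local-time path $L^0$ with the first zero $T_X(0)$ of the L\'evy path $X$. These are different quantities: the ``time'' variable of $L^0 = (L(a, T(0)), a \ge 0)$ is the \emph{level} variable $a$ of the L\'evy process, so $T_{L^0}(0)$ is the first level $a > 0$ at which $L^0(a) = 0$, which (since $X$ has no negative jumps and is stopped on hitting $0$) is $\sup X^0$. Consequently
\[
\{ T_{L^0}(0) > \varepsilon \} = \{ \sup X^0 > \varepsilon \} = \{ T_X(\varepsilon) < T_X(0) \}
\]
up to a null set, and \emph{not} $\{T_X(0) > \varepsilon\}$. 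The paper uses this identity explicitly in the proof of Lemma~\ref{lemma:CMJ}, where it writes $\PZ_n^1(T > \varepsilon) = \PX_n(T(\varepsilon) < T(0))$ and $\PZ(T > \varepsilon) = \underline\Ncal(T(\varepsilon) < T(0))$.

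Once the conditioning event is translated correctly, Theorem~\ref{thm:CMJ-1} becomes a verbatim restatement of Theorem~\ref{thm:main-1} with $a_0 = \varepsilon$: $\PZ_n^{1*}(\,\cdot\mid T(0)>\varepsilon)$ is the push-forward by $L^0$ of $\PX_n^*(\,\cdot\mid T(\varepsilon) < T(0))$, $\PZ^*(\,\cdot\mid T(0)>\varepsilon)$ is the push-forward by $L^0$ of $\Ncal(\,\cdot\mid T(\varepsilon)<T(0))$, and similarly for $\PZ_n^1$, $\PZ$ with $\PX_n$, $\underline\Ncal$. This is why the paper calls the theorem a ``plain reformulation'' and gives no further proof. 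All the additional machinery you invoke --- the Bayes decomposition over the excursion reaching $a_0$, the passage $a_0 \downarrow 0$, the separate tightness argument conditioning on $\{T(a_0)<T(0)\}$ --- is not needed and, as written, targets a different (and genuinely harder) statement in which the conditioning would be on the L\'evy-process duration $T_X(0)$ being large rather than on the excursion height being large. That other conditioning is what Lemma~\ref{lemma:CMJ} addresses (via $T \circ \Lcal$), but it is not what Theorem~\ref{thm:CMJ-1} asserts.
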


\begin{theorem} \label{thm:CMJ-2}
	Let $\zeta > 0$ and $(z_n)$ be any integer sequence such that $z_n / r_n \to \zeta$. Then the sequence $\PZ_n^{z_n*}$ converges in the sense of finite-dimensional distributions to $\PZ^{\zeta*}$. If in addition the tightness assumption holds, then the convergence holds in the sense of weak convergence.
\end{theorem}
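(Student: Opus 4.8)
The plan is to observe that Theorem~\ref{thm:CMJ-2} is not an independent statement but a translation of Theorem~\ref{thm:main-2} through the correspondence, recalled just before the theorem, between binary homogeneous CMJ processes and local time processes of spectrally positive compound Poisson processes with unit negative drift. Concretely, by the results of Lambert~\cite{Lambert11:0} quoted above, $\PZ_n^{z_n*}$ is exactly the law of the process $L(\,\cdot\,, T^L(z_n/r_n))$ under $\PX_n^0(\,\cdot\mid T^L(z_n/r_n) < +\infty)$. Setting $\zeta_n = z_n / r_n$, the hypothesis $z_n/r_n \to \zeta$ is precisely the condition $\zeta_n \to \zeta$ appearing in Theorem~\ref{thm:main-2}; since $r_n \to +\infty$ by Lemma~\ref{lemma:conv-W}, one moreover has $z_n \to +\infty$, so that $\PZ_n^{z_n*}$ indeed describes a rescaled CMJ process started from a growing population.

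I would then simply invoke Theorem~\ref{thm:main-2}: it states that $L(\,\cdot\,, T^L(\zeta_n))$ under $\PX_n^0(\,\cdot\mid T^L(\zeta_n) < +\infty)$ converges to $L(\,\cdot\,, T^L(\zeta))$ under $\PX^0(\,\cdot\mid T^L(\zeta) < +\infty)$ in the sense of finite-dimensional distributions in general, and weakly (in $\Dcal$, hence in $\Ecal$) once the tightness assumption holds. Since $\PZ^{\zeta*}$ is by definition the law of $L(\,\cdot\,, T^L(\zeta))$ under $\PX^0(\,\cdot\mid T^L(\zeta) < +\infty)$, both assertions of Theorem~\ref{thm:CMJ-2} follow at once.

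The only matters that deserve a line of comment are, first, that the identification of Lambert~\cite{Lambert11:0} holds for each fixed $n$ and each fixed initial size, so it may be applied term by term to the sequence $(\PZ_n^{z_n*})_n$ and to the limit; and second, that the pre-limit and limiting laws all live on the space $\Ecal$ of excursions, so that finite-dimensional and weak convergence carry the same meaning on both sides of the dictionary. I do not expect any genuine obstacle here: the analytic content --- the finite-dimensional convergence of Corollary~\ref{cor:fd-conv-large-initial-condition} and the C-tightness of Proposition~\ref{prop:tightness-1}, which together establish Theorem~\ref{thm:main-2} --- has already been carried out at the level of the L\'evy processes, and the CMJ reformulation is purely a change of vocabulary.
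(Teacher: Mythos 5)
Your proposal is correct and follows exactly the paper's own approach: the paper explicitly states, just before Theorem~\ref{thm:CMJ-2}, that $\PZ_n^{z*}$ is the law of $L(\,\cdot\,, T^L(z/r_n))$ under $\PX_n^0(\,\cdot\mid T^L(z/r_n)<+\infty)$ (citing Lambert~\cite{Lambert11:0}) and that $\PZ^{\zeta*}$ is by definition the law of $L(\,\cdot\,,T^L(\zeta))$ under $\PX^0(\,\cdot\mid T^L(\zeta)<+\infty)$, and then declares Theorems~\ref{thm:CMJ-1} and~\ref{thm:CMJ-2} to be ``plain reformulations'' of Theorems~\ref{thm:main-1} and~\ref{thm:main-2}. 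Your elaboration of why the dictionary applies term by term is sound but adds nothing beyond what the paper already implicitly relies on.
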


Since there is a rich literature on the scaling limits of branching processes, it is interesting to put this result in perspective. CMJ branching processes are (possibly non-Markovian) generalizations of Galton-Watson (GW) branching processes in continuous time. Scaling limits of GW processes have been exhaustively studied since the pioneering work of Lamperti~\cite{Lamperti67:0}, see Grimvall~\cite{Grimvall74:0}. Accumulation points of sequences of renormalized GW processes are called CSBP, they consist of all the continuous-time, continuous state-space, time-homogeneous Markov processes which satisfy the branching property. Via the Lamperti transformation, they are in one-to-one correspondence with spectrally positive L\'evy processes killed upon reaching~$0$, see Lamperti~\cite{Lamperti67:1} or Caballero et al.~\cite{Caballero09:0}.

On the other hand, little is known about scaling limits of CMJ branching processes, except for the Markovian setting where individuals live for an exponential duration and give birth, upon death, to a random number of offspring. Intuitively, in this case the tree representing the CMJ process should not differ significantly from the corresponding genealogical GW tree because the life length distribution has a light tail. And indeed, correctly renormalized, Markovian CMJ processes converge to CSBP, see Helland~\cite{Helland78:0}. The same intuition explains results obtained by Sagitov~\cite{Sagitov94:0, Sagitov95:0}, who proves the convergence of the finite-dimensional distributions of some non-Markovian CMJ processes towards CSBP. It also provides an explanation for the results obtained by Lambert et al.~\cite{Lambert11:0}, where it is proved that binary, homogeneous CMJ branching processes whose life length distribution has a finite variance converge to Feller diffusion, the only CSBP with continuous sample paths.

In the infinite variance case studied in the present paper, and with which Theorems~\ref{thm:CMJ-1} and~\ref{thm:CMJ-2} are concerned, some individuals will intuitively live for a very long time, causing the tree representing the CMJ to differ significantly from the corresponding genealogical GW tree, a difference that should persist in the limit. Our results are consistent with this intuition, since the CMJ studied here converge to non-Markovian processes (see remark following Theorem~\ref{thm:main-2}). To the best of our knowledge, this is the first time that a sequence of branching processes converging to a non-Markovian limit has been studied.

\subsection{Implications for queueing theory}

The Processor-Sharing queue is the single-server queue in which the server splits its service capacity equally among all the users present. 
For $n \geq 1$ and $z \in \N$, let $\Q_n^z$ (resp.\ $\Q_n^{z*}$) be the law of the queue length process of the Processor-Sharing queue with Poisson arrivals at rate $\kappa_n$, service distribution $\Lambda$ and started with $z$ initial customers with i.i.d.\ initial service requirements with common distribution $\Lambda$ (resp.\ $\Lambda^*$). Let $\PQ_n^z$ (resp.\ $\PQ_n^{z*}$) be the law of $X(n t) / n^{1-1/\alpha}$ under $\Q_n^{z}$ (resp.~$\Q_n^{z*}$).
\\

Let $\Ecal_+ \subset \Ecal$ be the set of positive excursions with finite length.  Let $\Lcal: \Ecal_+ \to \Ecal_+$ be the Lamperti transformation: by definition $\Lcal(e)$ for $e \in \Ecal_+$ is the only positive excursion that satisfies $\Lcal(e)(\int_0^t e) = e(t)$ for every $t \geq 0$. In the rest of this section, if $\mu$ is some positive measure on $\Ecal_+$, write $\Lcal(\mu)$ for the push-forward of $\mu$ by $\Lcal$: for any Borel set $A$,
\[ \Lcal(\mu)(A) = \mu(\Lcal \in A). \]

Recall that $X^0 = X(\, \cdot \, \wedge T(0))$: we have the following result, see for instance Chapter~$7.3$ in Robert~\cite{Robert03:0} and references therein.

\begin{lemma} \label{lemma:PS-CMJ}
	For any integer $z \geq 0$ and any $n \geq 1$, $X^0$ under $\PQ_n^{z}$ (resp.\ $\PQ_n^{z*}$) is equal in distribution to $\Lcal(\PZ_n^{z})$ (resp.\ $\Lcal(\PZ_n^{z*})$).
\end{lemma}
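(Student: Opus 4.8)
The plan is to deduce the statement from the well-known (un-rescaled) Lamperti correspondence between the busy period of an M/G/$1$ Processor-Sharing queue and a binary, homogeneous CMJ process, and then to verify that this correspondence commutes with the space-time rescaling built into $\PQ_n^z$ and $\PZ_n^z$. First I would record the un-rescaled version: since $\kappa_n \E(\Lambda) \leq 1$, the CMJ process under $\Z_n^z$ (resp.\ $\Z_n^{z*}$) becomes extinct in finite time almost surely and, correspondingly, the busy period of the queue under $\Q_n^z$ (resp.\ $\Q_n^{z*}$) is almost surely finite; hence, once stopped at their first visit to $0$, both processes define elements of $\Ecal_+$ when $z \geq 1$ (the case $z = 0$ being trivial). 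The Lamperti time change, which in branching time advances queue time at instantaneous rate equal to the current CMJ population, then gives that $X^0$ under $\Q_n^z$ is equal in distribution to $\Lcal(\Z_n^z)$, and similarly with stars; this is precisely the content of Chapter~$7.3$ in Robert~\cite{Robert03:0}. Note that $\Lcal$ applied to a CMJ path, which is eventually absorbed at $0$, agrees with $\Lcal$ applied to its stopped-at-$0$ version and is itself absorbed at $0$, so the operation $f \mapsto f^0$ is harmless throughout.

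Next I would establish an elementary scaling identity for $\Lcal$: for $e \in \Ecal_+$ and $\beta, \gamma > 0$, the rescaled excursion $e_{\beta,\gamma}(t) := \gamma^{-1} e(\beta t)$ again lies in $\Ecal_+$ and satisfies
\[ \Lcal(e_{\beta,\gamma})(s) = \gamma^{-1}\, \Lcal(e)(\beta\gamma\, s), \qquad s \geq 0. \]
This follows at once from the defining relation $\Lcal(e)(\int_0^t e) = e(t)$ together with its uniqueness: the change of variable $v = \beta u$ gives $\int_0^t e_{\beta,\gamma}(u)\, du = (\beta\gamma)^{-1} \int_0^{\beta t} e(v)\, dv$, and substituting this into the characterization of $\Lcal(e_{\beta,\gamma})$ shows that $s \mapsto \gamma^{-1}\Lcal(e)(\beta\gamma s)$ satisfies exactly the relation defining $\Lcal(e_{\beta,\gamma})$.

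Finally I would combine the two steps with $\beta = s_n$ and $\gamma = r_n$. Under the tightness assumption in force in this section one has $s_n = n^{1/\alpha}$ and $r_n = n/s_n = n^{1-1/\alpha}$, hence $\beta\gamma = s_n r_n = n$. If $Z$ has law $\Z_n^z$, then $\PZ_n^z$ is the law of the path $t \mapsto r_n^{-1} Z(s_n t) = Z_{s_n, r_n}(t)$, so by the scaling identity $\Lcal(Z_{s_n, r_n})(s) = r_n^{-1}\Lcal(Z)(ns)$; since $\Lcal$ is a deterministic pathwise map, equality in distribution is preserved under push-forward, and since $\Lcal(Z) \stackrel{d}{=} X^0$ under $\Q_n^z$ by the first step, the random path $s \mapsto r_n^{-1}\Lcal(Z)(ns)$ has the law of $s \mapsto r_n^{-1} X^0(ns) = n^{-(1-1/\alpha)} X^0(ns)$ under $\Q_n^z$, which is exactly $X^0$ under $\PQ_n^z$. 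This gives $X^0$ under $\PQ_n^z$ equal in distribution to $\Lcal(\PZ_n^z)$, and the starred identity follows verbatim upon replacing $\Z_n^z, \Q_n^z$ by $\Z_n^{z*}, \Q_n^{z*}$.

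The main difficulty here is not analytic but one of bookkeeping: one must invoke the classical Lamperti correspondence with the conventions adopted in this paper — in particular, checking that criticality ($\kappa_n \E(\Lambda) \leq 1$) makes both the CMJ and the queue busy period almost surely finite so that both objects genuinely lie in $\Ecal_+$, where $\Lcal$ is defined — and one must keep track of the scaling constants so that the product $s_n r_n = n$ aligns the queue normalization $n^{1-1/\alpha}$ with the CMJ normalization $(s_n, r_n)$. Once $\Lcal$ is recognized as acting pathwise, transferring the equality in law from the CMJ side to its image under $\Lcal$ needs no further argument.
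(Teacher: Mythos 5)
Your proposal is correct and reconstructs an argument the paper compresses into a bare citation: the paper gives no proof of this lemma, referring instead to Chapter~$7.3$ of Robert~\cite{Robert03:0} for the (un-rescaled) Lamperti correspondence between a binary homogeneous CMJ process and the busy-period path of the associated M/G/$1$ Processor-Sharing queue, and it leaves implicit the verification that $\Lcal$ is compatible with the two different space-time normalizations used on either side. Your two additions supply exactly that missing bookkeeping and are both verified correctly. First, criticality $\kappa_n\E(\Lambda)\le 1$ ensures a.s.\ extinction of the CMJ and a.s.\ finiteness of the busy period, so that both stopped paths lie in $\Ecal_+$, where $\Lcal$ is defined, with the $z=0$ case degenerate as you note. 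Second, the scaling identity $\Lcal(e_{\beta,\gamma})(s)=\gamma^{-1}\Lcal(e)(\beta\gamma s)$, which follows from uniqueness in the relation $\Lcal(e)\bigl(\int_0^t e\bigr)=e(t)$ exactly as you compute, combined with $s_n r_n = n$ (true by the definition $r_n=n/s_n$, independently of the tightness assumption) and $r_n = n^{1-1/\alpha}$ (where the tightness assumption enters), shows that $\Lcal$ carries the CMJ rescaling $t\mapsto r_n^{-1}Z(s_n t)$ onto the queue rescaling $t\mapsto n^{-(1-1/\alpha)}X^0(nt)$. Passing the un-rescaled equality in law through the deterministic pathwise map $\Lcal$ then finishes the argument, as you say, without further issue.
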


In view of Theorems~\ref{thm:CMJ-1},~\ref{thm:CMJ-2} and Lemma~\ref{lemma:PS-CMJ}, it is natural to expect excursions of the queue length process to converge, upon suitable conditioning. The conditioning $\{ T(0) > \varepsilon \}$ of $\PZ^1_n$ as in Theorem~\ref{thm:CMJ-1} is however not convenient in combination with the map $\Lcal$. Instead, it will be more convenient to consider $\PZ_n^1(\, \cdot \, | \, T \circ \Lcal > \varepsilon)$, where from now on we sometimes note $T = T(0)$, so that $T \circ \Lcal (e) = T_{\Lcal(e)}(0)$ for $e \in \Ecal_+$.

From the definition of $\Lcal$, one can see that, for $e \in \Ecal_+$, we have $\Lcal(e)(t) = 0$ if and only if $t \geq \int_0^{T_e(0)} e$, i.e., $T \circ \Lcal(e) = \int_0^{T_e(0)} e = \int_0^{\infty} e$. In particular, when $L^0$ is well-defined we have
\begin{equation} \label{eq:T}
	T \circ \Lcal \circ L^0 = \int_0^\infty L^0 = \int_0^\infty L(a, T) da = T.
\end{equation}

\begin{lemma} \label{lemma:CMJ}
	For any $\varepsilon > 0$, we have $\PZ_n^1 \left( \, \cdot \, | \, T \circ \Lcal > \varepsilon \right) \Rightarrow \PZ \left( \, \cdot \, | \, T \circ \Lcal > \varepsilon \right)$.
\end{lemma}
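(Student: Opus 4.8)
The plan is to reduce the statement to the excursion-level convergence of Theorem~\ref{thm:main-1} (available here in the weak sense, since the tightness assumption is in force throughout this subsection) by turning the conditioning $\{T\circ\Lcal>\varepsilon\}$ into a conditioning on the \emph{length} of the underlying excursion. Recall that $\PZ_n^1$ is the law of $L^0$ under $\PX_n$ and $\PZ$ the push-forward of $\underline\Ncal$ by $L^0$. Under $\PX_n$ the process $X$ starts from $\Lambda_n/s_n>0$ and has no negative jumps, hence $X>0$ on $[0,T(0))$, so that by the occupation density formula and~\eqref{eq:T} one has $T\circ\Lcal\circ L^0=\int_0^\infty L^0=T(0)$, $\PX_n$-almost surely; the same identity holds $\underline\Ncal$-a.e.\ (there $\underline X\ge0$ and $T(0)$ is the excursion length, with $\underline\Ncal(T(0)>\varepsilon)<\infty$ by Lemma~\ref{lemma:conv-hitting-times-3}). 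Consequently $\PZ_n^1(\,\cdot\mid T\circ\Lcal>\varepsilon)$ is the law of $L^0$ under $\PX_n(\,\cdot\mid T(0)>\varepsilon)$ and $\PZ(\,\cdot\mid T\circ\Lcal>\varepsilon)$ is the law of $L^0$ under the probability measure $\underline\Ncal(\,\cdot\mid T(0)>\varepsilon)$, so it suffices to prove that $L^0$ under $\PX_n(\,\cdot\mid T(0)>\varepsilon)$ converges weakly to $L^0$ under $\underline\Ncal(\,\cdot\mid T(0)>\varepsilon)$.

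First I would fix $a_0>0$ and combine Theorem~\ref{thm:main-1} (giving $L^0$ under $\PX_n(\,\cdot\mid T(a_0)<T(0))\Rightarrow L^0$ under $\underline\Ncal(\,\cdot\mid T(a_0)<T(0))$) with Lemma~\ref{lemma:conv-hitting-times-2} (giving $\kappa_ns_n\PX_n(T(a_0)<T(0))\to\underline\Ncal(T(a_0)<T(0))$): multiplying, the finite measures $\kappa_ns_n\PX_n\big(L^0\in\,\cdot\,;\,T(a_0)<T(0)\big)$ converge weakly on $\Ecal$ to $\underline\Ncal\big(L^0\in\,\cdot\,;\,T(a_0)<T(0)\big)$. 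Now $\{T(a_0)<T(0)\}=\{\sup X^0>a_0\}$ up to a null event, and $e\mapsto\int_0^\infty e=T\circ\Lcal(e)$ is continuous at $\underline\Ncal$-a.e.\ (and $\PX_n$-a.s.)\ value of $L^0$ — such an $L^0$ being a genuine excursion, strictly positive on the interior of its support because the underlying spectrally positive process passes continuously through every intermediate level, and having continuous paths in the limit, so that the length, and hence the area, functional is continuous there — while $\underline\Ncal(\int_0^\infty L^0=\varepsilon)=\underline\Ncal(T(0)=\varepsilon)=0$ (the atomlessness already exploited in Lemma~\ref{lemma:conv-hitting-times-3}). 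Inserting the bounded, a.e.-continuous factor $\indicator{T(0)>\varepsilon}$ then yields, for every bounded continuous $F:\Ecal\to\R$ and every $a_0>0$,
\[ \kappa_ns_n\,\EX_n\big(F(L^0)\,\indicator{T(0)>\varepsilon,\ \sup X^0>a_0}\big)\ \longrightarrow\ \underline\Ncal\big(F(L^0)\,\indicator{T(0)>\varepsilon,\ \sup X>a_0}\big) \]
as $n\to\infty$.

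To remove $a_0$, I bound the complementary contribution by $\|F\|_\infty\,\kappa_ns_n\PX_n(T(0)>\varepsilon,\sup X^0\le a_0)$ and write this quantity as $\kappa_ns_n\PX_n(T(0)>\varepsilon)-\kappa_ns_n\PX_n(T(0)>\varepsilon,\sup X^0>a_0)$; by Lemma~\ref{lemma:conv-hitting-times-3} and the above display with $F\equiv1$ it tends, as $n\to\infty$, to $\underline\Ncal(T(0)>\varepsilon)-\underline\Ncal(T(0)>\varepsilon,\sup X>a_0)=\underline\Ncal(T(0)>\varepsilon,\sup X\le a_0)$, which in turn vanishes as $a_0\downarrow0$ because $\underline\Ncal(T(0)>\varepsilon)<\infty$ and $\{T(0)>\varepsilon,\sup X\le a_0\}\downarrow\{T(0)>\varepsilon,\sup X=0\}=\emptyset$. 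Splitting $\indicator{T(0)>\varepsilon}$ accordingly, taking $\limsup_n$ and then $a_0\to0$, I obtain $\kappa_ns_n\EX_n(F(L^0)\indicator{T(0)>\varepsilon})\to\underline\Ncal(F(L^0)\indicator{T(0)>\varepsilon})$; with $F\equiv1$ this gives $\kappa_ns_n\PX_n(T(0)>\varepsilon)\to\underline\Ncal(T(0)>\varepsilon)\in(0,\infty)$, and dividing yields $\EX_n(F(L^0)\mid T(0)>\varepsilon)\to\underline\Ncal(F(L^0)\mid T(0)>\varepsilon)$ for all bounded continuous $F$, which is the reduced statement.

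The delicate point is this interchange of the limits in $n$ and $a_0$, i.e.\ the uniform-in-$n$ negligibility of excursions lasting longer than $\varepsilon$ but never rising above a small level $a_0$; pleasantly, the telescoping identity above reduces it to Lemmas~\ref{lemma:conv-hitting-times-2} and~\ref{lemma:conv-hitting-times-3}. The only other ingredient not entirely routine is the almost-sure/almost-everywhere continuity of the length (equivalently area) functional along the relevant excursions, which rests on the same path-regularity properties of spectrally positive L\'evy excursions that underlie Lemma~\ref{lemma:continuity-hitting-times}.
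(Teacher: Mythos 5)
Your reduction of the statement to the claim that $L^0$ under $\PX_n(\,\cdot\mid T(0)>\varepsilon)$ converges weakly to $L^0$ under $\underline\Ncal(\,\cdot\mid T(0)>\varepsilon)$, and the idea of proving this by combining Theorem~\ref{thm:main-1} with the excursion-mass limits of Lemmas~\ref{lemma:conv-hitting-times-2} and~\ref{lemma:conv-hitting-times-3} and then letting $a_0\to0$, is a genuinely different (and more self-contained) route from the paper's, which instead invokes an abstract change-of-conditioning result, Lemma~4.7 of~\cite{Lambert12:0}, and verifies its hypotheses~\eqref{eq:tmp-1}--\eqref{eq:tmp-2}. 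The telescoping device for the $a_0\downarrow 0$ step is clean and correctly uses the same two lemmas the paper relies on.

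However, there is a real gap in the step where you insert the factor $\indicator{T(0)>\varepsilon}=\indicator{\int_0^\infty L^0>\varepsilon}$ into the weak convergence of the finite measures $\kappa_ns_n\PX_n(L^0\in\,\cdot\,;T(a_0)<T(0))$. You justify this by claiming that $e\mapsto\int_0^\infty e$ is continuous at $\underline\Ncal$-typical values of $L^0$ because the limiting local time excursion is continuous, compactly supported and strictly positive on the interior of its support. That claim is false: the area functional is only \emph{lower} semi-continuous on $\Ecal$ for the Skorohod topology, and can fail upper semi-continuity precisely at such points. For instance, take $e(a)=\max(0,1-a)$ and $e_n(a)=\max(0,1-a)+n^{-1}\indicator{0\le a<n}$; both belong to $\Ecal$, $e_n\to e$ in $\Dcal$, $e$ is continuous, compactly supported and strictly positive on $(0,1)$, yet $\int_0^\infty e_n=3/2\neq1/2=\int_0^\infty e$. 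In particular the discontinuity set of $\indicator{\int_0^\infty e>\varepsilon}$ is not $\underline\Ncal$-negligible, so the Portmanteau argument does not apply as stated. What is actually needed is the \emph{joint} convergence of $(L^0,\,T_{L^0}(0))$ (equivalently $(L^0,\sup X^0)$), after which the identity $\int_0^\infty L^0=\int_0^{T_{L^0}(0)}L^0$ and the genuine continuity of $(f,t)\mapsto\int_0^t f$ would give convergence of $T(0)=\int_0^\infty L^0$ together with $L^0$. This is exactly what the paper does when it verifies hypothesis~\eqref{eq:tmp-2}; your marginal convergence of $L^0$ from Theorem~\ref{thm:main-1}, plus the separate marginal convergence of $T(0)$ from Lemma~\ref{lemma:conv-hitting-times-3}, does not yield that joint convergence, so the proposal as written does not close.
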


\begin{proof}
	Starting from Theorem~\ref{thm:CMJ-1} and using Lemma~$4.7$ in Lambert and Simatos~\cite{Lambert12:0}, one sees that it is enough to show that for any $\varepsilon > 0$,
	\begin{equation} \label{eq:tmp-1}
		\lim_{n \to +\infty} \kappa_n s_n \PZ_n^1( T > \varepsilon) = \PZ(T > \varepsilon), \ \lim_{n \to +\infty} \kappa_n s_n \PZ_n^1( T \circ \Lcal > \varepsilon) = \PZ( T \circ \Lcal > \varepsilon)
	\end{equation}
	and
	\begin{equation} \label{eq:tmp-2}
		(X, T \circ \Lcal) \text{ under } \PZ_n^1(\, \cdot \, | \, T > \varepsilon) \text{ converges weakly to } (X, T \circ \Lcal) \text{ under } \PZ(\, \cdot \, | \, T > \varepsilon).
	\end{equation}	

	Since $\PZ_n^1( T > \varepsilon) = \PX_n( T(\varepsilon) < T(0))$ and $\PZ(T > \varepsilon) = \underline \Ncal( T(\varepsilon) < T(0))$, the first limit in~\eqref{eq:tmp-1} is given by Lemma~\ref{lemma:conv-hitting-times-2}. Similarly,~\eqref{eq:T} entails $\PZ_n^1( T \circ \Lcal > \varepsilon) = \PX_n( T > \varepsilon)$ and $\PZ( T \circ \Lcal > \varepsilon) = \underline \Ncal(T > \varepsilon)$ and so the second limit in~\eqref{eq:tmp-1} follows from Lemma~\ref{lemma:conv-hitting-times-3}. Thus to complete the proof it remains to prove~\eqref{eq:tmp-2}.

	Since $T$ under $\PZ_n^1(\, \cdot \, | \, T > \varepsilon)$ is equal in distribution to $\sup X^0$ under $\PX_n(\, \cdot \, | \, \sup X^0 > \varepsilon)$ one can show that it converges weakly to $T$ under $\PZ(\, \cdot \, | \, T > \varepsilon)$. Using similar arguments as in the proof of Lemma~\ref{lemma:continuity-hitting-times-Levy}, one can strengthen this to get the joint convergence of $(X, T)$. Since $T (\Lcal(e)) = \int_0^{T_e(0)} e$ and the map $(f,t) \in \Dcal \times [0,\infty) \mapsto \int_0^t f$ is continuous, the continuous mapping theorem implies that $(X, T \circ \Lcal)$ under $\PZ_n^1(\, \cdot \, | \, T > \varepsilon)$ converges weakly to $(X, T \circ \Lcal)$ under $\PZ(\, \cdot \, | \, T > \varepsilon)$.
\end{proof}

We now state the two main queueing results of the paper. The first one (Theorem~\ref{thm:PS-excursion}) is a direct consequence of Lemma~\ref{lemma:CMJ} together with continuity properties of the map $\Lcal$, it gives results on excursions of the Processor-Sharing queue length process. The second one (Theorem~\ref{thm:PS-process}) leverages on results in Lambert and Simatos~\cite{Lambert12:0} to give two simple conditions under which not only excursions but the full processes converge.

\begin{theorem} \label{thm:PS-excursion}
	Let $\varepsilon, \zeta > 0$ and $(z_n)$ be any integer sequence such that $z_n / r_n \to \zeta$. Then the two sequences of processes $X^0$ under $\PQ_n^{z_n*}$ and $\PQ_n^{1}(\, \cdot \, | \, T > \varepsilon)$ converge weakly to $\Lcal(\PZ^{\zeta*})$ and $\Lcal(\PZ)(\, \cdot \mid T > \varepsilon)$, respectively.
\end{theorem}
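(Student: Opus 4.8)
The plan is to deduce both statements from the convergence of the underlying CMJ processes, transported through the Lamperti transformation $\Lcal$ by the continuous mapping theorem, using Lemma~\ref{lemma:PS-CMJ} to identify $X^0$ under $\PQ_n^{1}$ and under $\PQ_n^{z_n*}$ with $\Lcal(\PZ_n^{1})$ and $\Lcal(\PZ_n^{z_n*})$, respectively. Note first that in this section the tightness assumption is implicitly in force (the service distribution is $\Lambda$ and $r_n = n^{1-1/\alpha}$), so the relevant CMJ convergences hold weakly, not merely in the finite-dimensional sense: Theorem~\ref{thm:CMJ-2} gives $\PZ_n^{z_n*} \Rightarrow \PZ^{\zeta*}$, and Lemma~\ref{lemma:CMJ} gives $\PZ_n^{1}(\,\cdot \mid T\circ\Lcal > \varepsilon) \Rightarrow \PZ(\,\cdot \mid T\circ\Lcal > \varepsilon)$.

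Next I would record how the conditioning interacts with $\Lcal$. Since $T\circ\Lcal(e) = \int_0^\infty e$ for $e \in \Ecal_+$ by~\eqref{eq:T}, the event $\{T\circ\Lcal > \varepsilon\}$ equals $\Lcal^{-1}(\{T > \varepsilon\})$, so for any Borel set $A$ and any measure $\mu$ on $\Ecal_+$,
\[ \Lcal\big(\mu(\,\cdot \mid T\circ\Lcal > \varepsilon)\big)(A) = \frac{\Lcal(\mu)\big(A \cap \{T > \varepsilon\}\big)}{\Lcal(\mu)(\{T > \varepsilon\})} = \Lcal(\mu)(A \mid T > \varepsilon). \]
Applying this with $\mu = \PZ_n^{1}$ and $\mu = \PZ$ and combining with Lemma~\ref{lemma:PS-CMJ}, the law of $X^0$ under $\PQ_n^{1}(\,\cdot \mid T > \varepsilon)$ is $\Lcal\big(\PZ_n^{1}(\,\cdot \mid T\circ\Lcal > \varepsilon)\big)$ and $\Lcal(\PZ)(\,\cdot \mid T > \varepsilon) = \Lcal\big(\PZ(\,\cdot \mid T\circ\Lcal > \varepsilon)\big)$; similarly the law of $X^0$ under $\PQ_n^{z_n*}$ is $\Lcal(\PZ_n^{z_n*})$, while $\Lcal(\PZ^{\zeta*})$ is obtained by applying $\Lcal$ to the weak limit $\PZ^{\zeta*}$ of $\PZ_n^{z_n*}$. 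Hence both assertions reduce to showing that $\Lcal$ may be pushed through the two CMJ weak convergences.

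To do so I would invoke the continuity criterion for the Lamperti transformation established in Lambert and Simatos~\cite{Lambert12:0}: $\Lcal$ is continuous (for the $J_1$ topology) at every $e \in \Ecal_+$ that is continuous, satisfies $\int_0^\infty e < \infty$, and is strictly positive on $(0, T_e(0))$. It then remains to check that, under each of the limit laws $\PZ^{\zeta*}$ and $\PZ(\,\cdot \mid T\circ\Lcal > \varepsilon)$, the canonical excursion $e = L^0$ almost surely enjoys these three properties. Continuity of $e$ holds because, under the tightness assumption,~\eqref{eq:cond} is satisfied and hence by Lemma~\ref{lemma:cond} the local time process of $X$ under $\PX^0$ is jointly continuous; finiteness of $\int_0^\infty L^0$ is the identity~\eqref{eq:T}, by which it equals the a.s.\ finite random time $T(0)$ (on the relevant events); and strict positivity on $(0, T_e(0))$ follows because the continuous path of $X$ visits every level strictly below $\sup X$ (under $\underline\Ncal$), resp.\ below $\sup X^0$ (under $\PX^0$), and therefore accumulates positive local time there. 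Feeding these verifications into the continuous mapping theorem yields the two convergences.

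The step I expect to be the main obstacle is this last one: pinning down the precise continuity set of $\Lcal$ on $(\Dcal, J_1)$ and confirming that the limiting local time excursions fall inside it almost surely, the delicate ingredient being the strict positivity of $L^0$ on the interior of its support, which ultimately rests on the continuity of the limiting L\'evy process $X$ (available here only because the tightness assumption forces~\eqref{eq:cond} to hold) so that the range of $X$ is a genuine interval. The two bookkeeping computations with the conditionings are routine by comparison.
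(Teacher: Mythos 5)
Your reduction of the queueing statement to the CMJ convergences via Lemma~\ref{lemma:PS-CMJ} and the conditioning bookkeeping using~\eqref{eq:T} are correct and match the paper. The gap is in the crucial last step, where you claim that $\Lcal$ is $J_1$-continuous at every continuous $e\in\Ecal_+$ with $\int_0^\infty e<\infty$ and $e>0$ on $(0,T_e(0))$, and then apply the continuous mapping theorem. That claimed continuity criterion is false. Take any such $e$ with $T_e(0)=1$ and set $e_n(t)=e(t)+n^{-1/2}$ for $t<n^2$, $e_n(t)=0$ for $t\ge n^2$; then $e_n\in\Ecal_+$, $e_n\to e$ locally uniformly (hence in $J_1$), but $\int_0^\infty e_n\ge n^{3/2}\to\infty$, so $T_{\Lcal(e_n)}(0)\to\infty$ while $T_{\Lcal(e)}(0)<\infty$, and $\Lcal(e_n)\not\to\Lcal(e)$. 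The difficulty is that $J_1$ convergence on $[0,\infty)$ gives no control whatsoever on the tail contribution to $\int_0^\infty e_n$, precisely the quantity that becomes the lifetime of the Lamperti-transformed excursion.

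The paper's proof handles this by citing Lemma~$2.5$ of Lambert et al.~\cite{Lambert11:0} (not Lambert and Simatos~\cite{Lambert12:0}, which you cite), and that lemma is not a pointwise continuity statement: it asserts that $\Lcal(\mu_n)\Rightarrow\Lcal(\mu)$ when $\mu_n\Rightarrow\mu$ provided the excursion lifetime $T$ under $\mu_n$ is a tight sequence. This extra tightness hypothesis is exactly the missing ingredient in your proposal, and establishing it (in fact, its weak convergence) is where the paper's proof does real work: one identifies $T$ under $\PZ_n^{z_n*}$ with $\sup_{[0,T^L(\zeta_n)]}X$ under $\PX_n^0(\,\cdot\mid T^L(\zeta_n)<\infty)$ and argues this converges to the corresponding quantity under $\PX^0$, and similarly for $\PZ_n^1(\,\cdot\mid T\circ\Lcal>\varepsilon)$ using~\eqref{eq:T}. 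Without that step, the argument does not go through. Separately, your remark that strict positivity of $L^0$ ``ultimately rests on the continuity of the limiting L\'evy process $X$'' is off: $X$ has jumps in this setting; what is continuous under the tightness assumption is the local time process $L$ in the spatial variable, a different thing.
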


\begin{proof}
	Lemma~\ref{lemma:PS-CMJ} shows that $X^0$ under $\PQ^{z_n*}_n$ and $\PQ_n^1(\, \cdot \, | \, T > \varepsilon)$ is equal in distribution to $\Lcal(\PZ_n^{z_n*})$ and $\Lcal(\PZ_n^1(\, \cdot \, | \, T \circ \Lcal > \varepsilon))$, respectively. Then, note that since we consider the Processor-Sharing queue with Poisson arrivals at rate $\kappa_n$ and service distribution $\Lambda$, the tightness assumption holds. Thus $\PZ_n^1(\, \cdot \, | \, T \circ \Lcal > \varepsilon) \Rightarrow \PZ(\, \cdot \, | \, T \circ \Lcal > \varepsilon)$ by Lemma~\ref{lemma:CMJ} and $\PZ_n^{z_n*} \Rightarrow \PZ^{\zeta*}$ by Theorem~\ref{thm:CMJ-2}. Hence the result would be proved if we could show that $\Lcal$ was continuous along the two sequences $\PZ_n^1(\, \cdot \, | \, T \circ \Lcal > \varepsilon)$ and $\PZ_n^{z_n*}$ (using that $\Lcal(\PZ(\, \cdot \mid T \circ \Lcal > \varepsilon)) = \Lcal(\PZ)(\, \cdot \mid T > \varepsilon)$). Lemma~$2.5$ in Lambert et al.~\cite{Lambert11:0} shows that this holds if the two sequences $T$ under $\PZ_n^{z_n*}$ and $\PZ_n^1(\, \cdot \, | \, T \circ \Lcal > \varepsilon)$ are tight; actually, one can show as easily that they converge weakly.

	Indeed, defining $\zeta_n = z_n / r_n$, one sees that $T$ under $\PZ_n^{z_n*}$ is equal in distribution to $\sup_{[0, T^L(\zeta_n)]} X$ under $\PX_n^{0}(\, \cdot \, | \, T^L(\zeta_n) < +\infty)$. One can show using standard arguments that this converges to $\sup_{[0, T^L(\zeta)]} X$ under $\PX^{0}(\, \cdot \, | \, T^L(\zeta) < +\infty)$, which is equal in distribution to  $T$ under $\PZ^{\zeta*}$ and thus shows the weak convergence of $T$ under $\PZ_n^{z_n*}$. Similar arguments for $T$ under $\PZ_n^1(\, \cdot \, | \, T \circ \Lcal > \varepsilon)$ give the result, using also~\eqref{eq:T} in this case.
\end{proof}

\begin{theorem}\label{thm:PS-process}
	If one of the following two conditions is met:
	\begin{itemize}
		\item either the sequence $\PQ_n^{0}$ is tight;
		\item or for any $\eta > 0$,
		\begin{equation} \label{eq:cond-tightness}
			\lim_{\varepsilon \to 0} \limsup_{n \to +\infty} \left[ s_n \PQ_n^1\left( \sup X^0 \geq \eta, T \leq \varepsilon \right) \right] = 0;
		\end{equation}
	\end{itemize}
	then for any $\zeta \geq 0$ and any integer sequence $(z_n)$ such that $z_n / r_n \to \zeta$, the sequence $\PQ_n^{z_n*}$ converges weakly to the unique regenerative process that starts at $\zeta$ and such that its excursion measure is $\Lcal(\PZ)$, its zero set has zero Lebesgue measure and, when $\zeta > 0$, its first excursion is equal in distribution to $\Lcal(\PZ^{\zeta*})$.
\end{theorem}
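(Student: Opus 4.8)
The plan is to feed the queueing results already obtained into the general convergence theorem for sequences of regenerative processes of Lambert and Simatos~\cite{Lambert12:0}; the two alternative conditions in the statement are precisely the two ways offered by that reference of supplying the tightness ingredient that is otherwise missing. The first step is to identify the regenerative structure of the pre-limit: under $\PQ_n^{z_n*}$, the path of $X$ splits into a first busy cycle $X^0$, which by Lemma~\ref{lemma:PS-CMJ} has law $\Lcal(\PZ_n^{z_n*})$, followed by an i.i.d.\ sequence of (idle period, busy cycle) pairs, each busy cycle having law $\Lcal(\PZ_n^1)$ and each idle period being exponential with parameter $\kappa_n$ (when $\zeta = 0$ one prepends an idle period). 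Thus $\PQ_n^{z_n*}$ is a regenerative process with It\^o excursion measure $\kappa_n \Lcal(\PZ_n^1)$ away from $0$, with zero set of positive Lebesgue measure, and with first excursion of law $\Lcal(\PZ_n^{z_n*})$; the candidate limit is the regenerative process of the statement, with excursion measure $\Lcal(\PZ)$, Lebesgue-null zero set, and first excursion $\Lcal(\PZ^{\zeta*})$ when $\zeta > 0$.

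Next I would verify the hypotheses of the abstract theorem other than tightness. Convergence of the first excursion is Theorem~\ref{thm:PS-excursion}: $\Lcal(\PZ_n^{z_n*}) \Rightarrow \Lcal(\PZ^{\zeta*})$ when $\zeta > 0$, while for $\zeta = 0$ the first busy cycle, started from $z_n$ customers with $z_n/r_n \to 0$, has vanishing rescaled height and length and contributes nothing in the limit. Convergence of the rescaled excursion measures: by~\eqref{eq:T} the length $T(0)$ of a busy cycle of law $\Lcal(\PZ_n^1)$ equals $T \circ \Lcal$ of the underlying tree, so Lemma~\ref{lemma:CMJ} and the second limit in~\eqref{eq:tmp-1} give that, for every $\varepsilon > 0$, $\kappa_n s_n \Lcal(\PZ_n^1)$ restricted to $\{T(0) > \varepsilon\}$ converges to $\Lcal(\PZ)$ restricted to $\{T(0) > \varepsilon\}$, in total mass and, once normalized, weakly; since $\Lcal(\PZ)$ charges only excursions of positive length, this is the vague convergence of excursion measures required by~\cite{Lambert12:0}. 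Finally, the limiting zero set is Lebesgue-null: the rescaled idle time accumulated per unit of rescaled local time at $0$ is of order $1/(\kappa_n s_n) \to 0$ (equivalently $\Lcal(\PZ)$ has infinite mass, since $\underline \Ncal(T(0) > \varepsilon) \to \infty$ as $\varepsilon \to 0$), so the drift of the limiting inverse local time vanishes.

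It remains to supply tightness, and here the two cases diverge. If $\PQ_n^0$ is tight, then, writing $X$ under $\PQ_n^{z_n*}$ as its first busy cycle --- tight, being convergent --- concatenated at its (tight) right endpoint with an independent copy of $X$ under $\PQ_n^0$, and using that concatenation of an excursion ending at $0$ with a path starting at $0$ is continuous for the Skorokhod topology, one obtains tightness of $\PQ_n^{z_n*}$. If instead~\eqref{eq:cond-tightness} holds, this is exactly the quantitative control of excursions of small height and short length demanded by the tightness criterion of~\cite{Lambert12:0} (the normalizations $s_n$ and $\kappa_n s_n$ being interchangeable here, as $\kappa_n \to \alpha - 1 \in (0,\infty)$); combined with the convergence of the first excursion and of the large excursions established above, it again yields tightness of $\PQ_n^{z_n*}$. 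In either case the abstract theorem of~\cite{Lambert12:0} then identifies every subsequential limit with the stated regenerative process --- whence in particular its existence and uniqueness --- and tightness upgrades this to weak convergence of the whole sequence.

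I expect the main obstacle to be the faithful translation of our data into the precise framework of~\cite{Lambert12:0}: matching the mode of convergence of excursion measures we have at hand (Lemma~\ref{lemma:CMJ} together with~\eqref{eq:tmp-1}) and our normalizations to theirs, and, in the first case, making rigorous the ``tight first excursion concatenated with a tight regenerative tail'' argument, including the measurability of the gluing and the continuity of concatenation at the first return time to $0$.
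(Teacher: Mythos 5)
Your proposal is correct and takes essentially the same route as the paper: both reduce the theorem to the abstract convergence theorem for regenerative processes of Lambert and Simatos~\cite{Lambert12:0}, supplying the non-tightness hypotheses via Lemma~\ref{lemma:conv-hitting-times-3}, Lemma~\ref{lemma:CMJ}, Theorem~\ref{thm:PS-excursion} and the identity~\eqref{eq:T}, and treating the two alternative tightness conditions separately. The only cosmetic difference is that in the $\PQ_n^0$-tight case you sketch a direct concatenation argument, whereas the paper observes that assumption~$(6)$ of~\cite[Theorem~3]{Lambert12:0} is used there only to establish tightness of $\PQ_n^0$, so the cited proof applies verbatim once that tightness is assumed.
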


\begin{proof}
	We check that all the assumptions of Theorem~$3$ in Lambert and Simatos~\cite{Lambert12:0} are satisfied. In order to help the reader, we provide a translation of the notation used in~\cite{Lambert12:0}: the notation $\Ncal$, $\varphi$, $c_n$, $\Ncal_n$, $b_n$, and $v_\infty$ appearing in the statement of~\cite[Theorem~$3$]{Lambert12:0} correspond respectively to (with the notation of the present paper) $\Lcal(\PZ)$, $T$, $s_n \kappa_n$, $\PQ_n^1$, $n \kappa_n$ and $\sup$.

	That $\Lcal(\PZ)(T = +\infty) = \PQ_n^1(T = +\infty) = 0$ follows from the fact that $X$ under $\PX^0$ does not drift to $+\infty$. Let $\varepsilon > 0$: the assumption~$(\textbf{H1})$ in~\cite{Lambert12:0} corresponds to $s_n \kappa_n \PQ_n^1(T > \varepsilon) \to \Lcal(\PZ)(T > \varepsilon)$. Since the workload process associated to $X$ under $\PQ_n^1$ and renormalized in space by $s_n$ is equal in distribution to $\underline X$ under $\PX_n$, we have $\PQ_n^1(T > \varepsilon) = \PX_n(T > \varepsilon)$. Moreover,~\eqref{eq:T} gives $\Lcal(\PZ)(T > \varepsilon) = \underline \Ncal(T > \varepsilon)$ and so $s_n \kappa_n \PQ_n^1(T > \varepsilon) \to \Lcal(\PZ)(T > \varepsilon)$ is just a restatement of Lemma~\ref{lemma:conv-hitting-times-3}. It can be shown similarly that the assumption~$(\textbf{H2})$ in~\cite{Lambert12:0} holds (i.e., for any $\varepsilon, \lambda > 0$ we have $s_n \kappa_n \PQ_n^1( 1 - e^{-\lambda T} ; T \leq \varepsilon) \to \Lcal(\PZ)(1-e^{-\lambda T} ; T \leq \varepsilon)$).

	The convergence of $X^0$ under $\PQ_n^1(\, \cdot \mid T > \varepsilon)$ (which corresponds to $e_\varepsilon(X_n)$ in~\cite{Lambert12:0}) has been taken care of by Theorem~\ref{thm:PS-excursion}. It is easily deduced that $(X^0, T)$ under $\PQ_n^1(\, \cdot \mid T > \varepsilon)$ converges toward $(X^0, T)$ under $\Lcal(\PZ)(\, \cdot \mid T > \varepsilon)$ (which corresponds to $(e_\varepsilon, \varphi \circ e_\varepsilon)(X_n) \Rightarrow (e_\varepsilon, \varphi \circ e_\varepsilon)(X)$ in~\cite{Lambert12:0}).

	Finally, our assumption~\eqref{eq:cond-tightness} corresponds exactly to the last condition~$(6)$ in~\cite{Lambert12:0}, which proves the result if~\eqref{eq:cond-tightness} is assumed. If one assumes tightness of $\PQ_n^0$ instead of~\eqref{eq:cond-tightness}, one can check that the proof of Theorem~$3$ in~\cite{Lambert12:0} goes through, since the assumption~$(6)$ there is only used to show tightness of $\PQ_n^0$. The proof is therefore complete.
\end{proof}

\subsection*{A relation with the height process}

Let $Q$ be the regenerative process, started at $0$, whose zero set has zero Lebesgue measure and with excursion measure $\Lcal(\PZ)$. Duquesne and Le Gall~\cite{Duquesne02:0} have introduced a process $H$, which they call the height process, that codes the genealogy of a CSBP. It is interesting to note that for every $t \geq 0$, $Q(t)$ and $H(t)$ are equal in distribution, as can be seen by combining results from Kella et al.~\cite{Kella05:0} and Limic~\cite{Limic01:0}. Indeed, the first result shows that the one-dimensional distributions of a Processor-Sharing queue and a Last-In-First-Out (LIFO) queue started empty are equal. As for the second result, it shows that in the finite variance case a LIFO queue suitably renormalized converges to $H$ (which is then just a reflected Brownian motion), and the author argues in the remark preceding Theorem~$5$ that this result can be extended to the $\alpha$-stable setting which we have also studied here.

It would be interesting to study whether $Q$ and $H$ share more similarities. In general however, these processes may be dramatically different. For instance, it follows from Lemma~\ref{lemma:cond} and Duquesne and Le Gall~\cite{Duquesne02:0} that if $\Psi$ is such that
\[ \int^\infty \frac{d\lambda}{\Psi(\lambda) \sqrt{\log \lambda}} < +\infty \ \text{ while } \ \int^\infty \frac{d\lambda}{\Psi(\lambda)} = +\infty \]
then $H$ is very wild (not even c\`adl\`ag) while $Q$ is continuous.

\appendix

\section{Proof of Proposition~\ref{prop:tightness-interm}} \label{sec:proof}

In the rest of this section, we fix some $a_0 > 0$ and we assume that the tightness assumption stated in Section~\ref{sec:notation} holds: in particular, $\Lambda_n = \Lambda$ with $\P(\Lambda \geq s) = (1+s)^{-\alpha}$ for some $1 < \alpha < 2$, $n = s_n^{\alpha}$ and $r_n = s_n^{\alpha-1}$. The goal of this section is to prove that the sequence $L^0(a_0 + \, \cdot \,)$ under $\PX_n^*(\, \cdot \, | \, T(a_0) < T(0))$ is tight.
\\

Note that this will prove Proposition~\ref{prop:tightness-interm}: indeed, by Proposition~\ref{prop:conv-fd-shifted-process}, $L^0(a_0 + \, \cdot \,)$ under $\PX_n^*(\, \cdot \, | \, T(a_0) < T(0))$ converges in the sense of finite-dimensional distributions to $L^0(a_0 + \, \cdot \,)$ under $\Ncal(\, \cdot \, | \, T(a_0) < T(0))$. Moreover, the jumps of $L^0(\, \cdot \, + a_0)$ are of deterministic size $1/r_n$. Since $1/r_n \to 0$, any limiting point must be continuous, see for instance Billinglsey~\cite{Billingsley99:0}. Note that this reasoning could therefore be proved to show that $L$ under $\PX^0$ is continuous (in the space variable), a result that is difficult to prove in general (see for instance Barlow~\cite{Barlow88:0}).
\\

Under the tightness assumption, the scale function $w_n$ enjoys the following useful properties. The convexity and smoothness properties constitute one of the main reasons for making the tightness assumption.

\begin{lemma} \label{lemma:prop-w}
	For each $n \geq 1$, $w_n$ is twice continuously differentiable and concave, its derivative $w_n'$ is convex, $w_n(0) = 1$ and $w_n'(0) = \kappa_n$. Moreover,
	\[ \sup \left\{ \frac{w_n(t)}{(1+t)^{\alpha}}: n \geq 1, t > 0 \right\} < +\infty \]
	and finally, there exist $n_0 \geq 1$ and $t_0 > 0$ such that $w_n(t_0) \geq 2$ for all $n \geq n_0$.
\end{lemma}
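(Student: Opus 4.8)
The plan is to compute $w_n$ explicitly — or explicitly enough — using the Laplace transform characterization $\int_0^\infty e^{-\lambda x} w_n(x)\,dx = 1/\psi_n(\lambda)$, where under the tightness assumption $\psi_n(\lambda) = \lambda - \kappa_n \E(1 - e^{-\lambda\Lambda})$ with $\P(\Lambda \geq s) = (1+s)^{-\alpha}$. First I would record the elementary facts: $w_n(0) = \lim_{\lambda\to\infty}\lambda/\psi_n(\lambda) = 1$ is already noted in the text, and $w_n'(0)$ is obtained from $w_n'(0) = \lim_{\lambda\to\infty}\lambda(\lambda/\psi_n(\lambda) - 1) = \lim_{\lambda\to\infty}\lambda\,\kappa_n\E(1-e^{-\lambda\Lambda})/\psi_n(\lambda)$; since $\E(1-e^{-\lambda\Lambda})\to 1$ and $\psi_n(\lambda)/\lambda\to 1$, this limit is $\kappa_n$. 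For smoothness, I would differentiate the renewal-type equation that $w_n$ satisfies. Because $X$ under $\P_n^0$ is a compound Poisson process with unit negative drift and jump measure $\pi_n(da) = \kappa_n\P(\Lambda\in da)$, the scale function solves the integral equation $w_n(x) = 1 + \kappa_n\int_0^x w_n(x-y)\,\P(\Lambda > y)\,dy = 1 + \kappa_n\int_0^x w_n(x-y)(1+y)^{-\alpha}\,dy$ (this is the standard convolution identity for spectrally negative-of-the-drift Lévy processes; equivalently it follows by inverting the Laplace transform $1/\psi_n(\lambda) = 1/\lambda \cdot 1/(1 - \kappa_n\E(1-e^{-\lambda\Lambda})/\lambda)$ and expanding the geometric series). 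From this equation, since $y\mapsto(1+y)^{-\alpha}$ is $C^\infty$ on $[0,\infty)$ and $w_n$ is locally bounded, differentiating under the integral sign twice gives $w_n \in C^2$, with $w_n'(x) = \kappa_n w_n(0)(1+x)^{-\alpha} + \kappa_n\int_0^x w_n'(x-y)(1+y)^{-\alpha}\,dy$ and a similar expression for $w_n''$; the value $w_n'(0) = \kappa_n$ falls out immediately, consistent with the above.

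Next, concavity of $w_n$ and convexity of $w_n'$. I would argue from the differentiated equations: writing $g(y) = (1+y)^{-\alpha}$, which is positive, decreasing and convex, one has $w_n' = \kappa_n g + \kappa_n (w_n' * g)$ and hence $w_n'' = \kappa_n g' + \kappa_n (w_n'' * g) + \kappa_n w_n'(0) g$... — more cleanly, differentiate once more to get $w_n'' (x)= \kappa_n g'(x) + \kappa_n w_n'(0) g(x)\cdot\mathbb 1 + \kappa_n (w_n'*g)'(x)$; the cleanest route is to note $w_n''$ satisfies a renewal equation $w_n'' = h_n + \kappa_n (w_n'' * g)$ with $h_n(x) = \kappa_n\big(g'(x) + \kappa_n g(0)g(x)\big) + \kappa_n\kappa_n (g*g')(x)$ or similar, and then check $h_n \le 0$: since $g' \le 0$ dominates the positive terms near where they matter — this sign analysis is exactly the point that needs care. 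An alternative, and probably cleaner, approach avoids explicit renewal equations: it is classical (see Kyprianou's book, or Loeffen) that for a spectrally negative Lévy process whose Lévy measure has a completely monotone density, the scale function $w$ has a convex derivative; and for a process of bounded variation with Lévy density that is decreasing, $w$ is concave. Here the "Lévy measure" governing the scale function is $\pi_n(da) = \kappa_n\alpha(1+a)^{-\alpha-1}da$, whose density $\kappa_n\alpha(1+a)^{-\alpha-1}$ is completely monotone on $[0,\infty)$; invoking this structural result gives both convexity of $w_n'$ and, combined with $w_n'$ decreasing (which follows since $w_n'$ convex and, by the equation, $w_n'(x)\to \kappa_n\E(\Lambda)\cdot(\text{const})$... actually $w_n'(\infty) = \lim\lambda\cdot\lambda/\psi_n(\lambda) - \ldots$), concavity of $w_n$. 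I would state the cleanest available version and cite it.

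For the uniform bound $\sup\{w_n(t)/(1+t)^\alpha : n\ge 1, t>0\} < \infty$, the idea is that $w_n(t) \le W_n(\infty)\cdot(\text{polynomial})$ is too crude; instead I would use the Laplace-transform / Karamata-type estimate already quoted in the proof of Lemma~\ref{lemma:cond}: there exist $0 < c \le C$ with $c/(u\Psi(1/u)) \le W(u) \le C/(u\Psi(1/u))$, and the analogous two-sided bound holds for $w_n$ in terms of $\psi_n$, \emph{uniformly in $n$} provided one controls $\psi_n$ from below uniformly. Since $\kappa_n\E(\Lambda)\to 1$ with $\kappa_n\E(\Lambda)\le 1$, and $\psi_n(\lambda) = \lambda - \kappa_n\E(1-e^{-\lambda\Lambda})$ with $\E(1-e^{-\lambda\Lambda})$ behaving like $\lambda^{\alpha-1}$-type corrections coming from the $(1+s)^{-\alpha}$ tail (regular variation of index $\alpha - 1 \in (0,1)$ for $\lambda \mapsto \E(1-e^{-\lambda\Lambda})$... wait, the tail index is $\alpha$, so $1 - \E e^{-\lambda\Lambda} \sim$ const$\cdot\lambda^{\alpha-1}$ up to the renewal normalization), one gets $\psi_n(\lambda) \ge c'\lambda^\alpha$ for $\lambda$ bounded away from $0$, uniformly in $n$ large, hence $w_n(t) \le C' (1+t)^{\alpha}$ uniformly; the small-$n$ / small-$t$ regime is handled by $w_n(0) = 1$ and local boundedness. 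I expect this uniform-in-$n$ tail estimate on $\psi_n$, and the passage from it to the pointwise bound on $w_n$ via a Tauberian argument that is uniform in the family, to be \textbf{the main obstacle} — one must be careful that the Tauberian theorem gives a uniform constant, which here works because all $\psi_n$ are comparable to the single limiting $\Psi$.

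Finally, the last assertion — there exist $n_0$ and $t_0$ with $w_n(t_0) \ge 2$ for $n \ge n_0$ — follows from Lemma~\ref{lemma:conv-W}: $W_n(a) = w_n(as_n)/r_n \to W(a)$ and $W$ is a genuine (non-constant, increasing, unbounded unless the process drifts, but here $W(\infty) = 1/\Psi'(0+) \in (0,\infty]$) scale function with $W(0) = 0$, so pick $a$ with $W(a) > 0$; then $w_n(as_n) = r_n W_n(a) \to \infty$ since $r_n \to \infty$ and $W_n(a)\to W(a) > 0$, so certainly $w_n(t) \ge 2$ for a suitable fixed $t$ and all large $n$ — in fact, since $w_n$ is increasing and $w_n(0)=1$, one only needs $w_n(t_0) \ge 2$ for one $t_0$, which is immediate once $w_n$ is unbounded at some sequence of points, and that in turn follows because $w_n'(0) = \kappa_n \to 1/\E(\Lambda) > 0$ so $w_n(t) \ge 1 + (\kappa_n/2) t$ for small $t$ uniformly, giving $w_n(t_0) \ge 2$ with $t_0 = 2/\inf_n\kappa_n$ and $n_0$ chosen so that $\inf_{n\ge n_0}\kappa_n > 0$.
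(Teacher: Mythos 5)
Your treatment of the first half (the values $w_n(0)=1$, $w_n'(0)=\kappa_n$ via Laplace limits, smoothness via the renewal equation $w_n = 1 + \kappa_n(w_n * g)$ with $g(y)=(1+y)^{-\alpha}$, and the structural concavity/convexity result invoked through complete monotonicity of the L\'evy density) is a legitimate alternative to the paper's citations of Chan et al.\ and Kyprianou et al.; the hypotheses are indeed satisfied, and you correctly flag that your direct sign analysis of $w_n''$ was not carried through.

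Where your proposal genuinely falls short is in the last two assertions, and the missing idea in both is the same one: the scale function $w_n$ is \emph{monotone in the rate $\kappa_n$}. Concretely, expanding $1/\psi_n(\lambda) = \lambda^{-1}\sum_{k\geq 0}\big(\kappa_n \int_0^\infty e^{-\lambda y}\P(\Lambda>y)\,dy\big)^k$ and inverting gives $w_n(x) = \sum_k \kappa_n^k \int_0^x g^{*k}$, which is increasing in $\kappa_n$. Since $\kappa_n \leq \alpha-1$ for every $n$, taking $\overline\kappa = \alpha-1$ dominates all $w_n$ from above by a \emph{single} scale function $\overline w$, and the bound $\overline w(t) \leq C/(t\,\overline\psi(1/t)) \leq C' t^{\alpha-1}$ for $t\geq 1$ then gives the uniform estimate at once, with no need for a ``uniform Tauberian theorem'' across the family. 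Your proposed route — trying to push the one-process bound $w(t)\leq C/(t\psi(1/t))$ uniformly in $n$ — is exactly the step you yourself flag as the main obstacle, and it is not resolved: the constant $C$ in that inequality a priori depends on the process, and you give no argument that it can be chosen uniformly.

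The last assertion is where your proposal actually breaks. Your first idea (using $w_n(a s_n) = r_n W_n(a) \to \infty$) evaluates $w_n$ at points $a s_n$ tending to infinity, so it cannot produce a \emph{fixed} $t_0$. Your second idea — ``$w_n(t) \geq 1 + (\kappa_n/2)t$ for small $t$ uniformly, so take $t_0 = 2/\inf_n\kappa_n$'' — goes in the wrong direction: $w_n$ is concave with $w_n(0)=1$, $w_n'(0)=\kappa_n$, so the tangent line is an \emph{upper} bound, $w_n(t)\leq 1 + \kappa_n t$, and concavity gives no lower bound beyond $w_n \geq 1$. Moreover, even granting a local lower bound $w_n(t)\geq 1+(\kappa_n/2)t$ on some interval $[0,\varepsilon_n]$ (say via a Taylor estimate with $|w_n''(0)|$ controlled), one finds that reaching the value $2$ requires $t \approx 2/\kappa_n$, which is not small; working out the threshold shows the estimate cannot be stretched that far unless $\alpha \geq 3$, which contradicts $\alpha < 2$. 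The paper's argument sidesteps all of this by again using monotonicity in $\kappa_n$: choosing $\underline\kappa$ slightly below $\alpha-1$, one has $w_n \geq \underline w$ for $n$ large, and $\underline w(\infty) = 1/(1 - \underline\kappa\,\E(\Lambda)) = 1/(1 - \underline\kappa/(\alpha-1)) > 2$ once $\underline\kappa$ is close enough to $\alpha-1$, so a fixed $t_0$ with $\underline w(t_0)\geq 2$ works for all $n\geq n_0$.
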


\begin{proof}
	The smoothness of $w_n$ follows from Theorem~$3$ in Chan et al.~\cite{Chan10:0} since $f(s) = \P(\Lambda \geq s)$ is continuously differentiable with $|f'(0)| < +\infty$. The convexity properties follow from Theorem~$2.1$ in Kyprianou et al.~\cite{Kyprianou10:0} since $f$ is log-convex and $\psi'_n(0) \geq 0$. The formulas for $w_n(0)$ and $w'_n(0)$ are well-known, see for instance Kuznetsov et al.~\cite{Kuznetsov11:0}. We now prove the two last assertions.
	
	First of all, note that
	\[ \sup \left\{ \frac{w_n(t)}{(1+t)^{\alpha}}: n \geq 1, t > 0 \right\} \leq \max \left( \sup \left\{ w_n(1): n \geq 1 \right\}, \sup \left\{ \frac{w_n(t)}{t^{\alpha-1}}: n \geq 1, t \geq 1 \right\} \right). \]
	
	Let $\overline \psi$ be the L\'evy exponent given by $\overline \psi(\lambda) = \lambda - (\alpha-1) \E( 1 - e^{-\lambda \Lambda})$ with corresponding scale function $\overline w$. Since $\kappa_n \to \alpha-1$, $\P_n^0$ converges in distribution to the law of the L\'evy process with L\'evy exponent $\overline \psi$, and so it can be shown similarly as in the proof of Lemma~\ref{lemma:conv-W} that $w_n(1) \to \overline w(1)$. The first term $\sup \left\{ w_n(1): n \geq 1 \right\}$ appearing in the above maximum is therefore finite. As for the second term, since for any $n \geq 1$ we have $\kappa_n \E(\Lambda) = \kappa_n / (\alpha-1) \leq 1$ by assumption, we get $\overline \psi \leq \psi_n$ and by monotonicity it follows that $w_n \leq \overline w$. Moreover, it is known that there exists a finite constant $C > 0$ such that $\overline w(t) \leq C / (t \overline \psi(1/t))$ for all $t > 0$, see Proposition~III.$1$ or the proof of Proposition~VII.$10$ in Bertoin~\cite{Bertoin96:0}. In particular,
	\[ \sup \left\{ \frac{w_n(t)}{t^{\alpha-1}}: n \geq 1, t \geq 1 \right\} \leq \sup \left\{ \frac{\overline w(t)}{t^{\alpha-1}}: t \geq 1 \right \} \leq C \sup \left\{ \frac{t^\alpha}{\overline \psi(t)}: 0 < t \leq 1 \right \}. \]
	Since $\P(\Lambda \geq s) = (1+s)^{-\alpha}$ one can check that there exists some constant $\beta > 0$ such that $\overline \psi(t) \sim \beta t^{\alpha}$ as $t \to 0$, which shows that the last upper bound is finite and proves the desired result.
	
	To prove the last assertion of the lemma, consider $n_0$ large enough such that $\underline \kappa = \inf_{n \geq n_0} \kappa_n > 1/2$ (remember that $\kappa \to 1/(\alpha-1)>1$). Let $\underline \psi$ be the L\'evy exponent given by $\underline \psi(\lambda) = \lambda - \underline \kappa \E(1-e^{-\lambda \Lambda})$ and corresponding scale function $\underline w$. By monotonicity, we get $w_n \geq \underline w$ for any $n \geq n_0$, and one easily checks that $\underline w(\infty) = 1 / (1- \underline \kappa / (\alpha-1))$. Since by choice of $\underline \kappa$ this last limit is strictly larger than $2$, there exists $t_0 > 0$ such that $\underline w(t_0) \geq 2$. This proves the result.
\end{proof}

Since we are interested in limit theorems, we will assume in the sequel without loss of generality that there exists $t_0 > 0$ such that $w_n(t_0) \geq 2$ for all $n \geq 1$, and we henceforth fix such a $t_0$. We first give a short proof of Proposition~\ref{prop:tightness-interm} based on the two following technical results, see Theorem~$13.5$ in Billingsley~\cite{Billingsley99:0}.

\begin{prop} [Case $(b-a) \vee (c-b) \leq t_0 / s_n$] \label{prop:case<}
	For any $A > a_0$, there exist finite constants $C, \gamma \geq 0$ such that for all $n \geq 1$, $\lambda > 0$ and $a_0 \leq a < b < c \leq A$ with $(b-a) \vee (c-b) \leq t_0 / s_n$,
	\[
		\PX_n^* \left( \left| L^0(b) - L^0(a) \right| \wedge \left| L^0(c) - L^0(b) \right| \geq \lambda \, | \, T(a) < T(0) \right) \leq C \frac{(c-a)^{3/2}}{\lambda^\gamma}.
	\]
\end{prop}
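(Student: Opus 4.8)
The plan is to establish the displayed two‑point estimate, which is exactly the hypothesis of Billingsley's tightness criterion~\cite[Theorem~13.5]{Billingsley99:0} (the exponent $3/2$ of $c-a$ exceeds $1$, as that criterion requires); all the work lies in the inequality itself. The tool I would use is the pre‑limit Ray--Knight description of the local time read in the \emph{spatial} variable. Recall that $r_nL^0(a)=\#\{s\le T(0):X(s)=a\}$; by the strong Markov property and excursion theory, conditionally on the restriction of $X$ to the levels at and below $b$, the part of $X$ living above $b$ is obtained by grafting, onto the $r_nL^0(b)$ visits of $X$ to $b$, independent copies of the excursion mechanism. This makes $\big(r_nL^0(\ell)\big)_{\ell\ge a}$ a near‑critical branching‑type process in the level variable: its one‑level offspring law has mean governed by ratios of the scale function $W_n$ through~\eqref{eq:scale} and the identity $\EX_n^{b}\big(r_nL(b,T(0))\big)=r_nW_n(b)=w_n(bs_n)$ (obtained as in the proof of Lemma~\ref{lemma:conv-hitting-times}), while its heavy upper tail is inherited from the $\alpha$‑stable tail $\P(\Lambda\ge s)=(1+s)^{-\alpha}$. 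This is where the restriction $(b-a)\vee(c-b)\le t_0/s_n$ is crucial: a level‑step of size at most $t_0/s_n$ only ever involves $w_n$ evaluated on the compact interval $[0,t_0]$, on which Lemma~\ref{lemma:prop-w} makes $w_n$ uniformly bounded, $C^2$ and concave, so the step is uniformly near‑critical with a controlled fluctuation scale.

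Next I would record two ingredients, uniform in $n\ge1$ and in $a,b,c\in[a_0,A]$. First, a priori moment bounds: for every $p<\alpha$ there is $C_p<\infty$ with $\EX_n^*\big(L^0(a)^{p}\mid T(a)<T(0)\big)\le C_{p}$ --- only fractional moments are available, since $\Lambda$ has infinite variance. Second, a one‑increment conditional estimate: writing $\mathcal{G}_b$ for the $\sigma$‑field generated by $X$ observed at and below level $b$, for all $p<\alpha$ and $a_0\le b<c\le A$ with $c-b\le t_0/s_n$,
\[
	\EX_n^*\big( |L^0(c)-L^0(b)|^{p}\mid\mathcal{G}_b \big)\ \le\ C_{p}\,(c-b)^{p/\alpha}\,\big(1+L^0(b)\big)^{p/\alpha}.
\]
The self‑similar exponent $p/\alpha$ and the proportionality to a power of $L^0(b)$ are the branching features: conditionally on $\mathcal{G}_b$, $L^0(c)-L^0(b)$ is the increment, over a level‑lapse $c-b$, of the branching process started from $\asymp L^0(b)$, which by Lamperti's time change behaves like a spectrally positive $\alpha$‑stable increment run for a clock of length $\asymp L^0(b)(c-b)$. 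Both ingredients would follow from the excursion decomposition in the proof of Proposition~\ref{prop:conv-fd-shifted-process}, the scale‑function estimates of Lemma~\ref{lemma:prop-w}, and classical moment bounds for sums of i.i.d.\ summands in the domain of attraction of a stable law, with care devoted to the $\P^*$‑conditioning, the starting overshoot (Lemma~\ref{lemma:overshoot}), and the upward jumps of $X$ straddling $[b,c]$.

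Finally I would combine these by conditioning at the middle level. Put $D_1=L^0(b)-L^0(a)$ and $D_2=L^0(c)-L^0(b)$; then $D_1$ is $\mathcal{G}_b$‑measurable, so
\[
	\PX_n^*\big(|D_1|\ge\lambda,\ |D_2|\ge\lambda\mid T(a)<T(0)\big)\ =\ \EX_n^*\Big(\indicator{|D_1|\ge\lambda}\,\PX_n^*\big(|D_2|\ge\lambda\mid\mathcal{G}_b\big)\Big),
\]
and Markov's inequality applied to the conditional estimate bounds the inner probability by $C(c-b)^{p/\alpha}\lambda^{-p}\big(1+L^0(b)\big)^{p/\alpha}$. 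Bounding $L^0(b)\le L^0(a)+|D_1|$ and estimating $\EX_n^*\big(\indicator{|D_1|\ge\lambda}(1+L^0(b))^{p/\alpha}\big)$ by Markov's inequality together with the two ingredients applied now at level $a$ (one further conditioning, on $\mathcal{G}_a$), one arrives at a bound of the form $C(b-a)^{p'/\alpha}(c-b)^{p/\alpha}\lambda^{-\gamma}$; since $(b-a)^{x}(c-b)^{y}\le(c-a)^{x+y}$ for $x,y\ge0$ this is at most $C(c-a)^{(p+p')/\alpha}\lambda^{-\gamma}$, which yields the claim upon taking $p,p'$ close enough to $\alpha$ that $(p+p')/\alpha\ge3/2$, $\gamma$ being an explicit finite constant. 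I expect the genuinely delicate point to be this bookkeeping together with the conditional estimate: because only moments of order $<\alpha$ are available one cannot run a plain $L^{2}$ argument, so everything must pass through fractional moments and the $(\cdot)^{1/\alpha}$ scaling of increments, and keeping the accumulated exponent of $c-a$ above $1$ --- indeed pushing it up to $3/2$ --- is exactly what forces the two‑sided conditioning and requires exploiting that $c-a\le t_0/s_n$ together with the deterministic jump size $1/r_n\to0$ of the pre‑limit local time.
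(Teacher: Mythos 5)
Your plan hinges on two ``ingredients'' --- a uniform moment bound for $L^0(a)$ of order $p<\alpha$ only, and a conditional increment estimate
\[
\EX_n^*\big(|L^0(c)-L^0(b)|^p\mid\mathcal{G}_b\big)\le C_p\,(c-b)^{p/\alpha}\,(1+L^0(b))^{p/\alpha},
\]
and both are off the mark, in different directions.

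First, the premise that ``only fractional moments are available'' for $L^0(a)$ is not correct. Under $\PX_n^*(\,\cdot\mid T(a)<T(0))$, the quantity $r_nL^0(a)$ is $1+G_n(a)$ with $G_n(a)$ a geometric random variable (parameter $1-1/(r_nW_n(a))$, see Lemma~\ref{lemma:fd} and the preceding discussion), so $L^0(a)$ has all moments and indeed $\EX_n^*(L^0(a)^i)\le C_i$ for \emph{every} integer $i$, uniformly in $n$. The heavy tail of $\Lambda$ shows up in the hitting structure and in the increments, not in the local time at a fixed level.

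Second, and more seriously, the conditional estimate with scaling $(c-b)^{p/\alpha}$ is not what the pre-limit satisfies in the regime $c-b\le t_0/s_n$. That regime is precisely the one where the pre-limit has \emph{not yet entered} the stable scaling: the elementary increment $\xi_n^b$ (a mixture of $-1$ and a geometric $G_n(c-b)$) satisfies $\E\big(|\xi_n^b|^i\big)\le C_i\,(c-b)s_n$ for \emph{every} $i$, with $(c-b)s_n\le t_0=O(1)$ (Lemma~\ref{lemma:C_i}). So the conditional moments are governed by linear-in-$(c-b)s_n$ bounds, not by a $(c-b)^{1/\alpha}$ power; for $c-b\asymp 1/s_n$, the linear bound $(c-b)s_n\asymp 1$ is far larger than $(c-b)^{p/\alpha}\asymp s_n^{-p/\alpha}$. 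One can check explicitly (via von Bahr--Esseen, or by computing the conditional second moment) that the conditional $p$-th moment carries an unavoidable $r_n$-dependence that your estimate erases. And even \emph{granting} your estimate, the arithmetic does not close: after conditioning at $b$ and then at $a$, and optimising all Hölder/Markov exponents subject to the moment constraints you impose, the best total power of $(c-a)$ one obtains is of the form $2-1/\alpha$, which is strictly below $3/2$ for every $\alpha\in(1,2)$. The constraint ``only moments $<\alpha$'' is precisely what prevents reaching $3/2$; it is therefore a red flag that this constraint is self-imposed and false.

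The paper's proof of Proposition~\ref{prop:case<} goes the other way: it embraces the fact that at finite $n$ everything has all moments. It starts from the explicit combinatorial representation of the pair of increments (Lemma~\ref{lemma:fd}--\eqref{eq:fd-integrated}), in which all summands are $\{-1\}\cup\{\text{geometric}\}$-valued. It proves that each elementary summand has $i$-th moment $\le C_i(c-a)s_n$ for every integer $i$ (Lemmas~\ref{lemma:p-xi+theta}--\ref{lemma:C_i} and \eqref{eq:bound-xi+theta}), and controls random sums with the combinatorial Lemma~\ref{lemma:sum}. The extra $1/2$ power on $(c-a)$ is then produced by conditioning on the $\sigma$-field generated by the first increment and applying Cauchy--Schwarz to $\indicator{|D_1|\ge\lambda r_n}$, which contributes $\P(|D_1|\ge\lambda r_n)^{1/2}\le C\,(b-a)^{1/2}s_n^{1/2}\,\lambda^{-I/2}r_n^{-I/4}$. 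The resulting prefactor $(c-a)^{3/2}s_n^{3/2}$ is acceptable because the $s_n^{3/2}$ is annihilated by choosing the integer moment orders $I_2,I_4$ large enough that $s_n^{3/2}r_n^{-I_2/4}$ and $s_n^{3/2}r_n^{-I_4/2}$ stay bounded (using $r_n=s_n^{\alpha-1}$). This mechanism --- trading an $s_n$ excess for an $r_n$ decay obtained from high integer moments --- is unavailable in your proposal because you have capped yourself at moment orders $<\alpha$.
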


\begin{prop} [Case $b-a \geq t_0 / s_n$] \label{prop:case>}
	For any $A > a_0$, there exist finite constants $C, \gamma \geq 0$ such that for all $n \geq 1$, $\lambda > 0$ and $a_0 \leq a < b \leq A$ with $b-a \geq t_0 / s_n$,
	\[
		\PX_n^* \left( \left| L^0(b) - L^0(a) \right| \geq \lambda \, | \, T(a) < T(0) \right) \leq C \frac{(b-a)^{3/2}}{\lambda^\gamma}.
	\]
	Moreover, the constant $\gamma$ can be taken equal to the constant $\gamma$ of Proposition~\ref{prop:case<}.
\end{prop}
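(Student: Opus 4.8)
The plan is to prove Proposition~\ref{prop:case>} by the moment method, exploiting the branching structure of the local time process: I would bound $\EX_n^{a}\big((L^0(b)-L^0(a))^{2k}\big)$ for a suitably large integer $k$ uniformly in $n$ and in $a_0\le a<b\le A$, and then read off the statement from Markov's inequality, with $\gamma=2k$.

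First I would set up an excursion decomposition at level $a$. By the strong Markov property at the first hitting time of $a$ and the renewal structure of the visits of $X$ to $a$, the integer $N_a:=r_nL^0(a)$ (the number of visits to $a$ before $T(0)$) is geometric, and $N_b:=r_nL^0(b)$ satisfies $N_b=\sum_{i=1}^{N_a}B_i$, where the successive excursions of $X$ away from $a$ carry i.i.d.\ marks $(J_i,B_i)$: $J_i$ records whether the $i$-th excursion is absorbing (drops down to $0$), $N_a=\min\{i:J_i\text{ absorbing}\}$, and $B_i$ counts the visits to $b$ during that excursion, equal to $0$ unless the excursion reaches $b$, in which case it is $1$ plus a geometric variable. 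Because $X$ is spectrally positive (so that a downward passage through any level is continuous), these probabilities are explicit ratios of $w_n$ obtained from the two–sided exit identity~\eqref{eq:scale} and space homogeneity: an excursion from $a$ is absorbing with probability $1/w_n(as_n)$, it reaches $b$ with probability $1/w_n((b-a)s_n)-1/w_n(\infty)$, and the geometric in the definition of $B_i$ has parameter $1/w_n((b-a)s_n)$. In terms of $W_n$ this gives $\EX_n^a(N_a)=w_n(as_n)$, $\EX_n^a(B_i)=\bar\mu:=1-W_n(b-a)/W_n(\infty)$, $\mathrm{Var}(B_i)$ of order $w_n((b-a)s_n)=r_nW_n(b-a)$, and
\[
L^0(b)-L^0(a)=\frac1{r_n}\sum_{i=1}^{N_a}(B_i-1)=\frac1{r_n}\Big(\sum_{i=1}^{N_a}(B_i-\bar\mu)+(\bar\mu-1)N_a\Big).
\]

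The core step is the uniform moment estimate. Since $N_a$ is a stopping time for the filtration generated by the marks and $\EX_n^a(B_i)=\bar\mu$, the partial sums of $B_i-\bar\mu$ form a martingale, so a Burkholder--Rosenthal inequality bounds $\EX_n^a\big((\sum_{i=1}^{N_a}(B_i-\bar\mu))^{2k}\big)$ by a constant times $\EX_n^a(N_a^{k})\,\mathrm{Var}(B_1)^{k}+\EX_n^a(N_a)\,\EX_n^a(|B_1-\bar\mu|^{2k})$; adding the contribution $(\bar\mu-1)^{2k}\EX_n^a(N_a^{2k})$ and dividing by $r_n^{2k}$ leaves a sum of powers of $W_n(b-a)$ plus a term of size $r_n^{-2k}$. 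This is where I would use the sharp growth bound $w_n(t)\le C(1+t)^{\alpha-1}$, which the proof of Lemma~\ref{lemma:prop-w} actually establishes (its statement only records the weaker exponent $\alpha$), together with the boundedness of $W_n(\infty)$ from Lemma~\ref{lemma:conv-W}: these give $W_n(b-a)\le C(b-a)^{\alpha-1}$ uniformly in $n$ and in $b-a\ge t_0/s_n$, while $W_n(a)\le W_n(\infty)$ stays bounded. The dominant term is $C_kW_n(b-a)^{k}$, whence
\[
\EX_n^*\big((L^0(b)-L^0(a))^{2k}\mid T(a)<T(0)\big)\le C_k\,W_n(b-a)^{k}+C_k r_n^{-2k}\le C\,(b-a)^{(\alpha-1)k}
\]
for $b-a\ge t_0/s_n$. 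Choosing $k$ with $(\alpha-1)k\ge 3/2$ (possible since $\alpha>1$) this is $\le C(b-a)^{3/2}$, and Markov's inequality gives the claim with $\gamma=2k$; since increasing $k$ only strengthens the bound, one may finally take $\gamma$ equal to the exponent of Proposition~\ref{prop:case<}.

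The \textbf{main obstacle} is the uniformity of this moment bound: everything must be controlled simultaneously in $n$, in $a_0\le a<b\le A$, and in the order $2k$, which is exactly why pointwise convergence $W_n\to W$ is insufficient and one needs the sharp polynomial control of $w_n$ (uniform in $n$), its concavity, and the near-criticality $\kappa_n\E(\Lambda)\to1$ to handle $w_n(\infty)$. The hypothesis $b-a\ge t_0/s_n$ is used twice: it forces $w_n((b-a)s_n)\ge w_n(t_0)\ge2$, so the variables $B_i$ are non-degenerate and their moments really are the expected polynomials in $w_n((b-a)s_n)$, and it ensures $r_n^{-2k}\le C(b-a)^{(\alpha-1)k}$, absorbing the lower-order term. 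Carrying out the Burkholder--Rosenthal bookkeeping and identifying the leading term is routine but lengthy; the genuinely delicate issue — controlling small oscillations, where the product/$\wedge$ structure cannot be dispensed with — is exactly what is deferred to Proposition~\ref{prop:case<}, which is why here, when $b-a\ge t_0/s_n$, a single–increment bound suffices.
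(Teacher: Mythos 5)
Your strategy is essentially the paper's: decompose $L^0(b)-L^0(a)$ into contributions from excursions of $X$ away from level $a$, control a high moment of the stopped sum of geometric counts, and use the growth bound $W_n(b-a)\le C(b-a)^{\alpha-1}$ (valid precisely because $(b-a)s_n\ge t_0$) to obtain the $(b-a)^{3/2}$ decay. Burkholder--Rosenthal here plays the role that the elementary combinatorial Lemma~\ref{lemma:sum} plays in the paper; the paper sidesteps the stopping-time/martingale machinery entirely by conditioning each excursion on returning to $a$, which makes $G_n(a)$ genuinely independent of the returning-excursion counts $(\xi^a_{n,k})$ (see Lemma~\ref{lemma:fd}), whereas your $N_a$ is a stopping time correlated with the $(J_i,B_i)$ and so requires Wald/BDG bookkeeping. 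Either route yields the same order of magnitude.

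There is, however, a concrete gap: you compute moments under $\PX_n^a$, but the statement is under $\PX_n^*(\,\cdot\mid T(a)<T(0))$, where $X(0)\sim\Lambda^*/s_n$ conditioned on reaching $a$ before $0$. Under that law the process can (and often does) visit $b$ on $[0,T(a))$, before the renewal structure at level $a$ that your decomposition $N_b=\sum_{i=1}^{N_a}B_i$ relies on even begins. These visits are not counted by any $B_i$. The paper's representation~\eqref{eq:fd-integrated} carries an extra term $\widetilde\xi^a_n$ for exactly this initial segment, and Lemma~\ref{lemma:bound-xi^*-case>} shows $\E(|\widetilde\xi^a_n|^i)\le C(r_nW_n(b-a))^i$ uniformly over the law of the starting point $\chi_n^a$ — the same order as the other terms — so that the conclusion survives. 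To make your proof complete you need to isolate this pre-$T(a)$ contribution, show its $2k$-th moment is $O\bigl((r_nW_n(b-a))^{2k}\bigr)$ uniformly in $n$, $a$, $b$ and in the law of $X(0)$, and fold it into the Markov inequality; otherwise the reduction from $\PX_n^*(\,\cdot\mid T(a)<T(0))$ to $\PX_n^a$ has no justification.

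A small side remark: your observation that the proof of Lemma~\ref{lemma:prop-w} actually delivers $w_n(t)\le C(1+t)^{\alpha-1}$ (sharper than the stated $(1+t)^\alpha$) is correct and is exactly what the paper invokes in the proof of this proposition — the stated exponent $\alpha$ is what is needed elsewhere, but the $\alpha-1$ version is the one used here.
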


At this point, it must be said that the case $(b-a) \vee (c-b) \leq t_0 / s_n$ is much harder than the case $b-a \geq t_0 / s_n$. The reason is that in the former case, the bound $(c-a)^{3/2}$ cannot be achieved without taking the minimum between $|L^0(b) - L^0(a)|$ and $|L^0(c) - L^0(b)|$. Considering only one of these two terms gives a bound which can be shown to decay only linearly in $c-a$, which is not sufficient to establish tightness. This technical problem reflects that, in the well-studied context of random walks, tightness in the non-lattice case is harder than in the lattice one, where typically small oscillations, i.e., precisely when $(b-a) \vee (c-b) \leq t_0 / s_n$, are significantly easier to control.

\begin{proof} [Proof of Proposition~\ref{prop:tightness-interm} based on Propositions~\ref{prop:case<} and~\ref{prop:case>}]
	According to Theorem~$13.5$ in Billingsley~\cite{Billingsley99:0}, it is enough to show that for each $A > a_0$, there exist finite constants $C, \gamma \geq 0$ and $\beta > 1$ such that for all $n \geq 1$, $\lambda > 0$ and $a_0 \leq a < b < c \leq A$,
	\begin{equation} \label{eq:goal-tightness}
		\PX_n^* \left( \left| L^0(b) - L^0(a) \right| \wedge \left| L^0(c) - L^0(b) \right| \geq \lambda \, | \, T(a_0) < T(0) \right) \leq C \frac{(c-a)^\beta}{\lambda^\gamma}.
	\end{equation}
	
	Fix $n \geq 1$, $\lambda > 0$ and $a_0 \leq a < b < c$ and let $E = \{ |L^0(b) - L^0(a)| \wedge |L^0(c) - L^0(b)| \geq \lambda \}$. Since $X$ under $\PX_n^*$ is spectrally positive, we have $E \subset \{ L(a) > 0 \}$ and so
	\[ \PX_n^*(E, T(a_0) < T(0)) = \PX_n^*(E) = \PX_n^*(E, T(a) < T(0)). \]
	
	Thus Bayes formula entails
	\[
		\PX_n^* \left( E \, | \, T(a_0) < T(0) \right) = \frac{\PX_n^* \left( T(a) < T(0) \right)}{\PX_n^* \left( T(a_0) < T(0) \right)} \PX_n^* \left( E \, | \, T(a) < T(0) \right)
	\]
	and since $\PX_n^* \left( T(a) < T(0) \right) \leq \PX_n^* \left( T(a_0) < T(0) \right)$, we get
	\begin{multline*}
		\PX_n^* \left( \left| L^0(b) - L^0(a) \right| \wedge \left| L^0(c) - L^0(b) \right| \geq \lambda \, | \, T(a_0) < T(0) \right)\\
		\leq \PX_n^* \left( \left| L^0(b) - L^0(a) \right| \wedge \left| L^0(c) - L^0(b) \right| \geq \lambda \, | \, T(a) < T(0) \right).
	\end{multline*}
	
	Thus~\eqref{eq:goal-tightness} follows from the previous inequality together with either Proposition~\ref{prop:case<} when $(b-a) \vee (c-b) \leq t_0 / s_n$, or Proposition~\ref{prop:case>} when $b-a \geq t_0 / s_n$. In the last remaining case where $c-b \geq t_0 / s_n$, we derive similarly the following upper bound:
	\begin{multline*}
		\PX_n^* \left( \left| L^0(b) - L^0(a) \right| \wedge \left| L^0(c) - L^0(b) \right| \geq \lambda \, | \, T(a_0) < T(0) \right)\\
		\leq \PX_n^* \left( \left| L^0(c) - L^0(b) \right| \geq \lambda \, | \, T(a_0) < T(0) \right)\\
		\leq \PX_n^* \left( \left| L^0(c) - L^0(b) \right| \geq \lambda \, | \, T(b) < T(0) \right).
	\end{multline*}
	Proposition~\ref{prop:case>} then concludes the proof.
\end{proof}

The rest of this section is devoted to the proof of Propositions~\ref{prop:case<} and~\ref{prop:case>}. Our analysis relies on an explicit expression of the law of $(L^0(b) - L^0(a), L^0(c) - L^0(b))$ under $\PX_n^{x_0}(\, \cdot \, | \, T(a) < T(0))$. For $0 < a < b < c$ and $x_0 > 0$ we define
\[
	p_n^{x_0}(a) = \PX_n^{x_0} \left( T(a) < T(0) \right), \, p_{n,\xi}^{x_0}(a,b) = \PX_n^{x_0} \left( T(b) < T(a) \, | \, T(a) < T(0) \right)
\]
as well as
\[ p_{n,\theta}^{x_0}(a,b,c) = \PX_n^{x_0} \left( T(c) < T(b) \, | \, T(b) < T(a) < T(0) \right). \]
Remember that $G_n(a)$ denotes a geometric random variable with parameter $p_n^a(a)$, and from now on we adopt the convention $\sum_1^{-1} = \sum_1^0 = 0$.

\begin{lemma} \label{lemma:fd}
	For any $0 < a < b < c$ and $x_0  > 0$, the random variable
	\[ \left( r_n L^0(b) - r_n L^0(a), r_n L^0(c) - r_n L^0(b) \right) \]
	under $\PX_n^{x_0}( \, \cdot \, | \, T(a) < T(0))$ is equal in distribution to
	\begin{equation} \label{eq:fd}
		\left( \xi_n^{x_0} + \sum_{k=1}^{G_n(a)} \xi_{n,k}^a, \, \theta_n^{x_0} \indicator{\xi_n^{x_0} \geq 0} + \sum_{k=1}^{N_{n,a}} \theta_{n,k}^a + \sum_{k=1}^{N_{n,b}^{x_0}} \theta_{n,k}^b \right)
	\end{equation}
	where
	\[ N_{n,a} = \sum_{k=1}^{G_n(a)} \indicator{\xi_{n,k}^a \geq 0} \text{ and } N_{n,b}^{x_0} = (\xi_n^{x_0})^+ + \sum_{k=1}^{G_n(a)} (\xi_{n,k}^a)^+. \]
	All the random variables $\xi_n^{x_0}$, $\xi_{n,k}^a$, $\theta_n^{x_0}$, $\theta_{n,k}^a$, $\theta_{n,k}^b$ and $G_n(a)$ are independent. For any $u > 0$ and $k \geq 1$, $\theta_{n,k}^u$ is equal in distribution to $\xi_n^u$ and $\theta_{n,k}^u$ to $\theta_n^u$, where the laws of $\xi_n^u$ and $\theta_n^u$ are described as follows: for any function~$f$,
	\begin{equation} \label{eq:def-xi}
		\E\left[ f(\xi_n^u) \right] = (1-p_{n,\xi}^u(a,b))f(-1) + p_{n,\xi}^u(a,b) \E \left[ f(G_n(b-a)) \right]
	\end{equation}
	and
	\begin{equation} \label{eq:def-theta}
		\E\left[ f(\theta_n^u) \right] = (1-p_{n,\theta}^u(a,b,c))f(-1) + p_{n,\theta}^u(a,b,c) \E \left[ f(G_n(c-b)) \right].
	\end{equation}
\end{lemma}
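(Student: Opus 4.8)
The plan is to decompose the path of $X$ under $\PX_n^{x_0}(\,\cdot\mid T(a) < T(0))$ first according to its successive visits to $a$, then, inside each resulting piece, according to its visits to $b$, and finally, inside each ``sojourn at $b$'', according to its visits to $c$; each summand in~\eqref{eq:fd} is then identified with a functional of one such piece, after which independence and the announced laws follow from repeated use of the strong Markov property and spatial homogeneity, together with the classical fact that the number of excursions of a spectrally positive L\'evy process away from a point $y$ before it goes below a lower point $y'$ is geometric with parameter $\PX_n^{y}(T(y) < T(y'))$, a probability that~\eqref{eq:scale} expresses through $W_n$. Absence of negative jumps is used throughout in the elementary form: to reach a level from above one must pass continuously through every intermediate level; in particular $\{T(c) < T(b)\} \subseteq \{T(b) < T(a)\} \subseteq \{T(a) < T(0)\}$, which is what turns the conditional probabilities below into the quantities $p_{n,\xi}^u$ and $p_{n,\theta}^u$ of the statement.

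First I would condition on $\{T(a) < T(0)\}$, an event measurable with respect to the path killed at $T(a)$, and write the trajectory as the concatenation of an initial piece $I_0$ from $x_0$ up to the first visit $T(a)$ of $a$; a random number of ``returning'' excursions $E_1, E_2, \dots$ away from $a$, i.e.\ excursions returning to $a$ before hitting $0$; and a last excursion away from $a$ reaching $0$ without returning to $a$. This last excursion stays strictly below $a$ and hence visits neither $b$ nor $c$, so it is irrelevant. The strong Markov property at the successive visits of $a$ shows that $I_0$, the number $G$ of returning excursions and the sequence $(E_k)$ are independent, that $G$ is distributed as $G_n(a)$ (the number of returns being geometric with parameter $p_n^a(a)$), that the $E_k$ are i.i.d.\ copies of an excursion away from $a$ conditioned to return to $a$ before hitting $0$, and that $r_n L^0(a) = 1 + G$. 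Next, inside $I_0$ (resp.\ each $E_k$) I would record whether $b$ is ever reached: by the inclusion of events above this happens with probability $p_{n,\xi}^{x_0}(a,b)$ (resp.\ $p_{n,\xi}^a(a,b)$), and given that it is, the strong Markov property at the first visit of $b$ and spatial homogeneity show that the number of additional visits of $b$ in that piece is an independent $G_n(b-a)$. Setting $\xi_n^{x_0}$ (resp.\ $\xi_{n,k}^a$) equal to the number of visits of $b$ inside $I_0$ (resp.\ $E_k$) minus one produces exactly the laws~\eqref{eq:def-xi}; since $r_n L^0(b)$ is the sum over the pieces of their numbers of visits of $b$ while $r_n L^0(a) = 1 + G_n(a)$, this gives the first coordinate of~\eqref{eq:fd}.

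For the second coordinate I would subdivide each ``sojourn at $b$'' --- there is one inside $I_0$ exactly when $\xi_n^{x_0} \geq 0$ and one inside $E_k$ exactly when $\xi_{n,k}^a \geq 0$, so $N_{n,a}$ of them among the $E_k$'s --- into the ascent leading to the first visit of $b$ of that sojourn, followed by the excursions away from $b$. Among these last excursions, the one that eventually leaves the sojourn (going back down to $a$) stays below $b$ and contributes nothing to $c$; the remaining ones are ``returning'' excursions away from $b$, and there are $(\xi_n^{x_0})^+$ of them in the $I_0$ sojourn and $(\xi_{n,k}^a)^+$ in the $E_k$ sojourn, hence $N_{n,b}^{x_0}$ in total. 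Whether the ascent reaches $c$ before returning to $b$ is governed by $p_{n,\theta}^{x_0}(a,b,c)$ or $p_{n,\theta}^a(a,b,c)$, and whether a returning excursion away from $b$ reaches $c$ is governed by $p_{n,\theta}^b(a,b,c)$; when $c$ is reached, the strong Markov property at the first visit of $c$ gives one plus an independent $G_n(c-b)$ visits of $c$ in that stretch. Writing $\theta_n^{x_0}$, $\theta_{n,k}^a$, $\theta_{n,k}^b$ for these visit counts minus one, I obtain
\[ r_n L^0(c) = (\theta_n^{x_0}+1)\indicator{\xi_n^{x_0} \geq 0} + \sum_{k=1}^{N_{n,a}} (\theta_{n,k}^a + 1) + \sum_{k=1}^{N_{n,b}^{x_0}} (\theta_{n,k}^b + 1), \]
and, counting visits of $b$ the same way, $r_n L^0(b) = \indicator{\xi_n^{x_0} \geq 0} + N_{n,a} + N_{n,b}^{x_0}$; subtracting yields the second coordinate of~\eqref{eq:fd}, and the joint independence of the $\xi$'s, $\theta$'s and $G_n(a)$ is the cumulative output of the strong Markov property applied at all the hitting times used above.

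The fluctuation identities invoked (hitting probabilities in terms of $W_n$, geometric law of return counts) are standard for spectrally positive L\'evy processes, so I expect the genuine difficulty to be purely combinatorial bookkeeping. The delicate point is that a jump overshooting $b$ may land above $c$, so that $c$ can be visited during the ascent toward $b$ --- before the first visit of $b$ in a sojourn --- rather than between two consecutive visits of $b$; this is exactly the distinction between the terms $\theta_n^{x_0}\indicator{\xi_n^{x_0} \geq 0}$ and $\sum_{k \leq N_{n,a}} \theta_{n,k}^a$ on the one hand and $\sum_{k \leq N_{n,b}^{x_0}} \theta_{n,k}^b$ on the other. Getting this split right, and checking that the identities for $r_n L^0(b)$ and $r_n L^0(c)$ above leave nothing double-counted, is where the care is needed.
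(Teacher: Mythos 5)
Your proof is correct and follows essentially the same path decomposition as the paper's: by visits to $a$, then visits to $b$ within each $a$-piece, then visits to $c$, with the three $\theta$-terms accounting for visits to $c$ during the initial ascent, the $E_k$-ascents, and the returning excursions away from $b$, respectively. You simply spell out the counting (in particular the identity $\indicator{\xi\ge0}+\xi^+=\xi+1$ and the explicit expressions for $r_nL^0(b)$ and $r_nL^0(c)$) in more detail than the paper's terse version.
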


\begin{proof}
	In the rest of the proof we work under $\PX_n^{x_0}(\, \cdot \, | \, T(a) < T(0))$. By definition, $r_n L^0(a)$ and $r_n L^0(b)$ is the number of visits of $X$ to $a$ and $b$ in $[0, T(0)]$, respectively. Thus if $\beta$ is the number of visits of $X$ to $b$ in $[0, T(a)]$ and $\beta_{k}^{a}$ the number of visits of $X$ to $b$ between the $k$th and $(k+1)$st visit to $a$, we have
	\[ r_n L^0(b) - r_n L^0(a) = (\beta - 1) + \sum_{k=1}^{r_n L^0(a) - 1} (\beta_{k}^a - 1). \]
	
	Decomposing the path of $X$ between successive visits to $a$ and using the strong Markov property, one easily checks that all the random variables of the right hand side are independent and that $r_n L^0(a)$, $\beta_k^a$ and $\beta$ are respectively equal in distribution to $1 + G_n(a)$, $\xi_{n}^{a}+1$ and $\xi_{n}^{x_0}+1$. This shows that the random variable $r_n L^0(b) - r_n L^0(a)$ is equal in distribution to $\xi_n^{x_0} + \sum_{k=1}^{G_n(a)} \xi_{n,k}^a$.
	
	To describe the law of $(r_n L^0(b) - r_n L^0(a), r_n L^0(c) - r_n L^0(b))$ one also needs to count the number of visits of $X$ to $c$: if $X$ visits $b$ before $a$, it $X$ may visit $c$ before the first visit to $b$; it may also visit $c$ each time it goes from $a$ to $b$; finally it may also visit $c$ between two successive visits to $b$. These three different ways of visiting $c$ are respectively taken into account by the terms $\theta_n^{x_0}$, $\theta_{n,k}^a$ and $\theta_{n,k}^b$.
\end{proof}

The previous result readily gives the law of $(r_n L^0(b) - r_n L^0(a), r_n L^0(c) - r_n L^0(b))$ under $\PX_n^*( \, \cdot \, | \, T(a) < T(0))$: this law can be written as
\begin{equation} \label{eq:fd-integrated}
	\left( \widetilde \xi_n^a + \sum_{k=1}^{G_n(a)} \xi_{n,k}^a, \, \widetilde \theta_n^a \indicator{\widetilde \xi_n^a \geq 0} + \sum_{k=1}^{N_{n,a}} \theta_{n,k}^a + \sum_{k=1}^{\widetilde N_{n,b}^a} \theta_{n,k}^b \right)
\end{equation}
where $\widetilde N_{n,b}^a = (\widetilde \xi_n^a)^+ + \sum_{k=1}^{G_n(a)} (\xi_{n,k}^a)^+$ and the random variables $\xi_{n,k}^a$, $\theta_{n,k}^a$, $\theta_{n,k}^b$, $G_n(a)$ and $N_{n,a}$ are as described in Lemma~\ref{lemma:fd}. Moreover, these random variables are also independent from the pair $(\widetilde \xi_n^a, \widetilde \theta_n^a)$ whose distribution is given by
\begin{equation} \label{eq:def-tilde}
	\E \left[ f(\widetilde \xi_n^a, \widetilde \theta_n^a) \right] = \int \E \left[ f(\xi_n^{x_0}, \theta_n^{x_0}) \right] \P(\chi_n^a \in dx_0)
\end{equation}
where from now on $\chi_n^a$ denotes a random variable equal in distribution to $X(0)$ under $\PX_n^*(\, \cdot \, | \, T(a) < T(0))$. For convenience we will sometimes consider that $\chi_n^a$ lives on the same probability space and is independent from all the other random variables. This will for instance allow us to say that the random vector~\eqref{eq:fd-integrated} conditional on $\{ \chi_n^a = x_0 \}$ is equal in distribution to the random vector~\eqref{eq:fd}.
\\

In order to exploit~\eqref{eq:fd-integrated} and prove Propositions~\ref{prop:case<} and~\ref{prop:case>}, we will use a method based on controlling the moments, following similar lines as Borodin~\cite{Borodin81:0, Borodin84:0}. As Propositions~\ref{prop:case<} and~\ref{prop:case>} suggest, we need to distinguish the two cases $(b-a) \vee (c-b) \leq t_0 / s_n$ and $b-a \geq t_0 / s_n$ (remember that $t_0$ is a fixed number such that $w_n(t_0) \geq 2$ for each $n \geq 1$, see the discussion after Lemma~\ref{lemma:prop-w}).
\\

In the sequel, we need to derive numerous upper bounds. The letter $C$ then denotes constants which may change from line to line (and even within one line) but never depend on $n$, $a$, $b$, $c$, $x_0$ or $\lambda$. They may however depend on other variables, such as typically $a_0$, $A$ or $t_0$.
\\

Before starting, let us gather a few relations and properties that will be used repeatedly (and sometimes without comments) in the sequel. First, it stems from~\eqref{eq:scale} that
\[ \PX_n^{x_0}(T(a) < T(0)) = \frac{W_n(a) - W_n(a-x_0)}{W_n(a)} = \frac{\int_0^{x_0} W_n'(a-u) du}{W_n(a)}, \ 0 < x_0 \leq a. \]

Moreover, $(1)$ $\kappa_n \leq 1$; $(2)$ $W'_n(0) = \kappa_n s_n / r_n = \kappa_n s_n^{2-\alpha}$, $W_n \geq 0$ is increasing, $W'_n \geq 0$ is decreasing and $-W_n'' \geq 0$ is decreasing (as a consequence of Lemma~\ref{lemma:prop-w} together with the identity $W_n(a) = w_n(a s_n) / r_n$); $(3)$ for every $a > 0$, the sequences $(W_n(a), n \geq 1)$ and $(W_n'(a), n \geq 1)$ are bounded away from $0$ and infinity (by Lemma~\ref{lemma:conv-W}).

\subsection{Case $(b-a) \vee (c-b) \leq t_0 / s_n$} \label{sub:<} The goal of this subsection is to prove Proposition~\ref{prop:case<} through a series of lemmas.

\begin{lemma}\label{lemma:j}
	For any $A > a_0$, there exists a finite constant $C$ such that for any $n \geq 1$, any $a_0 \leq a \leq A$ and any $0 \leq x_0 \leq y \leq a$,
	\begin{equation} \label{eq:three-bounds}
		j_{n,a}(x_0, y) \leq \begin{cases}
			C x_0 (a-y) s_n^{2-\alpha} & \text{ if } x_0 \geq a/4,\\
			C x_0 (a-y) & \text{ if } x_0 \leq a/4 \text{ and } a/2 \leq y,\\
			C x_0 W_n'(y-x_0) & \text{ if } x_0 \leq a/4 \text{ and } y \leq a/2,
		\end{cases}
	\end{equation}
	where $j_{n,a}(x_0, y) = W_n(a-x_0) W_n(y) - W_n(y-x_0) W_n(a)$.
\end{lemma}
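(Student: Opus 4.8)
The plan is to treat the quantity $j_{n,a}(x_0,y) = W_n(a-x_0)W_n(y) - W_n(y-x_0)W_n(a)$ as a difference that can be rewritten in a more tractable form by exploiting concavity of $W_n$ (equivalently of $w_n$, via Lemma~\ref{lemma:prop-w}) and the fundamental theorem of calculus. The first step is to fix the representation
\[
	j_{n,a}(x_0,y) = \int_0^{x_0}\!\!\int_0^{?}\!\cdots
\]
more carefully: writing $W_n(a-x_0) = W_n(a) - \int_0^{x_0} W_n'(a-u)\,du$ and $W_n(y-x_0) = W_n(y) - \int_0^{x_0} W_n'(y-u)\,du$, one gets
\[
	j_{n,a}(x_0,y) = W_n(a)\int_0^{x_0} W_n'(y-u)\,du - W_n(y)\int_0^{x_0}W_n'(a-u)\,du,
\]
which one can rearrange as $\int_0^{x_0}\big(W_n(a)W_n'(y-u) - W_n(y)W_n'(a-u)\big)\,du$. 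Since $W_n'$ is decreasing and $y\le a$, the integrand is nonnegative; bounding it further is where the three regimes enter.

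For the third regime ($x_0\le a/4$, $y\le a/2$) the cleanest route is to factor out $W_n'(y-x_0)$: inside the integral $W_n'(y-u)\le W_n'(y-x_0)$ and $W_n'(a-u)$ is compared to it as well, while $W_n(a)$ and $W_n(y)$ are bounded above and below by constants (item $(3)$ in the list of properties recalled before the subsection, since $a_0\le y$ need not hold — here one uses instead that $a-u$ and, when $y$ is small, $y$ itself are controlled; one may have to split off the trivial estimate $W_n(y)\le W_n(a)\le C$ and $W_n'(y-x_0)\le W_n'(0)=\kappa_n s_n^{2-\alpha}$, but the statement keeps $W_n'(y-x_0)$ explicit so no such blow-up is needed). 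This yields $j_{n,a}(x_0,y)\le C\,x_0\,W_n'(y-x_0)$ after absorbing the bounded factors. For the first regime ($x_0\ge a/4$, hence $a-y\le a\le 4x_0$ is \emph{not} what we want — rather $x_0$ is bounded below by $a_0/4$), one wants the extra factor $(a-y)$: here I would instead write $W_n(a-x_0)-W_n(y-x_0) = \int_{a-x_0}^{\,}\!$ no — write $j_{n,a}$ by adding and subtracting $W_n(y-x_0)W_n(a-x_0)$ is not it either; the right move is
\[
	j_{n,a}(x_0,y) = W_n(a-x_0)\big(W_n(y)-W_n(a)\big) + W_n(a)\big(W_n(a-x_0)-W_n(y-x_0)\big)
\]
and then note $W_n(y)-W_n(a) = -\int_y^a W_n'(v)\,dv$ and $W_n(a-x_0)-W_n(y-x_0)=\int_y^a W_n'(v-x_0)\,dv$ (using $a-x_0\ge y-x_0$... careful with sign: $a-x_0 \ge y-x_0$ iff $a\ge y$, true, so this difference is $\int_{y-x_0}^{a-x_0}W_n' = \int_y^a W_n'(v-x_0)\,dv$, nonnegative). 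Hence $j_{n,a}(x_0,y) = \int_y^a\big(W_n(a)W_n'(v-x_0) - W_n(a-x_0)W_n'(v)\big)\,dv$, an integral over an interval of length $a-y$ of a quantity bounded by $C\,x_0\,W_n'(0) \le C\,x_0\,s_n^{2-\alpha}$ when $x_0\ge a/4$ — here one uses that $W_n'$ is decreasing and Lipschitz-type control $W_n'(v-x_0)-W_n'(v)\le -\int W_n''\le x_0\sup|W_n''|$, together with $|W_n''|\le |W_n''(0)|\le C s_n^{?}$... this is the point where I expect to have to be careful about the power of $s_n$, and it should come out to $s_n^{2-\alpha}$ from $W_n''(a)=w_n''(as_n)s_n^2/r_n = w_n''(as_n)s_n^{3-\alpha}$ combined with the uniform bound $w_n(t)\le C(1+t)^\alpha$ and convexity of $w_n'$ forcing decay of $|w_n''|$. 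The middle regime ($x_0\le a/4$, $y\ge a/2$) is intermediate: $y-x_0$ and $a-u$ are all bounded away from $0$ by $a_0/4$, so $W_n'(y-x_0)\le W_n'(a_0/4)\le C$ and the third-regime bound $C x_0 W_n'(y-x_0)$ degrades to $C x_0$; but one still needs the extra $(a-y)$, which again comes from the representation $j_{n,a}(x_0,y)=\int_y^a(\cdots)\,dv$ above, the integrand now being bounded by a constant since everything lives on a compact set away from the singularities of $W_n'$.

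The main obstacle I anticipate is bookkeeping the $s_n$-powers: one must use Lemma~\ref{lemma:prop-w}'s uniform bound $w_n(t)\le C(1+t)^\alpha$ together with concavity of $w_n$ and convexity of $w_n'$ to deduce uniform-in-$n$ bounds on $w_n'$ and $w_n''$ on compact sets and, crucially, that $|w_n''(ts_n)|$ decays fast enough in $s_n$ near $t=a$ bounded away from $0$ so that $|W_n''(v)| = |w_n''(vs_n)|s_n^{3-\alpha} \le C s_n^{2-\alpha}$ uniformly for $v\in[a_0/4,A]$. Concretely, convexity of $w_n'$ gives $w_n'(2ts_n) \le w_n'(ts_n) + ts_n\, w_n''(2ts_n)$ while $w_n'\ge 0$ and $w_n$ is bounded by $C(1+\cdot)^\alpha$ forces $w_n'(ts_n)\le C t^{\alpha-1}s_n^{\alpha-1}$ up to boundary terms, whence $|w_n''(2ts_n)| \le C t^{\alpha-2}s_n^{\alpha-2}$; multiplying by $s_n^2/r_n = s_n^{3-\alpha}$ gives $|W_n''| \le C s_n$ — this is a factor $s_n^{\alpha-1}$ worse than claimed, so the actual argument must be sharper, presumably squeezing $w_n$ between $\underline w$ and $\overline w$ as in the proof of Lemma~\ref{lemma:prop-w} to get two-sided control and then interpolating. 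Once the derivative bounds are in hand, each of the three cases is a one-line estimate of $j_{n,a}$ via the integral representations above, so the real content of the lemma is entirely this $s_n$-power accounting.
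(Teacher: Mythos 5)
Your two integral representations
\[
j_{n,a}(x_0,y)=\int_0^{x_0}\!\bigl(W_n(a)W_n'(y-u)-W_n(y)W_n'(a-u)\bigr)\,du
=\int_y^a\!\bigl(W_n(a)W_n'(v-x_0)-W_n(a-x_0)W_n'(v)\bigr)\,dv
\]
are both correct, and they are a genuinely lighter starting point than the paper's identity, which expands $j_{n,a}$ into a product of two single integrals of $W_n'$ plus a double integral of $-W_n''$. For the third regime you just drop the negative term in the first representation, and for the first regime you drop the negative term in the second representation, use $W_n(a)\le W_n(A)$, $W_n'(v-x_0)\le W_n'(0)=\kappa_n s_n^{2-\alpha}$ and $1\le 4x_0/a_0$ (this last inequality is what really produces the factor $x_0$ there). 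Both of those are correct one-liners, and no second-derivative control is needed — your digression about $W_n''$ in that part of the argument is unnecessary, and the concluding paragraph's $s_n$-power accounting worry is misdirected.

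The genuine gap is the middle regime ($x_0\le a/4$, $y\ge a/2$). You claim the integrand in $\int_y^a(\cdots)\,dv$ is ``bounded by a constant,'' which only yields $j_{n,a}\le C(a-y)$; combined with the separate estimate $j_{n,a}\le Cx_0$ you at best get $C\min(x_0,a-y)$, and this is strictly weaker than the required product $Cx_0(a-y)$ (and one can check that the weaker bound is not strong enough for the downstream integral $\int j_{n,a}(x_0,y)h_n(a-y)\,dy$ in Lemma~\ref{lemma:p-xi+theta}). To extract the factor $x_0$ you must exploit the cancellation in the integrand: write
\[
W_n(a)W_n'(v-x_0)-W_n(a-x_0)W_n'(v)=W_n(a)\bigl(W_n'(v-x_0)-W_n'(v)\bigr)+\bigl(W_n(a)-W_n(a-x_0)\bigr)W_n'(v),
\]
bound the first bracket by $x_0(-W_n'')(v-x_0)\le x_0(-W_n'')(a_0/4)$ and the second by $x_0W_n'(a-x_0)\le x_0 W_n'(3a_0/4)$. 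The remaining point, which your heuristic misestimated, is that $(-W_n'')(a_0/4)$ is bounded uniformly in $n$ by an $O(1)$ constant, not $O(s_n)$: since $W_n'$ is convex (Lemma~\ref{lemma:prop-w}) one has $(-W_n'')(a_0/4)\le \bigl(W_n'(a_0/8)-W_n'(a_0/4)\bigr)/(a_0/8)$, and the right-hand side is bounded by Lemma~\ref{lemma:conv-W}. With that, the integrand is $\le Cx_0$ and integrating over $v\in[y,a]$ gives $Cx_0(a-y)$. (The paper reaches the same conclusion but by a slightly different device: it works directly with the double integral $\int_0^{x_0}\int_0^{a-y}(-W_n'')(u+v+y-x_0)\,du\,dv$ and uses the monotonicity of $z\mapsto z^{-1}\int_0^z\varphi$ for a decreasing $\varphi$; your single-integral route is fine once the cancellation step above is inserted.)
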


\begin{proof}
	The derivation of these three bounds is based on the following identity:
	\begin{multline*}
		j_{n,a}(x_0, y) = \int_0^{a-y} W_n'(u+y) du \int_0^{x_0} W_n'(u+y-x_0) du\\
		+ W_n(y) \int_0^{x_0} \int_0^{a-y} (-W_n'') (u+v+y-x_0) du dv.
	\end{multline*}
	
	The term $W_n(y)$ appearing in the second term of the right-hand side is upper bounded by the finite constant $\sup_{n \geq 1} W_n(A)$, and so does not play a role as far as~\eqref{eq:three-bounds} is concerned. Assume that $x_0 \geq a/4$: then $y \geq a_0/4$ and so
	\[ \int_0^{a-y} W_n'(u+y) du \int_0^{x_0} W_n'(u+y-x_0) du \leq (a-y) W'_n(a_0/4) x_0 W'_n(0) \leq C (a-y) x_0 s_n^{2-\alpha}. \]
	
	On the other hand,
	\[ \int_0^{x_0} \int_0^{a-y} (-W_n'') (u+v+y-x_0) du dv \leq (a-y) \int_0^{x_0} (-W_n'') \leq (a-y) W'_n(0). \]
	
	This gives the desired upper bound of the form $C x_0 s_n^{2-\alpha}$, since $W'_n(0) \leq s_n^{2-\alpha}$ and $1 \leq C x_0$ when $x_0 \geq a/4$ (writing $1 = x_0 / x_0 \leq 4 x_0 / a_0$). This proves~\eqref{eq:three-bounds} in the case $x_0 \geq a/4$. Assume now that $x_0 \leq a/4 \leq a/2 \leq y$, so that $y \geq a_0/2$ and $y-x_0 \geq a_0/4$. Then
	\[ \int_0^{a-y} W_n'(u+y) du \int_0^{x_0} W_n'(u+y-x_0) du \leq (a-y) W_n'(a_0/2) x_0 W_n'(a_0/4) \leq C x_0(a-y) \]
	and
	\[ \int_0^{x_0} \int_0^{a-y} (-W_n'') (u+v+y-x_0) du dv \leq \int_0^{x_0} \int_0^{a-y} (-W_n'') (u+v+a_0/4) du dv. \]
	
	Since $-W_n''$ is decreasing, so is the function $\varphi(u) = \int_0^{a-y} (-W_n'') (u+v+a_0/4) dv$. Differentiating, this immediately shows that the function $z \mapsto z^{-1} \int_0^z \varphi$ is decreasing and since $x_0 \geq a_0$, this gives
	\[ \int_0^{x_0} \int_0^{a-y} (-W_n'') (u+v+a_0/4) du dv \leq x_0 a_0^{-1} \int_0^{a-y} \int_0^{a_0} (-W_n'') (u+v+a_0/4) du dv. \]
	
	Further, exploiting that $-W_n''$ is decreasing, we obtain
	\begin{align*}
		\int_0^{x_0} \int_0^{a-y} (-W_n'') (u+v+a_0/4) du dv & \leq x_0 a_0^{-1} (a-y) \int_0^{a_0} (-W_n'') (u+a_0/4) du\\
		& \leq C x_0 (a-y).
	\end{align*}
	
	This proves~\eqref{eq:three-bounds} in the case $x_0 \leq a/4 \leq a/2 \leq y$. Assume now that $x_0 \leq a/4$ and that $y \leq a/2$: then
	\[ \int_0^{a-y} W_n'(u+y) du \int_0^{x_0} W_n'(u+y-x_0) du \leq C x_0 W_n'(y-x_0) \]
	and
	\[ \int_0^{x_0} \int_0^{a-y} (-W_n'') (u+v+y-x_0) du dv \leq x_0 \int_0^{a-y} (-W_n'') (u+y-x_0) du \leq x_0 W'_n(y-x_0). \]
	
	This concludes the proof of~\eqref{eq:three-bounds}, 
\end{proof}

\begin{lemma} \label{lemma:p-xi+theta}
	For any $A > a_0$, there exists a finite constant $C$ such that for any $n \geq 1$, any $a_0 \leq a < b \leq A$ and any $x_0 \leq a$,
	\begin{equation} \label{eq:bound-p-xi}
		1-p_{n,\xi}^{x_0}(a,b) \leq C (b-a) s_n
	\end{equation}
	and for any $n \geq 1$, any $a_0 \leq a < b < c \leq A$ with $b-a \leq t_0 / s_n$ and any $x_0 \leq b$,
	\begin{equation} \label{eq:bound-p-theta}
		1 - p_{n, \theta}^{x_0}(a, b, c) \leq C (c-b) s_n.
	\end{equation}
\end{lemma}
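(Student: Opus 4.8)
The plan is to read off both inequalities from a single estimate on the overshoot of $X$ above a level. For each $n$, under $\PX_n^{x_0}$ the process $X$ is spectrally positive and of finite variation: it decreases continuously between its jumps, which are i.i.d.\ with law $\Lambda/s_n$, so it can enter an open half-line $(\ell,+\infty)$ only by a jump. Write $\sigma_\ell=\inf\{t>0:X(t)>\ell\}$ for the first passage above $\ell$; on $\{\sigma_\ell<+\infty\}$ one has $X(\sigma_\ell-)<\ell<X(\sigma_\ell)$ a.s. The key point I would isolate is that, conditionally on $X(\sigma_\ell-)=y$ (and, as discussed below, on any further event measurable with respect to the path of $X$ strictly before $\sigma_\ell$), the jump at $\sigma_\ell$ has the law of $\Lambda/s_n$ conditioned on $\{\Lambda/s_n>\ell-y\}$, so that the overshoot $X(\sigma_\ell)-\ell$ is distributed as $(\Lambda/s_n-(\ell-y))$ under that conditioning. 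Since $\P(\Lambda\ge s)=(1+s)^{-\alpha}$, this gives, for every $h>0$ and every $y<\ell$,
\[
	\PX_n^{x_0}\big(X(\sigma_\ell)-\ell\le h \,\big|\, X(\sigma_\ell-)=y\big)=1-\Big(\frac{1+(\ell-y)s_n}{1+(\ell-y)s_n+hs_n}\Big)^{\!\alpha}\le\alpha\,h\,s_n,
\]
using $1-(1+t)^{-\alpha}\le\alpha t$ for $t\ge0$; note that this bound is \emph{uniform} in $y$.

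Granting this, \eqref{eq:bound-p-xi} follows quickly. Since $X$ has no negative jumps and $x_0\le a$, I would note that $1-p_{n,\xi}^{x_0}(a,b)=\PX_n^{x_0}(T(a)<T(b)\mid T(a)<T(0))$ and that on $\{T(a)<T(0)\}$ the process crosses level $a$ from below, so $\sigma_a<T(0)$ and the crossing is a jump; moreover, if the overshoot $X(\sigma_a)-a$ exceeded $b-a$ then $X(\sigma_a)>b$, and because from above $b$ the process must pass through $b$ before it can reach $a$, one would get $T(b)<T(a)$. Hence $\{T(a)<T(b)\}\subseteq\{X(\sigma_a)-a\le b-a\}$ on $\{T(a)<T(0)\}$, and integrating the displayed estimate (with $\ell=a$, $h=b-a$) over the value of $X(\sigma_a-)$ yields $1-p_{n,\xi}^{x_0}(a,b)\le\alpha(b-a)s_n$, which is \eqref{eq:bound-p-xi} with $C=\alpha$.

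For \eqref{eq:bound-p-theta} I would run exactly the same argument one level up. Here $1-p_{n,\theta}^{x_0}(a,b,c)=\PX_n^{x_0}(T(b)<T(c)\mid T(b)<T(a)<T(0))$, and since $a<b$, $x_0\le b$ and $X$ has no negative jumps the conditioning event $\{T(b)<T(a)<T(0)\}$ coincides, up to a null set, with $\{\sigma_b<T(a)\}$, which is measurable with respect to the path of $X$ strictly before $\sigma_b$ (it records only whether $a$ has been hit before $\sigma_b$); conditioning on it therefore does not disturb the law of the jump at $\sigma_b$, hence of the overshoot. As before $\{T(b)<T(c)\}\subseteq\{X(\sigma_b)-b\le c-b\}$ on this event, and the displayed estimate with $\ell=b$, $h=c-b$ gives $1-p_{n,\theta}^{x_0}(a,b,c)\le\alpha(c-b)s_n$; incidentally, this argument does not use the restriction $b-a\le t_0/s_n$, which is inherited from the context in which the estimate is applied.

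The main obstacle --- really the only point that needs care --- is the assertion that the jump producing the first upcrossing of $\ell$ retains the $\Lambda/s_n$-law conditioned to be large enough, even after the extra conditioning on $\{T(a)<T(0)\}$ or $\{T(b)<T(a)<T(0)\}$. I would prove this by writing $\sigma_\ell=T_N$, where $T_1<T_2<\cdots$ are the jump times and $N$ is the index of the first jump whose post-jump value exceeds $\ell$, decomposing over the value of $N$, and applying the strong Markov property at each $T_k$: given the history up to $T_k-$, the event $\{N=k\}$ requires the path so far to be admissible (this also encodes the extra conditioning events, all of which concern the path before $\sigma_\ell$) and the $k$-th jump to exceed $\ell-X(T_k-)$, whence the conditional law of that jump is $\Lambda/s_n$ restricted to $\{\Lambda/s_n>\ell-X(T_k-)\}$. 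Since on all the events considered one has $X(\sigma_\ell-)\in[0,\ell)$, the uniform bound of the first paragraph applies, and the rest reduces to the elementary inequality $1-(1+t)^{-\alpha}\le\alpha t$.
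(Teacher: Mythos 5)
Your proof is correct, and it takes a genuinely different — and in fact substantially simpler — route than the paper's. The paper's argument goes through Bertoin's joint density formula for the undershoot and jump size of $X$ at the first exit from $(0,a)$ (the formula \eqref{eq:density}), reduces the estimate to bounding $\int_0^a u_{n,a}(x_0,y)h_n(a-y)\,dy$, and then needs a fairly intricate string of scale-function inequalities (Lemma~\ref{lemma:j}) to control the quantity $j_{n,a}(x_0,y)=W_n(a-x_0)W_n(y)-W_n(y-x_0)W_n(a)$ in three different regimes of $(x_0,y)$. Your argument sidesteps all of this: since under $\PX_n^{x_0}$ the process is compound Poisson minus drift, the jump at the first passage above $\ell$ is, conditionally on the undershoot and on any pre-$\sigma_\ell$-measurable event, simply a jump of law $\Lambda/s_n$ restricted to exceed $\ell-y$. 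The whole estimate then reduces to the fact that the Pareto distribution $\P(\Lambda\ge s)=(1+s)^{-\alpha}$ has hazard rate $\alpha/(1+s)\le\alpha$, so the rescaled jump has hazard rate uniformly bounded by $\alpha s_n$, which immediately gives the uniform overshoot bound $\alpha h s_n$. This eliminates Lemma~\ref{lemma:j} from the proof of the present lemma entirely, and as you note it also dispenses with the restriction $b-a\le t_0/s_n$ in~\eqref{eq:bound-p-theta}, which the paper does use (to lower-bound $\P(\Lambda\ge(b-a)s_n)$ by $\P(\Lambda\ge t_0)$).

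One small imprecision: you write that the conditioning event $\{T(b)<T(a)<T(0)\}$ coincides with $\{\sigma_b<T(a)\}$. When $x_0\le a$ this is not quite right: the process could hit $0$, go negative, and only then jump above $b$, in which case $\sigma_b<T(a)$ but $T(0)<T(a)$, so the event $\{T(b)<T(a)<T(0)\}$ fails. The correct identification (up to null sets) is $\{T(b)<T(a)<T(0)\}=\{\sigma_b<T(a)\wedge T(0)\}$: on $\{\sigma_b<T(a)\wedge T(0)\}$ the process has touched neither $a$ nor $0$ before jumping above $b$, and since there are no negative jumps it must thereafter pass through $b$, then $a$, then $0$ in that order. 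This corrected event is still measurable with respect to the path strictly before $\sigma_b$ (one checks that, for $t<\sigma_b$, $X(t)$ and $X(t-)$ avoid $\{0,a\}$), so your jump-time decomposition and the resulting overshoot bound apply unchanged, and the proof stands.
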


\begin{proof}
	Let us first prove~\eqref{eq:bound-p-xi}, so assume until further notice $x_0 \leq a$. Let in the rest of the proof $\tau_a = \inf\{ t \geq 0: X(t) \notin (0,a) \}$: then the inclusion
	\[ \{ T(a) < T(b), T(a) < T(0) \} \subset \{ a \leq X(\tau_a) < b \} \]
	holds $\PX_n^{x_0}$-almost surely and leads to
	\begin{align*}
		1-p_{n,\xi}^{x_0}(a,b) = \frac{\PX_n^{x_0} \left( T(a) < T(b), T(a) < T(0) \right)}{\PX_n^{x_0} \left( T(a) < T(0) \right)} & \leq \PX_n^{x_0}\left( a \leq X(\tau_a) < b \right) \frac{W_n(a)}{\int_0^{x_0} W'_n(a-u) du}\\
		& \leq C x_0^{-1} \PX_n^{x_0}\left( a \leq X(\tau_a) < b \right).
	\end{align*}
	
	Thus~\eqref{eq:bound-p-xi} will be proved if we can show that $\PX_n^{x_0}\left( a \leq X(\tau_a) < b \right) \leq C x_0 (b-a) s_n$. Let $h_n(z) = \alpha \kappa_n s_n^{\alpha+1} (1+zs_n)^{-\alpha-1}$ be the density of the measure $\Pi_n$ with respect to Lebesgue measure; note that it is decreasing and that the sequence $(h_n(z))$ is bounded for any $z > 0$. Corollary~$2$ in Bertoin~\cite{Bertoin97:1} gives
	\begin{equation} \label{eq:density}
		\PX_n^{x_0} \left( X(\tau_a-) \in dy, \Delta X(\tau_a) \in dz \right) = u_{n,a}(x_0, y) h_n(z) dz dy, \ y \leq a \leq y + z,
	\end{equation}
	where $\Delta X(t) = X(t) - X(t-)$ for any $t \geq 0$ and
	\[ u_{n,a}(x_0, y) = \frac{W_n(a-x_0) W_n(y)}{W_n(a)} - W_n(y-x_0) \indicator{y \geq x_0}, \]
	so it follows that
	\begin{multline*}
		\PX_n^{x_0}\left( a \leq X(\tau_a) < b \right) = \int \indicator{y < a, a-y \leq z < b-y} u_{n,a}(x_0, y) h_n(z) dz dy\\
		\leq (b-a) \int_0^a u_{n,a}(x_0, y) h_n(a-y) dy.
	\end{multline*}
	
	Hence~\eqref{eq:bound-p-xi} will be proved if we can show that $\int_0^a u_{n,a}(x_0, y) h_n(a-y) dy \leq C x_0 s_n$. We have by definition of $u_{n,a}$
	\begin{multline*}
		\int_0^a u_{n,a}(x_0, y) h_n(a-y) dy = \frac{W_n(a-x_0)}{W_n(a)} \int_0^{x_0} W_n(y) h_n(a-y) dy\\
		+ \int_{x_0}^a \left( \frac{W_n(a-x_0) W_n(y)}{W_n(a)} - W_n(y-x_0) \right) h_n(a-y) dy
	\end{multline*}
	which readily implies
	\begin{multline} \label{eq:intermediate}
		\int_0^a u_{n,a}(x_0, y) h_n(a-y) dy \leq C W_n(a-x_0) \int_0^{x_0} h_n(a-y) dy\\
		+ C \int_{x_0}^a j_{n,a}(x_0, y) h_n(a-y) dy
	\end{multline}
	with $j_{n,a}(x_0, y) = W_n(a-x_0) W_n(y) - W_n(y-x_0) W_n(a)$ as in Lemma~\ref{lemma:j}. We will show that each term of the above right hand side is upper bounded by a term of the form $C x_0 s_n$. Let us focus on the first term, so we want to show that
	\[
		W_n(a-x_0) \int_0^{x_0} h_n(a-y) dy \leq C x_0 s_n.
	\]
	Assume first that $x_0 \leq a/2$, then $a-y \geq a-x_0 \geq a/2 \geq a_0 / 2$ for any $y \leq x_0$ which gives
	\[
		W_n(a-x_0) \int_0^{x_0} h_n(a-y) dy \leq W_n(A) x_0 h_n(a_0/2) \leq C x_0 \leq C x_0 s_n.
	\]

	Assume now $x_0 \geq a/2$: since $h_n$ is the density of $\Pi_n$, we have
	\[ W_n(a-x_0) \int_0^{x_0} h_n(a-y) dy \leq W_n(a-x_0) \Pi_n((a-x_0, \infty)) = \kappa_n s_n \frac{w_n((a-x_0)s_n)}{(1 + (a-x_0)s_n)^{\alpha}}. \]
	
	Since $1 \leq C x_0$ (because $x_0 \geq a_0/2$), the desired upper bound of the form $C x_0 s_n$ follows from Lemma~\ref{lemma:prop-w}. We now control the second term of the right hand side in~\eqref{eq:intermediate}, i.e., we have to show that
	\[ \int_{x_0}^a j_{n,a}(x_0, y) h_n(a-y) dy \leq C x_0 s_n. \]

	In the case $x_0 \geq a/4$, the first bound in~\eqref{eq:three-bounds} gives
	\begin{multline*}
		\int_{x_0}^a j_{n,a}(x_0, y) h_n(a-y) dy \leq C x_0 s_n^{2-\alpha} \int_{x_0}^a \frac{(a-y) s_n^{\alpha+1}}{(1+(a-y)s_n)^{\alpha+1}} dy\\
		= C x_0 s_n \int_0^{(a-x_0) s_n} \frac{y}{(1+y)^{\alpha+1}} dy \leq C x_0 s_n.
	\end{multline*}
	
	Assume from now on that $x_0 \leq a/4$ and decompose the interval $[x_0, a]$ into the union $[x_0, a/2] \cup [a/2,a]$. For $[a/2, a]$,~\eqref{eq:three-bounds} gives
	\[
		\int_{a/2}^a j_{n,a}(x_0, y) h_n(a-y) dy \leq C x_0 \int_{a/2}^a \frac{(a-y) s_n^{\alpha+1}}{(1+(a-y)s_n)^{\alpha+1}} dy \leq C x_0 s_n^{\alpha-1} \leq C x_0 s_n
	\]
	since $\alpha - 1 < 1$. For $[x_0, a/2]$, \eqref{eq:three-bounds} gives, using $a-y \geq a_0/2$ when $y \leq a/2$,
	\[
		\int_{x_0}^{a/2} j_{n,a}(x_0, y) h_n(a-y) dy \leq C x_0 h_n(a_0/2) \int_{x_0}^a W_n'(y-x_0) dy \leq C x_0 \leq C x_0 s_n.
	\]
	
	This finally concludes the proof of~\eqref{eq:bound-p-xi}, which we use to derive~\eqref{eq:bound-p-theta}. Assume from now on that $x_0 \leq b$, we have by definition
	\begin{multline*}
		1-p_{n,\theta}^{x_0}(a,b,c) = \frac{\PX_n^{x_0} \left( T(b) < T(c), T(b) < T(a) < T(0) \right)}{\PX_n^{x_0} \left( T(b) < T(a) < T(0) \right)}\\
		\leq \frac{\PX_n^{x_0} \left( T(b) < T(c), T(b) < T(0) \right)}{\PX_n^{x_0} \left( T(b) < T(a) < T(0) \right)} = \frac{\PX_n^{x_0} \left( T(b) < T(0) \right)}{\PX_n^{x_0} \left( T(b) < T(a) < T(0) \right)} \left( 1 - p_{n,\xi}^{x_0}(b,c) \right).
	\end{multline*}
	
	Since $\PX_n^z(T(b) < T(0)) = \int_0^z \varphi$ where $\varphi(u) = W'_n(b-u) / W_n(b)$ is increasing, it is readily shown by differentiating that $z \in [0,b] \mapsto z^{-1} \PX_n^z(T(b) < T(0))$ is also increasing. Thus for $x_0 \leq b$ we obtain
	\[ \PX_n^{x_0} \left( T(b) < T(0) \right) \leq x_0 b^{-1} \PX_n^b \left( T(b) < T(0) \right) \leq C x_0. \]
	In combination with~\eqref{eq:bound-p-xi} and the fact that $\PX_n^{x_0}(X(\tau_a) \geq b) \leq \PX_n^{x_0}(T(b) < T(a) < T(0))$, this entails
	\[ 1-p_{n,\theta}^{x_0}(a,b,c) \leq C (c-b) s_n \frac{x_0}{\PX_n^{x_0}\left( X(\tau_a) \geq b \right)}. \]
	Hence~\eqref{eq:bound-p-theta} will be proved if we show that $x_0 \leq C \PX_n^{x_0}\left( X(\tau_a) \geq b \right)$. If follows from~\eqref{eq:density} that
	\[ \PX_n^{x_0} \left( X(\tau_a) \geq b \right) = \int_0^a u_{n,a}(x_0, y) \kappa_n s_n^{\alpha} \P(\Lambda \geq (b-y) s_n) dy \]
	and because $\P(\Lambda \geq u) = (1+s)^{-\alpha}$, it can be checked that $\P(\Lambda \geq u+v) \geq \P(\Lambda \geq u) \P(\Lambda \geq v)$ for any $u, v > 0$, so that
	\[ \PX_n^{x_0} \left( X(\tau_a) \geq b \right) \geq \P(\Lambda \geq (b-a) s_n) \int_0^a u_{n,a}(x_0, y) \kappa_n s_n^{\alpha} \P(\Lambda \geq (a-y) s_n) dy. \]
	
	In view of~\eqref{eq:density}, this last integral is equal to $\PX_n^{x_0}(X(\tau_a) \geq a) = \PX_n^{x_0}(T(a) < T(0))$ so finally, using $(b-a) s_n \leq t_0$ we get
	\[ \frac{x_0}{\PX_n^{x_0} \left( X(\tau_a) \geq b \right)} \leq \frac{x_0 W_n(a)}{\P(\Lambda \geq t_0) (W_n(a) - W_n(a-x_0))} \leq C \]
	which proves~\eqref{eq:bound-p-theta}.
\end{proof}

\begin{lemma} \label{lemma:bound-xi}
	There exists a finite constant $C$ such that for all $a_0 \leq a < b$ and all $n \geq 1$,
	\[ \left| \E \left( \xi_n^a \right) \right| \leq C (b-a) s_n \big / (r_n W_n(a)). \]
\end{lemma}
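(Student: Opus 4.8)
The plan is to obtain a closed form for $\E(\xi_n^a)$ in terms of the scale function $W_n$, and then estimate it using the regularity of $w_n$ provided by Lemma~\ref{lemma:prop-w}. First I would feed the description~\eqref{eq:def-xi} of the law of $\xi_n^a$ (that is, with $u=a$) into the expectation: since $G_n(b-a)$ is geometric with parameter $1-W_n(0)/W_n(b-a)$ its mean is $r_nW_n(b-a)-1$ (recall $W_n(0)=1/r_n$), so
\[ \E(\xi_n^a) = -\bigl(1-p_{n,\xi}^a(a,b)\bigr) + p_{n,\xi}^a(a,b)\bigl(r_nW_n(b-a)-1\bigr) = p_{n,\xi}^a(a,b)\,r_nW_n(b-a) - 1. \]

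Next I would compute $p_{n,\xi}^a(a,b)=\PX_n^a(T(b)<T(a)\mid T(a)<T(0))$ explicitly. Spectral positivity gives that from level $b$ the process creeps down through $a$ before reaching $0$, hence $\{T(b)<T(0)\}\subseteq\{T(a)<T(0)\}$ and therefore $\{T(b)<T(a)\}\cap\{T(a)<T(0)\}=\{T(b)<T(a)\wedge T(0)\}$. Writing $w=\PX_n^a(T(b)<T(a)\wedge T(0))$ and decomposing the path of $X$ at its successive returns to $a$ via the strong Markov property yields $\PX_n^a(T(b)<T(0))=w/\bigl(w+\PX_n^a(T(0)<T(a))\bigr)$. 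Inserting the values $\PX_n^a(T(b)<T(0))=(W_n(b)-W_n(b-a))/W_n(b)$, $\PX_n^a(T(0)<T(b))=W_n(b-a)/W_n(b)$ and $\PX_n^a(T(0)<T(a))=W_n(0)/W_n(a)$, all consequences of the identity $\PX_n^{x_0}(T(a')<T(0))=(W_n(a')-W_n(a'-x_0))/W_n(a')$ recalled above (itself a form of~\eqref{eq:scale}), one solves for $w$ and divides by $\PX_n^a(T(a)<T(0))=(W_n(a)-W_n(0))/W_n(a)$ to get
\[ p_{n,\xi}^a(a,b) = \frac{W_n(0)\bigl(W_n(b)-W_n(b-a)\bigr)}{W_n(b-a)\bigl(W_n(a)-W_n(0)\bigr)}, \qquad \E(\xi_n^a) = \frac{\bigl(W_n(b)-W_n(a)\bigr)-\bigl(W_n(b-a)-W_n(0)\bigr)}{W_n(a)-W_n(0)}, \]
where the second identity uses $r_nW_n(0)=1$.

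With this formula in hand the bound is short. The numerator equals $\int_0^{b-a}\bigl(W_n'(u+a)-W_n'(u)\bigr)\,du$; by Lemma~\ref{lemma:prop-w} the function $w_n$ is concave, so $W_n'$ is nonincreasing, each integrand lies in $[-W_n'(0),0]$, and the numerator is at most $(b-a)W_n'(0)$ in absolute value, with $W_n'(0)=w_n'(0)s_n/r_n=\kappa_ns_n/r_n\le s_n/r_n$ (using $\kappa_n\le 1$). For the denominator, $W_n(a)-W_n(0)=W_n(a)\bigl(1-1/(r_nW_n(a))\bigr)$ with $r_nW_n(a)=w_n(as_n)\ge w_n(a_0s_n)$; since $s_n\to\infty$ and $w_n(t_0)\ge 2$, the constant $c_0:=\inf_{n\ge1}w_n(a_0s_n)$ satisfies $c_0>1$ (it is $\ge2$ for all but finitely many $n$, and $w_n(a_0s_n)>w_n(0)=1$ for each $n$). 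Hence $W_n(a)-W_n(0)\ge(1-1/c_0)W_n(a)$ and
\[ \bigl|\E(\xi_n^a)\bigr| \le \frac{(b-a)s_n/r_n}{(1-1/c_0)W_n(a)} = \frac{c_0}{c_0-1}\cdot\frac{(b-a)s_n}{r_nW_n(a)}, \]
which is the asserted inequality with $C=c_0/(c_0-1)$.

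I expect the main obstacle to be the closed-form identity of the second paragraph: one must argue carefully that the conditioned hitting probability $p_{n,\xi}^a(a,b)$ reduces to scale functions, which rests on the excursion/renewal decomposition of the path together with the two spectral-positivity inclusions above and the bookkeeping of the conditioning $\{T(a)<T(0)\}$. Once that identity is established, the final estimate is an elementary consequence of the concavity of $w_n$.
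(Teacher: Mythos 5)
Your proof is correct and follows essentially the same route as the paper: both derive the closed form $\E(\xi_n^a)=\bigl((W_n(b)-W_n(a))-(W_n(b-a)-W_n(0))\bigr)/(W_n(a)-W_n(0))$ from a path decomposition at successive returns to $a$ together with \eqref{eq:scale}, and then estimate numerator and denominator using the regularity of $w_n$. The only (harmless) deviations are cosmetic: you obtain $p_{n,\xi}^a(a,b)$ by directly solving the renewal identity $\PX_n^a(T(b)<T(0))=w/(w+w_0)$ rather than matching $\PX_n^a(T(b)>T(0))=\E[(1-p_{n,\xi})^{G_n(a)}]$, and your bound on the numerator uses only the monotonicity of $W_n'$ (concavity of $w_n$) where the paper also invokes convexity of $W_n'$, which is an unnecessary extra.
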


\begin{proof}
	Starting from $a$, $X$ (under $\PX_n^a$) makes $1+G_n(a)$ visits to $a$. Decomposing the path $(X(t), 0 \leq t \leq T(0))$ between successive visits to $a$, one gets
	\begin{align*}
		\PX_n^{a}\left( T(b) > T(0) \right) & = \E \left[ \left\{ \PX_n^{a}(T(a) < T(b) \, | \, T(a) < T(0)) \right\}^{G_n(a)} \right]\\
		& = \E \left[ (1 - p_{n,\xi}^a(a,b))^{G_n(a)} \right].
	\end{align*}
	By definition, the left hand side is equal to $1-p_n^a(b)$, so integrating on $G_n(a)$ gives
	\[
		\E \left[ (1 - p_{n,\xi}^a(a,b))^{G_n(a)} \right] = \frac{1-p_n^a(a)}{1 - (1 - p_{n,\xi}^a(a,b))p_n^a(a)} = 1 - p_n^a(b)
	\]
	which gives
	\begin{equation} \label{eq:formula-p-xi-a}
		1 - p_{n,\xi}^{a}(a,b) = \frac{W_n(b-a)W_n(a) - W_n(b) W_n(0)}{W_n(b-a)(W_n(a) - W_n(0))}.
	\end{equation}
	Let $p_n = p_n^{b-a}(b-a)$ and $p_{n,\xi} = p_{n,\xi}^a(a,b)$: we have
	\[ \E \left( \xi_n^a \right) = p_{n,\xi} - 1 + p_{n,\xi} \E(G_n(b-a)) = p_{n,\xi} - 1 + p_{n,\xi} \frac{p_n}{1-p_n} = \frac{p_{n,\xi}}{1-p_n}-1. \]
	Plugging in~\eqref{eq:scale} and~\eqref{eq:formula-p-xi-a} gives after some computation
	\[
		\E \left( \xi_n^a \right) = \frac{W_n(b) - W_n(a) - (W_n(b-a) - W_n(0))}{W_n(a) - W_n(0)} = \frac{\int_0^a \int_0^{b-a} W_n''(u+v) du dv}{\int_0^a W_n'}.
	\]
	
	Since $W'_n \geq 0$ and $W_n'' \leq 0$, this gives
	\begin{equation} \label{eq:formula-xi}
		\left| \E \left( \xi_n^a \right) \right| = \frac{\int_0^a \int_0^{b-a} (-W_n'')(u+v) du dv}{\int_0^a W_n'}
	\end{equation}
	and since $W_n'$ is convex, we get
	\[
		\left| \E \left( \xi_n^a \right) \right| \leq (b-a) \frac{\int_0^a (-W_n'')}{\int_0^a W_n'} = \frac{(b-a) W_n'(0)}{W_n(a)} \frac{W_n(a) (W_n'(0) - W_n'(a))}{W_n'(0) (W_n(a) - W_n(0))}.
	\]
	
	Since $W_n'(0) \leq s_n / r_n$ and
	\[
		\frac{W_n(a) (W_n'(0) - W_n'(a))}{W_n'(0) (W_n(a) - W_n(0))} \leq \frac{W_n(a_0)}{W_n(a_0) - W_n(0)} \leq C,
	\]
	the proof is complete.
\end{proof}

To control the higher moments of $\xi_n^a$ and also the moments of the $\theta$'s we introduce the following constants:
\begin{equation} \label{eq:C_i}
	C_i = \sup \left\{ \frac{\E \left( (G_n(\delta))^i \right)}{\delta s_n} : n \geq 1, 0 \leq \delta \leq t_0 / s_n \right\}, \ i \geq 1.
\end{equation}

\begin{lemma} \label{lemma:C_i}
	For any integer $i \geq 1$, the constant $C_i$ is finite.
\end{lemma}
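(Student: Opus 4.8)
The plan is to reduce the statement to the explicit law of the geometric random variable $G_n(\delta)$ and to the linear bound $w_n(\delta s_n) - 1 \le \delta s_n$, which follows at once from the concavity of $w_n$.

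First I would recall that $G_n(\delta)$ is geometric with parameter $p_n^\delta(\delta) = 1 - W_n(0)/W_n(\delta)$, so that, writing $\rho_n(\delta) = \E(G_n(\delta))$,
\[ \rho_n(\delta) = \frac{p_n^\delta(\delta)}{1 - p_n^\delta(\delta)} = \frac{W_n(\delta) - W_n(0)}{W_n(0)} = r_n W_n(\delta) - 1 = w_n(\delta s_n) - 1, \]
where I used $W_n(a) = w_n(a s_n)/r_n$ and $W_n(0) = 1/r_n$. Since $w_n$ is concave with $w_n'(0) = \kappa_n \le 1$ by Lemma~\ref{lemma:prop-w}, the derivative $w_n'$ is decreasing, and hence
\[ \rho_n(\delta) = \int_0^{\delta s_n} w_n'(u)\, du \le \delta s_n\, w_n'(0) = \kappa_n\, \delta s_n \le \delta s_n, \]
so that in particular $\rho_n(\delta) \le t_0$ whenever $0 \le \delta \le t_0 / s_n$.

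Next I would express the moments of $G_n(\delta)$ through $\rho_n(\delta)$: for a geometric random variable $G$ of mean $\rho$ one has $\E\big(G(G-1)\cdots(G-j+1)\big) = j!\,\rho^j$ for $j \ge 1$, and therefore, expanding $x^i$ in falling factorials with the Stirling numbers $S(i,j)$ of the second kind,
\[ \E\big(G^i\big) = \sum_{j=1}^i S(i,j)\, j!\, \rho^j, \]
a polynomial in $\rho$ with no constant term. Taking $\rho = \rho_n(\delta)$, using $\rho_n(\delta) \le \delta s_n$ and $\rho_n(\delta)^{j-1} \le t_0^{\,j-1}$ for $j \ge 1$ when $\delta \le t_0/s_n$, I obtain
\[ \E\big((G_n(\delta))^i\big) = \sum_{j=1}^i S(i,j)\, j!\, \rho_n(\delta)^j \le \Big( \sum_{j=1}^i S(i,j)\, j!\, t_0^{\,j-1} \Big)\, \delta s_n, \]
which shows that the supremum defining $C_i$ is bounded by the finite constant $\sum_{j=1}^i S(i,j)\, j!\, t_0^{\,j-1}$, as required.

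I do not anticipate any real difficulty here: the one substantive ingredient is the estimate $w_n(\delta s_n) - 1 \le \delta s_n$, which rests entirely on the concavity of $w_n$ and on $w_n'(0) = \kappa_n \le 1$ from Lemma~\ref{lemma:prop-w}. If one wishes to sidestep the Stirling numbers, the cruder inequality $\E(G^i) \le c_i\, \rho\,(1+\rho)^i$ (valid for geometric $G$ of mean $\rho$, with $c_i = \sum_{j=1}^i S(i,j)\,j!$) gives, for $\rho = \rho_n(\delta) \le \delta s_n \le t_0$, the bound $\E((G_n(\delta))^i) \le c_i\,(1+t_0)^i\,\delta s_n$, which is equally sufficient.
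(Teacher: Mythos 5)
Your proof is correct and follows essentially the same route as the paper: both rely on the concavity of $w_n$ together with $w_n'(0)=\kappa_n\le 1$ to obtain a linear bound ($\rho_n(\delta)\le\delta s_n$ in your parametrization, $p_n^\delta(\delta)\le\delta s_n/(1+\delta s_n)$ in the paper's), and then invoke an explicit polynomial formula for the moments of a geometric random variable to control $\E((G_n(\delta))^i)/\delta s_n$ uniformly. The only cosmetic difference is that you parametrize by the mean $\rho$ and expand via Stirling numbers and factorial moments, whereas the paper parametrizes by the success probability $p$ and uses the Eulerian-number polynomials; both are equivalent and equally elementary.
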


\begin{proof}
	Using the concavity of $w_n$, one gets $w_n(\delta s_n) \leq w_n(0) + \delta s_n w_n'(0) \leq 1 + \delta s_n$ since $w_n(0) = 1$ and $w_n'(0) = \kappa_n \leq 1$ by Lemma~\ref{lemma:prop-w}. Hence
	\[ p_n^\delta(\delta) = \frac{w_n(\delta s_n) - w_n(0)}{w_n(\delta s_n)} \leq \frac{\delta s_n}{1 + \delta s_n} \leq \min \left( \delta s_n, \frac{t_0}{1 + t_0} \right) \]
	where the last inequality holds for $\delta s_n \leq t_0$. In particular, for any $i \geq 1$ we have
	\[ \frac{\E \left( (G_n(\delta))^i \right)}{\delta s_n} = \frac{\E \left( (G_n(\delta))^i \right)}{p_n^\delta(\delta)} \frac{p_n^\delta(\delta)}{\delta s_n} \leq \sup_{0 \leq p \leq \varepsilon} \left( p^{-1} \E \left( G_p^i \right) \right) \]
	where $\varepsilon = t_0 / (1 + t_0) < 1$ and $G_p$ is a geometric random variable with parameter $p$. It is well-known that
	\[ \E(G_p^i) = \frac{p P_{i-1}(p)}{(1-p)^i} \]
	where $P_i$ is the polynomial $P_i(p) = \sum_{k=0}^i T_{k,i} p^i$ with $T_{k,i} \geq 1$ the Eulerian numbers. Since $P_i$ satisfies $P_i(0) = 1$, one easily sees that for any $\varepsilon < 1$,
	\[ \sup_{0 \leq p \leq \varepsilon} \left( p^{-1} \E \left( G_p^i \right) \right) < +\infty \]
	which achieves the proof.
\end{proof}

\begin{lemma}
	For any $A > a_0$ and $i \geq 1$, there exists a finite constant $C$ such that for all $n \geq 1$ and all $a_0 \leq a < b < c \leq A$ with $(b-a) \vee (c-b) \leq t_0 / s_n$
	\begin{equation} \label{eq:bound-xi+theta}
		\max \left( \E \left( |\xi_n^a|^i \right), \ \E \left( |\theta_n^a|^i \right), \E \left( |\theta_n^b|^i \right) \right) \leq C (c-a) s_n.
	\end{equation}
\end{lemma}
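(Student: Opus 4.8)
The plan is to read each of the three moment bounds directly off the explicit descriptions \eqref{eq:def-xi} and \eqref{eq:def-theta} of the laws of $\xi_n^u$ and $\theta_n^u$, combining them with the probability estimates of Lemma~\ref{lemma:p-xi+theta} and the uniform control of moments of geometric random variables encoded by the constants $C_i$, which are finite by Lemma~\ref{lemma:C_i}. Throughout, recall that $G_n(\delta)$ takes nonnegative integer values, so $\E(|G_n(\delta)|^i) = \E((G_n(\delta))^i)$.

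First I would handle $\xi_n^a$. Applying \eqref{eq:def-xi} with $u = a$ and $f(x) = |x|^i$, so that $f(-1) = 1$, gives
\[ \E\left( |\xi_n^a|^i \right) = 1 - p_{n,\xi}^a(a,b) + p_{n,\xi}^a(a,b)\, \E\left( (G_n(b-a))^i \right). \]
Since $a \leq a$, the bound \eqref{eq:bound-p-xi} yields $1 - p_{n,\xi}^a(a,b) \leq C(b-a)s_n$; since $b - a \leq t_0/s_n$, the definition \eqref{eq:C_i} of $C_i$ yields $\E((G_n(b-a))^i) \leq C_i (b-a) s_n$; and $p_{n,\xi}^a(a,b) \leq 1$. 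These combine into $\E(|\xi_n^a|^i) \leq C(b-a)s_n \leq C(c-a)s_n$.

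Next I would treat $\theta_n^a$ and $\theta_n^b$ identically. Applying \eqref{eq:def-theta} with $u \in \{a, b\}$ and $f(x) = |x|^i$ gives
\[ \E\left( |\theta_n^u|^i \right) = 1 - p_{n,\theta}^u(a,b,c) + p_{n,\theta}^u(a,b,c)\, \E\left( (G_n(c-b))^i \right). \]
Both $u = a$ and $u = b$ satisfy $u \leq b$, and $b - a \leq t_0/s_n$ by hypothesis, so \eqref{eq:bound-p-theta} applies and gives $1 - p_{n,\theta}^u(a,b,c) \leq C(c-b)s_n$; since $c - b \leq t_0/s_n$, \eqref{eq:C_i} gives $\E((G_n(c-b))^i) \leq C_i(c-b)s_n$. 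Hence $\E(|\theta_n^u|^i) \leq C(c-b)s_n \leq C(c-a)s_n$ for $u \in \{a,b\}$, and taking the maximum of the three estimates gives \eqref{eq:bound-xi+theta}.

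There is no genuine obstacle here: the bound is an immediate corollary of Lemmas~\ref{lemma:p-xi+theta} and~\ref{lemma:C_i}. The only points deserving (minor) attention are verifying the hypotheses of Lemma~\ref{lemma:p-xi+theta} in each case — namely $x_0 = a \leq a$ for \eqref{eq:bound-p-xi}, and $x_0 \in \{a,b\}$ with $x_0 \leq b$ together with $b - a \leq t_0/s_n$ for \eqref{eq:bound-p-theta} — and observing that the relevant increments $b-a$ and $c-b$ are both at most $t_0/s_n$, which is precisely the regime considered, so that the constants $C_i$ are legitimately applicable.
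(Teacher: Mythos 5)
Your bounds for $\xi_n^a$ and $\theta_n^a$ are exactly the paper's: read the moment off \eqref{eq:def-xi}/\eqref{eq:def-theta}, use $f(-1)=1$, apply \eqref{eq:bound-p-xi} (resp.\ \eqref{eq:bound-p-theta}) at $x_0=a$, and control $\E((G_n(\delta))^i)$ via the constants $C_i$.

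For $\theta_n^b$, however, you diverge from the paper. You invoke \eqref{eq:bound-p-theta} directly with $x_0=b$, which is formally inside the stated range $x_0\le b$ of Lemma~\ref{lemma:p-xi+theta}. The paper instead first notes, by the strong Markov property at $T(b)$ (to drop the event $\{T(a)<T(0)\}$ from the conditioning) and then spatial invariance, that
$p_{n,\theta}^{b}(a,b,c)=\PX_n^b(T(c)<T(b)\mid T(b)<T(a))=p_{n,\xi}^{b-a}(b-a,c-a)$,
and then applies the $\xi$-type bound \eqref{eq:bound-p-xi}. The reason this detour matters is that the \emph{proof} of \eqref{eq:bound-p-theta} rests on the two-sided exit density \eqref{eq:density} (Corollary~2 in Bertoin~\cite{Bertoin97:1}) via $\PX_n^{x_0}(X(\tau_a)\ge b)$, and this machinery requires the starting point $x_0$ to lie strictly below $a$; the final estimate there even reads $x_0 W_n(a)/(W_n(a)-W_n(a-x_0))\le C$, which has no meaning for $x_0>a$. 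So while the \emph{statement} of Lemma~\ref{lemma:p-xi+theta} allows $x_0\le b$, the proof given covers only $x_0\le a$, and the paper deliberately avoids leaning on the endpoint $x_0=b$ here by re-parametrising $\theta_n^b$ as a $\xi$-type quantity. Your conclusion is the same, but you should be aware that you are trusting a part of Lemma~\ref{lemma:p-xi+theta}'s range that the paper's own proof of that lemma does not justify, whereas the paper's route for $\theta_n^b$ is self-contained.
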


\begin{proof}
	The results for $\xi_n^a$ and $\theta_n^a$ are direct consequences of~\eqref{eq:bound-p-xi} and~\eqref{eq:bound-p-theta} and the finiteness of $C_i$: indeed, using these two results we have for instance for $\xi_n^a$
	\[ \E \left( |\xi_n^a|^i \right) = 1-p_{n,\xi}^a(a,b) + p_{n,\xi}^a(a,b) \E \left( (G_n(b-a))^i \right) \leq C (b-a) s_n + C_i (b-a) s_n \]
	and similarly for $\theta_n^a$.	The result for $\theta_n^b$ is also straightforward because
	\begin{multline*}
		p_{n,\theta}^{b}(a,b,c) = \PX_n^{b} \left( T(c) < T(b) \, | \, T(b) < T(a) < T(0) \right)\\
		= \PX_n^b\left( T(c) < T(b) \, | \, T(b) < T(a) \right) = p_{n,\xi}^{b-a}(c-a, b-a).
	\end{multline*}
	and so the result follows similarly as for $\xi_n^a$.
\end{proof}

Recall the random variables $\widetilde \xi_n^a$ and $\widetilde \theta_n^a$ defined in~\eqref{eq:def-tilde}.

\begin{lemma}
	For any $A > a_0$, there exists a finite constant $C$ such that for all $n \geq 1$ and all $a_0 \leq a < b < c \leq A$ with $(b-a) \vee (c-b) \leq t_0 / s_n$,
	\begin{equation} \label{eq:bound-xi+theta^*}
		\E \left( \left| \widetilde \xi_n^a \right|^i \right) \leq C (b-a) s_n \ \text{ and } \ \E \left( \left| \widetilde \theta_n^a \right|^i \right) \leq C (c-a) s_n.
	\end{equation}
\end{lemma}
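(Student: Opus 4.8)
The plan is to condition on the initial position $\chi_n^a$ and reduce to the bounds~\eqref{eq:bound-p-xi}--\eqref{eq:bound-p-theta} and to the moment control~\eqref{eq:C_i}. By~\eqref{eq:def-tilde}, applied to a function of the first (resp.\ second) coordinate only,
\[ \E\bigl(|\widetilde \xi_n^a|^i\bigr) = \int_0^\infty \E\bigl(|\xi_n^{x_0}|^i\bigr)\,\P(\chi_n^a\in dx_0) \quad\text{and}\quad \E\bigl(|\widetilde \theta_n^a|^i\bigr) = \int_0^\infty \E\bigl(|\theta_n^{x_0}|^i\bigr)\,\P(\chi_n^a\in dx_0). \]
From~\eqref{eq:def-xi}, since $|{-1}|^i=1$, $p_{n,\xi}^{x_0}(a,b)\le1$ and $\E((G_n(b-a))^i)\le C_i(b-a)s_n$ --- the latter being legitimate because $b-a\le t_0/s_n$, by Lemma~\ref{lemma:C_i} and~\eqref{eq:C_i} --- we get $\E(|\xi_n^{x_0}|^i)\le (1-p_{n,\xi}^{x_0}(a,b)) + C_i(b-a)s_n$, and similarly $\E(|\theta_n^{x_0}|^i)\le (1-p_{n,\theta}^{x_0}(a,b,c)) + C_i(c-b)s_n$ from~\eqref{eq:def-theta}. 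As $c-b\le c-a$, it therefore remains to bound $\int (1-p_{n,\xi}^{x_0}(a,b))\,\P(\chi_n^a\in dx_0)$ by $C(b-a)s_n$ and $\int (1-p_{n,\theta}^{x_0}(a,b,c))\,\P(\chi_n^a\in dx_0)$ by $C(c-a)s_n$.

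First, for $\widetilde \xi_n^a$ I would split the integral at $x_0=a$. On $\{x_0\le a\}$, bound~\eqref{eq:bound-p-xi} gives $1-p_{n,\xi}^{x_0}(a,b)\le C(b-a)s_n$, so this part is at most $C(b-a)s_n$ since $\P(\chi_n^a\in\cdot)$ is a probability measure. On $\{x_0>a\}$, spectral positivity together with the standing assumption that $X$ under $\P_n^0$ does not drift to $+\infty$ forces $\PX_n^{x_0}(T(a)<T(0))=1$; hence the restriction of the law of $\chi_n^a$ to $(a,\infty)$ equals the restriction of the law of $\Lambda^*/s_n$ to $(a,\infty)$ divided by $\PX_n^*(T(a)<T(0))$, and retaining only this mass in the definition of the latter yields
\[ \PX_n^*\bigl(T(a)<T(0)\bigr)\ \ge\ \P(\Lambda^*>as_n)\ =\ (1+as_n)^{1-\alpha}. \]
Since moreover $1-p_{n,\xi}^{x_0}(a,b)\le\indicator{x_0\le b}$ for $x_0>a$ (for $x_0>b$ one has $p_{n,\xi}^{x_0}(a,b)=1$, as $\PX_n^{x_0}$-a.s.\ $T(b)<T(a)<T(0)$) and $\Lambda^*$ has the decreasing density $(\alpha-1)(1+s)^{-\alpha}$, so that $\P(\Lambda^*\in(as_n,bs_n])\le(\alpha-1)(1+as_n)^{-\alpha}(b-a)s_n$, the $\{x_0>a\}$ part is at most
\[ (1+as_n)^{\alpha-1}\,\P\bigl(\Lambda^*\in(as_n,bs_n]\bigr)\ \le\ (\alpha-1)\frac{(b-a)s_n}{1+as_n}\ \le\ \frac{\alpha-1}{a_0}(b-a). \]
Adding these two parts and the $C_i(b-a)s_n$ term, and absorbing $(b-a)$ into $(b-a)s_n$ (as $s_n\ge1$ under the tightness assumption), gives $\E(|\widetilde \xi_n^a|^i)\le C(b-a)s_n$.

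The bound for $\widetilde \theta_n^a$ follows the same scheme, splitting at $x_0=b$ instead: on $\{x_0\le b\}$ invoke~\eqref{eq:bound-p-theta} (valid since $b-a\le t_0/s_n$) for a contribution $\le C(c-b)s_n$; on $\{x_0>b\}$ use again $\PX_n^{x_0}(T(a)<T(0))=1$, the lower bound on $\PX_n^*(T(a)<T(0))$ above, $1-p_{n,\theta}^{x_0}(a,b,c)\le\indicator{x_0\le c}$ (it vanishes for $x_0>c$), and $\P(\Lambda^*\in(bs_n,cs_n])\le(\alpha-1)(1+bs_n)^{-\alpha}(c-b)s_n\le(\alpha-1)(1+as_n)^{-\alpha}(c-b)s_n$, to obtain a contribution $\le\tfrac{\alpha-1}{a_0}(c-b)$. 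Together with $C_i(c-b)s_n$ and $c-b\le c-a$, this gives $\E(|\widetilde \theta_n^a|^i)\le C(c-a)s_n$.

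The one genuinely delicate point --- and the reason bounds~\eqref{eq:bound-p-xi}--\eqref{eq:bound-p-theta} do not by themselves suffice --- is the contribution of the region where $\chi_n^a$ exceeds $a$ (resp.\ $b$): those bounds were established only for $x_0\le a$ (resp.\ $x_0\le b$), while the corresponding probability is of order $1/r_n=s_n^{1-\alpha}$, which is not small compared with $(b-a)s_n$. The point rescuing the argument, which I expect to be the main input, is that the explicit heavy tail $\P(\Lambda^*\ge t)=(1+t)^{1-\alpha}$ makes $\P(\Lambda^*\in(as_n,bs_n])$ of order $(b-a)s_n^{1-\alpha}$ while at the same time bounding $\PX_n^*(T(a)<T(0))$ below by $(1+as_n)^{1-\alpha}$; the two powers $s_n^{1-\alpha}$ cancel, so the contribution of that region is in fact only $O(b-a)$.
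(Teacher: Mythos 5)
Your proof is correct, and its overall structure matches the paper's: condition on $\chi_n^a$ via~\eqref{eq:def-tilde}, apply~\eqref{eq:bound-p-xi} and~\eqref{eq:bound-p-theta} together with the constant $C_i$ from~\eqref{eq:C_i} on the region where $x_0$ is at most $a$ (resp.\ $b$), and then control the mass that $\chi_n^a$ places on the short interval $(a,b]$ (resp.\ $(b,c]$) by combining the density of $\Lambda^*$ with a lower bound on $\PX_n^*(T(a)<T(0))$. The only genuine point of departure is that lower bound. The paper bounds $\PX_n^*(T(a)<T(0))\ge \PX_n^*(T(A)<T(0))$ and then uses Lemma~\ref{lemma:overshoot} to equate $\PX_n^*(T(A)<T(0))$ with $\PX_n^0(T(A)<T(0))$, followed by Lemma~\ref{lemma:conv-hitting-times}, a limit theorem, to deduce $\inf_n r_n\PX_n^*(T(A)<T(0))>0$. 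You instead observe directly that by spectral positivity $\{X(0)>a\}\subset\{T(a)<T(0)\}$ and invoke the explicit tail $\P(\Lambda^*>t)=(1+t)^{1-\alpha}$ to get $\PX_n^*(T(a)<T(0))\ge (1+as_n)^{1-\alpha}$, which is more elementary and avoids leaning on an asymptotic result to get a non-asymptotic uniform bound. Both routes achieve the same cancellation of the $s_n^{1-\alpha}$ factors; yours is arguably slightly cleaner, since it also keeps the constant tied to $a_0$ rather than to the auxiliary level $A$.
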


\begin{proof}
	Combining the two definitions~\eqref{eq:def-xi} and~\eqref{eq:def-tilde}, we obtain
	\begin{align*}
		\E \left( \left| \widetilde \xi_n^a \right|^i \right) & = \int_0^\infty \left( 1-p_{n,\xi}^{x_0}(a,b) + p_{n,\xi}^{x_0}(a,b) \E \left( (G_n(b-a))^i \right) \right) \P(\chi_n^a \in dx_0)\\
		& \leq C (b-a) s_n + \P(a \leq \chi_n^a \leq b) + C_i (b-a) s_n
	\end{align*}
	using~\eqref{eq:bound-p-xi} to obtain the inequality. We obtain similarly for $\widetilde \theta_n^a$, using~\eqref{eq:bound-p-theta} instead of~\eqref{eq:bound-p-xi},
	\[ \E \left( \left| \widetilde \theta_n^a \right|^i \right) \leq C (c-b) s_n + \P(b \leq \chi_n^a \leq c) + C_i (c-b) s_n. \]
	
	Thus the result will be proved if we can show that $\P(a \leq \chi_n^a \leq c) \leq C (c-a) s_n$; remember that $\chi_n^a$ is by definition equal in distribution to $X(0)$ under $\PX_n^*(\, \cdot \, | \, T(a) < T(0))$. Because $X$ is spectrally positive and $a \leq A$, it holds that
	\[
		\PX_n^* \left( a < X(0) \leq c \, | \, T(a) < T(0) \right) = \frac{\PX_n^* \left( a < X(0) \leq c \right)}{\PX_n^* \left( T(a) < T(0) \right)} \leq \frac{\PX_n^* \left( a < X(0) \leq c \right)}{\PX_n^* \left( T(A) < T(0) \right)}.
	\]
	
	Since $\PX_n^* \left( T(A) < T(0) \right) = \PX_n^0 \left( T(A) < T(0) \right)$ by Lemma~\ref{lemma:overshoot}, Lemma~\ref{lemma:conv-hitting-times} implies that
	\[ \inf_{n \geq 1} \left( r_n \PX_n^* \left( T(A) < T(0) \right) \right) > 0 \]
	which leads to $\PX_n^* \left( a < X(0) \leq c \, | \, T(a) < T(0) \right) \leq C r_n \PX_n^* \left( a < X(0) \leq c \right)$. We have by definition
	\[
		r_n \PX_n^* \left( a < X(0) \leq c \right) = (\alpha-1) r_n \int_{a s_n}^{c s_n} \frac{du}{(1+u)^\alpha} \leq C (c-a) \leq C (c-a) s_n
	\]
	which achieves the proof.
\end{proof}

To control the sum of i.i.d.\ random variables, we will repeatedly use the following simple combinatorial lemma. In the sequel for $I \in \N$ and $\beta \in \N^I$ note $|\beta| = \sum \beta_i$ and $\norm{\beta} = \sum i \beta_i$.

\begin{lemma} \label{lemma:sum}
	Let $(Y_{k})$ be i.i.d.\ random variables with common distribution $Y$. Then for any even integer $I \geq 0$ and any $K \geq 0$,
	\[ \E \left[ \left( \sum_{k=1}^K Y_i \right)^I \right] \leq I^I \sum_{\beta \in \N^I: \norm{\beta} = I} K^{|\beta|} \prod_{i=1}^I \left| \E \left( Y^i \right) \right|^{\beta_i}. \]
\end{lemma}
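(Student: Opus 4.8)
The plan is to expand the $I$-th power of the sum into monomials indexed by tuples $(k_1,\dots,k_I)\in\{1,\dots,K\}^I$ and then to regroup those monomials according to the multiplicity pattern — the \emph{type} — of the tuple. Writing
\[ \left( \sum_{k=1}^K Y_k \right)^I = \sum_{(k_1,\dots,k_I) \in \{1,\dots,K\}^I} \prod_{j=1}^I Y_{k_j}, \]
I would attach to each tuple the vector $\beta=(\beta_1,\dots,\beta_I)\in\N^I$, where $\beta_i$ is the number of distinct values occurring exactly $i$ times among $k_1,\dots,k_I$. Then automatically $\norm{\beta}=\sum_i i\beta_i = I$ (each of the $I$ positions is counted once) and $|\beta|=\sum_i\beta_i$ is the number of distinct values actually used.

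Next I would take expectations. Since the $Y_k$ are i.i.d., $\E\big[\prod_{j=1}^I Y_{k_j}\big]$ depends on the tuple only through its type and equals $\prod_{i=1}^I \E(Y^i)^{\beta_i}$. Grouping the monomials by type thus yields $\E\big[(\sum_k Y_k)^I\big] = \sum_{\beta:\,\norm{\beta}=I} N(\beta,K)\prod_{i=1}^I \E(Y^i)^{\beta_i}$, where $N(\beta,K)$ is the number of tuples in $\{1,\dots,K\}^I$ of type $\beta$. Because $I$ is even, the left-hand side is nonnegative, hence equals this sum; and since each $N(\beta,K)\ge 0$, it is bounded above by $\sum_{\beta:\,\norm{\beta}=I} N(\beta,K)\prod_{i=1}^I |\E(Y^i)|^{\beta_i}$. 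So the whole statement reduces to the combinatorial estimate $N(\beta,K)\le I^I K^{|\beta|}$.

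That count is the one genuinely delicate point, though it is still elementary. A tuple of type $\beta$ is encoded, with over-counting, by a function $\{1,\dots,I\}\to\{1,\dots,|\beta|\}$ (recording which positions share a common value) together with an injection $\{1,\dots,|\beta|\}\to\{1,\dots,K\}$ (assigning the actual values); equivalently one may count a partition of $\{1,\dots,I\}$ with the prescribed block sizes, of which there are at most $I!\le I^I$, times a choice of $|\beta|$ distinct values from $\{1,\dots,K\}$, of which there are $K(K-1)\cdots(K-|\beta|+1)\le K^{|\beta|}$. Using $|\beta|\le\norm{\beta}=I$ in the first description gives at once $N(\beta,K)\le |\beta|^I K^{|\beta|}\le I^I K^{|\beta|}$. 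Plugging this into the displayed bound finishes the argument. The estimate is intentionally crude — all that is needed downstream is the polynomial factor $K^{|\beta|}$ and a constant depending only on $I$ — so I would not attempt to optimize any of the numerical constants.
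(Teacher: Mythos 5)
Your proof is correct and follows essentially the same route as the paper's: expand the $I$-th power into monomials indexed by tuples, group by multiplicity type $\beta$, use independence and identical distribution to factor the expectation as $\prod_i \E(Y^i)^{\beta_i}$, invoke evenness of $I$ to pass to absolute values of moments, and bound the number of tuples of type $\beta$ by $I^I K^{|\beta|}$. Your combinatorial count (set partition with prescribed block sizes, at most $I!\le I^I$ of them, times an injection into $\{1,\dots,K\}$, at most $K^{|\beta|}$) is a slightly cleaner phrasing of the paper's $A_{I,K}(\beta)=K(K-1)\cdots(K-|\beta|+1)\,B(I,|\beta|)$ decomposition, but the estimate is the same.
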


\begin{proof}
	We have $\E \left[ \left( Y_1 + \cdots + Y_K \right)^I \right] = \sum_{1 \leq k_1, \ldots, k_I \leq K} \E(Y_{k_1} \cdots Y_{k_I})$. Since the $(Y_{k})$'s are i.i.d., we have $\E \left( Y_{k_1} \cdots Y_{k_I} \right) = m_1^{\beta_1} \cdots m_I^{\beta_I}$ with $m_i = \E(Y^i)$ and $\beta_i$ the number of $i$-tuples of $k$, i.e., $\beta_1$ is the number of singletons, $\beta_2$ the number of pairs, etc \ldots\ Since $I$ is even, this leads to
	\[
		\E \left[ \left( Y_{1} + \cdots + Y_K \right)^I \right] \leq \sum_{0 \leq \beta_1, \ldots, \beta_I \leq K: \norm{\beta} = I } A_{I,K}(\beta) \, |m_1|^{\beta_1} \cdots |m_I|^{\beta_I}
	\]
	with $A_{I,K}(\beta)$ the number of $I$-tuples $k \in \{1, \ldots, K\}^I$ with exactly $\beta_i$ $i$-tuples for each $i = 1, \ldots, I$. There are $K (K-1) \ldots (K - (|\beta|-1))$ different ways of choosing the $|\beta|$ different values taken by $k$, thus $A_{I,K}(\beta) = K (K-1) \ldots (K - (|\beta|-1)) \times B(I, |\beta|)$ with $B(i, a)$ the number of ways of assigning $i$ objects into $a$ different boxes in such a way that no box is empty, so that $A_{I,K}(\beta) \leq K^{|\beta|} I^{|\beta|} \leq K^{|\beta|} I^I$ since $|\beta| \leq \norm{\beta} = I$.
\end{proof}

In the sequel, we will use the inequality
\[ \E \left( (G_n(a))^i \right) \leq i! (r_n W_n(a))^i, \ i \geq 1, a > 0, \]
which comes from the fact that $G_n(a)$ is stochastically dominated by an exponential random variable with parameter $1-p_n^a(a) = 1/(r_n W_n(a))$. We now use the previous bounds on the moments to control the probability
\[ \P \left( \left| \widetilde \xi_n^a + \sum_{k=1}^{G_n(a)} \xi_{n,k}^a \right| \geq \lambda r_n \right). \]
We will see that it achieves a linear bound (in $b-a$) which justifies the need of the min later on.

\begin{lemma} \label{lemma:control-xi}
	For any $A > a_0$ and any even integer $I \geq 2$, there exists a finite constant $C$ such that for all $n \geq 1$, all $\lambda > 0$ and all $a_0 \leq a < b \leq A$ with $b-a \leq t_0 / s_n$,
	\[
		\P \left( \left| Y + \sum_{k=1}^{G_n(a)} \xi_{n,k}^a \right| \geq \lambda r_n \right)	\leq  C \frac{b-a}{\lambda^I} s_n r_n^{-I/2}
	\]
	where $Y$ is any random variable equal in distribution either to $\widetilde \xi_n^a$ or to $G_n(b-a)$.
\end{lemma}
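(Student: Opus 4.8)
The plan is to bound the probability by the $I$-th moment via Markov's inequality and then control that moment. Abbreviate $S = \sum_{k=1}^{G_n(a)} \xi_{n,k}^a$, so that, $I$ being even,
\[ \P\Big( \big| Y + S \big| \geq \lambda r_n \Big) \leq \frac{1}{\lambda^I r_n^I}\, \E\big[ (Y+S)^I \big]; \]
it therefore suffices to establish $\E[(Y+S)^I] \leq C (b-a)\, s_n\, r_n^{I/2}$. I will use that $Y$ is independent of the family $(\xi_{n,k}^a)_{k \geq 1}$ and of $G_n(a)$ — this is part of the independence structure recorded after~\eqref{eq:fd-integrated} when $Y$ has the law of $\widetilde\xi_n^a$, and holds by construction when $Y$ has the law of $G_n(b-a)$ — together with the elementary inequality $(y+s)^I \leq 2^{I-1}(y^I + s^I)$, valid for even $I$ by convexity of $t \mapsto t^I$. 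This yields
\[ \E\big[ (Y+S)^I \big] \leq 2^{I-1}\Big( \E\big[ |Y|^I \big] + \E\big[ S^I \big] \Big). \]
The first term is at most $C(b-a)s_n$: for $Y$ distributed as $G_n(b-a)$ this is the definition of the constant $C_I$ in~\eqref{eq:C_i} (recall $b-a \leq t_0/s_n$), and for $Y$ distributed as $\widetilde\xi_n^a$ it is~\eqref{eq:bound-xi+theta^*}; since $r_n = s_n^{\alpha-1} \geq 1$ this is $\leq C(b-a)s_n r_n^{I/2}$. Everything thus reduces to bounding $\E[S^I]$.

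For that I would condition on $G_n(a)$. Since the $\xi_{n,k}^a$ are i.i.d.\ copies of $\xi_n^a$ independent of $G_n(a)$, Lemma~\ref{lemma:sum} applied with $K = G_n(a)$ gives
\[ \E\big[ S^I \,\big|\, G_n(a) \big] \leq I^I \sum_{\beta \in \N^I:\ \norm{\beta} = I} \big( G_n(a) \big)^{|\beta|} \prod_{i=1}^I \big| \E\big( (\xi_n^a)^i \big) \big|^{\beta_i}. \]
Into this I plug the two moment estimates for $\xi_n^a$ at our disposal: the sharp first-moment bound $|\E(\xi_n^a)| \leq C(b-a)s_n / r_n$, coming from Lemma~\ref{lemma:bound-xi} together with $\inf_n W_n(a) > 0$; and, for $i \geq 2$, $|\E((\xi_n^a)^i)| \leq \E(|\xi_n^a|^i) \leq C(b-a)s_n$, which follows from~\eqref{eq:bound-p-xi} and the finiteness of the constants $C_i$ of~\eqref{eq:C_i} (equivalently,~\eqref{eq:bound-xi+theta}). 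Taking the expectation and using $\E[(G_n(a))^m] \leq m!\,(r_n W_n(a))^m \leq C^m r_n^m$ (as $W_n(a) \leq W_n(A)$ is bounded), the summand indexed by $\beta$ with $|\beta| = m$ is bounded by
\[ C^m r_n^m \cdot \big( (b-a)s_n \big)^{m} \cdot r_n^{-\beta_1} = C^m \big( (b-a)s_n \big)^{m}\, r_n^{\,m - \beta_1}. \]
Because $(b-a)s_n \leq t_0$ we have $((b-a)s_n)^m \leq t_0^{\,m-1}(b-a)s_n$, and the constraint $\sum_i i\beta_i = I$ forces $m - \beta_1 = \sum_{i \geq 2} \beta_i \leq \tfrac{1}{2}\sum_{i\geq 2} i\beta_i \leq I/2$, so $r_n^{m-\beta_1} \leq r_n^{I/2}$. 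Since the number of $\beta$ with $\norm{\beta} = I$ depends only on $I$, summing over them gives $\E[S^I] \leq C(b-a)s_n r_n^{I/2}$; combined with the bound on $\E[|Y|^I]$ and Markov's inequality this proves the lemma.

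The only genuinely delicate step is the bound on each summand and the verification that $m - \beta_1 \leq I/2$. It is essential to use the \emph{improved} first-moment bound of Lemma~\ref{lemma:bound-xi} — which gains an extra factor $1/r_n$ over the generic bound $O((b-a)s_n)$ valid for all moments — precisely $\beta_1$ times, so that the surplus $r_n^m$ produced by $\E[(G_n(a))^m]$ is cut down to $r_n^{\,m-\beta_1} \leq r_n^{I/2}$. Dropping this refinement would leave $r_n^I$ in the numerator, which is too weak to feed into the tightness criterion behind Propositions~\ref{prop:case<} and~\ref{prop:case>}; keeping it gives the decay $r_n^{-I/2}$ that, for $I$ chosen large, more than compensates the factor $s_n$.
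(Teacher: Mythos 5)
Your proof is correct and follows essentially the same route as the paper: Markov's inequality on the $I$-th moment, Lemma~\ref{lemma:sum} conditioned on $G_n(a)$, the refined first-moment bound from Lemma~\ref{lemma:bound-xi} applied exactly $\beta_1$ times, the bound $\E(|Y|^I)\le C(b-a)s_n$ from \eqref{eq:C_i} or \eqref{eq:bound-xi+theta^*}, and the combinatorial estimate $|\beta|-\beta_1\le I/2$. The only cosmetic difference is that you split $(Y+S)^I$ via convexity rather than the union bound on $\{|Y|\ge \lambda r_n/2\}\cup\{|S|\ge\lambda r_n/2\}$, and the independence of $Y$ from $(G_n(a),(\xi_{n,k}^a)_k)$ that you announce is in fact not needed for that step.
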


\begin{proof}
	Using first the triangular inequality and then Markov inequality gives
	\[ \P \left( \left| Y + \sum_{k=1}^{G_n(a)} \xi_{n,k}^a \right| \geq \lambda r_n \right) \leq (2/\lambda r_n)^{I} \left( \E \left( Y^I \right) + \E \left[ \left( \sum_{k=1}^{G_n(a)} \xi_{n,k}^a \right)^I \right] \right). \]
	Using the independence between $G_n(a)$ and $(\xi_{n,k}^a, k \geq 1)$ together with Lemma~\ref{lemma:sum} gives
	\begin{align*}
		\E \left[ \left( \sum_{k=1}^{G_n(a)} \xi_{n,k}^a \right)^I \right] & \leq C \sum_{\beta \in \N^I: \norm{\beta} = I} \E\left((G_n(a))^{|\beta|}\right) \prod_{i=1}^I \left| \E \left( (\xi_n^a)^i \right) \right|^{\beta_i}\\
		& \leq C \sum_{\beta \in \N^I: \norm{\beta} = I} (r_n W_n(a))^{|\beta|} \prod_{i=1}^I \left| \E \left( (\xi_n^a)^i \right) \right|^{\beta_i}.
	\end{align*}
	Lemma~\ref{lemma:bound-xi} gives the bound
	\begin{multline*}
		\prod_{i=1}^I \left| \E \left( (\xi_n^a)^i \right) \right|^{\beta_i} \leq C \left( (b-a) s_n / (r_n W_n(a)) \right)^{\beta_1} \prod_{i=2}^I ((b-a)s_n)^{\beta_i}\\
		= C ((b-a) s_n)^{|\beta|} \left( r_n W_n(a) \right)^{-\beta_1} \leq C (b-a) s_n \left( r_n W_n(a) \right)^{-\beta_1}
	\end{multline*}
	where $((b-a) s_n)^{|\beta|} \leq C (b-a) s_n$ follows from the fact that $(b-a) s_n \leq t_0$ while $|\beta| \geq 1$. Using~\eqref{eq:bound-xi+theta^*} for the case $Y = \widetilde \xi_n^a$ and the finiteness of $C_I$ for the case $Y = G_n(b-a)$, one can write $\E(Y^I) \leq C (b-a) s_n$, which gives
	\[
		\P \left( \left| Y + \sum_{k=1}^{G_n(a)} \xi_{n,k}^a \right| \geq \lambda r_n \right) \leq C (\lambda r_n)^{-I} (b-a) s_n \left( 1 +  \sum_{\beta \in \N^I: \norm{\beta} = I} (r_n W_n(a))^{|\beta| - \beta_1} \right).
	\]
	But
	\begin{equation} \label{eq:beta}
		|\beta| - \beta_1 = \sum_{i=2}^I \beta_i \leq \sum_{i=2}^I (i/2) \beta_i = \frac{1}{2} \left( \norm \beta - \beta_1 \right) \leq I/2
	\end{equation}
	and $r_n W_n(a) \geq 1$ so $(r_n W_n(a))^{|\beta| - \beta_1} \leq (r_n W_n(a))^{I/2}$, which proves the result.
\end{proof}

\begin{lemma} \label{lemma:bound-N}
	For any $i \geq 1$ and $A > 0$, it holds that
	\[ \sup \left\{ r_n^{-i} \E \left((N_{n,a})^i \right) : n \geq 1, 0 < a < b \leq A \right\} < +\infty \]
	and
	\[ \sup \left\{ r_n^{-i} \E \left((\widetilde N_{n,b}^{a})^i \right) : n \geq 1, 0 < a < b < c \leq A, b-a \leq t_0 / s_n \right\} < +\infty. \]
\end{lemma}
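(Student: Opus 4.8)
The plan is to reduce everything to the stochastic domination $\E((G_n(a))^i)\le i!\,(r_nW_n(a))^i$ recalled just before the lemma, combined with the monotonicity of $W_n$ and the fact that $\sup_{n}W_n(A)<\infty$ (Lemma~\ref{lemma:conv-W}), so that $\E((G_n(a))^i)\le Cr_n^i$ uniformly over $n$ and $0<a\le A$. The first supremum is then immediate: since $\indicator{\xi_{n,k}^a\ge 0}\le 1$, one has $N_{n,a}\le G_n(a)$ pointwise, hence $r_n^{-i}\E((N_{n,a})^i)\le r_n^{-i}\E((G_n(a))^i)\le i!\,(\sup_nW_n(A))^i<\infty$, which does not even use the restriction $b-a\le t_0/s_n$.

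For the second supremum, write $\widetilde N_{n,b}^{a}=(\widetilde\xi_n^a)^++\sum_{k=1}^{G_n(a)}(\xi_{n,k}^a)^+$ and use $(x+y)^i\le 2^i(x^i+y^i)$ for $x,y\ge 0$. The key observation is that, under the standing restriction $b-a\le t_0/s_n$, all moments of $(\xi_n^a)^+$ and of $(\widetilde\xi_n^a)^+$ are bounded by constants independent of $n,a,b$: from \eqref{eq:def-xi} and \eqref{eq:def-tilde} one gets, for every $j\ge1$, $\E(((\xi_n^a)^+)^j)=p_{n,\xi}^a(a,b)\,\E((G_n(b-a))^j)$ and $\E(((\widetilde\xi_n^a)^+)^j)=\int p_{n,\xi}^{x_0}(a,b)\,\E((G_n(b-a))^j)\,\P(\chi_n^a\in dx_0)$, both of which are at most $\E((G_n(b-a))^j)\le C_j(b-a)s_n\le C_jt_0$, using $p_{n,\xi}\le 1$ and the finiteness of the constants $C_j$ of \eqref{eq:C_i} (Lemma~\ref{lemma:C_i}). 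In particular $r_n^{-i}\E(((\widetilde\xi_n^a)^+)^i)\le C_it_0$ since $r_n\ge1$, so this term is harmless.

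It remains to bound $\E((\sum_{k=1}^{G_n(a)}(\xi_{n,k}^a)^+)^i)$ by $Cr_n^i$. We may assume $i$ even, the odd case following from the power-mean inequality $\E(Z^i)\le\E(Z^{i+1})^{i/(i+1)}$ applied to the nonnegative variable $Z=\sum_k(\xi_{n,k}^a)^+$. Conditioning on $G_n(a)$ and applying Lemma~\ref{lemma:sum} to the i.i.d.\ nonnegative summands $(\xi_{n,k}^a)^+$, each product $\prod_{j=1}^i\E(((\xi_n^a)^+)^j)^{\beta_j}$ is bounded by a constant (as $|\beta|\le\norm{\beta}=i$ and each factor is $\le C_jt_0$), so that $\E((\sum_{k=1}^{G_n(a)}(\xi_{n,k}^a)^+)^i\mid G_n(a))\le C\sum_{\beta\in\N^i:\norm{\beta}=i}(G_n(a))^{|\beta|}$. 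Taking expectations, $\E((G_n(a))^{|\beta|})\le|\beta|!\,(r_nW_n(a))^{|\beta|}\le i!\,(r_n\sup_nW_n(A))^{|\beta|}\le Cr_n^i$ because $|\beta|\le i$ and $r_n\ge1$, and since there are only finitely many $\beta$ the sum is $\le Cr_n^i$, as required. Nothing here is difficult; the only points needing care are the identification of the $n$-uniform moment bound for $(\xi_n^a)^+$ — which is exactly where the restriction $b-a\le t_0/s_n$ and Lemma~\ref{lemma:C_i} enter — and, when $i$ is odd, avoiding the loss of a power of $r_n$ by reducing to even exponents via the power-mean inequality rather than a crude comparison.
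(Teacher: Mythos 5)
Your proof is correct and follows essentially the same strategy as the paper: dominate $N_{n,a}$ by $G_n(a)$ and use $\E((G_n(a))^i)\le i!(r_nW_n(a))^i$ for the first bound; for the second, combine the finiteness of the constants $C_j$ of~\eqref{eq:C_i} (giving $O(1)$ moments for the summands under $b-a\le t_0/s_n$) with the geometric moment bound for $G_n(a)$ via Lemma~\ref{lemma:sum}. The only cosmetic difference is that the paper dominates $|\widetilde\xi_n^a|$ and $|\xi_{n,k}^a|$ stochastically by $1+G_n(b-a)$ and absorbs everything into a single sum of $G_n(a)+1$ i.i.d.\ terms before applying Lemma~\ref{lemma:sum}, whereas you split off $(\widetilde\xi_n^a)^+$ via $(x+y)^i\le 2^i(x^i+y^i)$ and apply Lemma~\ref{lemma:sum} only to the random sum; you also make the reduction to even exponents explicit via the power-mean inequality, a step the paper glosses over (it is harmless there because the summands are nonnegative, but your treatment is cleaner).
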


\begin{proof}
	The result on $N_{n,a}$ comes from the following inequality $\E((N_{n,a})^i) \leq \E((G_n(a))^i)$. For $\widetilde N_{n,b}^{a}$, we use the fact that $|\widetilde \xi_n^a|$ is stochastically dominated by $1 + G_n(b-a)$ (since for any $x_0 > 0$, $|\xi_n^{x_0}|$ is), thus
	\[ \E \left( ( \widetilde N_{n,b}^{a})^i \right) \leq \E \left( \left( \sum_{k=1}^{G_n(a)+1} (1+G_{n,k}(b-a)) \right)^i \right) \]
	with $(G_{n,k}(b-a), k \geq 1)$ i.i.d.\ with common distribution $G_n(b-a)$, independent of $G_n(a)$. Thus Lemma~\ref{lemma:sum} gives
	\[ \E \left( ( \widetilde N_{n,b}^{a})^i \right) \leq C \sum_{\beta \in \N^i: \norm \beta = i} \E \left( (1+G_n(a))^{|\beta|} \right) \prod_{k=1}^i \left[ \E \left( (1+G_{n}(b-a))^k \right) \right]^{\beta_k}. \]
	Since $G_n(a)$ is stochastically dominated by an exponential random variable with parameter $1-p_n^a(a) = 1/(r_n W_n(a))$ and $G_n(b-a)$ is integer valued, so that $(1+G_n(b-a))^k \leq (1+G_n(b-a))^i$ for any $1 \leq k \leq i$, we get, using that $|\beta| \leq i$ and that all quantities are greater than $1$,
	\[ \E \left( ( \widetilde N_{n,b}^{a})^i \right) \leq C E \left( (1+E r_n W_n(a))^{i} \right) \left[ \E \left( (1+G_{n}(b-a))^i \right) \right]^{i} \]
	where $E$ is a mean-$1$ exponential random variable. Using that for each $1 \leq k \leq i$
	\[ \E\left((G_n(b-a))^k\right) \leq \E\left((G_n(b-a))^i\right) \leq C_i (b-a) s_n \leq C_i t_0, \]
	one gets
	\[ \sup \left\{ \left[ \E \left( (1+G_{n}(b-a))^i \right) \right]^{i} : n \geq 1, b-a \leq t_0 / s_n \right\} < +\infty. \]
	Together with the inequality
	\[ \E \left( (1+E r_n W_n(a))^{i} \right) \leq \E \left( (1+E r_n W_n(A))^{i} \right) \leq C r_n^{i} \]
	this concludes the proof.
\end{proof}

We can now prove Proposition~\ref{prop:case<}. Remember that we must find constants $C$ and $\gamma > 0$ such that
\[
	\PX_n^* \left( \left| L^0(c) - L^0(b) \right| \wedge \left| L^0(b) - L^0(a) \right| \geq \lambda \, | \, T(a) < T(0) \right) \leq C \frac{(c-a)^{3/2}}{\lambda^\gamma}
\]
uniformly in $n \geq 1$, $\lambda > 0$ and $a_0 \leq a < b < c \leq A$ with $(b-a) \vee (c-b) \leq t_0 / s_n$.

\begin{proof}[Proof of Proposition~\ref{prop:case<}]
	Fix four even integers $I_1, I_2, I_3, I_4$. By~\eqref{eq:fd-integrated},
	\begin{multline*}
		\PX_n^* \left( \left| L^0(c) - L^0(b) \right| \wedge \left| L^0(b) - L^0(a) \right| \geq \lambda \, | \, T(a) < T(0) \right)\\
		= \P \left( \left| \widetilde \xi_n^a + \sum_{k=1}^{G_n(a)} \xi_{n,k}^a \right| \wedge \left| \widetilde \theta_n^a \indicator{\widetilde \xi_n^a \geq 0} + \sum_{k=1}^{N_{n,a}} \theta_{n,k}^a + \sum_{k=1}^{\widetilde N_{n,b}^{a}} \theta_{n,k}^b \right| \geq \lambda_n \right)
	\end{multline*}
	with $\lambda_n = \lambda r_n$. Let $\Fcal$ be the $\sigma$-algebra generated by $\chi_n^a$, $G_n(a)$, $\widetilde \xi_n^a$ and the $(\xi_{n,k}^a, k \geq 1)$. Then the above probability is equal to
	\[
		\E \left\{ \pi \, ; \, \left| \widetilde \xi_n^a + \sum_{k=1}^{G_n(a)} \xi_{n,k}^a \right| \geq \lambda_n \right\}
	\]
	with $\pi$ the random variable
	\[ \pi = \P \left( \left| \widetilde \theta_n^a \indicator{\widetilde \xi_n^a \geq 0} + \sum_{k=1}^{N_{n,a}} \theta_{n,k}^a + \sum_{k=1}^{\widetilde N_{n,b}^{a}} \theta_{n,k}^b \right| \geq \lambda_n \, \Big | \, \Fcal \right) \leq \widetilde \pi + \pi_a + \pi_b \]
	where
	\[ \widetilde \pi = \P \left( \left| \widetilde \theta_n^a \indicator{\widetilde \xi_n^a \geq 0} \right| \geq \lambda_n / 3 \, \Big | \, \Fcal \right) =  \indicator{\widetilde \xi_n^a \geq 0} \P \left( \left| \widetilde \theta_n^a \right| \geq \lambda_n / 3 \, \Big | \, \chi_n^a \right), \]
	\[ \pi_a = \P \left( \left| \sum_{k=1}^{N_{n,a}} \theta_{n,k}^a \right| \geq \lambda_n / 3 \, \Big | \, N_{n,a} \right) \text{ and } \pi_b = \P \left( \left| \sum_{k=1}^{\widetilde N_{n,b}^{a}} \theta_{n,k}^b \right| \geq \lambda_n / 3 \, \Big | \, \widetilde N_{n,b}^{a} \right). \]

	The two terms $\pi_a$ and $\pi_b$ can be dealt with very similarly. Fix $u = a$ or $b$, and denote by $N_u$ the random variable $N_{n,a}$ if $u = a$ or $\widetilde N_{n,b}^{a}$ if $u = b$. With this notation, $(\theta_{n,k}^u, k \geq 1)$ are i.i.d.\ and independent from $N_u$, so that Markov inequality and Lemma~\ref{lemma:sum} give
	\[
		\pi_u \leq (3I_1 / \lambda_n)^{I_1} \sum_{\beta \in \N^{I_1}: \norm{\beta} = I_1} N_u^{|\beta|} \prod_{i=1}^{I_1} \left| \E \left( \left( \theta_{n}^u \right)^i \right) \right|^{\beta_i}.
	\]
	By~\eqref{eq:bound-xi+theta},
	\[ \prod_{i=1}^{I_1} \left| \E \left( \left( \theta_n^u \right)^i \right) \right|^{\beta_i} \leq C ((c-a) s_n)^{|\beta|} \leq C (c-a) s_n \]
	since $1 \leq |\beta| \leq I_1$ and $(c-a) s_n \leq t_0$. Since $N_u$ is integer valued it holds that $N_u^{|\beta|} \leq N_u^{I_1}$ and finally this gives
	\[ \pi_u \leq C \lambda^{-I_1} (N_u / r_n)^{I_1} (c-a) s_n. \]
	Applying Cauchy-Schwarz inequality yields
	\[ \E \left\{ \pi_u \, ; \, \left| \widetilde \xi_n^a + \sum_{k=1}^{G_n(a)} \xi_{n,k}^a \right| \geq \lambda_n \right\} \leq C \lambda^{-I_1} (c-a) s_n \sqrt{ \E \left((N_u / r_n)^{2I_1} \right) \P \left( \left| \widetilde \xi_n^a + \sum_{k=1}^{G_n(a)} \xi_{n,k}^a \right| \geq \lambda_n \right) } \]
	and finally, Lemma~\ref{lemma:control-xi} with $Y = \widetilde \xi_n^a$ gives, together with Lemma~\ref{lemma:bound-N},
	\[
		\E \left\{ \pi_u \, ; \, \left| \widetilde \xi_n^a + \sum_{k=1}^{G_n(a)} \xi_{n,k}^a \right| \geq \lambda_n \right\} \leq C \frac{(c-a)^{3/2}}{\lambda^{I_1+I_2/2}} s_n^{3/2} r_n^{-I_2/4}.
	\]
	It remains to control the term $\widetilde \pi$: in $\{ \widetilde \xi_n^a \geq 0 \}$, $\widetilde \xi_n^a$ is equal in distribution to $G_n(b-a)$ and is independent of everything else, thus we have
	\begin{align*}
		\E \left\{ \widetilde \pi \, ; \, \left| \widetilde \xi_n^a + \sum_{k=1}^{G_n(a)} \xi_{n,k}^a \right| \geq \lambda_n \right\} & = \E \left\{ \P \left( \left| \widetilde \theta_n^a \right| \geq \lambda_n / 3 \, \big | \, \chi_n^a \right) \, ; \, \left| \widetilde \xi_n^a + \sum_{k=1}^{G_n(a)} \xi_{n,k}^a \right| \geq \lambda_n , \widetilde \xi_n^a \geq 0 \right\}\\
		& \leq C \lambda_n^{-I_3} \E \left\{ \E \left( \left| \widetilde \theta_n^a \right|^{I_3} \, \big | \, \chi_n^a \right) \, ; \, \left| G_n(b-a) + \sum_{k=1}^{G_n(a)} \xi_{n,k}^a \right| \geq \lambda_n \right\}.
	\end{align*}
	
	Since $\E( | \widetilde \theta_n^a|^{I_3} \, | \, \chi_n^a )$ is independent of $G_n(b-a) + \sum_{k=1}^{G_n(a)} \xi_{n,k}^a$, we get
	\begin{align*}
		\E \left\{ \widetilde \pi \, ; \, \left| \widetilde \xi_n^a + \sum_{k=1}^{G_n(a)} \xi_{n,k}^a \right| \geq \lambda_n \right\} & \leq C \lambda_n^{-I_3} \E \left( \left| \widetilde \theta_n^a \right|^{I_3} \right) \P \left( \left| G_n(b-a) + \sum_{k=1}^{G_n(a)} \xi_{n,k}^a \right| \geq \lambda_n \right)\\
		& \leq C \lambda_n^{-I_3-I_4} (c-a)^2 s_n^2 r_n^{-I_4/2}
	\end{align*}
	where the second inequality follows using~\eqref{eq:bound-xi+theta^*} and Lemma~\ref{lemma:control-xi} with $Y = G_n(b-a)$. Since $(c-a) s_n \leq t_0$, we have $((c-a)s_n)^2 \leq C ((c-a)s_n)^{3/2}$ and finally, gathering the previous inequalities, one sees that we have derived the bound
	\begin{multline*}
		\PX_n^* \left( \left| L^0(c) - L^0(b) \right| \wedge \left| L^0(b) - L^0(a) \right| \geq \lambda \, | \, T(a) < T(0) \right)\\
		\leq C (c-a)^{3/2} s_n^{3/2} \left( \lambda^{-I_1 - I_2/2} r_n^{-I_2/4} + \lambda^{-I_3 - I_4} r_n^{-I_4/2} \right).
	\end{multline*}
	Now choose $I_2$ and $I_4$ large enough such that both sequences $(s_n^{3/2} r_n^{-I_2/4})$ and $(s_n^{3/2} r_n^{-I_4/2})$ are bounded: this is possible since for any $\beta \in \R$, $s_n r_n^{-\beta} = s_n^{1-\beta(\alpha-1)}$. Moreover, choose $I_2$ not only even but a multiple of $4$. Then once $I_2$ and $I_4$ are fixed, choosing $I_1$ and $I_3$ in such a way that $I_1 + I_2 / 2 = I_3 + I_4$ concludes the proof.
\end{proof}

\subsection{Case $b-a \geq t_0 / s_n$}

We now consider the simpler case $b-a \geq t_0 / s_n$ and prove Proposition~\ref{prop:case>}.

\begin{lemma} \label{lemma:bound-xi^*-case>}
	For any $i \geq 1$, there exists a finite constant $C$ such that for all $n \geq 1$ and all $0 < a < b$ such that $b-a \geq t_0 / s_n$,
	\[ \E \left( |\widetilde \xi_n^a|^i \right) \leq C (r_n W_n(b-a))^i. \]
\end{lemma}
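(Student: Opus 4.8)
The plan is to unwind the two mixture definitions~\eqref{eq:def-xi} and~\eqref{eq:def-tilde} and reduce the claim to the elementary moment bound $\E\big((G_n(\delta))^i\big) \leq i!\,(r_n W_n(\delta))^i$ recorded just before Lemma~\ref{lemma:control-xi}, together with the normalization $w_n(t_0) \geq 2$ enforced after Lemma~\ref{lemma:prop-w}.

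First I would fix $u = x_0 > 0$ and apply~\eqref{eq:def-xi} with $f(x) = |x|^i$. Since $|-1|^i = 1$ and $G_n(b-a) \geq 0$, this gives
\[ \E\big(|\xi_n^u|^i\big) = \big(1 - p_{n,\xi}^u(a,b)\big) + p_{n,\xi}^u(a,b)\,\E\big((G_n(b-a))^i\big) \leq 1 + \E\big((G_n(b-a))^i\big), \]
using $p_{n,\xi}^u(a,b) \in [0,1]$; crucially this bound does not depend on $u$. Integrating it against $\P(\chi_n^a \in dx_0)$ in~\eqref{eq:def-tilde} therefore yields $\E\big(|\widetilde\xi_n^a|^i\big) \leq 1 + \E\big((G_n(b-a))^i\big)$.

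Next I would invoke $\E\big((G_n(b-a))^i\big) \leq i!\,(r_n W_n(b-a))^i$ to get $\E\big(|\widetilde\xi_n^a|^i\big) \leq 1 + i!\,(r_n W_n(b-a))^i$, and absorb the constant term as follows: since $b - a \geq t_0/s_n$ and $w_n$ is increasing with $w_n(t_0) \geq 2$, the identity $W_n(b-a) = w_n((b-a)s_n)/r_n$ gives $r_n W_n(b-a) = w_n((b-a)s_n) \geq w_n(t_0) \geq 2 \geq 1$, hence $1 \leq (r_n W_n(b-a))^i$. Combining the two estimates gives $\E\big(|\widetilde\xi_n^a|^i\big) \leq (1 + i!)(r_n W_n(b-a))^i$, so the constant $C = 1 + i!$ works, uniformly in $n$, $a$, $b$.

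There is essentially no obstacle here; the only points requiring care are checking that the bound on $\E(|\xi_n^u|^i)$ is uniform in $u$, so that integrating against the law of $\chi_n^a$ is harmless, and verifying $r_n W_n(b-a) \geq 1$ — which is precisely where the hypothesis $b-a \geq t_0/s_n$ and the normalization of $t_0$ enter, and which is why this lemma cannot be merged with the small-increment estimate~\eqref{eq:bound-xi+theta^*}.
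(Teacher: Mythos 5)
Your proof is correct and follows essentially the same route as the paper: reduce to a bound on $\E(|\xi_n^{x_0}|^i)$ uniform in $x_0$, expand the mixture~\eqref{eq:def-xi}, and invoke the geometric moment bound $\E((G_n(b-a))^i)\leq i!\,(r_n W_n(b-a))^i$. The only (cosmetic) difference is how the additive constant $1$ coming from the atom at $-1$ is absorbed: you note $r_n W_n(b-a)=w_n((b-a)s_n)\geq w_n(t_0)\geq 2$ so $1\leq (r_n W_n(b-a))^i$, whereas the paper uses the equivalent fact $p_n^{b-a}(b-a)=1-1/(r_n W_n(b-a))\geq 1/2$ to deduce $\E((G_n(b-a))^i)\geq 1$ and then writes $1-p_{n,\xi}^{x_0}\leq (1-p_{n,\xi}^{x_0})\E((G_n(b-a))^i)$ so the two terms recombine to $\E((G_n(b-a))^i)$. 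Both rest on exactly the same input from the hypothesis $b-a\geq t_0/s_n$ and the normalization of $t_0$.
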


\begin{proof}
	In view of~\eqref{eq:def-tilde}, it is enough to show that $\E \left( |\xi_n^{x_0}|^i \right) \leq C (r_n W_n(b-a))^i$ for every $x_0 > 0$. Since $b-a \geq t_0 / s_n$, exploiting the monotonicity of $w_n$ gives
	\[ p_n^{b-a}(b-a) = 1 - \frac{w_n(0)}{w_n((b-a)s_n)} \geq 1 - \frac{1}{w_n(t_0)} \geq \frac{1}{2} \]
	since $t_0$ has been chosen such that $w_n(t_0) \geq 2$. Since $G_n(b-a)$ is a geometric random variable with parameter $p_n^{b-a}(b-a)$, we have
	\[ \E\left( (G_n(b-a))^i \right) \geq \E(G_n(b-a)) = \frac{p_n^{b-a}(b-a)}{1-p_n^{b-a}(b-a)} \geq 1, \]
	using $p_n^{b-a}(b-a) \geq 1/2$. Thus for any $x_0 > 0$,
	\begin{align*}
		\E \left( |\xi_n^{x_0}|^i \right) & = 1-p_{n,\xi}^{x_0}(a,b) + p_{n,\xi}^{x_0}(a,b) \E\left( (G_n(b-a))^i \right)\\
		& \leq (1-p_{n,\xi}^{x_0}(a,b)) \E\left( (G_n(b-a))^i \right) + p_{n,\xi}^{x_0}(a,b) \E\left( (G_n(b-a))^i \right).
	\end{align*}
	
	This last quantity is equal to $\E\left((G_n(b-a))^i\right)$ and so the inequality $\E\left((G_n(b-a))^i\right) \leq i! (r_n W_n(b-a))^i$ achieves the proof.
\end{proof}

\begin{lemma} \label{lemma:bound-xi-case>}
	For any $i \geq 1$, there exists a finite constant $C$ such that for all $n \geq 1$ and all $0 < a < b$ with $b-a \geq t_0 / s_n$,
	\[ \E \left( \left| \xi_n^a \right|^i \right) \leq C (r_n W_n(b-a))^{i-1}. \]
	Moreover, for any $n \geq 1$ and $0 < a < b$,
	\[ \left| \E(\xi_n^a) \right| \leq \frac{W_n(b-a)}{W_n(a) - W_n(0)} \]
\end{lemma}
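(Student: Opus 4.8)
The plan is to read off both estimates directly from the explicit description \eqref{eq:def-xi} of the law of $\xi_n^a$ (taken with $u=a$), according to which $\xi_n^a$ equals $-1$ with probability $1-p_{n,\xi}^a(a,b)$ and is distributed as $G_n(b-a)\ge 0$ with probability $p_{n,\xi}^a(a,b)$. This gives, for every $i\ge 1$,
\[
	\E\bigl(|\xi_n^a|^i\bigr) = \bigl(1-p_{n,\xi}^a(a,b)\bigr) + p_{n,\xi}^a(a,b)\,\E\bigl((G_n(b-a))^i\bigr).
\]
The first summand is at most $1$, and since $W_n$ is increasing with $W_n(0)=1/r_n$ we have $r_nW_n(b-a)\ge 1$, so it is also at most $(r_nW_n(b-a))^{i-1}$. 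For the second summand I will combine the moment bound $\E((G_n(b-a))^i)\le i!\,(r_nW_n(b-a))^i$ recalled just before the lemma with the estimate $p_{n,\xi}^a(a,b)\le 1/(r_nW_n(b-a))$; their product is $i!\,(r_nW_n(b-a))^{i-1}$, and adding the two contributions yields the first assertion with $C=1+i!$.

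The one input that needs an argument is thus $p_{n,\xi}^a(a,b)\le 1/(r_nW_n(b-a))$. I will start from the identity \eqref{eq:formula-p-xi-a} established inside the proof of Lemma~\ref{lemma:bound-xi}, which after a short algebraic simplification becomes
\[
	p_{n,\xi}^a(a,b) = \frac{W_n(0)\bigl(W_n(b)-W_n(b-a)\bigr)}{W_n(b-a)\bigl(W_n(a)-W_n(0)\bigr)}.
\]
Because $W_n'$ is non-increasing and $b-a\ge 0$, one has $W_n'(b-a+v)\le W_n'(v)$, hence $W_n(b)-W_n(b-a)=\int_0^aW_n'(b-a+v)\,dv\le\int_0^aW_n'(v)\,dv=W_n(a)-W_n(0)$; substituting this and using $W_n(0)=1/r_n$ gives the bound. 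Note that this argument does not use $b-a\ge t_0/s_n$, so the first assertion in fact holds without that restriction.

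For the second assertion I will use formula \eqref{eq:formula-xi}, namely
\[
	\bigl|\E(\xi_n^a)\bigr| = \frac{\int_0^a\int_0^{b-a}(-W_n'')(u+v)\,du\,dv}{\int_0^aW_n'},
\]
whose denominator is $W_n(a)-W_n(0)$. To bound the numerator by $W_n(b-a)$ I integrate in the $v$ variable first: for fixed $u$, $\int_0^a(-W_n'')(u+v)\,dv=W_n'(u)-W_n'(u+a)\le W_n'(u)$ since $-W_n''\ge 0$ and $W_n'\ge 0$, so the numerator is at most $\int_0^{b-a}W_n'(u)\,du=W_n(b-a)-W_n(0)\le W_n(b-a)$. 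Dividing gives $|\E(\xi_n^a)|\le W_n(b-a)/(W_n(a)-W_n(0))$.

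I do not expect a genuine obstacle: the whole lemma amounts to unwinding the distributional identity \eqref{eq:def-xi} and performing two elementary monotonicity estimates on the scale function, both of which only exploit that $W_n'\ge 0$ is non-increasing (equivalently $W_n$ is concave), a property guaranteed under the tightness assumption by Lemma~\ref{lemma:prop-w}. The least mechanical point is the algebraic passage from \eqref{eq:formula-p-xi-a} to the displayed product form for $p_{n,\xi}^a(a,b)$, but it is routine. The hypothesis $b-a\ge t_0/s_n$ plays essentially no role in this statement; it is needed in the companion Lemma~\ref{lemma:bound-xi^*-case>} (to ensure $\E((G_n(b-a))^i)\ge 1$), whereas the bounds proved here are valid in full generality.
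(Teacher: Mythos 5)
Your proof is correct and follows essentially the same route as the paper: both unwind the definition \eqref{eq:def-xi}, substitute the explicit formula \eqref{eq:formula-p-xi-a} for $p_{n,\xi}^a(a,b)$, invoke the geometric moment bound $\E((G_n(b-a))^i)\le i!\,(r_nW_n(b-a))^i$, and exploit concavity of $W_n$ through $W_n(b)-W_n(b-a)\le W_n(a)-W_n(0)$, which is exactly what makes the paper's single chain of inequalities work and what makes your isolated bound $p_{n,\xi}^a(a,b)\le 1/(r_nW_n(b-a))$ true; likewise for the second assertion, your Fubini-then-monotonicity treatment of \eqref{eq:formula-xi} computes to the same quantity the paper writes out directly. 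Your remark that the hypothesis $b-a\ge t_0/s_n$ plays no role in this particular lemma is also correct, as $r_nW_n(b-a)\ge 1$ already follows from $W_n(0)=1/r_n$ and the monotonicity of $W_n$.
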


\begin{proof}
	By definition~\eqref{eq:def-xi} of $\xi_n$ we have $\E \left( \left| \xi_n^a \right|^i \right) = 1-p_{n,\xi}^a(a,b) + p_{n,\xi}^a(a,b) \E \left( (G_n(b-a))^i \right)$ and so plugging in~\eqref{eq:formula-p-xi-a} gives
	\[ \E \left( \left| \xi_n^a \right|^i \right) \leq 1 + i! \left( \frac{W_n(b-a)}{W_n(0)} \right)^i \frac{W_n(0)(W_n(b) - W_n(b-a))}{W_n(b-a) (W_n(a) - W_n(0))} \leq 2 i! (r_n W_n(b-a))^{i-1} \]
	using $W_n(b) - W_n(b-a) \leq W_n(a) - W_n(0)$ and $1 \leq i! (r_n W_n(b-a))^{i-1}$. The second inequality is a direct consequence of~\eqref{eq:formula-xi} which can be expanded to
	\[ \left| \E \left( \xi_n^a \right) \right| = \frac{W_n(b-a) - W_n(0) - (W_n(b) - W_n(a))}{W_n(a) - W_n(0)}. \]
	The result is proved.
\end{proof}

\begin{proof}[Proof of Proposition~\ref{prop:case>}]
	By~\eqref{eq:fd-integrated} we have
	\[
		\PX_n^* \left( \left| L^0(b) - L^0(a) \right| \geq \lambda \, | \, T(a) < T(0) \right) = \P\left( \left| \widetilde \xi_n^a + \sum_{k=1}^{G_n(a)} \xi_{n,k}^a\right| \geq \lambda r_n \right).
	\]
	We have
	\begin{multline*}
		\P\left( \left| \widetilde \xi_n^a + \sum_{k=1}^{G_n(a)} \xi_{n,k}^a\right| \geq \lambda r_n \right)\\
		\leq C(\lambda r_n)^{-I} \left( \E \left( |\widetilde \xi_n^a|^I \right) + \sum_{\beta \in \N^I: \norm \beta = I} \E\left( (G_n(a))^{|\beta|} \right) \prod_{i=1}^I \left| \E \left( (\xi_n^a)^i \right) \right|^{\beta_i} \right)\\
		\leq C\lambda^{-I} \left( (W_n(b-a))^I + \sum_{\beta \in \N^I: \norm \beta = I} r_n^{|\beta| - I} (W_n(a))^{|\beta|} \prod_{i=1}^I \left| \E \left( (\xi_n^a)^i \right) \right|^{\beta_i} \right)
	\end{multline*}
	where the first inequality comes from the triangular inequality, Markov inequality and Lemma~\ref{lemma:sum}, and the second inequality is a consequence of Lemma~\ref{lemma:bound-xi^*-case>} and the fact that $G_n(a)$ is stochastically dominated by an exponential random variable with parameter $1-p_n^a(a)$. Using Lemma~\ref{lemma:bound-xi-case>} and the identity $\sum_{i=2}^I (i-1) \beta_i = I - |\beta|$ gives
	\begin{multline*}
		r_n^{|\beta|-I} (W_n(a))^{|\beta|} \prod_{i=1}^I \left| \E \left( (\xi_n^a)^i \right) \right|^{\beta_i}\\
		\leq C r_n^{|\beta|-I} (W_n(a))^{|\beta|} \left( \frac{W_n(b-a)}{W_n(a) - W_n(0)} \right)^{\beta_1} (r_n W_n(b-a))^{I-|\beta|}
		\\ \leq C (W_n(b-a))^{I + \beta_1-|\beta|}.
	\end{multline*}
	
	Thus
	\[
		\P\left( \left| \widetilde \xi_n^a + \sum_{k=1}^{G_n(a)} \xi_{n,k}^a\right| \geq \lambda r_n \right) \leq C\lambda^{-I} \left( (W_n(b-a))^I + \sum_{\beta \in \N^I: \norm \beta = I} (W_n(b-a))^{I-|\beta|+\beta_1} \right).
	\]
	Since $W_n(t) = w_n(t s_n) / s_n^{\alpha-1}$, it holds that
	\[
		\sup \left\{ \frac{W_n(t)}{t^{\alpha-1}} : n \geq 1, t \geq t_0/s_n \right\} = \sup \left\{ \frac{w_n(t)}{t^{\alpha-1}} : n \geq 1,  t \geq t_0 \right\}
	\]
	which has been shown to be finite in the proof of Lemma~\ref{lemma:prop-w}. Hence the last upper bound yields
	\[
		\P\left( \left| \widetilde \xi_n^a + \sum_{k=1}^{G_n(a)} \xi_{n,k}^a\right| \geq \lambda r_n \right) \leq C\lambda^{-I} \left( (b-a)^{I(\alpha-1)} + \sum_{\beta \in \N^I: \norm \beta = I} (b-a)^{(I-|\beta|+\beta_1)(\alpha-1)} \right).
	\]
	By~\eqref{eq:beta}, $I-|\beta| + \beta_1 \geq I / 2$ and since we consider $b-a \leq A$ this gives
	\[ (b-a)^{(I-|\beta|+\beta_1)(\alpha-1)} \leq C (b-a)^{(\alpha-1) I / 2} \]
	and we finally get the desired bound for $I$ large enough, i.e., such that $I(\alpha-1) \geq 3$. Inspecting the proof of Proposition~\ref{prop:case<} one can check that one can choose the two constants $\gamma$ to be equal.
\end{proof}

\end{document}